\numberwithin{equation}{section}
\newtheorem{theorem}{Theorem}
\numberwithin{theorem}{section}
\newtheorem*{theorem*}{Theorem}
\newtheorem*{acknowledgement*}{Acknowledgment}
\newtheorem*{definition*}{Definition}
\newtheorem*{notation*}{Notation}
\newtheorem{corollary}[theorem]{Corollary}
\newtheorem*{corollary*}{Corollary}
\newtheorem{definition}[theorem]{Definition}
\newtheorem{lemma}[theorem]{Lemma}
\newtheorem{notation}[theorem]{Notation}
\newtheorem{proposition}[theorem]{Proposition}
\newtheorem{remark}[theorem]{Remark}
\newcommand{\RR}[0]{\mathbb{R}}
\newcommand{\CP}[0]{\mathbb{CP}}
\newcommand{\pd}[2]{\frac{\partial #1}{\partial#2}}
\newcommand{\pdt}[0]{\frac{\partial}{\partial t}}
\newcommand{\gt}[0]{\tilde{g}}
\newcommand{\ft}[0]{\tilde{f}}
\newcommand{\gb}[0]{\overline{g}}
\newcommand{\delb}[0]{\overline{\nabla}}
\newcommand{\delt}[0]{\widetilde{\nabla}}
\newcommand{\Gamt}[0]{\widetilde{\Gamma}}
\newcommand{\ve}[1]{\mathbf{#1}}
\newcommand{\Rc}[0]{\operatorname{Rc}}
\newcommand{\Rm}[0]{\operatorname{Rm}}
\newcommand{\Xt}[0]{\widetilde{X}}
\newcommand{\Rmt}[0]{\widetilde{\operatorname{Rm}}}
\newcommand{\dfn}[0]{\doteqdot}
\newcommand{\ZZ}[0]{\mathbb{Z}}
\newcommand{\Ac}[0]{\mathcal{A}}
\newcommand{\Cc}[0]{\mathcal{C}}
\newcommand{\Sc}[0]{\mathcal{S}}
\newcommand{\Lc}[0]{\mathcal{L}}
\newcommand{\pdtau}[0]{\pd{}{\tau}}
\newcommand{\Qc}[0]{\mathcal{Q}}
\newcommand{\Xc}[0]{\mathcal{X}}
\newcommand{\Yc}[0]{\mathcal{Y}}
\newcommand{\Zc}[0]{\mathcal{Z}}
\newcommand{\cl}[1]{\overline{#1}}
\newcommand{\gc}[0]{\mathring{g}}
\newcommand{\Ps}[0]{\mathring{P}}
\newcommand{\Pb}[0]{\bar{P}}
\newcommand{\Deltat}[0]{\tilde{\Delta}}
\newcommand{\oinf}[0]{o(\infty)}
\newcommand{\gh}[0]{\hat{g}}
\newcommand{\Rt}[0]{\tilde{R}}
\newcommand{\Bt}[0]{\tilde{B}}
\newcommand{\Mt}[0]{\tilde{M}}
\newcommand{\nablat}[0]{\widetilde{\nabla}}
\newcommand{\Hc}[0]{\mathcal{H}}
\newcommand{\Kc}[0]{\mathcal{K}}
\newcommand{\Rct}[0]{\widetilde{\Rc}}
\newcommand{\Gt}[0]{\tilde{G}}
\newcommand{\Dc}[0]{\mathcal{D}}
\newcommand{\Wc}[0]{\mathcal{W}}
\newcommand{\dX}[0]{d\mathfrak{m}}
\newcommand{\ub}[1]{\underaccent{\bar}{#1}}
\newcommand{\cb}[0]{b}
\newcommand{\cnst}[0]{c}
\newcommand{\RP}[0]{\mathbb{RP}}
\newcommand{\Ss}[0]{\mathbb{S}}
\title[Asymptotically cylindrical shrinking solitons]{A uniqueness theorem for asymptotically cylindrical shrinking Ricci solitons}
\author{Brett Kotschwar}
\address{Arizona State University, Tempe, AZ, USA}
\email{kotschwar@asu.edu}
\author{Lu Wang}
\address{California Institute of Technology, Pasadena, CA, USA}
\email{drluwang@caltech.edu}
\thanks{The first author was supported in part by Simons Foundation grant \#359335. The second author was supported in part by NSF grants DMS-2018221 (formerly DMS-1406240) and DMS-2018220, an Alfred P. Sloan research fellowship, and the office of the Vice Chancellor for
Research and Graduate Education at the University of Wisconsin-Madison with funding from the Wisconsin Alumni Research Foundation.}
\begin{document}

\begin{abstract}
We prove that a shrinking gradient Ricci soliton which agrees to infinite order at spatial infinity with one of the standard cylinders $\Ss^k\times \RR^{n-k}$ for $k\geq 2$
along some end must be isometric to the cylinder on that end. When the underlying manifold is complete, it must be globally isometric either to the cylinder or (when $k=n-1$) to its  $\ZZ_2$-quotient.
\end{abstract}

\maketitle

\section{Introduction}
A \emph{shrinking Ricci soliton} is a Riemannian manifold $(M, g)$ for which
\begin{equation}\label{eq:soliton}
 2\Rc(g) + \mathcal{L}_{X}g = g
\end{equation}
for some smooth vector field $X$ on $M$.
The soliton is \emph{gradient} if $X = \nabla f$ for some $f\in C^{\infty}(M)$. When a shrinking soliton is complete and of bounded curvature, it is always possible to find $f$ such that $X - \nabla f$ is Killing \cite{Naber4D, Perelman1}, and so, for most applications,
there is no loss of generality
in considering only gradient shrinking solitons. (By contrast, there are \emph{expanding} Ricci solitons of bounded geometry which are nongradient in an essential way: see, e.g., \cite{BairdDanielo, Lott}.)
Below, we will assume that all shrinking solitons (or, simply, \emph{shrinkers}) are gradient and are normalized to satisfy the equations
\begin{align}
 \label{eq:grs}
  &\Rc(g)+ \nabla\nabla f = \frac{g}{2}, \quad R+ |\nabla f|^2 = f,
\end{align}
on $M$. The contracted second Bianchi identity implies that 
\[
\nabla (R + |\nabla f|^2 - f) \equiv 0
\]
whenever the first equation is satisfied, so 
it is always possible to achieve the normalization in the second equation
by adding a constant to $f$ on each connected component of $M$. 

We will use either $(M, g, X)$ or $(M, g, f)$ to denote a soliton structure, depending on our emphasis. When there is no ambiguity about which particular structure is meant, we will refer to $(M, g)$, or simply $M$, as ``the'' soliton.

Shrinking solitons are of some intrinsic geometric interest both as generalizations of positive Einstein manifolds and as models in the theory of smooth metric measure spaces. 
We are interested here in their connection
to the Ricci flow
\begin{equation}\label{eq:rf}
 \pdt g = -2\Rc(g),
\end{equation}
where they correspond to \emph{shrinking self-similar solutions}: generalized fixed points of the equation which move only under the natural actions of $\RR_+$ and $\operatorname{Diff}(M)$ on the space of metrics.
When $(M, g)$ is complete, the vector field $\nabla f$ is complete \cite{ZhangCompleteness} and the system
\[
 \left\{\begin{array}{rl}
	    \pd{\phi}{t} &= -\frac{1}{t}\nabla f\circ \phi\\
	     \phi_{-1} &= \operatorname{Id}
        \end{array}\right.
\]
may be solved to obtain a family of diffeomorphisms $\phi_t: M\to M$ defined for $t\in (-\infty, 0)$. The family of rescaled pull-backs $g(t) = -t \phi^*_t g$ of the original metric then solve \eqref{eq:rf} 
on $M\times (-\infty, 0)$.

The study of shrinking solitons is central to the analysis of the singular behavior of solutions to the Ricci flow. 
Solutions which develop a singularity at a finite time $T$ are expected ``generically'' to satisfy a so-called Type-I curvature bound $\sup_{M\times[0, T)} (T-t)|\Rm| < \infty$.
From the work of Hamilton \cite{HamiltonSingularities}, Perelman \cite{Perelman1},
\v{S}e\v{s}um \cite{SesumTypeI}, Naber \cite{Naber4D}, and Enders, M\"uller (Buzano), and Topping \cite{EndersMuellerTopping}, it is now known  that, about any point in the high-curvature region of such a Type-I singular solution,
one can extract a sequence of blow-ups converging to a complete nontrivial shrinking gradient Ricci soliton. In this sense, shrinkers represent potential models for the geometry of a solution
in the neighborhood of a developing singularity. It is a fundamental problem to understand what possible forms these model geometries may take.

\subsection{The classification problem for shrinking Ricci solitons}
Shrinking solitons are completely classified in dimensions two and three. In dimension two, Hamilton \cite{HamiltonSurfaces} proved 
that the only complete shrinkers are the flat plane $\RR^2$ with the Gaussian soliton structure and the standard round metrics on $\mathbb{S}^2$ 
and $\RR\mathbb{P}^2$.  (Alternative proofs that compact 2-D shrinkers have constant positive curvature were later given in
\cite{ChowKnopf} and \cite{ChenLuTian}; the latter, being independent of the Uniformization Theorem, can be used with the convergence
results in \cite{Chow2D, HamiltonSurfaces} to show that the Ricci flow uniformizes compact surfaces.) In three dimensions, the combined results of 
Hamilton \cite{HamiltonSingularities}, Ivey \cite{Ivey3DSolitons}, Perelman \cite{Perelman1}, Ni-Wallach \cite{NiWallach},
and Cao-Chen-Zhu \cite{CaoChenZhu} show that the only complete shrinkers are
the Gaussian soliton on $\RR^3$ and finite quotients of the round sphere $\Ss^3$ and standard round cylinder $\Ss^2\times \RR$. 

These classifications are made possible by some additional
a priori structure peculiar to two and three dimensions: in dimension two, orientable gradient solitons are necessarily rotationally symmetric 
(the rotation of the potential vector field $\nabla f$ by the complex structure is a Killing vector field) 
and in dimension three, complete shrinkers are necessarily
of nonnegative sectional curvature (on account of the Hamilton-Ivey estimate \cite{HamiltonSingularities, Ivey3DSolitons}).
In higher dimensions, the class of shrinkers--which includes all Einstein manifolds with positive scalar curvature--is simply too large to expect an exhaustive classification. 

The three-dimensional classification has nevertheless been extended in a variety of directions within limited classes.
For example, the work of Cao-Wang-Zhang \cite{CaoWangZhang}, Eminenti-LaNave-Mantegazza \cite{EminentiLaNaveMantegazza},
Fern\'andez-L\'opez and Garc\'ia-R\'io\cite{FernandezLopezGarciaRio}, Munteanu-Sesum \cite{MunteanuSesum}, Ni-Wallach \cite{NiWallach}, Petersen-Wylie
\cite{PetersenWylie}, and Zhang \cite{ZhangWeyl}, has shown that the only complete shrinkers with vanishing (even harmonic) Weyl tensor
are either the Gaussian soliton $\RR^n$ or finite quotients of $\Ss^n$ or $\Ss^{n-1}\times \RR$. Other results in this direction include the classification
of four-dimensional half-conformally flat shrinking solitons due to  X.-X. Chen and Y. Wang \cite{ChenWang}, later generalized by  H.-D. Cao and Q. Chen \cite{CaoChen} to shrinkers with vanishing Bach tensor in all higher dimensions. 

Some classifications have also been established for shrinkers satisfying additional curvature positivity conditions.
By a theorem of B.-L. Chen \cite{ChenStrongUniqueness} (cf. \cite{ChowLuYang}), every complete shrinker must at least have nonnegative scalar curvature, however, beginning in dimension four,
there are examples that have Ricci curvatures of mixed sign \cite{FeldmanIlmanenKnopf}. 
As a corollary of the work of B\"ohm-Wilking \cite{BohmWilking}, Brendle \cite{BrendleGenRFConvergence}, and Brendle-Schoen \cite{BrendleSchoen}, 
it is known that any compact shrinker whose curvature operator is $2$-positive or which satisfies the so-called $\mathrm{PIC}1$ condition must be a quotient of the round sphere. In four dimensions, 
X. Li, L. Ni, and K. Wang  \cite{LiNiWang} have shown recently that a 
complete gradient shrinker
with positive isotropic curvature must be a quotient of the standard sphere $\Ss^4$ or standard cylinder $\Ss^3\times \RR$. 
In another direction, Munteanu and J. Wang \cite{MunteanuWangPositiveCurvature} 
(generalizing results of Perelman \cite{Perelman2} and Naber \cite{Naber4D} in dimensions three and four) have shown that any complete shrinker with positive sectional curvature must be compact.

The body of literature on shrinking Ricci solitons is too large to adequately summarize here,
and our discussion has left out many important recent results. As entry points to further related work, we refer the reader to \cite{CaoZhou}, \cite{CarrilloNi}, \cite{HaslhoferMueller1}, \cite{HaslhoferMueller2}, \cite{LiWang}, \cite{MunteanuWang1}, and 
the references therein.

 \subsection{Complete noncompact shrinking solitons}  Given the formal similarity of \eqref{eq:grs} to the condition of nonnegative Ricci curvature, the geometry of a noncompact shrinker near infinity might be expected to be comparatively inflexible, constrained by strong and opposing tendencies toward incompleteness and reducibility. A growing body of evidence now appears to support this heuristic, and to suggest that the structural possibilities for the asymptotic geometry of a complete noncompact shrinker may indeed be few.

Every nontrivial complete noncompact shrinking soliton currently known either splits locally as a product or has a single end smoothly asymptotic to a cone. So far,
examples of the latter type are scarce. The construction of Feldman-Ilmanen-Knopf \cite{FeldmanIlmanenKnopf} produces a family of complete $\mathbb{U}(n)$-invariant asymptotically conical K\"ahler shrinkers on the tautological line bundle of $\CP^{n-1}$ with Ricci curvatures of mixed sign. This construction was later generalized by
Dancer-Wang \cite{DancerWang} and Yang \cite{Yang} to line bundles over products of 
K\"ahler-Einstein metrics with positive scalar curvature. These examples, too, have quadratic curvature decay and a single asymptotically conical end.  

In four dimensions, it is conjectured that any complete shrinker must fit one of the two above descriptions, at least asymptotically. The recent work of Munteanu-Wang \cite{MunteanuWang2, MunteanuWang3, MunteanuWang4} allows for a neat phrasing of this proposed dichotomy in terms of the scalar curvature. On one hand, in \cite{MunteanuWang2, MunteanuWang3}, Munteanu and Wang show that if the scalar curvature tends to zero at spatial infinity, then every end of $(M^4, g)$ must be smoothly asymptotic to a cone. On the other hand,
in \cite{MunteanuWang4}, they show that if the scalar curvature is bounded below by a positive constant, then either every end of $(M^4, g)$
is smoothly asymptotic to a quotient of $\Ss^3\times \RR$, or, for any sequence of points $x_i$ going to infinity along an integral curve of $\nabla f$, the sequence of pointed manifolds $(M^4, g, x_i)$
will subconverge in the smooth Cheeger-Gromov sense to a quotient of $\Ss^2\times \RR^2$.
(See also \cite{ChowLu} for a general splitting criterion for limits of pointed sequences of shrinkers.) When $(M^4, g)$ is K\"ahler and the scalar curvature is bounded, it is proven in \cite{MunteanuWang4} that these are the only two alternatives for the scalar curvature.

What connects this proposed dichotomy to a potential classification of complete noncompact four-dimensional solitons -- and what motivates the present paper -- is a question of uniqueness of interest in all dimensions: {\it to what extent is 
a shrinker determined by its asymptotic
geometry?} 

The authors have previously addressed this question for conical asymptotic geometries. In \cite{KotschwarWangConical}, it is shown that if two shrinkers are 
$C^2$-asymptotic to the same cone on some ends of each, then the shrinkers must themselves be isometric to each other near infinity
on those ends. This is an analog of a theorem of the second author for asymptotically conical self-shrinkers to the mean curvature flow \cite{WangConical}, and it reduces the classification 
of asymptotically conical shrinking solitons to that of the potential asymptotic cones.  

At present, there are few restrictions known to hold
on the cones which admit an asymptotic shrinker. Lott-Wilson \cite{LottWilson} have shown that there are at least no formal obstructions to the existence 
of a shrinker or an expander asymptotic to any regular cone,
and it is a consequence of the uniqueness result in \cite{KotschwarWangConical} that any isometry of the cone
must correspond to an isometry of the shrinker. The first author has also shown in \cite{KotschwarKaehlerShrinker} that if the cone is K\"ahler the shrinker must also be K\"ahler.

In this paper, we revisit the above question of uniqueness in the case of asymptotically cylindrical geometries.
\subsection{Asymptotically cylindrical shrinking Ricci solitons}
Let us establish the notation we need to state our main result.
For each $k\geq 2$, we will write $\Cc^k = \Ss^k\times \RR^{n-k}$  and let
\[
 g_k = (2(k-1)\gc)\oplus \bar{g}, \quad f_k(\theta, z) = \frac{|z|^2}{4} + \frac{k}{2},
\]
where $\gc$ is the round metric on $\Ss^k$ of constant sectional curvature $1$ and $\bar{g}$ is the Euclidean metric on $\RR^{n-k}$.
We will call the soliton structure $(\Cc^k, g_k, f_k)$ the \emph{standard cylinder}; the constants in the definitions for $g_k$ and $f_k$ have been chosen so that the normalizations in \eqref{eq:grs} are met.

For each $r > 0$, let $\mathcal{C}^k_r$ denote the end of the cylinder given by
\[
  \mathcal{C}^k_r = \left\{ 
  \begin{array}{rl} 
    \Ss^k \times(\RR^{n-k}\setminus \cl{B_r(0)}) & 2 \leq k < n-1,\\
    \Ss^{n-1}\times (r, \infty) & k = n-1.
    \end{array}\right.
\]
More generally, by an \emph{end} of a Riemannian manifold $(M, g)$, we will mean an unbounded connected component of the complement of a compact set in $M$.

The following definition makes precise the sense in which we mean that a metric ``agrees to infinite order'' with the cylinder at infinity.
\begin{definition}
\label{def:acyl}  
Let $r > 0$.
 We will say that $(\Cc^k_{r}, \gt)$ is \emph{strongly asymptotic to $(\Cc^k, g_k)$}
if, for all $l$, $m\geq 0$,
\begin{equation}\label{eq:ac1}
\sup_{\Cc_{r}^k} \left(|z|^l|\nabla^{(m)}_{g_k}(\tilde{g}-g_k)|_{g_k}(\theta, z)\right) < \infty.
\end{equation} 
We will say that $(\tilde{M}, \tilde{g})$ is \emph{strongly asymptotic to $(\Cc^k, g_k)$ along the end $V \subset (\tilde{M}, \tilde{g})$}
if there exists $r > 0$ and a diffeomorphism $\Psi: \Cc_{r}^k\to V$ such that $(\Cc_{r}, \Psi^*\gt)$ is strongly asymptotic to $(\Cc^k, g_k)$. 
\end{definition}

The main result of this paper is the following local uniqueness result.
\begin{theorem} 
\label{thm:main} 
Suppose $(\tilde{M}, \tilde{g}, \tilde{f})$ is a shrinking gradient Ricci soliton for which $(\tilde{M}, \tilde{g})$ 
is strongly asymptotic to $(\Cc^k, g_k)$ along the end $V\subset (\tilde{M}, \tilde{g})$ for some $k\geq 2$.
Then $(V, \tilde{g}|_{V})$ is isometric to $(\Cc^k_r, g_k|_{\Cc^k_r})$ for some $r > 0$. 
\end{theorem}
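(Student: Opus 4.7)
The plan is to convert this static soliton problem into a backward-uniqueness problem for the Ricci flow, in the spirit of our treatment of the asymptotically conical case \cite{KotschwarWangConical}. The key mechanism is that each shrinker generates a self-similar Ricci flow $g(t) = -t\phi_t^* g$ on $M \times (-\infty, 0)$, and the strong asymptotic condition \eqref{eq:ac1}, combined with the dilation behavior of the cylindrical soliton diffeomorphisms, should translate into infinite-order agreement of the two flows at the singular time $t = 0$. A unique-continuation argument at $t = 0$ then forces the flows to coincide, which one unwinds to the desired isometry on the end.

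First, use $\Psi$ to pull back $(\gt, \ft)$ to $\Cc_r^k$, yielding a shrinker structure $(\Cc_r^k, \Psi^*\gt, \Psi^*\ft)$ strongly asymptotic to $(g_k, f_k)$. On $\Cc^k$, the cylinder's soliton diffeomorphisms $\phi_t$ (normalized by $\phi_{-1} = \Id$) act as the identity on $S^k$ and as the dilation $z \mapsto (-t)^{-1/2} z$ on the Euclidean factor, so the cylinder's self-similar flow is $g_k(t) = 2(k-1)(-t)\gc \oplus \gb$. Because the gradient of $\Psi^*\ft - f_k$ decays to infinite order in $|z|$, the soliton diffeomorphism $\tilde\phi_t$ of $\Psi^*\gt$ can be constructed on a common domain $\Cc_{r'}^k$ for some $r' > r$ and shown to be a small perturbation of $\phi_t$ uniformly as $t \to 0^-$. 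This places both flows, $\tilde g(t) = -t\,\tilde\phi_t^*(\Psi^*\gt)$ and $g_k(t)$, on the common space-time $\Cc_{r'}^k \times (-\infty, 0)$.

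The next step is to quantify how fast $\tilde g(t) - g_k(t)$ decays as $t \to 0^-$. Since $\tilde\phi_t$ sends a fixed $(\theta, z) \in \Cc_{r'}^k$ to a point with Euclidean component of size comparable to $(-t)^{-1/2}|z|$, and since \eqref{eq:ac1} provides polynomial decay of $\Psi^*\gt - g_k$ of every order in $|z|$ together with all covariant derivatives, the pull-back comparison gives $|\tilde g(t) - g_k(t)|_{g_k(t)} = O((-t)^l)$ for every $l \geq 0$ uniformly on compact subsets of $\Cc_{r'}^k$, with matching decay of all space-time derivatives. After choosing a DeTurck-type gauge that recasts $\tilde g(t)$ and $g_k(t)$ as solutions of the same strictly parabolic system, the tensor $h = \tilde g(t) - g_k(t)$ satisfies a parabolic inequality of the schematic form $|\heat h|_{g_k(t)} \leq C(|h| + |\nabla h|)$ together with infinite-order vanishing at $t = 0$.

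Finally, apply a Carleman-type backward uniqueness argument at $t = 0$ to conclude $h \equiv 0$ on $\Cc_{r'}^k \times (-\infty, 0)$. Evaluating at $t = -1$ gives $\Psi^*\gt = g_k$ on $\Cc_{r'}^k$, so $\Psi$ restricts to an isometry from $\Cc_{r'}^k$ onto an end contained in $V$, and the normalization $R + |\nabla f|^2 = f$ forces $\Psi^*\ft = f_k$ on the same set, yielding the theorem (after possibly enlarging $r$). The principal obstacle is the Carleman estimate itself: unlike the conical case where dilation symmetry supplies natural radial weights, here the flat factor is static under the Ricci flow, so the weights must pair a suitably chosen function of $|z|$ (to exploit the spatial decay from \eqref{eq:ac1}) with a sharp temporal weight near $t = 0$, all while retaining coercivity in the presence of the quadratically growing potential. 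A secondary technical difficulty is the careful construction of $\tilde\phi_t$ on a sufficiently large common domain, together with sharp bounds on $\tilde\phi_t - \phi_t$ needed to justify the infinite-order vanishing.
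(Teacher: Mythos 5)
Your high-level strategy---pass from the static soliton to a self-similar backward Ricci flow, establish infinite-order vanishing of the difference at the singular time, and conclude by Carleman-type backward uniqueness---is exactly the philosophy of the paper. But there are three genuine gaps in the chain, two of which would cause the argument to fail as stated.

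\textbf{The vector field normalization is missing.} You assert that ``the gradient of $\Psi^*\ft - f_k$ decays to infinite order in $|z|$.'' This is false in general: the hypotheses only constrain $\gt - g_k$, and the soliton equations \eqref{eq:grs} leave $\delt\ft - \nabla f_k$ undetermined up to a bounded (even constant) correction. Concretely, the potential $f_{z_0}(\theta,z) = |z-z_0|^2/4 + k/2$ satisfies \eqref{eq:grs} for any $z_0$, yet $\nabla f_{z_0} - \nabla f_k$ is a nonzero constant vector field. Without first killing this translational mode, your $\tilde\phi_t$ is \emph{not} a small perturbation of $\phi_t$ as $t\to 0^-$ (the flows drift apart at rate $\sim |z_0|/\sqrt{-t}$), and $\tilde g(t) - g_k(t)$ does not vanish to infinite order at $t=0$. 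The paper spends Proposition~\ref{prop:xnorm}, Proposition~\ref{prop:translation}, Theorem~\ref{thm:vfnormalization}, and Appendix~\ref{app:normalization} on precisely this: first pull back by a translation of the Euclidean factor to make the vector fields agree to infinite order, then construct a further diffeomorphism to make them agree exactly. Only then is the difference $h$ genuinely self-similar, which is essential in the final backward uniqueness argument.

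\textbf{The DeTurck gauge does not close the argument here.} You propose recasting both flows ``as solutions of the same strictly parabolic system'' via DeTurck's trick. As the paper observes, this fails for backward uniqueness: the gauge-fixing diffeomorphisms would have to solve a \emph{terminal-value} problem for a harmonic map-type heat flow, which is ill-posed. Known backward uniqueness proofs for Ricci flow instead prolong the system to a coupled ``PDE--ODE'' system in $(\ve{X},\ve{Y}) = (\nablat\Rmt,\, (h,\nabla h,\nabla\nabla h))$, for which the degeneracy does not arise. Your $|\heat h| \le C(|h|+|\nabla h|)$ inequality is not available without either the DeTurck gauge (which you cannot use here) or the prolongation (which you do not mention).

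\textbf{The exponential decay step is not addressed.} Even granting the prolonged system, the Carleman weight you would need for backward uniqueness near $t=0$ blows up like $\exp(c|z|^{2\delta}/(-t)^{\delta})$. To apply it, one must \emph{first} upgrade the hypothesized polynomial space-time decay of the system to exponential-quadratic decay, and the natural inequality for $\ve{X}$ carries a coefficient of order $1/\tau$ on the right that the relevant Carleman estimate cannot absorb. This is what forces the paper's introduction of a ``refined'' system $\ve{W}$ of rescaled components of $\delt\Rmt$ (relative to the splitting $T\Cc = TS^k\oplus T\RR^{n-k}$) whose singular coefficients appear in a strictly triangular pattern, enabling an inductive absorption argument. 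This refinement is the heart of the paper's technical contribution and there is no indication in your sketch of how you would get around it; a single DeTurck-style parabolic inequality would face the same singular coefficients.
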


The local nature of the statement should be kept in mind when evaluating the strength of the hypothesis of infinite order decay. Only the geometry of the shrinker near infinity on the end $V$ is involved, and $(\tilde{M}, \tilde{g})$ is required neither to be complete nor to satisfy any a priori restriction on the number of its topological ends. In this generality,
there are heuristic reasons to believe the infinite order decay of $\gt - g_k$ may actually be necessary. The theorem is an analog of an earlier result of the second author \cite{WangCylindrical} for the mean curvature flow, which shows that an embedded self-shrinker which is asymptotic of infinite order to one of the standard cylinders must actually coincide with the cylinder. In this case, the assumption of infinite order decay is known to be effectively optimal-- the paper \cite{WangCylindrical} includes the construction of
a family of self-shrinkers on $\Ss^{n-1}\times (a, \infty)\hookrightarrow \RR^{n+1}$
 which are not themselves rotationally symmetric but which nevertheless decay to the cylinder at arbitrarily high fixed polynomial rates. 

When the underlying manifold $(\tilde{M}, \tilde{g})$ 
is complete, however, one expects to be able to say more;  in this case, Theorem \ref{thm:main} implies that $(\tilde{M}, \tilde{g})$ must be globally isometric to a quotient of $(\Cc^k, g_k)$. 
\begin{corollary}
\label{cor:maincor}
 Suppose that, in addition to the assumptions in Theorem \ref{thm:main}, the manifold $(\tilde{M}, \tilde{g})$ is complete.
 Then, either $(\tilde{M}, \tilde{g})$ is isometric to $(\Cc^k, g_k)$, or $k = n-1$ and $(\tilde{M}, \tilde{g})$ is isometric to the quotient $(\Cc^{n-1}, g_{n-1})\slash \Gamma$ where $\Gamma = \{\operatorname{Id}, \gamma\}$
 and $\gamma(\theta, z) = (-\theta, -z)$.
\end{corollary}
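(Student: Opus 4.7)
The plan is to extend the isometry on the end $V$ given by Theorem~\ref{thm:main} to a global identification of the universal cover of $\tilde{M}$ with $(\Cc^k, g_k)$. I would invoke the real analyticity of gradient shrinking Ricci solitons---a standard consequence of the ellipticity of the soliton system in harmonic coordinates---together with the de Rham decomposition theorem. This will realize $\tilde{M}$ as a quotient $\Cc^k/\Gamma$ by a finite group of soliton-preserving isometries, and the asymptotic hypothesis will then constrain $\Gamma$ to the two possibilities stated.

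The first step is to propagate the cylindrical product splitting from $V$ to all of $\tilde{M}$. Under the isometry $\Psi: V \to \Cc^k_r$ of Theorem~\ref{thm:main}, the Ricci endomorphism of $\tilde{g}$ has exactly two eigenvalues, $1/2$ with multiplicity $k$ (tangent to the $S^k$-factor) and $0$ with multiplicity $n-k$ (tangent to $\RR^{n-k}$), and the corresponding orthogonal eigendistributions $E$ and $F$ are both $\tilde{\nabla}$-parallel. The characteristic polynomial of $\tilde{\Rc}$ is real analytic on $\tilde{M}$ and equals $\lambda^{n-k}(\lambda - 1/2)^k$ on $V$; by unique continuation it does so on all of $\tilde{M}$, so the multiplicities remain constant and $E, F$ extend to globally defined analytic subbundles of $T\tilde{M}$. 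The parallelism $\tilde{\nabla} P_E \equiv 0$ for the orthogonal projection $P_E: T\tilde{M} \to E$ is a tensor identity that holds on $V$, and hence, by the same analytic-continuation argument, on all of $\tilde{M}$.

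Next, I would pass to the universal cover $(\hat{M}, \hat{g})$ of $\tilde{M}$ and apply de Rham to obtain a Riemannian product decomposition $\hat{M} = \hat{E} \times \hat{F}$. The full curvature tensor of $\tilde{g}$ also propagates from $V$ by analyticity, forcing $\hat{E}$ to have constant sectional curvature $1/(2(k-1))$ and $\hat{F}$ to be flat; completeness, simple connectedness, and the Killing--Hopf classification then give $\hat{E} \cong S^k_{\sqrt{2(k-1)}}$ and $\hat{F} \cong \RR^{n-k}$, so $\hat{M} \cong (\Cc^k, g_k)$ isometrically. The lift of $\tilde{f}$ satisfies the shrinker system on $\Cc^k$ and, by a short calculation using the normalization $R + |\nabla f|^2 = f$, must equal $f_k$ up to a translation of the $\RR^{n-k}$-factor, which (being an isometry of $\Cc^k$) may be absorbed into the identification.

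Finally, writing $\tilde{M} = \Cc^k/\Gamma$, preservation of $f_k$ forces $\Gamma \subset \mathrm{O}(k+1) \times \mathrm{O}(n-k)$, which is compact; hence $\Gamma$ is finite. The requirement that $V$ be isometric (not merely locally isometric) to $\Cc^k_r$ forces the stabilizer in $\Gamma$ of the corresponding end of $\Cc^k$ to act trivially on that end, and hence to be trivial. For $k < n-1$, $\Cc^k$ has a single end preserved by all of $\Gamma$, so $\Gamma = \{\mathrm{Id}\}$ and $\tilde{M} \cong (\Cc^k, g_k)$. For $k = n-1$, projecting $\Gamma \subset \mathrm{O}(n)\times\mathrm{O}(1)$ onto the second factor reveals two possibilities: either the image is trivial, in which case $\Gamma$ preserves each end and the triviality of the stabilizer forces $\Gamma = \{\mathrm{Id}\}$; or the image is $\mathbb{Z}/2$, in which case the kernel is trivial (by the same analysis), so $\Gamma = \{\mathrm{Id}, (A, -1)\}$ for some involution with $A \in \mathrm{O}(n)$ satisfying $A^2 = \mathrm{Id}$ and having no eigenvalue $1$, which forces $A = -\mathrm{Id}$ and yields $\gamma(\theta, z) = (-\theta, -z)$. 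The principal obstacle is the analytic propagation step in the second paragraph: verifying that the parallel cylindrical splitting genuinely extends from $V$ to $\tilde{M}$ via unique continuation for real-analytic tensor identities. The remaining steps are essentially routine, although the $k = n-1$ case requires the care outlined above to exclude deck groups that swap ends with nontrivial stabilizers.
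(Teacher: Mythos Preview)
Your proposal is correct, but it takes a more elaborate route than the paper's proof. The paper argues in essentially one line: the universal cover $(M',g')$ is complete, real-analytic, and isometric to $(\Cc^k,g_k)$ on an open set; since both spaces are simply connected, the classical analytic extension theorem for local isometries (as in Kobayashi--Nomizu) gives a global isometry $M'\cong\Cc^k$. You reach the same conclusion by propagating the Ricci eigenstructure and the parallelism of the eigendistributions via unique continuation, then invoking de Rham and Killing--Hopf. This works, but the intermediate machinery (global constancy of the characteristic polynomial, analyticity of the eigenprojections, de Rham splitting) is unnecessary once you have the one-step extension theorem available.

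For finiteness of $\Gamma$, the paper simply cites Wylie's theorem that complete shrinkers have finite fundamental group, whereas you deduce it from the observation that $\Gamma$ preserves the lifted potential $f_k$ and hence embeds in the compact group $\mathrm{O}(k+1)\times\mathrm{O}(n-k)$. Your argument is more self-contained and actually gives slightly more: it pins down the form of $\Gamma$ immediately, which streamlines the endgame. The paper instead analyzes general isometries $(\theta,r)\mapsto(F(\theta),G(r))$ of order at most two and rules out $F=-\operatorname{Id}$, $G=\operatorname{Id}$ by observing that $\RR P^{n-1}\times\RR$ has no cylindrical end. Your treatment of the $k=n-1$ case via the projection $\Gamma\to\mathrm{O}(1)$ and the fixed-point condition on $A\in\mathrm{O}(n)$ is a clean alternative. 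Both end analyses rely on the same underlying fact---that the stabilizer of a component of $\pi^{-1}(V)$ is trivial because $V$ is simply connected---which you state correctly.
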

The techniques of this paper are rather specialized to address the local problem of uniqueness in Theorem \ref{thm:main}. We expect that when $(\tilde{M}, \tilde{g})$ is complete, it should be possible to weaken (or eliminate entirely) the assumption on the rate of convergence to the cylinder. In fact, even here we haven't fully optimized the formulation of condition \eqref{eq:ac1}, nor do we really require its full strength  to obtain the conclusion of Theorem \ref{thm:main}. For example,
using an interpolation argument, it isn't hard to see that a metric $\gt$ on $\Cc^k_r$ is strongly asymptotic to $(\Cc^k, g_k)$
provided only that
\begin{equation}\label{eq:ac12}
\sup_{\Cc_{r}^k} |z|^l|(\tilde{g}-g_k)|_{g_k}(\theta, z) < \infty, \quad \sup_{\Cc_{r}^k} |\nabla^{(m)}_{g_k}(\tilde{g}-g_k)|_{g_k}(\theta, z) < \infty,
\end{equation}
for all $l \geq 0$ and $m\geq 1$.  An inspection of the proof shows, moreover, that our argument actually only requires
that the pull-back of the metric $\gt$ satisfy \eqref{eq:ac12} for $m$ less than some universal constant $m_0$.

\subsection{Overview of the proof}
As in \cite{KotschwarWangConical}, \cite{WangCylindrical}, our basic strategy is to use the correspondence between shrinkers and self-similar solutions to transform Theorem \ref{thm:main} into an equivalent problem of parabolic unique continuation for solutions to the Ricci flow,
which we ultimately treat with the method of Carleman inequalities.  However, the resulting problem of unique continuation we face here -- for a nonlinear, weakly parabolic system \emph{at the singular time} -- is more complicated than those addressed in either \cite{KotschwarWangConical} or \cite{WangCylindrical}. (In \cite{KotschwarWangConical}, by contrast, the 
problem is fundamentally nonsingular since the solutions extend smoothly to the terminal time slice-- in that case, the end of the common asymptotic cone.  In \cite{WangCylindrical}, the problem, though singular in a similar way, reduces to the analysis of a solution to scalar parabolic inequality.) Our implementation of this strategy involves a number of new ingredients needed to overcome obstacles not present in these related problems.  We summarize the major steps in our proof now.

For the remainder of this section, we will assume that $k\geq 2$ is fixed and write simply
$\Cc= \Cc^k$, $\Cc_r= \Cc_r^k$, $g = g_k$, and $f = f_k$, using $|\cdot| = |\cdot|_{g_k}$ and $\nabla = \nabla_{g_k}$ to denote the norms and connections induced by $g$ and its Levi-Civita connection
on tensor bundles over $\Cc$. 

\subsubsection{Normalizing the soliton structure}
It is sufficient to prove Theorem \ref{thm:main} in the case that $\gt$ and $\ft$ are actually defined on $\Cc_{r_0}$ for some $r_0 > 0$, 
that is, when $(\Cc_{r_0}, \gt)$ is strongly asymptotic to $(\Cc, g)$. Taking this as our starting point, our first concern is to put the entire soliton structure $(\Cc_{r_0}, \gt, \ft)$ into a canonical form. The hypotheses of Theorem \ref{thm:main} only explicitly constrain the asymptotic behavior of $\gt$, and, by themselves, do not even guarantee that the difference of $\Xt$ and $X = \nabla f$ tends to zero at infinity.  

In Proposition \ref{prop:xnorm}, we first show that we can arrange for $\Xt - X$ to vanish to infinite order at infinity by pulling back $\gt$ and $\Xt$ by an appropriate translation on the Euclidean factor. We then show in Theorem \ref{thm:vfnormalization} that it is possible to construct a further injective diffeomorphism 
$\Phi: \Cc_{r_1}\to \Cc_{r_0}$ for some $r_1 > r_0$ such that $\Phi^*\Xt = X$  and for which $(\Cc_{r_1}, \Phi^*\gt)$ is still strongly
asymptotic to $(\Cc, g)$. To ensure the latter property requires that we construct $\Phi$ with some care. We postpone the details of this construction (which are independent of the rest of the paper) to Appendix \ref{app:normalization}. 

\subsubsection{Reducing to a problem of backward uniqueness}
Having reduced Theorem \ref{thm:main} to the case that $\Xt$ and $X$ coincide
on $\Cc_{r_1}$ for some $r_1 > 0$, our next step is to recast it as a problem of parabolic unique continuation for solutions
to the Ricci flow. The family of diffeomorphisms $\Psi: \Cc_{r_1}\times (0, 1]\to \Cc_{r_1}$ given by $\Psi_{\tau}(\theta, z) = (\theta, z/\sqrt{\tau})$
solve
\[
  \pd{\Psi}{\tau} = -\frac{1}{\tau}X \circ \Psi, \quad \Psi_{1} = \operatorname{Id},
\]
and (since $X = \nabla f = \delt \ft)$, we may use them to construct
from $\gt$ and $g$ smooth self-similar families of metrics
\[
\gt(\tau) = \tau \Psi_{\tau}^*\gt, \quad g(\tau) = \tau \Psi_{\tau}^*g = (2(k-1)\tau \gc) \oplus \bar{g},
\]
which solve the \emph{backward Ricci flow}
\begin{equation}
 \label{eq:brf}
 \pd{g}{\tau} = 2\Rc(g)
\end{equation}
on $\Cc_{r_1}$ for $\tau \in (0, 1]$. The normalizations we have performed to this point ensure that the difference $h(\tau) = (\gt - g)(\tau) = \tau\Psi_{\tau}^*(h(1))$ of these solutions is itself self-similar. This will be critical to us in Section \ref{sec:backwarduniqueness}. 
Moreover, since $(\Cc_{r_1}, \gt)$ is strongly asymptotic to $(\Cc, g)$, the tensor $h$ will vanish  to infinite order as $|z|\to \infty$ and $\tau \searrow 0$ in the sense that
\[
 \sup_{\Cc_{r_1}\times (0, 1]} \frac{|z|^{2l}}{\tau^l}|\nabla^{(m)}h|(\theta, z, \tau) <\infty
\]
for all $l$, $m\geq 0$.  Here and below, we write $|\cdot| = |\cdot|_{g(\tau)}$ and $\nabla = \nabla_{g(\tau)}$ (in fact, the connection $\nabla_{g(\tau)}$ of the evolving cylinder is independent of time).

To prove Theorem \ref{thm:main}, then, it is enough to show that $h(\tau_0) \equiv 0$ on $\Cc_{r}$ for some $\tau_0$ and $r > 0$. For, if so, $\tilde{g}(1) - g(1) = h(1) = \tau_0^{-1}(\Psi_{\tau_0}^{-1})^*h(\tau_0)$
vanishes on $\Cc_{r^{\prime}}$ for $r^{\prime} = r/\sqrt{\tau_0}$, and it follows from a continuation argument that $\gt$ and $g$ are isometric on $\Cc_{r_0}$. We give this parabolic restatement in Theorem \ref{thm:mainp} and verify that it indeed implies Theorem \ref{thm:main}
at the end of Section \ref{sec:reduction}.  

\subsubsection{Prolonging the system} To prove Theorem \ref{thm:mainp}, we must first address the lack of strict parabolicity
 of equation \eqref{eq:rf}. The degeneracy of the equation, a consequence of its diffeomorphism invariance, is not rectifiable here by the use of DeTurck's trick 
 as it is in the problem of forward uniqueness of solutions to the Ricci flow:
 the diffeomorphisms needed to pass to a problem of backward uniqueness for the strictly parabolic Ricci-DeTurck flow are  naturally solutions to a ill-posed \emph{terminal-value} problem 
 for a  harmonic map-type heat flow. See, e.g., \cite{KotschwarBackwardsUniqueness} for  a discussion of these and related issues.

To work around the degeneracy of \eqref{eq:rf}, we instead employ a device used by the first author in \cite{KotschwarBackwardsUniqueness} which encodes the vanishing of $h$ in terms of the vanishing of solutions to a prolonged ``PDE-ODE'' system of differential inequalities. The implementation of this device, however, is rather more involved than in \cite{KotschwarBackwardsUniqueness} and \cite{KotschwarWangConical} since the system used in these references turns out to be slightly too coarse to track by itself the blow-up which here occurs anisotropically at the singular time. We instead make use of two prolonged systems: a ``basic'' system which, on account of its relative simplicity, we use to frame and prove the backward uniqueness theorem which implies the vanishing of $h$, and
a ``refined'' system whose higher granularity allows us to track the blow-up rate of individual components of $\nablat \Rmt$.

The basic system is equivalent to those considered in 
\cite{KotschwarBackwardsUniqueness, KotschwarWangConical}, and consists of the families of sections
\[
\ve{X} = \delt \Rmt = \delt\Rmt - \nabla \Rm, \quad \ve{Y} = (h, \nabla h, \nabla \nabla h),
\]
of $\mathcal{X} = T^{(5, 0)}\Cc_{r_1}$ and $\mathcal{Y} = T^{(2, 0)}\Cc_{r_1}\oplus T^{(3, 0)}\Cc_{r_1}\oplus T^{(4, 0)}\Cc_{r_1}$, respectively.
These
sections satisfy a system of inequalities of the form
\begin{align*}
\left|\left(D_{\tau} + \Delta\right) \ve{X}\right| &\leq \frac{B}{\tau}|\ve{X}| + B|\ve{Y}|, \quad
 \left|D_{\tau} \ve{Y}\right| \leq B\left(|\ve{X}| + |\nabla \ve{X}|\right) + \frac{B}{\tau}|\ve{Y}|,
\end{align*}
for some constant $B$ on $\Cc_{r_1}\times (0, 1]$. Here, $\Rm = \Rm(g(\tau))$, $\Rmt = \Rm(\gt(\tau))$, $\nablat = \nabla_{\gt(\tau)}$, and $\Delta = \Delta_{g(\tau)}$, and $D_{\tau}$ indicates
a derivative taken relative to evolving $g(\tau)$-orthogonal frames. We describe this system in greater detail and derive the above equations in Section \ref{sec:pdeode}.

However, our basic system is inadequate for what is perhaps the most important step in the proof of Theorem \ref{thm:mainp}: to parlay the infinite order decay that we assume on $h$ and its derivatives (and hence on $\ve{X}$ and $\ve{Y}$) into an exponential-quadratic rate of decay for $\ve{X}$ and $\ve{Y}$ (and hence on $h$ and its derivatives). The Carleman estimate \eqref{eq:pdecarleman2} we use for this purpose cannot directly absorb the coefficient of $\tau^{-1}$ which appears on the right side of the equation 
for $\ve{X}$. 

In Section \ref{sec:pdeode2}, we will
replace the parabolic component $\ve{X}$ of our basic system with a more elaborate choice $\ve{W} = (W^0, W^1, \ldots, W^5)$ in an attempt to address this issue. The components $W^i$ consist of collections of components of $\delt \Rmt$ (relative to the $g$-orthogonal splitting
$TM = T\Ss^k \oplus T\RR^{n-k}$) rescaled by powers of $\tau$ which together satisfy a system of the form
\begin{align}\label{eq:weqvar}
 \left|\left(D_{\tau} + \Delta\right)W^i\right| & \leq B\tau^{\beta}(|\ve{W}| + |\ve{Y}|) + B\sum_{j < i}\tau^{-\gamma_{j}}|W^j|  
\end{align}
for some nonnegative constants $\beta$, $\gamma_j$, and $B$. The strict triangular structure of the singular terms in \eqref{eq:weqvar} allows us
to absorb the unbounded coefficients on the right side of
the equation for any $W^i$ using appropriately weighted applications of the inequalities for $i^{\prime} < i$.

\subsubsection{Promoting the rate of decay to exponential}
The Carleman inequalities \eqref{eq:pdecarlemanbu} and \eqref{eq:odecarlemanbu} we ultimately use to prove the vanishing of $\ve{X}$ and $\ve{Y}$ involve a weight which, for large $|z|$ and small $\tau$, grows on the order of $\exp(C |z|^{2\delta}/\tau^{\delta})$ for some $\delta\in (0, 1)$.
In order to apply these inequalities, we first need to verify that $\ve{X}$ and $\ve{Y}$ decay rapidly enough to be integrable against this weight.
To this end, in Theorem \ref{thm:expdecay1} (proven in Section \ref{sec:expdecay}) we show
that there are constants $N_0$, $N_1 > 0$ such that 
\begin{equation*}
    \int^1_0\int_{\Ac_{r, 2r}}\left(|\ve{X}|^2 + |\nabla\ve{X}|^2 + |\ve{Y}|^2\right)e^{\frac{N_0r^2}{\tau}}\,d\mu_{g(\tau)}\,d\tau \leq N_1 
 \end{equation*}
 for all sufficiently large $r$. Here $\Ac_{r, 2r} = \Cc_{r} \setminus \overline{\Cc_{2r}}$. This argument, including the derivation of the system \eqref{eq:weqvar} above,
is perhaps the most delicate in the paper.  

We establish the decay of $\ve{W}$ and $\ve{Y}$ inductively, using the Carleman inequality \eqref{eq:pdecarleman2} in tandem with \eqref{eq:odecarleman2g} and 
\eqref{eq:odecarleman2ng} to obtain upper bounds  of the form $CL^mr^{-2m}m!$ of successively higher order on the
weighted $L^2$-norms of $\ve{W}$ and $\ve{Y}$ on $\Ss^k\times B_{r}(z_0)$ for small $r$ and $z_0\in \Cc_{r_0}$. These estimates involve
a weight approximately of the form $\tau^{-m}\exp(-|z-z_0|^2/(4\tau))$ localized about $z_0$.  Since the components of $\ve{W}$ are merely rescaled components of $\delt\Rmt$,
 the estimates on $\ve{W}$ directly yield
 corresponding estimates for $\ve{X}$, which can be summed and rescaled to obtain the asserted rate of exponential decay.
The main inequality \eqref{eq:pdecarleman2}, analogous to one established by the second author in \cite{WangCylindrical}, is ultimately modeled on the inequality proven in \cite{EscauriazaSereginSverakHalfSpace} for an application to solutions to linear parabolic inequalities on Euclidean half-spaces.

\subsubsection{Establishing the vanishing of $\ve{X}$ and $\ve{Y}$} In Section \ref{sec:backwarduniqueness}, we return to an analysis of the basic system. Knowing now that $\ve{X}$ and $\ve{Y}$ decay at an at-least exponential-quadratic rate, we
use Carleman inequalities analogous to those in \cite{KotschwarBackwardsUniqueness} and \cite{WangCylindrical} to show that they must vanish identically. This part of the argument is modeled closely on the corresponding argument in \cite{WangCylindrical}, with some modifications to handle the ODE component $\ve{Y}$, and it is here that we make essential use of the self-similarity of $h$ (and hence of $\ve{X}$ and $\ve{Y})$. The Carleman inequalities needed here and above
in the proof of the exponential decay of $\ve{X}$ and $\ve{Y}$ are proven
in Section \ref{sec:carleman}.

\begin{acknowledgement*} The authors wish to thank Ben Chow, Ovidiu Munteanu, Lei Ni, and Jiaping Wang for their interest, encouragement, and valuable suggestions. They also wish to thank the anonymous referees
for their many helpful recommendations. 
\end{acknowledgement*}

\section{Normalizing the soliton}\label{sec:normalization}
Let us now fix $1 < k < n$ once and for all, and, for the rest of the paper, continue to write simply
$\Cc = \Cc^k = \Ss^k\times \RR^{n-k}$ and $\Cc_r = \Cc_r^k$. For $a$, $b$, $r > 0$, we define
\[
\Ac_{a, b} = \Cc_a \setminus \overline{\Cc_b}, \quad
 \Sc_r = \left\{
\begin{array}{rl}
 \Ss^k \times \partial B_r(0) & k < n-1,\\
 \Ss^{n-1}\times \{r\} & k = n-1.
 \end{array}\right.
\]
We will also continue to use 
\[
  g = g_k = (2(k-1)\gc) \oplus \bar{g}, \quad f(\theta, z) = f_k(\theta, z) = \frac{|z|^2}{4} + \frac{k}{2},
\]
to denote the metric and potential of the normalized cylindrical soliton structure on $\Cc$ and to use the unadorned notation
\[
  |\cdot| = |\cdot|_{g}, \quad \nabla = \nabla_{g},
\]
for the norms and connections induced by $g$ and its Levi-Civita connection on the tensor bundles $T^{(p, q)}\Cc$.

Frequently, we will use spherical coordinates on the Euclidean factor $\RR^{n-k}$ to identify $\Cc_a$ with 
$\Ss^k \times \Ss^{n-k-1}\times (a, \infty)$
via $(\theta, z) \mapsto (\theta, \sigma, r)$, where $\sigma = z/|z|$ and $r = |z|$.

\subsection{Some preliminary estimates}
 To prove Theorem \ref{thm:main}, 
it suffices to consider the situation that $M = V = \Cc_{r_0}$ for some $r_0 > 0$ and $(\Cc_{r_0}, \gt)$ is strongly asymptotic to $(\Cc, g)$. We first record some elementary consequences of
\eqref{eq:ac1} for the soliton metric $\gt$.
\begin{lemma}\label{lem:decayconseq}
Suppose that $(\Cc_{r_0}, \gt, \ft)$ is a shrinking Ricci soliton where
\begin{equation}\label{eq:decay1}
 \sup_{\Cc_{r_0}} r^3|\nabla^{(m)}(\gt - g)| < \infty 
\end{equation}
for $m =0, 1, 2$. Then there are $r_1 \geq r_0$ and $k_0$, $K_0 > 0$
such that
\begin{equation}\label{eq:decayconseq1}
 \frac{1}{2} g \leq \gt \leq 2g, \quad |\delt \ft| \leq K_0(r+ 1), \quad |\nabla \ft| \leq K_0(r+1),
 \end{equation}
and  
\begin{equation}\label{eq:decayconseq2}
\frac{1}{8}r^2 \leq \tilde{f} \leq \frac{1}{4}(r + k_0)^2,
\end{equation}
on $\Cc_{r_1}$. 
\end{lemma}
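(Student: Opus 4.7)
The plan is a bootstrap from the two soliton equations in \eqref{eq:grs} and the decay hypothesis \eqref{eq:decay1}. First, the pointwise bound $|\gt - g|_g \leq C r^{-3}$ gives $\tfrac{1}{2} g \leq \gt \leq 2g$ once $r$ is large enough, so I fix $r_1 \geq r_0$ such that this holds on $\Cc_{r_1}$. Since Ricci curvature depends on at most two derivatives of the metric, $|\nabla^{(m)}(\gt - g)|_g \leq C r^{-3}$ for $m = 1, 2$ yields $|\Rc(\gt) - \Rc(g)|_g = O(r^{-3})$, and, because $\Rc(g)$ has bounded $g$-norm on $\Cc$, $|\Rc(\gt)|_{\gt} \leq C$; the first soliton equation then produces $|\delt\delt\ft|_{\gt} \leq C$.

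Next, the identity $\delt |\delt\ft|^2_{\gt} = 2(\delt\delt\ft)(\delt\ft, \cdot)$ shows that $|\delt\ft|_{\gt}$ is uniformly Lipschitz in $\gt$-distance with constant $\leq |\delt\delt\ft|_{\gt} \leq C$. Fix a base point $p_0 \in \Cc_{r_1}$. For any $p = (\theta, z) \in \Cc_{r_1}$, one can construct a path in $\Cc_{r_0}$ from $p_0$ to $p$ of $g$-length at most $C + r$, e.g., a spherical arc at fixed radius followed by a Euclidean chord detoured around $B_{r_0}$ if needed. Metric equivalence yields $d_{\gt}(p, p_0) \leq K(r + 1)$, hence $|\delt\ft|_{\gt}(p) \leq K_0(r+1)$; after adjusting $K_0$ by $\sqrt{2}$, $|\nabla\ft|_g \leq K_0(r+1)$.

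The quadratic bounds on $\ft$ come from integrating twice along a Euclidean-radial $g$-geodesic. For each $p = (\theta, z) \in \Cc_{r_1}$ with $r = |z|$, fix $R_0 > r_0$ (independent of $p$), set $\sigma = z/|z|$, and examine the unit-speed $g$-geodesic $\gamma(s) = (\theta, (R_0 + s)\sigma)$, $s \in [0, r - R_0]$, which stays in $\Cc_{R_0}$. Because $\sigma$ is Euclidean, $\Rc(g)(\sigma,\sigma) = 0$ and the Christoffel symbols $\Gam^k_{ij}$ of $g$ vanish whenever $i$ or $j$ is Euclidean. Combining the soliton equation for the $\gt$-Hessian of $\ft$ with the comparison $\nabla\nabla \ft - \delt\delt\ft = (\Gamt - \Gam)^k_{ij}\partial_k \ft$ and the decay $|\gt - g| + |\delt(\gt - g)| = O(r^{-3})$ together with $|d\ft| = O(r)$ yields
\begin{equation*}
 \frac{d^2 \ft}{ds^2}(\gamma(s)) \;=\; \nabla\nabla\ft(\sigma, \sigma) \;=\; \frac{1}{2} + O\!\left((R_0 + s)^{-2}\right).
\end{equation*}
Two integrations, using $|\ft(p_0')| \leq C$ and $\bigl|\tfrac{d\ft}{ds}(0)\bigr| \leq |\nabla\ft|_g(p_0') \leq K_0(R_0 + 1)$ at the initial point $p_0' = (\theta, R_0\sigma)$, give
\begin{equation*}
 \ft(p) \;=\; \frac{(r - R_0)^2}{4} + E(p), \qquad |E(p)| \leq A r,
\end{equation*}
for a constant $A$ depending only on $R_0$, $K_0$, and $C$. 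This yields the upper bound $\ft \leq (r+k_0)^2/4$ for a suitable $k_0$, and, after further enlarging $r_1$ so that $r_1 \geq 8A$, also the lower bound $\ft \geq r^2/8$ on $\Cc_{r_1}$.

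The main technical obstacle is ensuring that the Ricci and Christoffel-symbol corrections in the expansion of $\nabla\nabla\ft(\sigma, \sigma)$ contribute only $O(r^{-2})$, pinning the leading coefficient of $\ft$ to exactly $1/4$; the rest is bookkeeping.
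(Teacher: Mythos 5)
Your proof is correct, and it establishes the same chain of estimates, but the route you take to the linear gradient bound $|\delt\ft| \leq K_0(r+1)$ is genuinely different from the paper's. The paper uses the scalar normalization $\Rt + |\delt\ft|^2_{\gt} = \ft$ together with $\Rt \geq 0$ to get $|\nabla\ft^{1/2}| \leq C$, then integrates this along Euclidean-radial lines to deduce $\ft = O(r^2)$ and hence $|\nabla\ft| \leq \sqrt{2\ft} = O(r)$. You instead bootstrap only from the first soliton equation: bounded $|\Rc(\gt)|_{\gt}$ (from the $m \leq 2$ decay) gives $|\delt\delt\ft|_{\gt} \leq C$, hence $|\delt\ft|_{\gt}$ is $\gt$-Lipschitz, and a path of length $O(r)$ from a fixed basepoint then gives the linear bound. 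Both arguments are sound; yours avoids the normalization $R + |\nabla f|^2 = f$ in that step (trading it for a path-length estimate in $\Cc_{r_1}$), while the paper's is shorter because it never needs to construct a path — it integrates the intrinsic scalar inequality $|\nabla\ft^{1/2}| \leq C$ radially from a fixed sphere. Once the linear gradient bound is in hand, your derivation of $\frac{d^2\ft}{ds^2}(\gamma(s)) = \tfrac12 + O(r^{-2})$ along radial $g$-geodesics and the double integration matches the paper's, including the key observation that the correction term $(\Gamt - \Gamma)^k_{\sigma\sigma}\partial_k\ft$ is $O(r^{-3}) \cdot O(r) = O(r^{-2})$.

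One small point of care: in the double-integration step you take the initial point $p_0' = (\theta, R_0\sigma)$ and invoke $|\nabla\ft|_g(p_0') \leq K_0(R_0 + 1)$; to use this you should take $R_0 \geq r_1$ (the radius where the gradient bound has already been established), and then carry out the argument for $r > R_0$, enlarging $r_1$ at the end. This is easily arranged and does not affect the conclusion.
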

Quadratic bounds for the potential with sharp coefficient $1/4$ have been established for general complete shrinking solitons by Cao-Zhou \cite{CaoZhou} (see also \cite{HaslhoferMueller1}). The weaker
bounds for $\ft$ in \eqref{eq:decayconseq2} (which we must verify from scratch, given the incompleteness of $\Cc_{r_0}$) will be sufficient for our purposes, however.
\begin{proof}
It follows directly from \eqref{eq:decay1} that we can arrange for  $(1/2)g \leq \gt \leq 2g$ and $\Rt \geq k/4$ to hold on $\Cc_{a}$ by choosing $a\geq r_0$ large enough. 
The identity $\Rt + |\delt \ft|_{\gt}^2 = \ft$ then implies that we will have  $\ft \geq k/4$ and $|\nabla \ft |^2 \leq 2|\delt \ft|^2_{\gt} \leq 2\ft$ on the same set.
Integrating along along integral curves of $\pd{}{r}$ we then see that
\begin{equation}\label{eq:ftest}
 \tilde{f}^{1/2}(\theta, \sigma, r) - \tilde{f}^{1/2}(\theta, \sigma, a) \leq \int_a^r|\nabla \ft^{1/2}|\leq r - a,
\end{equation}
for all $(\theta, \sigma)\in \Ss^k\times \Ss^{n-k-1}$.
In particular, $|\delt \ft| \leq 2|\nabla \ft|\leq 4(r+K)$ on $\Cc_{a}$ for some $K$ depending on $\sup_{\Cc_{a}} \ft$. This proves the the last two inequalities in \eqref{eq:decayconseq1} provided $r_1 \geq a$.

Next, using the soliton equation, we have
\begin{align*}
\nabla_i \nabla_j \ft & = \nabla_i \nabla_j \ft - \delt_i\delt_j \ft - \tilde{R}_{ij} + \frac{\tilde{g}_{ij}}{2} \\
&= (\Gamt_{ij}^k - \Gamma_{ij}^k)\nabla_k \ft - (\tilde{R}_{ij} - R_{ij}) + \frac{1}{2}(\gt_{ij} - g_{ij})  -R_{ij} + \frac{g_{ij}}{2}\\
&= A_{ij}^k\nabla_k \ft + S_{ij}  -R_{ij} + \frac{g_{ij}}{2},
\end{align*}
where $A_{ij}^k$ and $S_{ij}$ are polynomials in $g^{-1}$, $\gt^{-1}$, and $\nabla^{(m)}(\gt - g)$ for $m \leq 2$. 
So, using \eqref{eq:decay1} and that $|\nabla \ft| \leq 4(r + K)$, we have
\begin{equation}\label{eq:hess1}
\frac{1}{2} - \frac{K}{r^2}  \leq \frac{\partial^2 \ft}{\partial r^2} \leq \frac{1}{2} + \frac{K}{r^2},
\end{equation}
for some possibly larger $K$. Integrating both inequalities in \eqref{eq:hess1} along integral curves of $\pd{}{r}$ starting at points in $\Sc_a$,
we obtain
\[
\frac{r}{2} - K^{\prime} \leq \left\langle \nabla \ft, \pd{}{r} \right\rangle \leq \frac{r}{2} + K^{\prime}, 
\]
for some $K^{\prime} > 0$ depending on $a$. Hence
\[
  \frac{r^{2}}{4} - K^{\prime}r - \frac{a^2}{4}\leq \ft(\theta, \sigma, r) \leq  \frac{r^{2}}{4} + K^{\prime}r  + \ft(\theta, \sigma, r_1) \leq \frac{r^2}{4} + K^{\prime}r + (r_1 + K^{\prime\prime})^2,
\]
for any $r_1 \geq a$ and some $K^{\prime\prime}$ depending on $a$. Here we have used \eqref{eq:ftest} to estimate $\ft(\theta, \sigma, r_1)$. Choosing then $r_1 \geq a$ large enough to ensure that the left
side is larger than $r^2/8$ on $\Cc_{r_1}$, and then choosing $k_0$ large enough depending on $r_1$ to bound the right side by $(r+ k_0)^2/4$, we obtain \eqref{eq:decayconseq2}.
\end{proof}

\subsection{Correcting the vector field by a translation} Our next step is motivated by the observation that
the assumption
that $(\Cc_{r_0}, \tilde{g})$ is strongly asymptotic to $(\Cc, g)$ -- even with the implicit normalizations in \eqref{eq:grs} -- does not uniquely determine the vector field $\delt\ft$ in the soliton structure $(\Cc_{r_0}, \gt, \delt\ft)$.  In general, the difference $\delt \ft  - \nabla f$ need not tend to zero as $|z|\to \infty$, much less decay to infinite order.  

For example,
the soliton structure $(\Cc, g, f_{z_0})$ with the potential
\[
 f_{z_0}(\theta, z) = \frac{|z-z_0|^2}{4} + \frac{k}{2}
\]
satisfies \eqref{eq:grs}  for any $z_0\in \RR^{n-k}$ (and $(\Cc_{0}, g)$ is, of course, strongly asymptotic to itself), but the difference
\[
 \nabla f - \nabla f_{z_0} = \sum_{i=1}^{n-k}\frac{z^i_0}{2}\pd{}{z^i}
\]
is constant. At the same time, the two soliton structures here can be made to agree by pulling back one by a suitable translation of the Euclidean factor. 

We show next that a similar adjustment can be made in general: by pulling back $\gt$ and $\ft$ by an appropriate translation of $\RR^{n-k}$, we can arrange for $\delt \ft  - \nabla f$ to decay to infinite order
 at infinity. Since the translation is an isometry of $g$,
 the pullback of $\gt$ will still be strongly asymptotic to $g$ on some neighborhood of infinity of the end.

\begin{proposition}\label{prop:xnorm}
Let $p \geq 2$ and suppose that $(\Cc_{r_0}, \gt, \ft)$ satisfies \eqref{eq:grs} and 
\begin{equation}\label{eq:decay2}
 \sup_{\Cc_{r_0}} r^l|\nabla^{(m)}(\gt - g)| < \infty 
\end{equation}
for all $l\geq 0$ and $m\leq p$. Then there is a constant vector field $V$ tangent to the $\RR^{n-k}$ factor 
such that
 \begin{equation}
    \delt \ft = \frac{r}{2}\pd{}{r} + V  +  E,
 \end{equation}
where $E$ satisfies
\begin{equation}\label{eq:edecay1}
\sup_{\Cc_{r_0}}r^{l}|\nabla^{(m)} E| < \infty
\end{equation}
for all $l\geq 0$ and $0\leq m \leq p-1$.
\end{proposition}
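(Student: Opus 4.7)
The strategy is to extract the constant vector field $V$ from the Euclidean components of $W := \delt \ft - \nabla f$ by exploiting the product structure of the cylinder, and to show that the remainder $E := W - V$ and its derivatives decay to infinite order.

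First, comparing the Hessian identities $\nablat \delt \ft = \gt/2 - \Rct$ for the soliton with $\nabla \nabla f = g/2 - \Rc$ for the cylinder, and using that $\nabla \delt \ft = \nablat \delt \ft - (\Gamt - \Gamma)\cdot \delt \ft$, one obtains
\[
\nabla W = \tfrac{1}{2}(\gt - g) - (\Rct - \Rc) - (\Gamt - \Gamma) \cdot \delt \ft.
\]
Since $\Gamt - \Gamma$ is a tensor linear in $\nabla(\gt - g)$, the decay hypothesis \eqref{eq:decay2} together with the linear growth bound for $\delt \ft$ from Lemma \ref{lem:decayconseq} ensures that the right-hand side and all its covariant derivatives up to order $p-2$ decay to infinite order as $r \to \infty$. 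Hence $\nabla^{(m)} W$ decays to infinite order for $0 \leq m \leq p-1$.

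Second, the $S^k$-tangential part of $W$ decays to infinite order owing to the compactness of the spherical factor. Pairing the Hessian identity for $\ft$ with pairs of $S^k$-tangent vectors, and using that $R_{ab} = g_{ab}/2$ on the cylindrical factor, forces the $S^k$-Hessian of $\ft$ restricted to each slice $S^k \times \{z\}$ to decay to infinite order. Tracing yields that $\Delta_{S^k} \ft(\cdot, z)$ decays to infinite order, and standard elliptic regularity on the closed manifold $S^k$ then gives, for every $l$, that $\|\ft(\cdot, z) - \overline{\ft}(z)\|_{C^l(S^k)}$ decays to infinite order in $|z|$, where $\overline{\ft}(z)$ denotes the $S^k$-average. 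Differentiating the elliptic equation in $z$ and iterating propagates this estimate through mixed derivatives, so the $S^k$-tangential components of $W$, together with their covariant derivatives up to order $p-1$, decay to infinite order.

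Third, for the Euclidean components $W^i := g(W, \pd{}{z^i})$ in the parallel Euclidean frame, Step 1 ensures that each partial derivative $\partial_\alpha W^i$ decays to infinite order. A standard integration argument on $\RR^{n-k}$ produces a limit $V^i := \lim_{|z|\to\infty} W^i(\theta, z)$, and the uniform decay of all partial derivatives shows that the limit is independent of both the direction of approach and the point $\theta$. The field $V := V^i \pd{}{z^i}$ is therefore constant, tangent to the $\RR^{n-k}$-factor, and parallel on the cylinder; hence $E := W - V$ satisfies $\nabla^{(m)} E = \nabla^{(m)} W$ for $m \geq 1$, and the bound \eqref{eq:edecay1} follows by combining Steps 1--3. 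The principal technical obstacle is Step 2: one must propagate the decay of the $S^k$-Hessian of $\ft$ through mixed $z$- and $\theta$-derivatives uniformly across slices, which is handled by differentiating the equation $\Delta_{S^k}\ft = v$ in $z$, applying Schauder-type estimates on $S^k$ iteratively, and invoking the higher-order decay bounds from \eqref{eq:decay2}.
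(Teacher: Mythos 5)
Your proposal is correct in spirit but takes a genuinely different route from the paper, and the detour costs you some extra technical work. Both proofs begin identically: differentiate the soliton equations to show that $\nabla W$ (with $W = \widetilde\nabla\tilde f - \nabla f$) decays to infinite order together with its derivatives up to order $p-1$. The paper then fixes a point $q$, parallel-transports an orthonormal frame along the radial line from $q$, extracts the limit $V(q) = \lim_{r\to\infty}\langle W, F_{q,i}\rangle F_{q,i}$ from the integrability of $|\nabla W|$, and proves $V$ is a \emph{parallel} vector field on the cylinder by comparing parallel transports along the arcs $\lambda_r$ in $\mathcal{S}_r$ (these have length $O(r)$, while $|\nabla W| = O(r^{-l})$ for all $l$, so the discrepancy vanishes as $r\to\infty$). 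Tangency of $V$ to the $\RR^{n-k}$ factor then falls out for free from the fact that a parallel vector field on $S^k\times\RR^{n-k}$ with $k\geq 2$ has no $S^k$-component, because $\mathrm{SO}(k)$ acts irreducibly. Your argument instead decomposes $W$ into spherical and Euclidean parts and treats them by different mechanisms: you show the spherical part decays via elliptic regularity for $\Delta_{S^k}$ on the slices $S^k\times\{z\}$, and you extract constants $V^i$ from the Euclidean components by integration. This is workable, but two places deserve more care. First, your elliptic route requires propagating the estimate through mixed $z$- and $\theta$-derivatives and counting derivative losses against the finite budget $p$; the paper's parallel transport argument sidesteps this entirely, since once $V$ is shown to be parallel every estimate on $E = W - V$ reduces to an estimate on $W$ and $\nabla^{(m)}W$. (You also claim decay of $\|\tilde f - \bar{\tilde f}\|_{C^l}$ for \emph{every} $l$, which overstates what the finite regularity hypothesis can deliver; you only need and can only get it up to the order needed.) Second, your assertion in Step 3 that the uniform decay of partial derivatives immediately gives independence of the limit from the direction of approach and from $\theta$ is precisely the point where the paper does real work: one must connect two rays at radius $r$ by a path in $\mathcal{S}_r$ of length $O(r)$ and use that $|\nabla W|$ decays faster than $1/r$, then take $r\to\infty$. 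In short, your Step 2 is an avoidable detour and your Step 3 compresses the crux of the paper's argument into a sentence; the underlying ideas are sound, but the paper's single parallel-transport argument handles both factors at once more economically.
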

\begin{proof}

Let $X = \nabla f = \frac{r}{2}\pd{}{r}$ and $\tilde{X} = \delt \ft$. From \eqref{eq:grs}, we compute that
\begin{align*}
  \nabla_i \tilde{X}^j &= \delt_i\tilde{X}^j+ (\Gamma_{ik}^j- \Gamt_{ik}^j)\tilde{X}^k =\nabla_iX^j + (g^{jk}R_{ik} - \gt^{jk}\tilde{R}_{ik}) + (\Gamma_{ik}^j- \Gamt_{ik}^j)\tilde{X}^k.
\end{align*}
Using \eqref{eq:decay2} and that $|\tilde{X}| \leq K_0(r+1)$ from Lemma \ref{lem:decayconseq}, we thus see that
 $W = \tilde{X} - X$ satisfies
\[
 \sup_{C_{r_0}} r^l|\nabla^{(m)} W|  < \infty
\]
for all $l\geq 0$ and $1\leq m \leq p-1$.

Fix any $q = (\theta, z) \in \Cc_{r_0}$, and let $\{F_{q, i}\}_{i=1}^n$ be any orthonormal basis for $T_q \Cc$. Extend this basis by parallel transport to a frame $\{F_{q, i}(r)\}_{i=1}^n$
along the radial line $\gamma_q(r) = (\theta, rz/|z|)$.  For any $|z| \leq r_1 \leq r_2$, and any $l\geq 0$, we have
\begin{align}
\begin{split}\label{eq:wintest}
   \left|\langle W, F_{q, i}\rangle(\gamma_q(r_2)) - \langle W, F_{q, i}\rangle(\gamma_q(r_1))\right| &\leq \int_{r_1}^{r_2}|\nabla W|(\gamma_q(r))\,dr \leq  \frac{M_l}{r_1^{l}}
   \end{split}
\end{align}
for some $M_l$, and it follows that, for each $i =1, 2, \ldots n$, we have
\[
 \lim_{r\to\infty} \langle W, F_{q, i}\rangle (\gamma_q(r)) = V^i(q) < \infty
\]
for some numbers $V^i(q)$. Define 
\[
    V(q) = V^i(q) F_{q, i} \in T_q\Cc,
\]
and suppose we repeat this process starting from another orthonormal basis $\{\tilde{F}_{q, i}\}_{i=1}^n$. 
Then $\tilde{F}_{q, i}(r) = A_{i}^j F_{q, j}(r)$ for some fixed
orthogonal transformation
$A$, and
\[
 \tilde{V}^i(q) = \lim_{r\to\infty} \langle W, \tilde{F}_{q, i}\rangle (\gamma_q(r))
 = (A^T)^i_j V^{j}(q),
\]
so  the limit $V(q)= \tilde{V}(q)$ depends only on $q$. Taking such a limit at each $q$ thus defines a (rough) vector field on $\Cc_{r_0}$. 

By construction,  for all $\theta$ and $\sigma$ and all $r_0\leq r_1\leq r_2$, the value of $V(\theta, \sigma, r_2)$ will coincide with that of the parallel transport of $V(\theta, \sigma, r_1)$ along the radial line
connecting $(\theta, \sigma, r_1)$ and $(\theta, \sigma, r_2)$. 

We claim that $V$ is actually parallel. To see this, fix any $(\theta, \sigma)$ and  $(\tilde{\theta}, \tilde{\sigma})$ in $\Ss^k\times \Ss^{n-k-1}$ and any $r_1\geq r_0$. For $r \geq r_1$, consider the points
$q_r = (\theta, \sigma, r)$, $\tilde{q}_r = (\tilde{\theta}, \tilde{\sigma}, r)\in \mathcal{S}_r$. 
Let $\alpha:[0, 1]\to \Ss^k\times \Ss^{n-k-1}$ be any smooth path with $\alpha(0) = (\sigma, \theta)$ and $\alpha(1) = (\tilde{\sigma}, \tilde{\theta})$. Then, for $r\geq r_1$, define the curve $\lambda_r:[0, 1]\to \Sc_r\subset \Cc$ by 
$\lambda_r(s) = (\alpha(s), r) \in \Sc_r$ for $r \geq r_1$. Note that the speed of $\lambda_r$ will be bounded by $C_0(r+1)$ for some $C_0$ depending on $\alpha$. 

For each $r\geq r_1$ and $s\in [0, 1]$, Let $P_{r; s}:T_{q_r}\Cc\to T_{\lambda_r(s)}\Cc$ denote parallel translation
along $\lambda_r$. We claim that $P_{r_1; 1}(V(q_{r_1})) = V(\tilde{q}_{r_1})$. For this, observe first that the vector field $W$ above is bounded on account of the decay of $|\nabla W|$, and, by the definition of $V$ and equation \eqref{eq:wintest}, we have
\begin{equation}\label{eq:west}
|V - W| \leq \frac{M_l}{r^l}
\end{equation}
for each $l$ for some constant $M_l$.
Hence,
\begin{align*}
\begin{split}
  &|P_{r; 1}(V(q_r)) - W(\tilde{q}_r)|^2 \\
  &\qquad= |V(q_r) - W(q_r)|^2- 2\int_0^1\big\langle (D_{\pd{}{s}}W)(\lambda_r(s)), P_{r; s}(V(q_r))- W(\lambda_r(s))\big\rangle\, ds
  \end{split}\\
  &\qquad\leq |V(q_r) - W(q_r)|^2 + 2C_0(r+1)\int_0^1 |\nabla W| (|V(q_r)| + |W(\lambda_r(s))|)\, ds\\
  &\qquad\leq \frac{C_1}{r},
\end{align*}
for some $C_1$ independent of $r$. So,  using \eqref{eq:west} again, we see that
\begin{equation}\label{eq:vest}
 |P_{r; 1}(V(q_r)) - V(\tilde{q}_r)| \leq \frac{C_2}{\sqrt{r}},
\end{equation}
for some $C_2$ independent of $r$. But, by the structure of the cylindrical metric
and the fact that $V$ is parallel along radial lines, 
\[
 |P_{r_1; 1}(V(q_{r_1})) - V(\tilde{q}_{r_1})| = |P_{r; 1}(V(q_r)) - V(\tilde{q}_r)|.
\]
Consequently, sending $r\to \infty$ in \eqref{eq:vest}, we obtain that $P_{r_1; 1}(V(q_{r_1})) = V(\tilde{q}_{r_1})$. 

Fixing $\sigma = \tilde{\sigma}$ and $r\geq r_0$, and applying this conclusion for arbitrary $\theta$, $\tilde{\theta}\in \Ss^k$,
we see that each of the vector fields $V(\cdot, \sigma, r)$ are parallel relative to the round metric on $\Ss^k$. Since $k\geq 2$, these vector fields must be trivial and thus $V$ is tangent to the $\RR^{n-k}$ factor. 

Now we argue that $V = V(\sigma, r)$, regarded as a vector field on $\RR^{n-k}\setminus \overline{B_{r_0}(0)}$, is parallel. We already know that $V$ is invariant under parallel translation along any path in which either the $r$-coordinate or $\sigma$-coordinate is fixed, and therefore is also invariant along the concatenation of such paths. Since $\RR^{n-k}\setminus \overline{B_{r_0}(0)}$ has trivial local holonomy it follows that $V$ is parallel and represented by a constant vector on $\RR^{n-k}$.
\end{proof}

\subsection{Aligning the vector fields}
The previous proposition suggests the following refinement of our notion of asymptotic cylindricity which incorporates the vector field as well as the metric.
\begin{definition} We will say  $(\Cc_{r_0}, \gt, \Xt)$ is \emph{strongly asymptotic to  $(\Cc, g, X)$ as a soliton} if 
\begin{equation}\label{eq:decay3}
 \sup_{\Cc_{r_0}} |z|^l\left(|\nabla^{(m)}(\gt - g)| + |\nabla^{(m)}(\Xt - X)|\right) < \infty
\end{equation}
for all $l$, $m\geq 0$.
\end{definition}

We may then restate Proposition \ref{prop:xnorm} as follows.
\begin{proposition}\label{prop:translation}
Suppose $(\Cc_{r_0}, \gt, \nablat \ft)$ is a gradient shrinking soliton for which $(\Cc_{r_0}, \gt)$ is strongly asymptotic to $(\Cc, g)$. Then there is $r_1 \geq r_0$
and a translation $\tau_{z_0}(\theta, z) = (\theta, z- z_0)$
such that $(\Cc_{r_1}, \tau_{z_0}^*\gt, \tau_{z_0}^*(\nablat \ft))$ is strongly asymptotic to $(\Cc, g, \nabla f)$ as a soliton.
\end{proposition}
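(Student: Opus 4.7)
The plan is to apply Proposition \ref{prop:xnorm} to extract from $\delt \ft$ a constant residual vector field $V$ along the Euclidean factor, and then cancel $V$ by a suitably chosen translation of $\RR^{n-k}$. Since all the analytical content has been isolated in Proposition \ref{prop:xnorm}, the remaining argument is essentially a bookkeeping computation.

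First, I would invoke Proposition \ref{prop:xnorm}. Because $(\Cc_{r_0}, \gt)$ is strongly asymptotic to $(\Cc, g)$, the hypothesis \eqref{eq:decay2} holds for every $p\geq 2$, so the proposition applies with arbitrarily large $p$, producing a parallel vector field $V = \sum_{i=1}^{n-k} V^i \pd{}{z^i}$ tangent to $\RR^{n-k}$ and writing $\delt \ft = X + V + E$ with $X = \nabla f$. Since $X$ and $V$ are uniquely determined (by the decomposition of $\delt\ft$ into parts along $\pd{}{r}$ and parallel directions, together with uniqueness of the radial limit in the proof of Proposition \ref{prop:xnorm}), the error $E = \delt\ft - X - V$ is the same tensor regardless of $p$, so $\sup_{\Cc_{r_0}} r^l|\nabla^{(m)} E| < \infty$ for every $l, m \geq 0$.

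Next, I would identify $V$ with the point $z_0 := 2(V^1,\dots,V^{n-k}) \in \RR^{n-k}$ and consider the translation $\tau_{z_0}(\theta, z) = (\theta, z - z_0)$. The key observation is that, although the cylindrical metric $g$ is translation-invariant (so $\tau_{z_0}^* g = g$), the radial field $X = \tfrac{1}{2}\sum z^i \pd{}{z^i}$ is not: since $d\tau_{z_0}$ is the identity on each tangent space, a direct computation gives
\[
\tau_{z_0}^* X = X - \tfrac{1}{2}\sum_{i=1}^{n-k} z_0^i \pd{}{z^i} = X - V.
\]
Because $V$ is parallel, $\tau_{z_0}^* V = V$, and therefore
\[
\tau_{z_0}^*(\delt \ft) - X = \tau_{z_0}^*(X + V + E) - X = E \circ \tau_{z_0}.
\]

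Finally, I would choose $r_1 \geq r_0 + |z_0|$ so that $\tau_{z_0}$ maps $\Cc_{r_1}$ diffeomorphically into $\Cc_{r_0}$, and verify \eqref{eq:decay3}. On $\Cc_{r_1}$ one has $|z - z_0| \geq |z| - |z_0| \geq |z|/2$ for $|z|$ large; combined with the translation-invariance of the connection $\nabla$, this transfers the infinite-order decay of $\nabla^{(m)}(\gt - g)$ and $\nabla^{(m)}E$ verbatim to the pullbacks $\tau_{z_0}^*\gt - g = \tau_{z_0}^*(\gt - g)$ and $\tau_{z_0}^*(\delt\ft) - X = E\circ\tau_{z_0}$. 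The only nontrivial step in the whole argument is the construction of the constant vector $V$, which is already provided by Proposition \ref{prop:xnorm}; the cancellation $\tau_{z_0}^*X + V = X$ is then a formal consequence of translation invariance, and I do not expect any additional obstacle.
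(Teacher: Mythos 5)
Your proposal is correct and follows essentially the same route as the paper: invoke Proposition \ref{prop:xnorm}, set $z_0 = 2V$, pull back by $\tau_{z_0}$, and use that the translation is a $g$-isometry so both the metric decay and the decay of $E\circ\tau_{z_0}$ transfer verbatim, with $\tau_{z_0}^*X + V = X$ giving the cancellation. Your explicit remark that $V$ (hence $E$) is independent of the parameter $p$ in Proposition \ref{prop:xnorm}, so that $E$ in fact decays to infinite order with all derivatives, is a detail the paper leaves implicit, and it is handled correctly.
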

\begin{proof} Let $\Xt = \nablat \ft$ and $X = \nabla f$. By Proposition \ref{prop:xnorm}, we may write $\Xt = X + V + E$ where $V$ is a constant vector field tangent to the $\RR^{n-k}$ factor and $E$ satisfies
\[
   \sup_{\Cc_{r_0}} |z|^l |\nabla^{(m)}E|(\theta, z) < \infty
\]
for all $l$, $m\geq 0$.

Let us write the components of $V$ as $V^{i} = z_0^i/2$, and define the translation map $\tau_{z_0}: \Cc \to \Cc$ by  $\tau_{z_0}(\theta, z) =(\theta,  z - z_0)$.  Provided $r_1 > r_0 + |z_0|$,
we will have $\tau_{z_0}(\Cc_{r_1}) \subset \tau_{z_0}(\Cc_{r_0})$. Since $\tau_{z_0}$ is an isometry of $g$, the restriction of $\tau^*_{z_0}\gt$ to $\Cc_{r_1}$ will continue to
be strongly asymptotic to $g$, but we will now have in addition that
\[
 \tau_{z_0}^*\Xt(\theta, z) = X(\theta, z - z_0) + V + E(\theta, z- z_0) = X(\theta, z) + \tilde{E}(\theta, z), 
\]
where $\tilde{E}(\theta, z) = E(\theta, z- z_0)$ satisfies 
\[
 \sup_{\Cc_{r_1}} |z|^l |\nabla^{(m)}\tilde{E}|(\theta, z)  < \infty
\]
for all $l$, $m\geq 0$. 
\end{proof}

In fact, after adjusting metric and potential
by a further diffeomorphism, we can arrange for the gradient vector field of $(\Cc_{r_0}, \gt, \ft)$ to coincide with the vector field of the standard cylindrical structure.
\begin{theorem}\label{thm:vfnormalization}
Suppose $(\Cc_{r_0}, \gt, \nablat \ft)$ is strongly asymptotic to the cylinder $(\Cc_{r_0}, g, \nabla f)$ as a soliton. Then there is $r_1\geq r_0$
and an injective local diffeomorphism $\Phi:\Cc_{r_1}\to \Cc_{r_0}$ for which $\Cc_{2r_1} \subset\Phi(\Cc_{r_1})$, $(\Cc_{r_1}, \Phi^*\gt)$ is  strongly asymptotic to  $(\Cc, g)$, and 
\begin{equation}\label{eq:vfnorm}
 \Phi^*(\nablat\ft)  =\nabla f = \frac{r}{2}\pd{}{r}
\end{equation}
on $\Cc_{r_1}$.
\end{theorem}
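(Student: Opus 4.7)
\medskip
\noindent\textbf{Proof proposal for Theorem \ref{thm:vfnormalization}.}

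The plan is to construct $\Phi$ as a wave operator conjugating the flow of $X=\nabla f$ to that of $\Xt=\nablat\ft$. Let $\phi_s(\theta,z)=(\theta,e^{s/2}z)$ and $\tilde{\phi}_s$ denote the local flows of $X$ and $\Xt$; both are defined for all $s\geq 0$ on $\Cc_{r_0}$ by the linear growth bound on $|\Xt|$ from Lemma~\ref{lem:decayconseq}. I would set
\[
  \Phi(p)\dfn\lim_{s\to\infty}\tilde{\phi}_{-s}\circ\phi_s(p),
\]
working on $\Cc_{r_1}$ with $r_1\geq r_0$ large enough that the trajectories $\tilde{\phi}_{-s}(\phi_s(p))$ stay inside $\Cc_{r_0}$ for every $s\geq 0$; this is ensured by a Gr\"onwall comparison with the trivial identity $\phi_{-s}\circ\phi_s=\mathrm{id}$, using the infinite-order closeness of $\Xt$ to $X$ at infinity. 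If this limit exists smoothly, the definition directly yields $\Phi\circ\phi_s=\tilde{\phi}_s\circ\Phi$ and hence $\Phi^*\Xt=X$, which is \eqref{eq:vfnorm}.

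Setting $\Psi_s(p)=\tilde{\phi}_{-s}\circ\phi_s(p)$ and using the $\tilde{\phi}_{-s}$-invariance of $\Xt$, one finds
\[
  \pd{\Psi_s}{s}(p)=-\,d\tilde{\phi}_{-s}\big|_{\phi_s(p)}\!\bigl(Y(\phi_s(p))\bigr),\qquad Y\dfn\Xt-X.
\]
Along these trajectories, $d\tilde\phi_{-s}$ is a small perturbation of $d\phi_{-s}=P_{S}+e^{-s/2}P_{E}$ (relative to the splitting $TM=TS^k\oplus T\RR^{n-k}$) and so is uniformly bounded in operator norm; meanwhile, by Proposition~\ref{prop:translation} $Y$ and all its derivatives decay to infinite order in $r$, and $r(\phi_s(p))=r(p)e^{s/2}$, so the integrand decays faster than any exponential in $s$. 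Thus $\Phi(p)=\Psi_0(p)+\int_0^\infty\dot\Psi_s(p)\,ds$ converges with $|\Phi(p)-p|\leq C_l\, r(p)^{-l}$ for every $l\geq 0$.

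To upgrade this to infinite-order decay of all derivatives $\partial^{\alpha}(\Phi-\mathrm{id})$, I would differentiate under the integral sign and argue by induction on $|\alpha|$. Each $p$-derivative of the integrand contributes at most one factor of $d\phi_s$, which grows like $e^{s/2}$ in the Euclidean direction; this is multiplied by some $\partial^\beta Y$ evaluated at $\phi_s(p)$, which decays faster than any $e^{-ls/2}$, leaving an integrable remainder of size $C_l\,r(p)^{-l}$. Hence $d\Phi$ is a perturbation of the identity by an infinite-order-small tensor on $\Cc_{r_1}$, so $\Phi$ is a local diffeomorphism, and the decomposition
\[
  \Phi^*\gt-g=\Phi^*(\gt-g)+(\Phi^*g-g),
\]
together with the chain rule and the infinite-order decay of $\gt-g$ and of $\Phi-\mathrm{id}$ (and all their derivatives), shows that $(\Cc_{r_1},\Phi^*\gt)$ is strongly asymptotic to $(\Cc,g)$.

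Finally, injectivity and the image condition $\Cc_{2r_1}\subset\Phi(\Cc_{r_1})$ follow from the conjugation identity and the closeness of $\Phi$ to the identity at large $r$: if $\Phi(p)=\Phi(q)$, then $\Phi(\phi_s(p))=\Phi(\phi_s(q))$ for all $s\geq 0$, forcing $\phi_s(p)=\phi_s(q)$ once $r$ is large enough for local injectivity and hence $p=q$; and the infinite-order displacement bound lets one solve $\Phi(p)=q$ for $q\in\Cc_{2r_1}$ by a contraction-mapping argument, provided $r_1$ is taken sufficiently large relative to $r_0$. \emph{The main obstacle} I expect is the inductive derivative estimate described above: the $e^{s/2}$-growth of $d\phi_s$ in the Euclidean direction compounds with each new derivative in $p$, and managing this across all orders—so that it is cleanly defeated by the infinite-order decay of $Y$ and its derivatives—is the essential technical point, and is precisely where the hypothesis of infinite-order (rather than merely polynomial) decay in Definition~\ref{def:acyl} is used.
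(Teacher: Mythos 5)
Your construction is correct in outline, and it is at bottom the same construction as the paper's, repackaged: the slice maps $\Phi^{(b)}$ of Proposition \ref{prop:phiexist} satisfy $\Phi^{(b)}(p)=\tilde{\phi}_{-t}\circ\phi_{t}(p)$ with $t=2\ln(b/r(p))$, so your $\Psi_s$ is the same family up to a point-dependent reparametrization and the limits $s\to\infty$ and $b\to\infty$ yield the same map. The difference is in how the estimates are organized. Your Duhamel formula $\Phi-\operatorname{Id}=\int_0^\infty\dot\Psi_s\,ds$ avoids the coordinate atlas, the abstract ODE lemma (Proposition \ref{prop:odeckestimate}), and the Arzel\`a--Ascoli subsequence, and it makes the key exchange (one factor of $e^{s/2}$ per $p$-derivative against infinite-order decay of $Y=\Xt-X$ along $\phi_s$) transparent; the paper's version buys that each $\Phi^{(b)}$ intertwines the vector fields exactly, so \eqref{eq:vfnorm} is automatic in the limit, and its coordinate lemma packages all derivative orders at once. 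What your sketch leaves implicit, and what is exactly the content of Proposition \ref{prop:odeckestimate}, are uniform-in-$s$ bounds on the base-point derivatives $\nabla^{(j)}(d\tilde{\phi}_{-s})$: differentiation also falls on $d\tilde{\phi}_{-s}|_{\phi_s(p)}$, and one must run the Jacobi-type ODE along backward trajectories, whose homogeneous part is contracting (since $\nabla X=\tfrac12\Pb$, $\nabla\nabla X=0$) and whose inhomogeneity is driven by $\nabla^{(j+1)}Y$, hence integrable; also check that the whole backward trajectory $u\mapsto\tilde{\phi}_{-u}(\phi_s(p))$, not merely its endpoint, stays in $\Cc_{r_0}$. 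For injectivity, rather than ``local injectivity at large $r$,'' it is cleaner to note that $\Phi(\phi_s(p))=\Phi(\phi_s(q))$, $d(\phi_s(p),\phi_s(q))\geq d(p,q)$, and $|\Phi-\operatorname{Id}|\to 0$ at spatial infinity force $d(p,q)=0$.
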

The construction of the map $\Phi$ is straightforward but somewhat technical and also conceptually independent of the rest of the paper. We postpone it until Appendix \ref{app:normalization}.

 \section{Reduction to a problem of parabolic unique continuation}
 \label{sec:reduction}
 
 Now, we recast Theorem \ref{thm:main} as a problem of uniqueness for the backward Ricci flow, by converting the cylinder and the unknown soliton into their shrinking self-similar counterparts.
The reduction in the previous section will allow us to assume that both solutions are flowing relative to a family of diffeomorphisms generated by the same vector field
and thus that their difference is also self-similar.
\begin{proposition}\label{prop:selfsim} Write $X = \nabla f$ and suppose that $(\Cc_{r_0}, \gt, X)$ is strongly asymptotic to $(\Cc_{r_0}, g, X)$ as a soliton. Let $\Psi: \Cc_{r_0}\times (0, 1] \to \Cc_{r_0}$
 be the map $\Psi(\theta, z, \tau) = (\theta, z/\sqrt{\tau})$ and put $\Psi_{\tau} = \Psi(\cdot, \cdot, \tau)$. Then
\[
 g(\tau) = \tau \Psi_{\tau}^*g =  (2(k-1)\tau \gc)\oplus \bar{g}, \quad \gt(\tau) = \tau \Psi^*_{\tau}\gt, 
\]
solve \eqref{eq:brf} on $\Cc_{r_0}\times (0, 1]$, and
$h(\tau) = (\gt - g)(\tau)= \tau\Psi_{\tau}^*h(1)$
satisfies
\begin{equation}\label{eq:hd}
\sup_{\Cc_{r_0}\times(0, 1]} \frac{|z|^{2l}}{\tau^l}|\nabla_{g(\tau)}^{(m)}h(\tau)|_{g(\tau)} < \infty
\end{equation}
for each $l$, $m\geq 0$.
\end{proposition}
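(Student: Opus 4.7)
The plan is to verify the four conclusions by direct computation, relying on the explicit form of $\Psi_\tau$ and the soliton identities satisfied by $\gt$ and $g$ with the common vector field $X$.

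First, the formula for $g(\tau)$ follows from computing $\Psi_\tau^*g$ in the product structure: since $\Psi_\tau$ acts as the identity on $S^k$ and as the homothety $z\mapsto z/\sqrt{\tau}$ on $\RR^{n-k}$, one finds $\Psi_\tau^*g = (2(k-1))\gc \oplus \tau^{-1}\bar{g}$, giving the asserted expression upon multiplying by $\tau$. That $g(\tau)$ satisfies \eqref{eq:brf} is then immediate: $\Rc(g(\tau)) = (k-1)\gc \oplus 0$ is independent of $\tau$, and $\partial_\tau g(\tau) = 2(k-1)\gc \oplus 0 = 2\Rc(g(\tau))$.

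For the self-similar family $\gt(\tau)$, a direct check shows that $\partial_\tau \Psi_\tau = -\tau^{-1} X \circ \Psi_\tau$ with $\Psi_1 = \Id$, so the velocity field of the flow is $V_\tau = -\tau^{-1}X$. The standard identity $\partial_\tau \Psi_\tau^*\gt = \Psi_\tau^*\Lie_{V_\tau}\gt$ then yields
\[
\partial_\tau (\tau \Psi_\tau^*\gt) = \Psi_\tau^*\gt - \Psi_\tau^* \Lie_X \gt = \Psi_\tau^*(\gt - \Lie_X\gt) = 2\Psi_\tau^*\Rc(\gt) = 2\Rc(\gt(\tau)),
\]
where the third equality invokes the shrinker equation $2\Rc(\gt) + \Lie_X\gt = \gt$, and the last uses naturality of Ricci together with its invariance under constant rescaling. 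Self-similarity of $h$ is immediate from linearity of pullback: $h(\tau) = \gt(\tau) - g(\tau) = \tau\Psi_\tau^*(\gt - g) = \tau\Psi_\tau^*h(1)$.

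For the decay bound \eqref{eq:hd}, I would first observe that the Levi-Civita connection of $g(\tau)$ is independent of $\tau$ (the $\tau$-rescaling is constant within each factor), so that $\nabla = \nabla_{g(\tau)}$ may be used unambiguously. Because $\Psi_\tau$ is affine with respect to $\nabla$, we have $\nabla^{(m)}h(\tau) = \tau\Psi_\tau^*\nabla^{(m)}h(1)$. Combining the naturality $|\Psi_\tau^*T|_{\Psi_\tau^*g}(q) = |T|_g(\Psi_\tau(q))$ with the homogeneity of the norm of a $(0, m+2)$-tensor under constant conformal rescaling of the metric, one computes
\[
|\nabla^{(m)}h(\tau)|_{g(\tau)}(\theta,z) = \tau^{-m/2}|\nabla^{(m)}h(1)|_g(\theta, z/\sqrt{\tau}).
\]
The main (modest) obstacle is to absorb the singular factor $\tau^{-m/2}$ as $\tau \searrow 0$. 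Writing $w = z/\sqrt{\tau}$ and noting that $|w| \geq |z| \geq r_0$ on $\Cc_{r_0}\times(0,1]$, the hypothesis that $|\nabla^{(m)}h(1)|_g(\theta,w)$ decays faster than any polynomial in $|w|$, in particular faster than $|w|^{-(2l+m)}$, yields
\[
\frac{|z|^{2l}}{\tau^l}|\nabla^{(m)}h(\tau)|_{g(\tau)}(\theta, z) = |w|^{2l}\tau^{-m/2}|\nabla^{(m)}h(1)|_g(\theta, w) \leq C_{l,m}|z|^{-m} \leq C_{l,m}r_0^{-m},
\]
which completes the proof. This absorption step is the only place where the full strength of the infinite-order decay is required; the rest of the proposition is bookkeeping for the self-similar rescaling.
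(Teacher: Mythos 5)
Your proposal is correct and follows essentially the same route as the paper's proof: the Lie-derivative computation you spell out is exactly the "standard calculation" the paper cites to Chow--Knopf, and the scaling identity $|\nabla^{(m)}h(\tau)|_{g(\tau)}(\theta,z) = \tau^{-m/2}|\nabla^{(m)}h(1)|_g(\theta, z/\sqrt{\tau})$ together with the observation that $|z|\geq r_0$ absorbs the factor $\tau^{-m/2}$ is precisely the paper's argument (written there as $\frac{|z|^{2l}}{\tau^l}|\nabla^{(m)}h(\tau)|_{g(\tau)} = \frac{|z|^{2l}}{\tau^{l+m/2}}|\nabla^{(m)}h|(\theta, z/\sqrt{\tau})$). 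The only difference is that you fill in the derivations the paper leaves to the reader or to a reference.
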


\begin{proof} 
The map $\Psi$ satisfies
\begin{equation}\label{eq:psieq}
    \pd{\Psi}{\tau}(\theta, z, \tau) = -\frac{1}{\tau}(X \circ \Psi)(\theta, z, \tau), \quad \Psi(\theta, z, 1) = (\theta, z),
\end{equation}
and it is a standard calculation (see, e.g., \cite{ChowKnopf}) that $g(\tau) = \tau\Psi_{\tau}^*g$ and $\gt(\tau)= \tau\Psi_{\tau}^*\gt$ solve \eqref{eq:brf} .
Equation \eqref{eq:hd} follows then by scaling: fixing $l$, $m\geq 0$, we have
 \begin{align*} 
   \frac{|z|^{2l}}{\tau^l}|\nabla_{g(\tau)}^{(m)}h(\tau)|_{g(\tau)}(\theta, z, \tau) &= \frac{|z|^{2l}}{\tau^{l+ \frac{m}{2}}}|\nabla^{(m)}_{g(1)}h(1)|_{g(1)}\left(\theta, \frac{z}{\sqrt{\tau}}\right) \leq \frac{M_{l, m}}{r_0^m}
 \end{align*}
on $\Cc_{r_0}$ for some constant $M_{l, m}$
 by our assumption on $h$.
\end{proof}
Going forward, we will write simply
\[
\gt = \gt(\tau), \quad g = g(\tau), \quad h= h(\tau), \quad |\cdot| = |\cdot|_{g(\tau)}, \quad \nabla = \nabla_{g(\tau)}.
\]

\subsection{A reformulation of Theorem \ref{thm:main}}
Our parabolic restatement of Theorem 1.2 asserts that any shrinking self-similar solution to the backward Ricci flow
which is flowing along the cylindrical vector field and agrees to infinite order with the shrinking cylinder near spatial infinity and $\tau = 0$ in the sense of \eqref{eq:hd} must coincide with the shrinking cylinder.
\begin{theorem}\label{thm:mainp} Suppose $\gt(\tau) = \tau \Psi_{\tau}^*g(1)$ is a self-similar solution to \eqref{eq:rf} on $\Cc_{r_0} \times (0, 1]$ for some $r_0> 0$,
where $\Psi:\Cc_{r_0}\times (0, 1]\to \Cc_{r_0}$ is the map $\Psi_{\tau}(\theta, z) = (\theta, z/\sqrt{\tau})$, and $g = g(\tau) =  (2(k-1)\tau \gc) \oplus \bar{g}$. 
If, for all $l$, $m\geq 0$, there exist constants $M_{l, m} > 0$ such that $h = g - \gt$ satisfies
\begin{equation}\label{eq:hdecay}
 \sup_{\Cc_{r_0}\times(0, 1]} \frac{|z|^{2l}}{\tau^l}|\nabla^{(m)}h| \leq M_{l, m},
\end{equation}
then $h\equiv 0$ on $\Cc_{r_1}\times (0, \tau_0]$ for some $r_1 \geq r_0$ and $0 < \tau_0 \leq 1$.
\end{theorem}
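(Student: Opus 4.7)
The plan is to treat Theorem \ref{thm:mainp} as a backward uniqueness problem for a prolonged system of mixed PDE--ODE differential inequalities, and to resolve that problem with Carleman estimates. Because the Ricci flow is only weakly parabolic---and the DeTurck trick is not available for backward uniqueness without solving an ill-posed terminal-value problem for a harmonic map heat flow---I would avoid strict parabolization entirely and instead encode the vanishing of $h$ through the joint vanishing of $\ve{Y} = (h, \nabla h, \nabla \nabla h)$ and $\ve{X} = \nablat \Rmt - \nabla \Rm$. A direct computation from \eqref{eq:brf} and the Bianchi identities shows that $(\ve{X}, \ve{Y})$ satisfies
\begin{equation*}
\left|(D_\tau + \Delta)\ve{X}\right| \leq \frac{B}{\tau}|\ve{X}| + B|\ve{Y}|, \qquad \left|D_\tau \ve{Y}\right| \leq B\bigl(|\ve{X}| + |\nabla \ve{X}|\bigr) + \frac{B}{\tau}|\ve{Y}|
\end{equation*}
on $\Cc_{r_1} \times (0,1]$ for some $B$. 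Showing $\ve{X} \equiv \ve{Y} \equiv 0$ there forces $h \equiv 0$.

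To run a Carleman-based backward uniqueness argument, I must first upgrade the polynomial decay of $h$ inherited from \eqref{eq:hdecay} to the exponential-quadratic integrability
\begin{equation*}
\int_0^1 \int_{\Ac_{r,2r}} \bigl(|\ve{X}|^2 + |\nabla \ve{X}|^2 + |\ve{Y}|^2\bigr) e^{N_0 r^2/\tau}\, d\mu_{g(\tau)} \, d\tau \leq N_1
\end{equation*}
for all large $r$. I would obtain this from a local parabolic Carleman inequality centered at Euclidean points $z_0$ with a Gaussian weight roughly of the form $\tau^{-m} e^{-|z-z_0|^2/(4\tau)}$, producing bounds of order $C L^m r^{-2m} m!$ on weighted $L^2$ norms over $S^k \times B_r(z_0)$; summing in $m$ and rescaling then yields the Gaussian-in-$|z|^2/\tau$ decay above. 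The obstruction is that the $B/\tau$ coefficient on the right-hand side of the equation for $\ve{X}$ cannot be absorbed by such a Gaussian Carleman weight. To bypass this, I would replace $\ve{X}$ with a refined system $\ve{W} = (W^0, \ldots, W^5)$ built from selected components of $\nablat \Rmt$ relative to the product splitting $TM = TS^k \oplus T\RR^{n-k}$, each rescaled by an appropriate power of $\tau$, so that
\begin{equation*}
\left|(D_\tau + \Delta) W^i\right| \leq B\tau^\beta \bigl(|\ve{W}| + |\ve{Y}|\bigr) + B \sum_{j < i} \tau^{-\gamma_j}|W^j|
\end{equation*}
for some $\beta > 0$ and $\gamma_j \geq 0$. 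The strictly triangular structure of the singular couplings then lets me establish the weighted decay of $W^i$ inductively, using bounds already proven for $W^{i'}$, $i' < i$, to absorb the unbounded coefficients in the equation for $W^i$. Because $\ve{W}$ and $\ve{X}$ are related by rescalings, the decay of the former transfers to the latter.

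With the exponential-quadratic decay of $\ve{X}, \nabla \ve{X},$ and $\ve{Y}$ in hand, the concluding step is a second application of Carleman estimates, now with a weight growing like $\exp(C|z|^{2\delta}/\tau^\delta)$ for some $\delta \in (0,1)$---a parabolic inequality applied to $\ve{X}$ together with an ODE-type inequality applied to $\ve{Y}$. Combining these with appropriate relative weights so that the cross terms on the right-hand side can be absorbed, I would let the free parameter in the Carleman weight tend to infinity to conclude $\ve{X} \equiv \ve{Y} \equiv 0$ on $\Cc_{r_1} \times (0, \tau_0]$ for some $r_1 \geq r_0$, $\tau_0 \in (0,1]$; vanishing of $\ve{Y}$ yields the theorem. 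The self-similarity $h(\tau) = \tau \Psi_\tau^* h(1)$ is essential here: it ties the singular-in-$\tau$ behavior of $h$ rigidly to its spatial behavior at $\tau = 1$, which is precisely what the exponentially growing weight can see.

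The main obstacle, as I see it, is the design and analysis of the refined system $\ve{W}$---identifying the correct anisotropic decomposition of $\nablat \Rmt$, the correct $\tau$-rescalings, and verifying that the resulting coupled inequalities are genuinely triangular in their singular coefficients. By comparison, the derivation of the basic system and the final backward uniqueness step proceed along lines established in \cite{KotschwarBackwardsUniqueness, KotschwarWangConical, WangCylindrical}; they are technically demanding but structurally familiar.
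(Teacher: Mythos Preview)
Your proposal is correct and follows essentially the same approach as the paper: the basic system $(\ve{X},\ve{Y})$, the refined triangular system $\ve{W}=(W^0,\ldots,W^5)$ to promote polynomial decay to exponential-quadratic decay via Gaussian-weighted Carleman inequalities and induction, and the final backward uniqueness step with weights $\exp(C|z|^{2\delta}/\tau^{\delta})$ exploiting self-similarity all match the paper's argument in structure and detail. Your identification of the design and analysis of $\ve{W}$ as the main obstacle is likewise on target.
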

In fact, $g(\tau)$ and $\gt(\tau)$ will be isometric on all of $\Cc_{r_0}\times (0, 1]$. We will prove Theorem \ref{thm:mainp} in Section \ref{sec:backwarduniqueness} once we have the necessary ingredients in place. For now, we note that it indeed implies Theorem \ref{thm:main} and show how to derive Corollary \ref{cor:maincor} from Theorem \ref{thm:main}.

\begin{proof}[Proof of Theorem \ref{thm:main} assuming Theorem \ref{thm:mainp}] Let $(\Mt, \gt, \ft)$ be a shrinking Ricci soliton for which $(\Mt, \gt)$ is strongly asymptotic to $(\Cc, g)$ along the end $V\subset (\tilde{M}, \gt)$. 
Then, for some $r_0 > 0$,
there is a diffeomorphism 
$\varphi: \Cc_{r_0}\to V$ such that $(\Cc_{r_0}, \varphi^*g)$ is strongly asymptotic to $(\Cc, \gt)$.  By Proposition \ref{prop:translation}, there is $r_1 > r_0$ and an injective local diffeomorphism
$\psi:\Cc_{r_1}\to \Cc_{r_0}$ such
that $(\Cc_{r_1}, (\varphi\circ \psi)^*\gt, (\varphi\circ\psi)^*\delt\ft)$ is strongly asymptotic to $(\Cc, g, \nabla f)$ as a soliton structure. Finally, by Theorem \ref{thm:vfnormalization}, there is $r_2 > r_1$ and an injective local diffeomorphism
$\Phi: \Cc_{r_2}\to \Cc_{r_1}$ such that $(\Cc_{r_2}, (\varphi \circ \psi \circ \Phi)^*\gt, \nabla f)$ is strongly asymptotic to $(\Cc, g, \nabla f)$.

Write $\hat{g} =  (\varphi\circ \psi \circ \Phi)^*\gt$.
Using Proposition \ref{prop:selfsim}, we can construct a self-similar solution $\gh(\tau)= \tau\Psi_{\tau}\gh(1)$ on $\Cc_{r_1}\times (0, 1]$ from $\gh = \gh(1)$ and $\nabla f$ for which $h = \gh - g$ satisfies
\begin{equation*}
 \sup_{\Cc_{r_2}\times(0, 1]} \frac{|z|^{2l}}{\tau^l}|\nabla^{(m)}h| < \infty
\end{equation*}
for all $l$, $m\geq 0$.

By Theorem \ref{thm:mainp}, $h \equiv 0$ on $\Cc_{r_3}\times (0, \tau_0]$ for some $\tau_0 > 0$ and $r_3 \geq r_2$. Fixing any $a \in (0, \tau_0]$, we then have $\gh(a) = a\Psi_a^*\gh(1) = a \Psi_a^*g(1) = g(a)$ on $\Cc_{r_3}$,
so $\gh = (\varphi\circ\psi \circ \Phi)^*\gt = g$ on $\Cc_{r_4}$ where $r_4 = r_3/\sqrt{a}$. However, as Ricci solitons, 
both $\gh$ and $g$ are real-analytic relative to atlases consisting of their own geodesic normal coordinate charts 
\cite{IveyLocalSoliton}, and the isometry between them on $\Cc_{r_4}$ can be extended to an isometry on $\Cc_{r_2}$ by continuation along paths. So  $\gh$ and $\gt$ are in fact isometric on $\Cc_{r_2}$. Similarly, $\varphi^*\gt$ and $g$ are isometric on $\Cc_{r_0}$, and $(V, \gt)$ is isometric to $(\Cc_{r_0}, g)$.
\end{proof}

\begin{proof}[Proof of Corollary \ref{cor:maincor}]
Suppose now that $(\tilde{M}, \gt)$ is complete. By Theorem \ref{thm:main}, $(V, \gt)$ is isometric to $(\Cc_{r_0}, g)$ for some $r_0 > 0$. Then the lift $(M^{\prime}, g^{\prime})$ of $(\tilde{M}, \tilde{g})$ to the universal cover $M^{\prime}$ of $M$ is complete, real-analytic,
and isometric to $(\Cc, g)$ on an open set. Since $\Cc$ and $M^{\prime}$ are simply connected, it follows that $(M^\prime, g^{\prime})$ is globally isometric to $(\Cc, g)$.  So $(\tilde{M}, \gt)$
must be a quotient of $(\Cc, g)$ by a discrete subgroup $\Gamma$ of isometries acting freely and properly on $\Cc$. 

To identify this quotient, let $\pi:\Cc\to \tilde{M}$ be the covering map, and consider $V^{\prime} = \pi^{-1}(V)$.  
By \cite{Wylie}, the fundamental group of $\tilde{M}$
is finite, so $\pi$ is proper, and we may write $V^{\prime}$ as the disjoint union of finitely many connected components $V_i^{\prime}$, $i=1, 2, \ldots, N$. Each $V_i^{\prime}$ is itself an end of 
$(\Cc, g)$, and, since  $V$ is open and simply connected,
the restriction of $\pi$ to any $V_i^{\prime}$ is a diffeomorphism. 

When $2 \leq k < n-1$, we must have $N =1$ since $(\Cc, g)$ is connected at infinity. Thus $\pi: \Cc\to \tilde{M}$ is a diffeomorphism and $\Gamma = \{\operatorname{Id}\}$ in this case.
When $k= n-1$, $(\Cc, g)$ has two ends, and we must have $N \leq 2$ and $|\Gamma| \leq 2$.  Any isometry $\gamma$ of $(\Cc, g)$ must take the form $\gamma(\theta, r) = (F(\theta), G(r))$,
and, if $\gamma\in \Gamma$, we know that both $F$ and $G$ have order no more than two. For $G$ this means that $G(r) = r$ or $G(r) = -r + c$ for some $c$. If $G(r) = r$, then either $\gamma = \operatorname{Id}$ or $F(\theta) = -\theta$. However, the latter is impossible 
since no end of $\RP^{n-1}\times \RR$ is isometric to $\Ss^{n-1}\times (a, \infty)$ for any $a$. If instead $G(r) = -r + c$ for some $c$, then $\gamma$ fixes $\Ss^{n-1}\times \{c/2\}$. This forces $F$ to have the form $F(\theta) = -\theta$, if $\gamma$ is not to fix any points.
Thus, when $k = n-1$, either $\Gamma = \{\operatorname{Id}\}$ or $\Gamma = \{\operatorname{Id}, \gamma\}$ where $\gamma(\theta, r) = (-\theta, -r + c)$ is a reflection on both factors.
\end{proof}

\section{The basic system}
\label{sec:pdeode}

Next we transform Theorem \ref{thm:mainp} into a problem that we can treat with Carleman inequalities.
Following the method used in \cite{KotschwarBackwardsUniqueness}, we will first define a simple prolonged ``PDE-ODE'' system whose components satisfy a coupled system of mixed parabolic 
and ordinary differential inequalities amenable to the application of inequalities \eqref{eq:pdecarlemanbu} and \eqref{eq:odecarlemanbu} in Section \ref{sec:backwarduniqueness}.

\subsection{The setting}

First we need to establish some notation. Here, as before, $g(\tau)= (2(k-1)\tau\gc)\oplus\bar{g}$ will represent the normalized shrinking cylindrical solution
to \eqref{eq:brf} on $\Cc\times (0, \infty)$. We will use $g= g(\tau)$ and $\nabla = \nabla_{g(\tau)}$ as the reference metric and connection in our computations, and write $\tau$ $|\cdot| = |\cdot|_{g(\tau)}$, suppressing $\tau$.

Since the structural properties of the system we will describe are independent of the self-similarity of $\gt$,
we will assume in this section (except within the context of the last assertion in Proposition \ref{prop:xypdeode}) only that
$\gt = \gt(\tau)$ is a solution to the backward Ricci flow \eqref{eq:brf} on $\Cc_{r_0}\times (0, 1]$ for which $h = \gt - g$ satisfies
\begin{equation}\label{eq:hstdecay}
 \sup_{\Cc_{r_0}} \frac{|z|^{2l}}{\tau^l}|\nabla^{(m)}h|(\theta, z, \tau) <\infty
\end{equation}
for all $l$, $m\geq 0$.

It will be convenient to introduce the operator
\[
 D_{\tau} = \pdtau - R_{q}^p\Lambda^q_p
\]
acting on families of $(k, l)$ tensors $V = V(\tau)$, where
\begin{align*}
  \Lambda_p^q(V)_{b_1b_2\cdots b_k}^{a_1a_2\cdots a_l}&= 
  \delta^q_{b_1}V_{pb_2\ldots b_k}^{a_1a_2\ldots a_l}  +\delta^q_{b_2} V_{b_1p\ldots b_k}^{a_1a_2\ldots a_l}
 + \cdots +\delta^q_{b_k} V_{b_1b_2\ldots p}^{a_1a_2\ldots a_l}\\
 &\phantom{=}- \delta^{a_1}_pV_{b_1b_2\ldots b_k}^{qa_2\ldots a_l}
 - \delta^{a_2}_pV_{b_1b_2\ldots b_k}^{a_1q\ldots a_l} - \cdots - \delta^{a_l}_pV_{b_1b_2\ldots b_k}^{a_1a_2\ldots q}. 
\end{align*}
Here $R_q^p = g^{pr}R_{rq}$. (We have two metrics lurking in the background, so to avoid confusion, we will only implicitly raise and lower indices with the metric $g$, and explicitly include any instances of $\gt$ and $\gt^{-1}$.) 
When $\{e_{i}(\tau)\}_{i=1}^{n}$ is a smooth family of local orthonormal frames
evolving so as to remain
orthonormal relative to $g(\tau$), the components of $D_{\tau}V$ express the total derivatives 
\[
  D_{\tau}V_{b_1b_2 \ldots b_k}^{a_1a_2\ldots a_l} = \pdtau \left(V(e_{b_1}, e_{b_2}, \ldots, e_{b_k}, e_{a_1}^*, e_{a_2}^*, \ldots, e_{a_l}^*)\right).
\]
In particular, $D_{\tau}g \equiv 0$.

\subsection{Definition of the system}
Now consider the bundles
\[
\Xc = T^{(5, 0)}(\Cc), \quad \Yc = T^{(2, 0)}(\Cc) \oplus T^{(3, 0)}(\Cc) \oplus T^{(4, 0)}(\Cc), 
\]
over $\Cc$ equipped with the smooth families of metrics and connections induced by $g$. Let
$\ve{X}$ and $\ve{Y}$ be the family of sections of $\Xc$ and $\Yc$ over $\Cc_{r_0}\times (0, 1]$ defined
by
\begin{equation}\label{eq:xydef}
\ve{X} = \delt \Rmt = \delt \Rmt - \nabla \Rm, \quad \ve{Y} = (Y_0, Y_1, Y_2) = (h, \nabla h, \nabla\nabla h).
\end{equation}
The system $(\ve{X}, \ve{Y})$ is  equivalent to that considered in \cite{KotschwarBackwardsUniqueness}, \cite{KotschwarWangConical}. The components of $\ve{Y}$ are chosen to ensure
that, together, $\ve{X}$ and $\ve{Y}$ satisfy a closed system of differential inequalities. 

\begin{proposition}\label{prop:xypdeode}
Let $\ve{X}$ and $\ve{Y}$ denote the sections of $\Xc$ and $\Yc$ defined above. There is a constant $B > 0$ such that
\begin{align}\label{eq:xypdeode}
 \begin{split}
  |(D_{\tau} + \Delta)\ve{X}|&\leq \frac{B}{\tau}|\ve{X}| + B|\ve{Y}|,\\
  |D_{\tau} \ve{Y}| &\leq B(|\ve{X}| + |\nabla\ve{X}|) + \frac{B}{\tau}|\ve{Y}|,
 \end{split}
\end{align}
on $\Cc_{r_0}\times (0, 1]$, and, for each $l$, $m\geq 0$, constants $M_{l, m}$ such that
\begin{equation}\label{eq:xydecay}
 \sup_{\Cc_{r_0}\times (0, 1]}\frac{|z|^{2l}}{\tau^l}\left(|\nabla^{(m)}\ve{X}| + |\nabla^{(m)}\ve{Y}|\right) \leq M_{l, m}.
\end{equation}
Moreover, when $h(\tau) = \tau\Psi_{\tau}^*(h(1))$ as in Theorem \ref{thm:mainp}, $\ve{X}$ and $\ve{Y}$ are self-similar in the sense that
\begin{equation}\label{eq:xyselfsim}
 \ve{X}(\tau) = \tau\Psi_{\tau}^*(\ve{X}(1)),\quad \ve{Y}(\tau) = \tau\Psi_{\tau}^*(\ve{Y}(1)). 
\end{equation}
\end{proposition}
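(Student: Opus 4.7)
The plan is to derive the inequalities in \eqref{eq:xypdeode} directly from the standard evolution and commutation identities for Ricci flow, after carefully accounting for the discrepancy between $g$ and $\gt$. The key preliminary observation is that $g(\tau) = (2(k-1)\tau \gc) \oplus \bar{g}$ is a product of locally symmetric spaces, so $\nabla \Rm \equiv 0$; hence $\ve{X} = \delt \Rmt$. The only source of the singular coefficient $B/\tau$ in \eqref{eq:xypdeode} is the pointwise bound $|\Rm(g(\tau))| \leq C/\tau$ arising from the shrinking sphere factor.

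\textbf{The PDE inequality for $\ve{X}$.} Under the backward Ricci flow, $\Rmt$ satisfies a heat-type equation of the schematic form $\partial_\tau \Rmt + \tilde\Delta \Rmt = \Rmt \ast \Rmt$, and differentiating once yields a corresponding equation for $\delt \Rmt$. Rewriting $\tilde\Delta$ and $\delt$ in terms of $\Delta$ and $\nabla$ produces algebraic expressions in the difference tensors $A = \Gamt - \Gam$, $\nabla A$, and $\gt^{-1} - g^{-1}$, which are themselves smooth polynomials in $\gt^{-1}$, $h$, $\nabla h$, and $\nabla\nabla h$ whenever $h$ is small. By \eqref{eq:hstdecay} these factors are uniformly bounded on $\Cc_{r_0}\times(0,1]$. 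Splitting $\Rmt = \Rm + (\Rmt - \Rm)$ in the nonlinearity isolates a leading term of the form $\Rm \ast \ve{X}$, which is bounded by $(B/\tau)|\ve{X}|$, while the remaining terms are pointwise linear in the components of $\ve{Y}$. This gives the first inequality.

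\textbf{The ODE inequality for $\ve{Y}$.} A direct computation using $D_\tau g = 0$ shows that $D_\tau h = 2(\Rct - \Rc) + \Rc \ast h$, and the familiar expansion of $\Rct - \Rc$ in terms of $\nabla\nabla h$, $\nabla h$, $h$, and $\Rm$ produces a bound $|D_\tau h| \leq B|\ve{Y}| + (B/\tau)|h|$. To handle $D_\tau \nabla h$ and $D_\tau \nabla \nabla h$, I apply $\nabla$ and $\nabla\nabla$ to this identity and commute past $D_\tau$; since $\nabla \Rm \equiv 0$ the commutator terms contribute only factors of $\Rm$ acting algebraically on $\ve{Y}$, again bounded by $(B/\tau)|\ve{Y}|$. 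The top covariant derivatives of $\Rct$ that appear can be reduced, via the twice-contracted second Bianchi identity and the relation $\nabla \Rct = \delt \Rct + A \ast \Rct$, to trace-level components of $\ve{X} = \delt \Rmt$ and $\nabla \ve{X}$, yielding the second inequality.

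\textbf{Decay and self-similarity.} The bounds \eqref{eq:xydecay} follow by substituting \eqref{eq:hstdecay} into the schematic expressions for $\ve{X}$ and $\ve{Y}$; both are polynomials in $h$ and its covariant derivatives up to order three, with coefficients that are smooth in $\gt^{-1}$ and the background curvature, all bounded on $\Cc_{r_0}\times(0,1]$. For \eqref{eq:xyselfsim}, since $g(\tau) = \tau \Psi_\tau^* g(1)$, pullback by $\Psi_\tau$ preserves the Levi-Civita connection of $g(\tau)$ and the $(0,4)$ Riemann tensor scales linearly in the metric, so $\Rmt(\tau) = \tau \Psi_\tau^* \Rmt(1)$ and hence $\ve{X}(\tau) = \tau \Psi_\tau^* \ve{X}(1)$; the analogous pullback identities for $h$, $\nabla h$, and $\nabla\nabla h$ give $\ve{Y}(\tau) = \tau \Psi_\tau^* \ve{Y}(1)$. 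The main obstacle throughout is the careful bookkeeping of the $\tau^{-1}$ factors: as the authors emphasize in the introduction, the $B/\tau$ coefficient in the inequality for $\ve{X}$ is already too singular to be absorbed by the Carleman estimate applied later, which is precisely why the more delicate rescaled system of Section \ref{sec:pdeode2} becomes necessary downstream.
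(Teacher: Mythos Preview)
Your approach is essentially the same as the paper's: derive schematic evolution equations for $h$, $\nabla h$, $\nabla\nabla h$, and $\delt\Rmt$ (the paper records these explicitly as Proposition~\ref{prop:schemev}), then read off the inequalities \eqref{eq:xypdeode} using the curvature bound $|\Rm(g(\tau))|\le C/\tau$ and the decay hypothesis \eqref{eq:hstdecay}; the decay and self-similarity claims follow by scaling exactly as you indicate. One small remark: you do not need the second Bianchi identity to pass from $\delt\Rct$ to $\ve{X}$ --- since $\Rct_{ij} = \gt^{kl}\Rt_{ikjl}$, the tensor $\delt\Rct$ is already a trace of $\delt\Rmt = \ve{X}$ (plus a $\nabla\gt^{-1}\ast\Rmt$ term controlled by $|\ve{Y}|$).
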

The decay \eqref{eq:xydecay} and self-similarity \eqref{eq:xyselfsim} of $\ve{X}$ and $\ve{Y}$ follow from the corresponding properties of $h$, and the observation that
the components of $\ve{X}$ and $\ve{Y}$ scale the same as $h$. The verification of \eqref{eq:xypdeode} is close to that of Lemma 3.1 in \cite{KotschwarWangConical}; see Proposition \ref{prop:schemev} below.
We include some of the computations on which it relies
since we will need them
in any case when we modify this system in the next section.

\subsubsection{Evolution equations}

Here and below we will use $V\ast W$ to denote linear combinations of contractions of $V\otimes W$ or $\tilde{V}\otimes \tilde{W}$
for any tensors $\tilde{V}$ and $\tilde{W}$ identified with $V$ and $W$ via the isomorphisms $T\Cc\to T^*\Cc$ and $T^*\Cc \to T\Cc$ induced by $g$. The coefficients
in these linear combinations are understood to be bounded by dimensional constants. 

We will first recall standard formulas for the difference of the Levi-Civita connections and curvature tensors of different metrics. 
\begin{lemma}\label{lem:metdiff} Let $g$, $\gt$ be any two metrics and $h = g- \gt$.
Then
\begin{align}\label{eq:invdiff}
  \gt^{ij} - g^{ij} &= -\gt^{ia}g^{jb}h_{ab} = \gt^{-1}\ast h,\\
\label{eq:delgt}
  \nabla_k \gt^{ij} &= -\gt^{ia}\gt^{jb} \nabla_k h_{ab} = \gt^{-2} \ast \nabla h,\\
  \label{eq:rmdiff}
  \Rmt - \Rm & = \nabla\nabla h + \gt^{-1}\ast (\nabla h)^2 + \Rm \ast h,
\end{align}
where $\Rm$ and $\Rmt$ denote the $(4, 0)$ curvature tensors of $g$ and $\gt$.

In addition, 
\begin{align}
 \label{eq:d1diff}
 \delt V - \nabla V &= \gt^{-1}\ast\nabla h \ast V,\\
\begin{split}
\label{eq:laplacediff}
\Deltat V - \Delta V &= \gt^{-2}\ast \nabla h \ast \nabla V + \gt^{-3} \ast (\nabla h)^2 \ast V \\
		     &\phantom{=}+ \gt^{-2} \ast \nabla\nabla h \ast V +\gt^{-1}\ast h \ast \nabla\nabla V,
\end{split}
 \end{align}
 for any tensor $V$ of rank at least $1$.
\end{lemma}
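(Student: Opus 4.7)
The plan is to derive each identity by direct manipulation, leveraging the metric compatibility $\nabla g = 0$ and standard matrix calculus; no deep ideas are needed, the content of the lemma is essentially bookkeeping. The first formula follows from the pointwise matrix identity $\gt^{-1} - g^{-1} = -\gt^{-1}(\gt - g)g^{-1}$ (applied under the convention $h = \gt - g$ used in the rest of the paper, which accounts for the displayed sign). Differentiating $\gt^{ia}\gt_{aj} = \delta^i_j$ and using $\nabla g = 0$ immediately yields $\nabla_k \gt^{ij} = -\gt^{ia}\gt^{jb}\nabla_k h_{ab}$, the second formula.

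Next I would introduce the difference tensor
\[
A_{ij}^k \dfn \Gamt_{ij}^k - \Gamma_{ij}^k = \tfrac{1}{2}\gt^{kl}\bigl(\nabla_i h_{jl} + \nabla_j h_{il} - \nabla_l h_{ij}\bigr),
\]
obtained by applying the Koszul formula to $\gt$ while evaluating its partial derivatives in terms of $\nabla$ (permissible because $\nabla g = 0$). Schematically $A = \gt^{-1} \ast \nabla h$, from which $\delt V - \nabla V = A \ast V$ yields the fourth identity. For the curvature difference, the standard computation of expanding $\Gamt = \Gamma + A$ inside the formula for $\tilde{R}^l_{ijk}$ gives the $(3,1)$-version
\[
\tilde{R}^l_{ijk} - R^l_{ijk} = \nabla_i A_{jk}^l - \nabla_j A_{ik}^l + A_{ip}^l A_{jk}^p - A_{jp}^l A_{ik}^p,
\]
and combining this with $\nabla A = \gt^{-2}\ast \nabla h \ast \nabla h + \gt^{-1}\ast \nabla\nabla h$ (from the second identity) produces a tensor of the schematic form $\nabla\nabla h + \gt^{-1}\ast(\nabla h)^2$. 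The passage to the $(4,0)$ tensor $\Rmt$ requires lowering the last index by $\gt$ rather than $g$:
\[
\tilde{R}_{ijkl} - R_{ijkl} = \gt_{lm}\bigl(\tilde{R}^m_{ijk} - R^m_{ijk}\bigr) + (\gt_{lm} - g_{lm})R^m_{ijk},
\]
and the second term contributes precisely the $\Rm \ast h$ piece in the third identity (the $\gt$-factors in the first term are absorbed into the schematic $\ast$ notation together with the existing $\gt^{-1}$).

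Finally, for the Laplacian difference I would split
\[
\Deltat V - \Delta V = (\gt^{ij} - g^{ij})\delt_i\delt_j V + g^{ij}\bigl(\delt_i\delt_j V - \nabla_i\nabla_j V\bigr),
\]
bound the first summand by $\gt^{-1}\ast h \ast \nabla\nabla V$ plus lower-order contributions coming from rewriting $\delt\delt V$ in terms of $\nabla\nabla V$ and $A$, and expand the second summand using the connection-difference identity together with the expression for $\nabla A$ above. This produces the four schematic pieces appearing in \eqref{eq:laplacediff}. The only substantive obstacle is careful bookkeeping — keeping track of which factors carry $\gt^{-1}$ rather than $g^{-1}$, and verifying that the index-lowering correction in the curvature computation reproduces the $\Rm \ast h$ term exactly. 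No Ricci flow or soliton input is used anywhere in the proof.
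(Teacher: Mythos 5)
The paper states Lemma \ref{lem:metdiff} without proof; it is a standard collection of identities, and your proposal reconstructs the usual derivation correctly. Two small remarks worth recording.

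First, you correctly diagnosed the sign convention: as written, the hypothesis $h = g - \gt$ is inconsistent with \eqref{eq:invdiff} and \eqref{eq:delgt}, both of which require $h = \gt - g$ (as is indeed used throughout the rest of the paper, e.g.\ in Proposition \ref{prop:selfsim}). This is a typo in the lemma's hypothesis, and your reading of the convention is the right one.

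Second, a bookkeeping point on \eqref{eq:laplacediff}. Your split
\[
\Deltat V - \Delta V = (\gt^{ij} - g^{ij})\delt_i\delt_j V + g^{ij}\bigl(\delt_i\delt_j V - \nabla_i\nabla_j V\bigr)
\]
does not quite reproduce the listed $\gt^{-k}$ exponents: the second summand, contracted against $g^{ij}$, produces $\gt^{-1}\ast\nabla h\ast\nabla V + \gt^{-2}\ast(\nabla h)^2\ast V + \gt^{-1}\ast\nabla\nabla h\ast V$ (one fewer $\gt^{-1}$ in each slot than the lemma states), while the first summand, once $\delt\delt V$ is rewritten in terms of $\nabla\nabla V$ and $A$, throws off additional higher-order pieces like $\gt^{-2}\ast h\ast\nabla\nabla h\ast V$ that do not appear in \eqref{eq:laplacediff}. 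The split that reproduces \eqref{eq:laplacediff} on the nose is instead
\[
\Deltat V - \Delta V = (\gt^{ij} - g^{ij})\nabla_i\nabla_j V + \gt^{ij}\bigl(\delt_i\delt_j V - \nabla_i\nabla_j V\bigr),
\]
i.e.\ contract the connection-difference part with $\gt^{ij}$ (hence the extra $\gt^{-1}$ in each of the first three terms) and apply \eqref{eq:invdiff} only to the clean $\nabla\nabla V$ piece. Since the schematic $\ast$ notation only absorbs $g$-contractions, the explicit $\gt^{-k}$ factors are in principle part of the claimed identity, so this is worth being careful about even though it has no effect on how the lemma is used (all the $\gt^{-1}$ factors are uniformly bounded on the relevant region). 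The rest of your outline — the matrix identity for $\gt^{-1}-g^{-1}$, differentiating $\gt^{ia}\gt_{aj}=\delta^i_j$, the Koszul-type formula for $A = \Gamt - \Gamma$, the $(3,1)$ curvature difference followed by index-lowering with $\gt_{lm} = g_{lm} + h_{lm}$ — is exactly the standard argument and is correct.
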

\begin{proof}
 We only prove \eqref{eq:rmdiff}. Writing, temporarily, 
\begin{align*}
    A_{jk}^l &= \Gamt_{jk}^l -\Gamma_{jk}^l = \frac{1}{2}\gt^{lp}\left(\nabla_j h_{kp} + \nabla_k h_{jp} - \nabla_{p}h_{jk}\right),
\end{align*}
and
\begin{align*}
    B_{jkl} &= \gt_{ml} A_{jk}^m = \frac{1}{2}\left(\nabla_j h_{kl} + \nabla_k h_{jl} - \nabla_{l}h_{jk}\right),
\end{align*}
we have
\begin{align*}
& \Rt_{ijk}^m - R_{ijk}^m = \nabla_i A_{jk}^m - \nabla_j A_{ik}^m + A_{jk}^p A_{ip}^m - A_{ik}^pA_{jp}^m\\
 \begin{split}
                         &\qquad= \gt^{pm}(\nabla_i B_{jkp} -\nabla_j B_{ikp}) -\gt^{pq}\gt^{mr}(\nabla_ih_{rs}B_{jkp}
                         -\nabla_j h_{rs}B_{ikp})\\
                         &\qquad\phantom{=} + \gt^{qm}(A_{jk}^pB_{ipq} - A_{ik}^pB_{jpq}).
\end{split}
\end{align*}
So the difference of the $(4, 0)$-tensors satisfies
\begin{align*}
 \Rt_{ijkl} - R_{ijkl} &= \gt_{lm}(\Rt_{ijk}^m - R_{ijk}^m) +R_{ijk}^m h_{lm}\\
 \begin{split}
    &= \nabla_i B_{jkl} -\nabla_j B_{ikl} -\gt^{pq}(\nabla_ih_{ls}B_{jkp}
                         -\nabla_j h_{ls}B_{ikp})\\
                         &\phantom{=} + A_{jk}^pB_{ipl} - A_{ik}^pB_{jpl} +R_{ijk}^m h_{lm}.
\end{split}\\
 &= \nabla\nabla h + \gt^{-1}\ast(\nabla h)^2 + \Rm \ast h
\end{align*}
as claimed.
\end{proof}

 Now (referring to, e.g., Section 6.1 of \cite{ChowKnopf}), we  recall the evolution equations 
 \begin{equation*}
  \label{eq:connev}
  \pdtau \Gamt_{ij}^k = \gt^{mk}\left(\delt_{i}\Rt_{jm} + \delt_j\Rt_{im} - \delt_m\Rt_{ij}\right),
 \end{equation*}
 and
 \begin{align*}
 \begin{split}
  \left(\pdtau + \Deltat\right)\Rt_{ijkl} &= -2(\Bt_{ijkl} - \Bt_{ijlk} + \Bt_{ikjl} - \Bt_{iljk}) \\
 &\phantom{=} +\gt^{pq}\left(\Rt_{ip}\Rt_{qjkl} +\Rt_{jp}R_{iqkl} + \Rt_{kp}\Rt_{ijql} +  \Rt_{lp}\Rt_{ijkq}\right),
 \end{split}
 \end{align*}
where
\[
  \Bt_{ijkl} = -\gt^{pr}\gt^{qs}\Rt_{pijq}\Rt_{rkls}.
\]
Combining these equations with a bit of further computation, we arrive at the following equation for the evolution of $\delt\Rmt$.
\begin{lemma}\label{lem:drmtev}
If $\gt$ satisfies \eqref{eq:brf}, then
\begin{align*}
 \begin{split}
 &\left(\pdtau + \Deltat\right)\delt_a\Rt_{ijkl}= -2\delt_a\left(\Bt_{ijkl} - \Bt_{ijlk} + \Bt_{ikjl} - \Bt_{iljk}\right)\\
 &\quad+ 2\gt^{pr}\gt^{qs}\bigg(\Rt_{iqap}\delt_r \Rt_{sjkl} + \Rt_{jqap}\delt_r \Rt_{iskl} + \Rt_{kqap}\delt_r\Rt_{ijsl} + \Rt_{lqap}\delt_r\Rt_{ijks}\bigg)\\
 &\quad +\gt^{pq}\bigg(\Rt_{ap}\delt_q\Rt_{ijkl} + \Rt_{ip}\delt_a\Rt_{qjkl}+\Rt_{jp}\delt_a\Rt_{iqkl}+ \Rt_{kp}\delt_a\Rt_{ijql} +  \Rt_{lp}\delt_a\Rt_{ijkq}\bigg).
 \end{split}
\end{align*}

\end{lemma}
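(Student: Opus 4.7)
The plan is to derive the evolution equation for $\delt_a \Rt_{ijkl}$ by commuting the heat operator $\pdtau + \Deltat$ past the covariant derivative $\delt_a$, substituting the stated evolution equation for $\Rmt$, and then using the second Bianchi identity to reorganize any ``gradient of Ricci'' expressions into the ``gradient of Riemann'' expressions appearing on the right-hand side of the claim.

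First I would write
\[
  \left(\pdtau + \Deltat\right)\delt_a \Rt_{ijkl} = \delt_a\left[\left(\pdtau + \Deltat\right)\Rt_{ijkl}\right] + \left[\pdtau, \delt_a\right]\Rt_{ijkl} + \left[\Deltat, \delt_a\right]\Rt_{ijkl}.
\]
The bracketed first term on the right is handled by substituting the evolution equation for $\Rmt$ recalled just before the lemma and applying the Leibniz rule, using $\delt_a \gt^{pq} = 0$; this produces directly the derivatives of the $\Bt$-terms on the first line of the claim, together with two species of quadratic terms, namely $\Rt_{ip}\,\delt_a \Rt_{qjkl}$ and its three analogues (which already match four of the summands on the third line of the target), and $(\delt_a\Rt_{ip})\,\Rt_{qjkl}$ and its analogues (which are not yet in the claimed form).

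Next I would evaluate $\left[\pdtau, \delt_a\right]\Rt_{ijkl}$ using the formula for $\pdtau \Gamt_{ij}^k$ recorded above the lemma; the result is a sum of four terms of the form $-(\pdtau\Gamt_{a(\cdot)}^m)\Rt_{\ldots m\ldots}$ and so consists entirely of $(\delt\Rct)\ast\Rmt$ expressions. Similarly I would expand $\left[\Deltat, \delt_a\right] = \gt^{pq}\bigl([\delt_p, \delt_a]\delt_q + \delt_p[\delt_q, \delt_a]\bigr)$ and apply the Ricci identity $[\delt_b, \delt_c]T = -\Rmt\ast T$. After contracting with $\gt^{pq}$, the first piece yields terms of precisely the shape $\Rt_{iqap}\,\delt_r \Rt_{sjkl}$ appearing on the second line of the target, together with one further term of the form $\gt^{pq}\Rt_{ap}\,\delt_q\Rt_{ijkl}$ (arising from the ``outer-slot'' commutator in which a double contraction against $\gt^{pq}$ collapses two indices of $\Rmt$ to $\Rct$), matching the missing fifth summand on the third line; the second piece produces additional $(\delt\Rct)\ast\Rmt$ contributions whose outer derivative lands on a Riemann factor.

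The final step is to process all surviving $\delt\Rct$-type contractions using the second Bianchi identity in the form $\gt^{mn}\delt_m \Rt_{aibn} = \delt_a \Rt_{ib} - \delt_i \Rt_{ab}$ (and its permutations), then to collect like terms and verify that the residual expression is exactly the one stated. The main obstacle, as I see it, is not conceptual but combinatorial: one must carefully coordinate the index positions and signs across the four Leibniz terms from the product rule, the four contributions from $[\pdtau, \delt_a]$, and the several curvature commutators from $[\Deltat, \delt_a]$, and then choose the right Bianchi substitutions so that every spurious $\delt\Rct$ term either cancels outright or folds into the Riemann-gradient structure displayed on the right-hand side. No identity beyond the Leibniz rule, the Ricci identity, and the second Bianchi identity should be required.
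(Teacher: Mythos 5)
Your strategy is correct and is exactly the ``bit of further computation'' the paper intends: commute $\delt_a$ across the heat operator, substitute the displayed evolution equations for $\pdtau\Gamt$ and $(\pdtau + \Deltat)\Rmt$, and clean up via the contracted second Bianchi identity. One small correction to your accounting that matters for closing the bookkeeping: the coefficient $2$ on the second line of the target does \emph{not} come from the first piece $\gt^{pq}[\delt_p,\delt_a]\delt_q$ alone. After Leibniz, the term in the second piece $\gt^{pq}\delt_p[\delt_q,\delt_a]\Rt_{ijkl}$ in which $\delt_p$ lands on the undifferentiated Riemann factor coincides, upon relabeling $p\leftrightarrow q$, with the corresponding term from the first piece, and the two together supply the factor $2$. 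The remaining Leibniz contribution from the second piece --- with $\delt_p$ landing on the curvature commutator factor --- then yields, via the Bianchi contraction, exactly the negatives of the four surviving $(\delt\Rct)\ast\Rmt$ residuals of $[\pdtau,\delt_a]\Rt_{ijkl}$, after the latter's $\gt^{pm}(\delt_a\Rt_{ip})\Rt_{mjkl}$-type pieces have already cancelled the spurious Leibniz terms produced when $\delt_a$ hits the Ricci factors in the reaction part of the $\Rmt$-equation.
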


Note that, according to our normalization, the curvature tensor of the cylindrical metric $g$
satisfies
\[
 |\Rm|^2 = \frac{k}{2(k-1)\tau^2}.
\]
The first assertion in Proposition \ref{prop:xypdeode} is now a consequence of the decay assumption \eqref{eq:hstdecay}, Lemma \ref{lem:metdiff}, and the following schematic evolution equations. 
\begin{proposition}\label{prop:schemev}
 The tensors $h$ and $\delt\Rmt$ satisfy
\begin{align}
 \label{eq:dth}
    D_{\tau} h &= \gt^{-1}\ast \nabla\nabla h +  \gt^{-2}\ast (\nabla h)^2 + \gt^{-1}\ast\Rm \ast h + \Rm \ast h,
\end{align}
\begin{align}
\begin{split}\label{eq:dtdh}
    D_{\tau}\nabla h &= \gt^{-1}\ast \nablat\Rmt  + \gt^{-2}\ast \nabla h \ast \nabla\nabla h + \gt^{-3}\ast (\nabla h)^3\\
    &\phantom{=} + \gt^{-2} \ast\Rm \ast h \ast \nabla h  + \Rm \ast \nabla h,
\end{split}
\end{align}
\begin{align}
\begin{split}\label{eq:dtddh}
D_{\tau} \nabla\nabla h &= \gt^{-2}\ast\nabla h \ast \delt\Rmt + \gt^{-1}\ast \nabla \delt\Rmt\\
&\phantom{=} + \gt^{-3}\ast\nabla\nabla h \ast (\nabla h)^2  + \gt^{-4}\ast(\nabla h)^4  + \gt^{-2}\ast (\nabla\nabla h)^2\\
&\phantom{=}  + \gt^{-3} \ast \Rm \ast h \ast (\nabla h)^2 + \gt^{-2}\ast \Rm \ast h \ast \nabla\nabla h \\
&\phantom{=}+\gt^{-3}\ast \Rm \ast (\nabla h)^2 + \gt^{-2}\ast\Rm\ast \nabla\nabla h + \Rm \ast \nabla\nabla h,
\end{split}
\end{align}
and
\begin{align}
\begin{split}
 \label{eq:drmtev}
 &(D_{\tau} + \Delta)\delt\Rmt = \gt^{-1}\ast\delt^{(3)}\Rmt \ast h  + \gt^{-1}\ast \delt\delt \Rmt \ast \nabla h \\
 &\phantom{=}+ \gt^{-2}\ast \nablat\Rmt \ast (\nabla h)^2 + \gt^{-1}\ast\delt\Rmt \ast \nabla\nabla h \\
 &\phantom{=} + \gt^{-2}\ast (\Rmt - \Rm) \ast \delt\Rmt+ \gt^{-2}\ast h \ast \Rm \ast \delt\Rmt \\
 &\phantom{=} + \gt^{-2} \ast \Rm \ast \delt\Rmt.
\end{split}
\end{align}
\end{proposition}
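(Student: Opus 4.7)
The plan is to derive each of the four equations by combining four ingredients: (a) the backward Ricci flow evolution $\partial_\tau \tilde g = 2\widetilde{\operatorname{Rc}}$ and the corresponding equation $\partial_\tau g = 2\operatorname{Rc}$ on the cylinder; (b) the difference formulas of Lemma \ref{lem:metdiff} expressing $\tilde g^{-1} - g^{-1}$, $\tilde\nabla - \nabla$, $\widetilde{\operatorname{Rm}} - \operatorname{Rm}$, and $\tilde\Delta - \Delta$ schematically in $h$; (c) the PDE for $\tilde\nabla\widetilde{\operatorname{Rm}}$ in Lemma \ref{lem:drmtev}; and, crucially, (d) the observation that on the cylindrical background $g$ one has $\nabla \operatorname{Rm} \equiv 0$ (since $S^k$ is locally symmetric and $\RR^{n-k}$ is flat), so that $\partial_\tau \Gamma = \nabla\operatorname{Rc} \equiv 0$ and consequently $D_\tau$ and $\nabla$ commute on arbitrary tensor fields.

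First I would handle \eqref{eq:dth}. Writing $\partial_\tau h = 2(\widetilde{\operatorname{Rc}} - \operatorname{Rc})$, the identity $\widetilde{\operatorname{Rc}} - \operatorname{Rc} = \tilde g^{-1}\ast(\widetilde{\operatorname{Rm}} - \operatorname{Rm}) + (\tilde g^{-1} - g^{-1})\ast\operatorname{Rm}$ together with \eqref{eq:invdiff} and \eqref{eq:rmdiff} gives exactly the three schematic types on the right of \eqref{eq:dth}; the contribution of $-R_q^p \Lambda_p^q h$ absorbs into $\operatorname{Rm}\ast h$. For \eqref{eq:dtdh} and \eqref{eq:dtddh}, observation (d) lets me write $D_\tau \nabla h = \nabla D_\tau h$ and $D_\tau \nabla\nabla h = \nabla\nabla D_\tau h$, so I simply differentiate the right side of \eqref{eq:dth} once or twice. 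The product rule, combined with $\nabla\tilde g^{-1} = \tilde g^{-2}\ast\nabla h$ from \eqref{eq:delgt} and $\nabla \operatorname{Rm} = 0$, produces all terms in the asserted schematic form \emph{except} the higher derivative $\nabla\nabla\nabla h$ (respectively $\nabla^{(4)} h$) coming from differentiating the $\tilde g^{-1}\ast\nabla\nabla h$ piece. To eliminate these, I invert \eqref{eq:rmdiff} as
\[
\nabla\nabla h = (\widetilde{\operatorname{Rm}} - \operatorname{Rm}) + \tilde g^{-1}\ast(\nabla h)^2 + \operatorname{Rm}\ast h,
\]
differentiate once (or twice), and then convert the resulting $\nabla\widetilde{\operatorname{Rm}}$ factor to $\tilde\nabla\widetilde{\operatorname{Rm}}$ using \eqref{eq:d1diff} (and similarly for the second derivative). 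Matching terms gives \eqref{eq:dtdh} and \eqref{eq:dtddh}.

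For \eqref{eq:drmtev}, I start with Lemma \ref{lem:drmtev}, which yields a schematic equation of the form $(\partial_\tau + \tilde\Delta)\tilde\nabla\widetilde{\operatorname{Rm}} = \tilde g^{-2}\ast\widetilde{\operatorname{Rm}}\ast\tilde\nabla\widetilde{\operatorname{Rm}}$. I then apply \eqref{eq:laplacediff} with $V = \tilde\nabla\widetilde{\operatorname{Rm}}$ to replace $\tilde\Delta$ by $\Delta$: the first three terms of \eqref{eq:laplacediff} produce the $\tilde g^{-1}\ast\tilde\nabla\widetilde{\operatorname{Rm}}\ast\nabla\nabla h$, $\tilde g^{-2}\ast\tilde\nabla\widetilde{\operatorname{Rm}}\ast(\nabla h)^2$, and $\tilde g^{-1}\ast\tilde\nabla\tilde\nabla\widetilde{\operatorname{Rm}}\ast\nabla h$ terms (after further conversion of $\nabla\tilde\nabla\widetilde{\operatorname{Rm}}$ to $\tilde\nabla^{(2)}\widetilde{\operatorname{Rm}}$ via \eqref{eq:d1diff}), while the last gives the $\tilde g^{-1}\ast h\ast\tilde\nabla^{(3)}\widetilde{\operatorname{Rm}}$ term. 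Finally I split $\widetilde{\operatorname{Rm}} = \operatorname{Rm} + (\widetilde{\operatorname{Rm}} - \operatorname{Rm})$ in the curvature-product factor coming from Lemma \ref{lem:drmtev}, which yields the clean background piece $\tilde g^{-2}\ast\operatorname{Rm}\ast\tilde\nabla\widetilde{\operatorname{Rm}}$ together with $\tilde g^{-2}\ast(\widetilde{\operatorname{Rm}} - \operatorname{Rm})\ast\tilde\nabla\widetilde{\operatorname{Rm}}$; the contribution of $-R_q^p\Lambda_p^q\tilde\nabla\widetilde{\operatorname{Rm}}$ absorbs into the $\operatorname{Rm}\ast\tilde\nabla\widetilde{\operatorname{Rm}}$ term (whose decomposition via \eqref{eq:rmdiff} also accounts for the $h\ast\operatorname{Rm}\ast\tilde\nabla\widetilde{\operatorname{Rm}}$ type).

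The main obstacle is purely organizational rather than analytical: the equation \eqref{eq:drmtev} in particular produces a large menagerie of terms, and one must systematically use Lemma \ref{lem:metdiff} to \emph{separate} genuine background contributions, which multiply $\tilde\nabla\widetilde{\operatorname{Rm}}$ (and will ultimately produce the singular $B/\tau$ coefficient in Proposition \ref{prop:xypdeode} through $|\operatorname{Rm}| \sim \tau^{-1}$), from small corrections that can be folded into $h$-dependent schematic terms. Similarly for \eqref{eq:dtddh}, one must be careful to exhaust \emph{all} third- and fourth-order derivatives of $h$ using \eqref{eq:rmdiff} and its derivatives, since the system is designed so that no derivative of $h$ beyond second order appears on the right-hand side.
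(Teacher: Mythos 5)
Your overall strategy matches the paper's: derive \eqref{eq:dth} directly from $\partial_\tau h = 2(\Rct-\Rc)$ together with Lemma~\ref{lem:metdiff}, propagate to $\nabla h$ and $\nabla\nabla h$ using the time-independence of $\nabla$ and the parallelism of $\Rc$ on the cylinder, and obtain \eqref{eq:drmtev} by converting the $\pdtau$ and $\Deltat$ in Lemma~\ref{lem:drmtev} to $D_\tau$ and $\Delta$. Two remarks are in order.

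First, the commutation claim in your observation (d) is stated too strongly. It is true that $\pdtau\Gamma=0$ and $\nabla\Rc=0$, but $D_\tau$ and $\nabla$ do \emph{not} commute, because $\nabla V$ carries an extra covariant index on which $\Lambda^q_p$ acts. One finds
\begin{equation*}
 D_\tau\nabla_a V - \nabla_a D_\tau V = -R^p_a\nabla_p V,
\end{equation*}
and similarly with two covariant derivatives there are two such correction terms. These do not vanish; they are only of the form $\Rm\ast\nabla V$ and $\Rm\ast\nabla\nabla V$, which happen to match the schematic types $\Rm\ast\nabla h$ and $\Rm\ast\nabla\nabla h$ appearing on the right sides of \eqref{eq:dtdh} and \eqref{eq:dtddh}. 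So your conclusion is unaffected, but the intermediate claim is false and should be replaced by a statement that the commutator is absorbed into the asserted schematic form.

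Second, the detour through the inverted form of \eqref{eq:rmdiff} -- differentiating the fully reduced \eqref{eq:dth} to produce a spurious $\nabla^{(3)}h$ and then substituting $\nabla\nabla h = (\Rmt - \Rm) + \gt^{-1}\ast(\nabla h)^2 + \Rm\ast h$ back in -- works, but is circular: you first use \eqref{eq:rmdiff} to eliminate $\Rmt - \Rm$ in favor of $\nabla\nabla h$, then undo it. It is cleaner to differentiate the intermediate identity
\begin{equation*}
 D_\tau h_{ij} = 2\gt^{pq}(\Rt_{ipqj} - R_{ipqj}) - 2\gt^{pr}g^{qs}R_{ipqj}h_{rs} - R^p_ih_{pj} - R^p_jh_{ip}
\end{equation*}
directly. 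Applying $\nabla$ to the $\Rmt$-term produces $\nablat\Rmt$ immediately (after one application of \eqref{eq:d1diff}), and no third derivative of $h$ ever appears; only at the end does one invoke \eqref{eq:rmdiff} to express the $\Rmt$-factors appearing in lower-order products in terms of $h$ and its derivatives. This is, I believe, what the paper means by ``follow similarly.'' Your version, though less economical, arrives at the same schematic expression.
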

\begin{proof}
For \eqref{eq:dth}, we have
\begin{align*}
 D_{\tau} h_{ij} &= 2\Rt_{ij} - R^p_i\gt_{pj} - R^p_j \gt_{ip}=2(\Rt_{ij}-R_{ij}) - R^p_ih_{pj} - R^p_j h_{ip}\\
		 &=2\gt^{pq}(\Rt_{ipqj} -R_{ipqj}) -2\gt^{pr}g^{qs}R_{ipqj}h_{rs}  - R^p_ih_{pj} - R^p_j h_{ip},
\end{align*}
which yields the desired expression after applying \eqref{eq:rmdiff} to the first term on the right.
Equations \eqref{eq:dtdh} and \eqref{eq:dtddh} follow similarly, using that the Levi-Civita connection $\nabla$ of the cylindrical metric is time-independent.

For \eqref{eq:drmtev}, observe that, by Lemma \ref{lem:drmtev},
\begin{align*}
\begin{split}
 &\left(D_{\tau} + \Deltat\right)\delt_a\Rt_{ijkl} = -2\delt_a\left(\Bt_{ijkl} - \Bt_{ijlk} + \Bt_{ikjl} - \Bt_{iljk}\right)\\
 &+ 2\gt^{pr}\gt^{qs}\big(\Rt_{iqap}\delt_r \Rt_{sjkl} + \Rt_{jqap}\delt_r \Rt_{iskl} + \Rt_{kqap}\delt_r\Rt_{ijsl}+ \Rt_{lqap}\delt_r\Rt_{ijks}\big)\\
& +(\gt^{pq}\Rt_{ap} - R_a^q)\delt_q\Rt_{ijkl} + (\gt^{pq}\Rt_{ip} - R_i^q)\delt_a\Rt_{qjkl}+(\gt^{pq}\Rt_{jp} - R_j^q)\delt_a\Rt_{iqkl}\\
&+(\gt^{pq}\Rt_{kp} - R_k^q)\delt_a\Rt_{ijql}+  (\gt^{pq}\Rt_{lp} - R_l^q)\delt_a\Rt_{ijkq}.
 \end{split}
 \end{align*}
The desired expression then follows from \eqref{eq:laplacediff} and the observation that the terms on the left on the first two lines are all of the schematic form
$\gt^{-2}\ast\Rmt\ast \delt\Rmt$.
\end{proof}

\section{Exponential decay and a refined system}
\label{sec:pdeode2}
In order to apply the Carleman inequalities in Section \ref{sec:backwarduniqueness} which imply the vanishing of $\ve{X}$ and $\ve{Y}$, 
we need to know that these sections decay near spatial infinity and $\tau = 0$ at at least an exponential rate. 
The goal of the next two sections will be to prove
the following local estimate, which establishes this uniform exponential decay on regions of fixed size.

Below, we will write
\[
 \Dc_{r}(z_0) = \Ss^k\times B_{r}(z_0)
\]
for $r > 0$ and $z_0\in \RR^{n-k}$, and use the shorthand $\dX = d\mu_{g(\tau)} \,d\tau$.

\begin{theorem}\label{thm:expdecay1} 
 There exist positive constants  $N_0$, $N_1$ depending only on $n$, $k$, $r_0$ and finitely many of the  constants $M_{l, m}$ from \eqref{eq:hstdecay}
 such that
 \begin{equation}\label{eq:expdecay1}
    \int^1_0\int_{\Dc_{1}(z_0)}\left(|\ve{X}|^2 + |\nabla\ve{X}|^2 + |\ve{Y}|^2\right)e^{\frac{N_0}{\tau}}\,\dX \leq N_1, 
 \end{equation}
for any $z_0 \in \RR^{n-k}\setminus \overline{B_{8r_0}(0)}$.
\end{theorem}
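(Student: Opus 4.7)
The plan is to prove \eqref{eq:expdecay1} by establishing localized weighted $L^2$ estimates for the refined system $(\ve{W}, \ve{Y})$ introduced in Section \ref{sec:pdeode2}, applying the parabolic Carleman inequality \eqref{eq:pdecarleman2} in tandem with the ODE estimates \eqref{eq:odecarleman2g} and \eqref{eq:odecarleman2ng}. Concretely, I would fix $z_0$ with $|z_0| \geq 8r_0$, work on parabolic neighborhoods of the form $\Dc_{2r}(z_0) \times (0, 1]$ for small $r$, and test against a weight that is approximately $\tau^{-m} e^{-|z-z_0|^2/(4\tau)}\eta(z-z_0)$, where $\eta$ is a spatial cutoff supported in $B_{2r}(0)$. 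The auxiliary parameter $m$ strengthens the decay as $\tau \searrow 0$; the resulting bounds will be summed against $m!$ to manufacture an exponential weight.

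The argument proceeds by induction on the index $i$ of $W^i$ in the triangular structure \eqref{eq:weqvar}. The base case is $W^0$, which satisfies a ``clean'' parabolic inequality with no singular lower-order coefficients, so \eqref{eq:pdecarleman2} together with \eqref{eq:odecarleman2g}, \eqref{eq:odecarleman2ng} for $\ve{Y}$ yields
\[
 \int_0^1\!\int_{\Dc_r(z_0)} \tau^{-m} e^{-\frac{|z-z_0|^2}{4\tau}}\bigl(|W^0|^2 + |\ve{Y}|^2\bigr)\,\dX \leq \frac{C L^m m!}{r^{2m}},
\]
for all $m \geq 0$ and all $r \in (0, r_*]$, with $C$ and $L$ independent of $m$. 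The inductive step treats the terms $B\tau^{-\gamma_j}|W^j|$ on the right-hand side of the inequality for $W^i$ as known data, substituting the bound already obtained for $W^j$ with $j < i$. The crucial point is that the $\tau^{-\gamma_j}$ blow-up is absorbed by trading one factor of $\tau^{-m}$ in the weight for $\tau^{-(m+\gamma_j)}$, at the price of shifting the induction index by a finite amount. The remaining contributions $B\tau^\beta(|\ve{W}| + |\ve{Y}|)$ cause no trouble because of the positive power $\tau^\beta$, which the Carleman estimate absorbs on its own for small enough $r$.

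Summing the resulting bounds over $m$ by a Stirling-type manipulation (pairing $m!/\tau^m$ against the Taylor series of $e^{N_0 r^2/\tau}$) converts the family of $\tau^{-m}$-weighted estimates into a single exponentially weighted estimate
\[
 \int_0^1\!\int_{\Dc_r(z_0)} \bigl(|\ve{W}|^2 + |\ve{Y}|^2\bigr) e^{\frac{N_0 r^2}{\tau}}\,\dX \leq N_1.
\]
Since the components of $\ve{W}$ are $\tau$-rescaled components of $\ve{X} = \delt\Rmt$, reverting the rescaling yields the desired estimate for $\ve{X}$ after choosing $r$ of order one (the assumption $|z_0| \geq 8r_0$ guarantees that $\Dc_1(z_0)$ stays safely inside the domain of definition). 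The derivative bound on $|\nabla\ve{X}|^2$ is then obtained by including $\nabla\ve{X}$ directly in the Carleman estimate, or, alternatively, by local parabolic interpolation against the uniform polynomial bounds \eqref{eq:xydecay}.

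The main obstacle is precisely the motivation for introducing the refined system in the first place: the basic inequality \eqref{eq:xypdeode} carries a factor of $\tau^{-1}$ in front of $|\ve{X}|$ that \eqref{eq:pdecarleman2} cannot absorb directly. The bookkeeping subtlety is to select the exponents $\beta$ and $\gamma_j$ in the refined system so that the triangular absorption closes at each level and the inductive constants $C L^m m!$ do not grow so quickly that the subsequent summation against $(r^2/\tau)^m$ fails. Once these exponents are chosen correctly, the Carleman iteration goes through in a standard fashion.
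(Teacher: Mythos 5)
Your high-level picture --- use the refined triangular system $\ve{W}$, apply the Carleman inequalities from Theorems \ref{thm:pdecarleman2} and \ref{thm:odecarleman2}, and sum the resulting family of $\tau^{-m}$-weighted bounds against $m!$ to manufacture an exponential weight --- is correct and matches the paper. But the inductive mechanism you propose is different from the paper's, and it has a genuine gap.

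You propose to do induction on the component index $i$, treating $W^0,\dots,W^{i-1}$ as known data when estimating $W^i$. This does not close, because the inequalities \eqref{eq:wsimple}--\eqref{eq:yweq} are not triangular in $i$: the right side of the equation for each $W^i$ contains $\tau^\beta(|\ve{W}| + |\ve{Y}|)$, involving the \emph{entire} vector $\ve{W}$ (including $W^j$ with $j > i$) and $\ve{Y}$; and the ODE inequality for $\ve{Y}$ contains $|\ve{W}| + |\nabla\ve{W}|$, again involving all components at once. Only the \emph{singular} terms $\tau^{-\gamma}|W^j|$ are triangular. So estimating $W^0$ and $\ve{Y}$ in your base case already requires bounds on the full $\ve{W}$. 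The paper avoids this by instead doing a single induction on the weight power $m$ and treating all the components $W^i$ \emph{simultaneously}: it applies the Carleman inequality \eqref{eq:pdecarleman2} to each $W^i_\epsilon$ with a staggered exponent $\alpha_i = m/2 + \nu_i$, where $\nu_i = (q-i)(\gamma+b)$, and then \emph{sums over $i$}. The decreasing sequence of $\alpha_i$ is precisely what lets the singular $\tau^{-\gamma}|W^j|$ terms ($j<i$) be absorbed into the left side of the Carleman inequality for $W^j$ (whose weight is stronger), and the choice $\beta \geq \beta_0 = (q+1)b + q\gamma$ lets the benign $\tau^\beta|\ve{W}|$ terms (all $j$) be absorbed via \eqref{eq:ba}. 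None of this works if you only have the inequalities for $j < i$ in hand.

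A second gap: your sketch does not address the actual reason the paper is forced into an induction in the first place, namely the lossiness of the ODE Carleman estimate \eqref{eq:odecarleman2g}, whose right side carries a factor $|z-z_0|^2$ inside the integrand which cannot be absorbed into the left side directly. This is what drives Section \ref{sec:expdecay}: the paper splits the bad term into regions $\{|z-z_0|^2 < m\tau/8\}$ and its complement, uses the Gaussian decay on the complement, and then converts the Gaussian-weighted bound of degree $m-1$ into a non-Gaussian bound via the delocalization Lemma \ref{lem:fgtofng} and Lemma \ref{lem:xmngtoympng} before feeding it back in. That back-and-forth between Gaussian-weighted estimates at degree $m$ and un-Gaussian estimates at degree $m-1$ is the content of the $m$-induction, and it has no analogue in your proposal. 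Finally, your idea of trading a factor of $\tau^{-m}$ for $\tau^{-(m+\gamma_j)}$ would cascade: for $W^q$ you would need $W^0$ at degree $m + q\gamma$, with constants $(m+q\gamma)!$ rather than $m!$, which at best complicates the summation step and at worst destroys the uniformity in $m$ that the final Stirling-type argument relies on. The staggered-$\alpha_i$ device in the paper keeps all bounds at the common degree $m$ precisely to avoid this.
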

In Proposition \ref{prop:expdecay2} we will use the self-similarity of $\ve{X}$ and $\ve{Y}$ to deduce an estimate on the \emph{space-time} vanishing rate of the sections from
\eqref{eq:expdecay1}. 
However, the self-similarity of $\ve{X}$ and $\ve{Y}$ will not be used in the proof of Theorem \ref{thm:expdecay1} or elsewhere in the the next two sections.

As discussed in the introduction, it does not seem possible here to deduce the exponential decay of  $\ve{X}$ and $\ve{Y}$ from a direct application of our Carleman estimates (Theorems \ref{thm:pdecarleman2} and \ref{thm:odecarleman2} below) to the system \eqref{eq:xypdeode} since these estimates cannot absorb the coefficients of $\ve{X}$ on the right side of \eqref{eq:xypdeode} which blow up at a rate proportional to $1/\tau$.  However, this obstacle is really only an artifact of the coarse way 
in which we have so far estimated the reaction terms in the evolution equation for $\delt\Rmt$ in \eqref{eq:xypdeode}. 

We will now analyze the algebraic structure of these terms more carefully and introduce a replacement for $\ve{X}$ in which the components of $\delt\Rmt$ relative to the splitting of $T\Cc$ are grouped and rescaled according to their own individual rates of decay. We will define
this replacement in Proposition \ref{prop:wtriangular} once the preliminary calculations are out of the way.
 Though involved, the computations in this section are guided by a fairly simple underlying strategy. See Section \ref{ssec:strategy} for a short explanation. We summarize the properties of the resulting ``refined'' system we will need in Proposition \ref{prop:wsys} at the end of the section.

\subsection{Notational conventions} We will not make use the self-similarity of $\gt$ in the computations below, so for the rest of this section, 
$\gt = \gt(\tau)$ will simply represent a smooth solution to the backward Ricci flow \eqref{eq:brf} satisfying \eqref{eq:hstdecay}  on $\Cc_{r_0}\times (0, 1]$. 
We will continue to use $g = g(\tau)$ to represent the normalized shrinking cylindrical solution on $\Cc\times (0, 1]$.

Let $\Hc$ and $\Kc$ denote the subbundles of $T\Cc$ with fibers $\Hc_{(\theta, z)} = T_{(\theta, z)}(\Ss^k\times \{z\})$ and $\Kc_{(\theta, z)} = T_{(\theta, z)}(\{\theta\}\times \RR^{n-k})$,
and let $\Ps: T\Cc\to \Hc$ and $\Pb:T\Cc\to \Kc$ denote the corresponding $g$-orthogonal projections onto these subbundles. The projections $\Ps$ and $\Pb$ are smooth, globally defined families of $(1, 1)$-tensor fields
on $\Cc\times(0, 1]$ satisfying
\[
 \Ps^2 = \Ps, \quad \Pb^2 =\Pb, \quad \Ps + \Pb = \operatorname{Id}_{T\Cc}, \quad g(\Ps \cdot, \Pb \cdot) = 0,
\]
and
\[
  \nabla \Ps = \nabla \Pb = 0,\quad  \pdtau \Ps = \pdtau \Pb =  D_{\tau}\Ps = D_{\tau}\Pb = 0.
\]

Using $\Ps$ and $\Pb$, we can track the components of any tensor relative to the splitting $T\Cc = \Hc \oplus \Kc$.
We will use a notational system of underlined and barred indices to distinguish these components.  Underlined indices will denote components acting
on directions tangent to the spherical factor and barred indices will denote components acting on directions tangent to the Euclidean factor. Thus, for example, we will write
\[
 \Rt_{\ub{a}\ub{b}} = \Rt_{ij}\Ps_a^i\Ps_b^j, \ \Rt_{\ub{a}\bar{b}} = \Rt_{ij}\Ps_a^i\Pb_b^j, \ \Rt_{\bar{a}\ub{b}} = \Rt_{ij}\Pb_a^i\Ps_b^j, \ \Rt_{\bar{a}\bar{b}} = \Rt_{ij}\Pb_a^i\Pb_b^j.
\]
An unadorned index will represent an unmodified component, e.g.,
\[
 \Rt_{\ub{a}b} =\Rt_{ib}\Ps_a^i.
\]

We emphasize that each of the above expressions represent \emph{globally-defined} tensor fields and that the underlined and barred indices denote modifications to the tensor field, 
not the expression of the components of the tensor 
relative to a particular local frame. 

In general, we will not need to pay careful attention to the algebraic structure of terms that are quadratic or better in factors of $h$ and its derivatives, and it will be convenient to have an economical notation for tensors with rapid space-time decay whose precise form we can safely ignore.

\begin{notation} The expression $\oinf$ will denote various families of tensor fields $V = V(\tau)$ that vanish to infinite order in space and time in the sense that
 \[
   \sup_{\Cc_{r_0}\times(0, 1]}\left(\frac{|z|^{2l}}{\tau^l}\right)|V|(\theta, z, \tau) < \infty 
 \]
 for all $l\geq 0$. Here $|\cdot| = |\cdot|_{g(\tau)}$ as before.
\end{notation}
Finally, we will also use a repeated index to denote a contraction with the metric $g$, and write out explicitly any contraction with $\gt$.

\subsection{The gradient of the scalar curvature.}
We begin our analysis by examining the evolution of the differential of the scalar curvature. In this and the calculations that follow,
we will focus our attention on the structure of the linearization of the reaction terms based at the cylindrical solution $g$.

\begin{proposition}\label{prop:dscalev}
The differential $\delt \Rt$ of the scalar curvature of $\gt$ satisfies
\begin{equation}\label{eq:dscalev}
 |(D_{\tau} + \Delta)\tau^2\nablat_a \Rt| \leq |\oinf| (|h| + |\nabla h| +  |\nablat \Rct|) + 2\sqrt{n-k}\tau |\nablat_a\Rt_{\bar{b}\bar{c}}|
\end{equation}
on $\Cc_{r_0}\times (0, 1]$.
\end{proposition}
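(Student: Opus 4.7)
My plan is to differentiate the scalar-curvature evolution and use the block structure of the cylindrical Ricci together with a trace form of the contracted second Bianchi identity to produce a precise cancellation that removes the $O(1/\tau)$ contribution of $\Rct$ from the reaction term, leaving only a residual supported on the fully-Euclidean components of $\nablat \Rct$.

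Starting from the standard identity $(\partial_\tau + \Deltat)\Rt = -2|\Rct|^2$ under the backward Ricci flow, I would apply $\nablat_a$ together with the Bochner identity $\Deltat \nablat_a u = \nablat_a \Deltat u + \Rct_a^b \nablat_b u$ (valid for any function $u$, which suffices since $\Rt$ is a scalar and $\nablat_a\Rt = \partial_a \Rt$) to obtain
\begin{equation*}
(\partial_\tau + \Deltat)\nablat_a\Rt = -4\Rct^{ij}\nablat_a \Rct_{ij} + \Rct_a^b \nablat_b \Rt.
\end{equation*}
Converting to $D_\tau + \Delta$ introduces the two corrections $-R_a^p\nablat_p \Rt$ (from $D_\tau - \partial_\tau$ on a $1$-form) and $(\Delta - \Deltat)\nablat_a\Rt$; by Lemma \ref{lem:metdiff} together with \eqref{eq:hstdecay} — which in particular forces $\Rt - R$ and all its spatial derivatives to lie in $\oinf$ — both are, after multiplication by $\tau^2$, bounded by $|\oinf|(|h|+|\nabla h|+|\nablat\Rct|)$.

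The crux is the term $-4\tau^2 \Rct^{ij}\nablat_a\Rct_{ij}$. I would split $\Rct = R + (\Rct - R)$ with $R$ the Ricci of the cylinder; since $\Rct - R = \oinf$ by the trace of \eqref{eq:rmdiff}, the difference contributes only to the $\oinf$ remainder. For the $R$ piece, the key algebraic fact is that $R_{ab}$ is supported entirely on the spherical factor, with $R_{\ub{a}\ub{b}} = g_{\ub{a}\ub{b}}/(2\tau)$ and all other components vanishing. This gives
\begin{equation*}
-4\tau^2 R^{ij}\nablat_a\Rct_{ij} = -2\tau\, g^{\ub{i}\ub{j}}\nablat_a\Rct_{\ub{i}\ub{j}}.
\end{equation*}
Now the trace identity $\gt^{ij}\nablat_a\Rct_{ij} = \nablat_a\Rt$ (from $\nablat\gt = 0$ and $\Rt = \operatorname{tr}_{\gt}\Rct$), combined with the block decomposition $g^{ij} = g^{\ub{i}\ub{j}} + g^{\bar{i}\bar{j}}$ and $\gt^{-1}-g^{-1} = \oinf$, rewrites $g^{\ub{i}\ub{j}}\nablat_a\Rct_{\ub{i}\ub{j}}$ as $\nablat_a\Rt - g^{\bar{i}\bar{j}}\nablat_a\Rct_{\bar{i}\bar{j}}$ modulo an $\oinf\ast\nablat\Rct$ error. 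Assembling, one obtains
\begin{equation*}
\tau^2(D_\tau + \Delta)\nablat_a\Rt = -2\tau\, \nablat_a\Rt + 2\tau\, g^{\bar{i}\bar{j}}\nablat_a\Rct_{\bar{i}\bar{j}} + E,
\end{equation*}
with $|E| \leq |\oinf|(|h|+|\nabla h|+|\nablat\Rct|)$.

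The desired bound then follows from $(D_\tau + \Delta)(\tau^2\nablat_a\Rt) = 2\tau\nablat_a\Rt + \tau^2(D_\tau + \Delta)\nablat_a\Rt$: the two $\pm 2\tau\nablat_a\Rt$ pieces cancel, leaving $2\tau\, g^{\bar{i}\bar{j}}\nablat_a\Rct_{\bar{i}\bar{j}} + E$, whose norm is controlled by $2\tau|\nablat_a\Rt_{\bar{b}\bar{c}}| + |\oinf|(|h|+|\nabla h|+|\nablat\Rct|)$ once the dimensional trace constant is absorbed. The main obstacle — and the reason the statement must retain the genuinely $O(\tau)$ contribution $2\tau|\nablat_a\Rt_{\bar{b}\bar{c}}|$ rather than absorbing everything into $\oinf$ terms — is identifying the correct form of the cancellation: the $\tau^{-1}$-singular reaction produced by $|\Rct|^2$ on the cylinder, once multiplied by $\tau^2$, must match exactly the "trivial" $2\tau\nablat_a\Rt$ coming from the Leibniz rule, and the only mechanism that achieves this is the contracted Bianchi identity combined with the fact that cylindrical Ricci is carried entirely on the spherical factor. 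The Euclidean-trace remainder $g^{\bar{i}\bar{j}}\nablat_a\Rct_{\bar{i}\bar{j}}$ is the irreducible byproduct, and is isolated here so that it can be handled later within the refined system of Section \ref{sec:pdeode2}.
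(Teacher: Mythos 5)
Your proposal is correct and follows essentially the same route as the paper: differentiate the scalar curvature evolution, pass from $\partial_\tau + \Deltat$ to $D_\tau + \Delta$ (absorbing the corrections into $\oinf$ terms), use $R_{ij} = \tfrac{1}{2\tau}\Ps_{ij}$ to identify the singular reaction term as $-\tfrac{2}{\tau}\nablat_a\Rt_{\ub{p}\ub{p}}$, trade the spherical trace for the full trace minus the Euclidean trace via $\gt^{ij}\nablat_a\Rct_{ij} = \nablat_a\Rt$ and $\gt^{-1} - g^{-1} = \oinf$, and then let the $-\tfrac{2}{\tau}\nablat_a\Rt$ piece cancel against the Leibniz contribution from $\partial_\tau(\tau^2)$. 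One terminological slip worth noting: you twice attribute the crucial exchange to the \emph{contracted second Bianchi identity}, but — as you correctly write in the body of your argument — the identity actually invoked is the trace compatibility $\gt^{ij}\nablat_a\Rct_{ij} = \nablat_a\Rt$, a consequence of $\nablat\gt = 0$ and $\Rt = \operatorname{tr}_{\gt}\Rct$, not of Bianchi. The contracted second Bianchi identity enters only later, in Proposition \ref{prop:drcev}, to treat certain components of $\nablat\Rct$.
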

\begin{proof} From the standard formula
\[
 \left(\pdtau + \Deltat\right)\Rt = -2|\Rct|^2_{\gt},
\]
we have
\begin{align*}
\left(\pdtau + \Deltat\right)\nablat_a \Rt & = -4\gt^{pq}\gt^{rs}\nablat_a \Rt_{pr}\Rt_{qs} + \gt^{pq}\Rt_{ap}\nablat_q \Rt,
\end{align*}
and then
\begin{align*}
\begin{split}
 \left(D_{\tau} + \Deltat\right)\nablat_a \Rt & =  -4\gt^{pq}\gt^{rs}\delt_a \Rt_{pr}\Rt_{qs} + (\gt^{pq}\Rt_{ap} - g^{pq}R_{ap})\nablat_q \Rt.
\end{split}
 \end{align*} 
 Using \eqref{eq:hstdecay} and \eqref{eq:laplacediff}, and the fact that $R_{ij} = (1/2\tau)\Ps_{ij}$, where $\Ps_{ij} = g_{jk}\Ps_{i}^k = g_{\ub{i}\ub{j}}$, we may rewrite this as
 \begin{align*}
 \begin{split}
   \left(D_{\tau} + \Delta\right)\nablat_a \Rt & = \Delta \nablat_a \Rt - \Deltat\nablat_a \Rt -4\gt^{pq}\gt^{rs}\nablat_a \Rt_{pr}\Rt_{qs} + (\gt^{pq}\Rt_{ap} - g^{pq}R_{ap})\nablat_q \Rt
 \end{split}\\
    &= \oinf \ast (h + \nabla h + \delt \Rct)  -4\delt_a \Rt_{pq} R_{pq}\\
    &= \oinf \ast (h +  \nabla h + \delt \Rct)  -\frac{2}{\tau}\nablat_a \Rt_{pq} \Ps_{pq},
 \end{align*}
 and, using our indexing convention, again as
 \begin{align}\nonumber
   \left(D_{\tau} + \Delta\right)\nablat_a \Rt  
   &=\oinf \ast (h + \nabla h + \nablat \Rct)  -\frac{2}{\tau}\nablat_a \Rt_{\ub{p}\ub{p}}\\
  \nonumber
    &= \oinf \ast (h + \nabla h + \nablat \Rct)  -\frac{2}{\tau}\nablat_a \Rt_{pp} + \frac{2}{\tau} \nablat_a \Rt_{\bar{p}\bar{p}}\\
  \label{eq:scalevpre1}
    &= \oinf \ast( h + \nabla h + \nablat \Rct) -\frac{2}{\tau}\nablat_a \Rt + \frac{2}{\tau} \nablat_a \Rt_{\bar{p}\bar{p}}.
 \end{align}
Here, to obtain the second line in the above computation, we used that
\[
 \nablat_a \Rt_{pp} = g^{pq}\nablat_a \Rt_{pq} = (g^{pq} - \gt^{pq})\nablat_a \Rt_{pq}  + \nablat_a \Rt = \oinf \ast \nablat \Rct + \nablat_a \Rt.
\]
We then multiply $\nablat \Rt$ by $\tau^2$ so that an application of $D_{\tau}$ will pick off the second term on the right in \eqref{eq:scalevpre1}. This yields equation \eqref{eq:dscalev}.
\end{proof}

\subsubsection{A remark on the strategy}\label{ssec:strategy}
Above, in the computation leading to \eqref{eq:scalevpre1}, we have traded the singular term proportional to $\nablat_a \Rt_{\ub{p}\ub{p}}$
for a singular term proportional to $\nablat_a \Rt_{\bar{p}\bar{p}}$, exchanging a tensor with two underlined indices for one with two barred indices. 
Although we have not eliminated the singular coefficient, we have reassigned it 
from a primarily spherical component of $\delt\Rct$ to a primarily Euclidean one.

The computations for $\delt \Rct$ and $\delt \Rmt$ that follow are essentially just more elaborate versions of this ``under-for-over'' exchange, aimed at rearranging appropriately rescaled components of $\nablat\Rt$, $\nabla \Rct$, and $\nablat\Rmt$ into a system whose singular part has a strictly triangular structure.
This structure will allow us to transfer the blow-up in the equations for the spherical and mixed components of the system to the equations of components with fewer spherical directions. 
At the end of the line are the principally Euclidean components of $\nablat\Rmt$ which satisfy evolution equations with reaction terms that are quadratic-or-better in the other elements of the system and have the capacity to absorb the blow-up that we have sent in their direction. 

\subsection{Decomposition of $\delt\Rct$}
We next examine the evolution of the covariant derivative of the Ricci tensor.   Define
\[
  \Gt_{ijk} = \nablat_i \Rt_{jk} - \nablat_{j}\Rt_{ik},
\]
and, for convenience,
\[
    \cnst = \frac{1}{k-1}.
\]

\begin{proposition}\label{prop:drcev}
 The components of $\delt \Rct$ satisfy the equations
 \begin{align}
  \begin{split}
  \label{eq:drc11x}
   |(D_{\tau} + \Delta)\tau\nablat_{\bar{a}}\Rt_{\bar{\jmath}k}| &\lesssim |\nablat_{\bar{a}}\Rt_{\bar{\imath} \bar{\jmath} \bar{l} k}|,
  \end{split}
  \end{align}
  \begin{align}
  \begin{split}
  \label{eq:drc110}
   |(D_{\tau} + \Delta)\tau^{1-\cnst}\nablat_{\bar{a}}\Rt_{\ub{j}\ub{k}}| &\lesssim \tau^{-\cnst}(|\nablat_{\bar{a}}\Rt| 
   + |\nablat_{\bar{a}}\Rt_{\bar{\jmath}\bar{k}}| + |\nablat_{\bar{a}}\Rt_{\ub{i}\bar{\jmath}\bar{k} \ub{l}}|),
  \end{split}
  \end{align}
  \begin{align}
   \begin{split}
  \label{eq:drc011}
   |(D_{\tau} + \Delta)\tau\nablat_{\ub{a}}\Rt_{\bar{\jmath}\bar{k}}| 
   &\lesssim
   |\nablat_{\ub{a}}\Rt_{\bar{\imath}\bar{\jmath}\bar{k}\bar{l}}|,
  \end{split}
  \end{align}
  \begin{align}
  \begin{split}
  \label{eq:drc001}
   &|(D_{\tau} + \Delta)\tau^{1-\cnst}\nablat_{\ub{a}}\Rt_{\ub{j}\bar{k}}| \lesssim \tau^{-\cnst}(|\nablat_{\bar{a}}\Rt| + |\Gt_{\ub{a}\ub{j}\bar{k}}| +
   |\nablat_{\bar{a}}\Rt_{\bar{\jmath}\bar{k}}| + |\nablat_{\bar{a}}\Rt_{\ub{i} \bar{\jmath}\bar{k} \ub{l}}|),
  \end{split}
  \end{align}
 and
 \begin{align}
  \begin{split}
  \label{eq:drc000}
 & |(D_{\tau} + \Delta)\tau^{1-3\cnst}\nablat_{\ub{a}}\Rt_{\ub{j}\ub{k}}|\\
   &\qquad \lesssim \tau^{-3\cnst}\big(|\nablat_{\ub{a}}\Rt| + |\Gt_{\ub{a}\ub{j}\ub{k}}| +
   |\nablat_{\ub{a}}\Rt_{\bar{\jmath}\bar{k}}| + |\nablat_{\bar{a}}\Rt_{\ub{j}\bar{k}}|
   + |\nablat_{\ub{a}}\Rt_{\ub{i} \bar{\jmath}\bar{k} \ub{l}}|\big),
  \end{split}
 \end{align}
 where the notation $|U| \lesssim |V|$ indicates that
 \[
      |U| \leq |\oinf|(|h| + |\nabla h| + |\nablat \Rmt|) + C|V|
 \]
for some constant $C = C(n) > 0$. The components of  $\Gt_{\ub{a}\ub{j}k}$ satisfy 
\begin{align}
 \begin{split}\label{eq:g001}
 |(D_{\tau} + \Delta)\tau^{1+\cnst}\Gt_{\ub{a}\ub{j}\bar{k}}| &\lesssim \tau^{\cnst}|\nablat_{\bar{a}}\Rt_{\ub{i} \ub{j}\bar{k} \bar{l}}|,
 \end{split}\\
 \begin{split}\label{eq:g000}
 |(D_{\tau} + \Delta)\tau \Gt_{\ub{a}\ub{j}\ub{k}}| &\lesssim 
 |\nablat_{\ub{a}}\Rt| +
   |\nablat_{\ub{a}}\Rt_{\bar{\jmath}\bar{k}}| + |\nablat_{\bar{a}}\Rt_{\ub{j}\bar{k}}| + |\nablat_{\bar{a}}\Rt_{\ub{i} \ub{j}\ub{k} \bar{l}}|.
   \end{split}
\end{align}
\end{proposition}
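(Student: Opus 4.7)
The strategy is to mimic the derivation of Proposition \ref{prop:dscalev} but applied component-by-component to $\delt\Rct$ (for the five equations \eqref{eq:drc11x}--\eqref{eq:drc000}) and then to the antisymmetrized tensor $\Gt_{ajk}$ (for the two equations \eqref{eq:g001}--\eqref{eq:g000}). I would carry out the argument in four stages.

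First, I derive the schematic evolution equation for $\delt_a \Rt_{jk}$. Starting from the standard backward-Ricci-flow formula $(\pdtau + \Deltat)\Rt_{jk} = -2\gt^{pr}\gt^{qs}\Rt_{jpkq}\Rt_{rs} + 2\gt^{pq}\Rt_{jp}\Rt_{kq}$, differentiating in $\delt_a$, and applying the commutators $[\pdtau, \delt_a]$ and $[\Deltat, \delt_a]$, I obtain an equation whose reaction terms are all of the schematic form $\gt^{-1} \ast \Rmt \ast \delt\Rct$. Converting $\pdtau + \Deltat$ into $D_\tau + \Delta$ via Lemma \ref{lem:metdiff} absorbs the error into a remainder of type $\oinf \ast (h + \nabla h + \delt\Rct)$ on account of \eqref{eq:hstdecay}.

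Second, I linearize the reaction at the cylindrical metric $g$. Writing $\Rt_{pq} = R_{pq} + (\Rt_{pq} - R_{pq})$ and $\Rt_{jpkq} = R_{jpkq} + (\Rt_{jpkq} - R_{jpkq})$, the differences feed into the $\oinf$ remainder by \eqref{eq:rmdiff} and the decay assumption, while the cylindrical parts, together with $R_{pq} = (2\tau)^{-1}\Ps_{pq}$ and the analogous formula for the cylindrical Riemann tensor, produce a $\tau^{-1}$ coefficient multiplying a handful of spherical contractions of $\delt\Rct$ such as $\delt_a\Rt_{\ub{p}\ub{p}}$, $\delt_{\ub{a}}\Rt_{jk}$, or $\delt_p \Rt_{\ub{p}k}$.

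Third, I perform the ``under-for-over'' exchange component by component. After projecting the equation onto a specific component of $\delt\Rct$ using $\Ps$ and $\Pb$, each singular spherical contraction is rewritten via the identity $V_{pp} = V_{\ub{p}\ub{p}} + V_{\bar{p}\bar{p}}$, the approximation $g^{pq}\delt_a\Rt_{pq} = \delt_a\Rt + \oinf \ast \delt\Rct$, and the contracted Bianchi identity $2\gt^{ij}\delt_i \Rt_{jk} = \delt_k \Rt$. This expresses each spherical trace as a full trace minus its Euclidean counterpart, plus antisymmetrization pieces proportional to $\Gt$. The upshot is that the equation for each component $V$ of $\delt\Rct$ takes the form $(D_\tau + \Delta)V = -(C/\tau)V + \text{remainder}$, where $C$ is an explicit dimensional constant, and the remainder contains only components that are strictly lower in the triangular hierarchy (fewer underlined indices), plus $\oinf$ errors.

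Fourth, I weight by $\tau^\alpha$ to cancel the leading singular term. Since $(D_\tau + \Delta)(\tau^\alpha V) = \alpha\tau^{\alpha-1}V + \tau^\alpha(D_\tau + \Delta)V$, choosing $\alpha = C$ eliminates $-(C/\tau)V$, producing exactly the displayed inequalities. The exponents $1$, $(k-2)/(k-1)$, $(k-4)/(k-1)$, and $k/(k-1)$ arise from the trace coefficients of $\Ps$ against the various projector combinations generated by the exchange. For $\Gt$, the same procedure applies, but the equations \eqref{eq:g001}, \eqref{eq:g000} are derived by antisymmetrizing the system for $\delt\Rct$; the antisymmetrization partially cancels the leading singular terms, explaining the slightly different weights. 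The principal obstacle is the bookkeeping needed to verify that the singular part of the system is truly strictly triangular, especially for $\delt_{\ub{a}}\Rt_{\ub{j}\ub{k}}$ where multiple chained exchanges are needed and $\Gt$ must enter as an intermediary to reroute the $\tau^{-1}$ contributions; a subsidiary difficulty is matching each weighting exponent to the precise trace coefficient produced by the projectors, which is non-obvious but purely computational.
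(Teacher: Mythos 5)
Your proposal follows essentially the same route as the paper: differentiate the Ricci evolution equation, pass to $(D_{\tau}+\Delta)$ absorbing errors into the $\oinf$ remainder, linearize the reaction at the cylinder using $R_{ij}=\tfrac{1}{2\tau}\Ps_{ij}$ and the explicit cylindrical curvature, trade spherical traces for full minus Euclidean traces (with the contracted second Bianchi identity), choose the $\tau$-weights to cancel the resulting $-C/\tau$ coefficients, and antisymmetrize in the first two indices to obtain the equations for $\Gt$. This is the paper's argument in all essential respects; the only imprecision is cosmetic (e.g.\ describing the antisymmetrization as a cancellation, whereas it mainly removes the $\Ps_{aj}$-symmetric terms and sums the coefficients on $\Gt$).
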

\begin{proof}
 Starting from the equation
\[
\left(\pdtau + \Deltat\right)\Rt_{jk} = -2\gt^{pr}\gt^{qs}\Rt_{jpqk}\Rt_{rs} + 2\gt^{pq}\Rt_{jp}\Rt_{kq},
\]
we obtain
\begin{align*}
\begin{split}
&\left(\pdtau + \Deltat \right)\nablat_a \Rt_{jk} = 
 \gt^{pq}\left(\Rt_{ap}\nablat_q \Rt_{jk} + \Rt_{jp}\nablat_a \Rt_{qk} + \Rt_{kp} \nablat_a \Rt_{jq}\right)\\
&
-2\gt^{pr}\gt^{qs}\left(\nablat_a\Rt_{jpqk} \Rt_{rs} + \Rt_{jpqk}\nablat_a \Rt_{rs}
+\Rt_{pajq}\nablat_r \Rt_{sk} + \Rt_{pakq}\nablat_r \Rt_{sj}\right),
\end{split}
\end{align*}
and hence 
\begin{align*}
\begin{split}
&\left(D_{\tau} + \Delta \right)\nablat_a \Rt_{jk} = \Delta \nablat_a \Rt_{jk} - \Deltat \nablat_a \Rt_{jk}\\
& + (\gt^{pq}\Rt_{ap} - R_a^q)\nablat_q \Rt_{jk} + (\gt^{pq}\Rt_{jp} - R_j^q)\nablat_a \Rt_{qk} + (\gt^{pq}\Rt_{kp} - R^q_k) \nablat_a \Rt_{jq}\\
&
-2\gt^{pr}\gt^{qs}\left(\nablat_a\Rt_{jpqk} \Rt_{rs} + \Rt_{jpqk}\nablat_a \Rt_{rs}
+\Rt_{pajq}\nablat_r \Rt_{sk} + \Rt_{pakq}\nablat_r \Rt_{sj}\right).
\end{split}
\end{align*}
So, in view of \eqref{eq:hstdecay} and \eqref{eq:laplacediff}, we have
\begin{align*}
\begin{split}
&\left(D_{\tau} + \Delta \right)\nablat_a \Rt_{jk} = \oinf \ast(h + \nabla h + \nablat \Rmt) + E_{ajk},
\end{split}
\end{align*}
where
\begin{align}
\begin{split}
 \label{eq:edef}
E_{ajk} &= -2 \big(\nablat_a\Rt_{pq}R_{jpqk} +\nablat_p \Rt_{qj}R_{pakq} + \nablat_p \Rt_{qk}R_{pajq} + \nablat_a \Rt_{jpqk} R_{pq}\big).
\end{split}
\end{align}

Now, according to our normalization, on the evolving cylinder we have
\[
 R_{ijkl} = \frac{\cnst}{2\tau}(\Ps_{il}\Ps_{jk} - \Ps_{ik}\Ps_{jl}), \quad R_{ij} = \frac{1}{2\tau}\Ps_{ij},
\]
so \eqref{eq:edef} becomes
\begin{align}
\begin{split}\nonumber
E_{ajk} &= -\frac{1}{\tau} \nablat_a \Rt_{jpqk}\Ps_{pq} + \frac{\cnst}{\tau}\nablat_a\Rt_{pq}(\Ps_{jq}\Ps_{pk} - \Ps_{jk}\Ps_{pq})\\
&\phantom{=}+\frac{\cnst}{\tau}\nablat_p \Rt_{qj}(\Ps_{pk}\Ps_{aq} - \Ps_{pq}\Ps_{ak})  + \frac{\cnst}{\tau}\nablat_p \Rt_{qk}(\Ps_{pj}\Ps_{aq} - \Ps_{pq}\Ps_{aj})
\end{split}\\
\begin{split}\label{eq:e1}
&= -\frac{1}{\tau} \nablat_a \Rt_{j\underline{pp}k} + \frac{\cnst}{\tau}(\nablat_a\Rt_{\ub{j}\ub{k}} - \Ps_{jk}\nablat_a\Rt_{\ub{p}\ub{p}}) + \frac{\cnst}{\tau}(\nablat_{\ub{k}} \Rt_{\ub{a}j} - \nablat_{\ub{p}}\Rt_{\ub{p}j}\Ps_{ak})\\
&\phantom{=}
 + \frac{\cnst}{\tau}(\nablat_{\ub{j}} \Rt_{\ub{a}k} -  \nablat_{\ub{p}}\Rt_{\ub{p}k}\Ps_{aj}).
\end{split}
\end{align}
Computing as in the proof of Proposition \ref{prop:dscalev}, we see that
\[
 \nablat_a \Rt_{j\ub{p}\ub{p}k} = \oinf \ast \nablat \Rmt + \nablat_a \Rt_{jk} - \nablat_a \Rt_{j\bar{p}\bar{p}k},
\]
and
\begin{align*}
 \nablat_a\Rt_{\ub{p}\ub{p}} &= \oinf \ast \nablat \Rct + \nablat_a\Rt - \nablat_a\Rt_{\bar{p}\bar{p}},\\
 \nablat_{\ub{p}}\Rt_{\ub{p}j} &=\oinf \ast\nablat \Rct + \frac{1}{2}\nablat_j \Rt -\nablat_{\bar{p}}\Rt_{\bar{p}j}.
\end{align*}

Returning, then, to \eqref{eq:e1} and putting things together, we obtain
\begin{align}
\begin{split}\label{eq:e2}
E_{ajk} &= \oinf \ast \nablat \Rmt -\frac{1}{\tau}\nablat_a \Rt_{jk} +\frac{1}{\tau} \nablat_a \Rt_{j\bar{p}\bar{p}k}\\
 &\phantom{=} +\frac{\cnst}{\tau}\left(\nablat_a\Rt_{\ub{j}\ub{k}} + \nablat_{\ub{k}} \Rt_{\ub{a}j} + \nablat_{\ub{j}} \Rt_{\ub{a}k}\right)+\frac{\cnst}{\tau}\Ps_{jk}\left(\nablat_a\Rt_{\bar{p}\bar{p}}- \nablat_a \Rt\right)\\
 &\phantom{=}  + \frac{\cnst}{\tau}\Ps_{ak}\left(\nablat_{\bar{p}}\Rt_{\bar{p}j}
 - \frac{1}{2}\nablat_jR\right) + \frac{\cnst}{\tau}\Ps_{aj}\left(\nablat_{\bar{p}}\Rt_{\bar{p}k}- \frac{1}{2}\nablat_k \Rt\right)\\
 &= \oinf \ast \nablat \Rmt + F_{ajk},
\end{split}
\end{align}
where, by inspection, the components of the tensor $F_{ajk}$ satisfy
\begin{align}
\label{eq:f11x}
 F_{\bar{a}\bar{\jmath}k} &= - \frac{1}{\tau}\nablat_{\bar{a}} \Rt_{\bar{\jmath}k} +\frac{1}{\tau} \nablat_{\bar{a}} \Rt_{\bar{\jmath}\bar{p}\bar{p}k},
 \end{align}
 \begin{align}
\begin{split}
\label{eq:f100}
 F_{\bar{a}\ub{j}\ub{k}} &= - \left(\frac{1-\cnst}{\tau}\right)\nablat_{\bar{a}}\Rt_{\ub{j}\ub{k}} + \frac{1}{\tau}\nablat_{\bar{a}} \Rt_{\ub{j}\bar{p}\bar{p}\ub{k}}\\
 &\phantom{=}+\frac{\cnst}{\tau}\Ps_{jk}\left(\nablat_{\bar{a}}\Rt_{\bar{p}\bar{p}}- \nablat_{\bar{a}} \Rt\right),
\end{split}
\end{align}
\begin{align}
\begin{split}
\label{eq:f011}
 F_{\ub{a}\bar{\jmath}\bar{k}} &= - \frac{1}{\tau}\nablat_{\ub{a}} \Rt_{\bar{\jmath}\bar{k}} +\frac{1}{\tau} \nablat_{\ub{a}} \Rt_{\bar{\jmath}\bar{p}\bar{p}\bar{k}},
\end{split}
\end{align}
\begin{align}
\begin{split}
\label{eq:f001}
 F_{\ub{a}\ub{j}\bar{k}} &= - \left(\frac{1-\cnst}{\tau}\right)\nablat_{\ub{a}} \Rt_{\ub{j}\bar{k}} -\frac{\cnst}{\tau}\Gt_{\ub{a}\ub{j}\bar{k}} 
 +\frac{1}{\tau} \nablat_{\ub{a}} \Rt_{\ub{j}\bar{p}\bar{p}\bar{k}}\\
		    &\phantom{=} +\frac{\cnst}{\tau}\Ps_{aj}\left(\nablat_{\bar{p}}\Rt_{\bar{p}\bar{k}}- \frac{1}{2}\nablat_{\bar{k}} \Rt\right),
\end{split}
\end{align}
and
\begin{align}
\begin{split}
\label{eq:f000}
F_{\ub{a}\ub{j}\ub{k}} &= -\left(\frac{1-3\cnst}{\tau}\right)\nablat_{\ub{a}} \Rt_{\ub{j}\ub{k}} + \frac{\cnst}{\tau}(\Gt_{\ub{j}\ub{a}\ub{k}} 
+ \Gt_{\ub{k}\ub{a}\ub{j}}) +\frac{1}{\tau} \nablat_{\ub{a}} \Rt_{\ub{j}\bar{p}\bar{p}\ub{k}}\\
 &\phantom{=} +\frac{\cnst}{\tau}\Ps_{jk}\left(\nablat_{\ub{a}}\Rt_{\bar{p}\bar{p}}- \nablat_{\ub{a}} \Rt\right) + \frac{\cnst}{\tau}\Ps_{ak}\left(\nablat_{\bar{p}}\Rt_{\bar{p}\ub{j}}
 - \frac{1}{2}\nablat_{\ub{j}}\Rt\right)\\
 &\phantom{=} + \frac{\cnst}{\tau}\Ps_{aj}\left(\nablat_{\bar{p}}\Rt_{\bar{p}\ub{k}}- \frac{1}{2}\nablat_{\ub{k}} \Rt\right).
\end{split}
 \end{align} 

The relations \eqref{eq:drc11x} - \eqref{eq:drc000} then follow directly from the identities \eqref{eq:f11x} - \eqref{eq:f000} for $F_{ajk}$.
For example, using that 
\[
D_{\tau} \Ps = D_{\tau}\Pb = \Delta\Ps = \Delta \Pb = 0,
\]
we have
\begin{align*}
 (D_{\tau} + \Delta)\nablat_{\bar{a}}\Rt_{\bar{\jmath}k} &= \Pb_a^p\Pb_j^q(D_{\tau} + \Delta)\nablat_{p}\Rt_{qk}\\
 &= \oinf \ast(h + \nabla h + \nablat \Rmt) + F_{\bar{a}\bar{\jmath}k}.
\end{align*}
Then, using \eqref{eq:f11x}, we see that
\begin{align*}
  (D_{\tau} + \Delta)(\tau \nablat_{\bar{a}}\Rt_{\bar{\jmath}k}) &= \oinf \ast(h + \nabla h + \nablat \Rmt) + \nablat_{\bar{a}}\Rt_{\bar{\jmath}k} + \tau F_{\bar{a}\bar{\jmath}k}\\
  &=\oinf \ast(h + \nabla h + \nablat \Rmt) + \nablat_{\bar{a}}\Rt_{\bar{\jmath}\bar{p}\bar{p}k},
\end{align*}
which implies \eqref{eq:drc11x}.  Relations \eqref{eq:drc110}- \eqref{eq:drc000} can be verified similarly. For \eqref{eq:drc001}, we use the Bianchi identities to estimate the third term in \eqref{eq:f001}.

The identities \eqref{eq:g001} - \eqref{eq:g000} follow in the same way from the identities 
\begin{align*}
\begin{split}
 F_{\ub{a}\ub{j}\bar{k}} - F_{\ub{j}\ub{a}\bar{k}} &= - \left(\frac{1+\cnst}{\tau}\right)\Gt_{\ub{a}\ub{j}\bar{k}} 
 -\frac{1}{\tau}\nablat_{\bar{p}} \Rt_{\ub{a}\ub{j}\bar{p}\bar{k}},
\end{split}\\
\begin{split}
F_{\ub{a}\ub{j}\ub{k}} - F_{\ub{j}\ub{a}\ub{k}} &= -\frac{1}{\tau}\Gt_{\ub{a}\ub{j}\ub{k}} +\frac{1}{\tau}\nablat_{\bar{p}} \Rt_{\ub{a}\ub{j}\ub{k}\bar{p}}+ \frac{\cnst}{\tau}\Ps_{ak}\left(\nablat_{\bar{p}}\Rt_{\bar{p}\ub{j}} - \nablat_{\ub{j}}\Rt_{\bar{p}\bar{p}} 
 + \frac{1}{2}\nablat_{\ub{j}}\Rt\right)\\ 
&\phantom{= } -\frac{\cnst}{\tau}\Ps_{jk}\left( \nablat_{\bar{p}}\Rt_{\bar{p}\ub{a}} - \nablat_{\ub{a}}\Rt_{\bar{p}\bar{p}}+ \frac{1}{2} \nablat_{\ub{a}} \Rt\right),
\end{split}
\end{align*}
which are consequences of \eqref{eq:f001} and \eqref{eq:f000} and the  Bianchi identities.
\end{proof}

\subsection{Decomposition of $\nablat \Rmt$}
Finally we examine the full covariant derivative of $\Rmt$. We will only need expressions for some of the components to obtain a closed system of inequalities.
\begin{proposition}\label{prop:drmev}
 The components of $\nablat \Rmt$ satisfy
\begin{align}
  \label{eq:drmx1111}
  &|(D_{\tau} + \Delta)\nablat_{a}\Rt_{\bar{\imath}\bar{\jmath}\bar{k}\bar{l}}| \lesssim 0, \\
  \label{eq:drm10111}
  &|(D_{\tau} + \Delta)\nablat_{\bar{a}}\Rt_{\ub{i}\bar{\jmath}\bar{k}\bar{l}}| \lesssim 0,\\
  \label{eq:drm10011}
  &|(D_{\tau} + \Delta)\tau^{\cnst}\nablat_{\bar{a}}\Rt_{\ub{i}\ub{j}\bar{k}\bar{l}}| \lesssim 0,\\
  \begin{split}
  \label{eq:drm10110}
  &|(D_{\tau} + \Delta)\tau^{-\cnst}\nablat_{\bar{a}}\Rt_{\ub{i}\bar{\jmath}\bar{k}\ub{l}}|\lesssim 
  \tau^{-(1+\cnst)}\left(|\nablat_{\bar{a}}\Rt_{\ub{i}\ub{j}\bar{k}\bar{l}}| + |\nablat_{\bar{a}}\Rt_{\bar{\imath}\bar{\jmath}\bar{k}\bar{l}}|
  + |\nablat_{\bar{a}}\Rt_{\bar{\imath}\bar{\jmath}}|\right),
  \end{split}\\
  \label{eq:drm10001}
  &|(D_{\tau} + \Delta)\nablat_{\bar{a}}\Rt_{\ub{i}\ub{j}\ub{k}\bar{l}}| \lesssim 
  \tau^{-1}\left(|\nablat_{\bar{a}}\Rt_{\ub{i}\bar{\jmath}\bar{k}\bar{l}}| 
  + |\nablat_{\bar{a}}\Rt_{\ub{i}\bar{\jmath}}|\right),\\
  \label{eq:drm10000}
  &|(D_{\tau} + \Delta)\nablat_{\bar{a}}\Rt_{\ub{i}\ub{j}\ub{k}\ub{l}}| \lesssim 
  \tau^{-1}\left(|\nablat_{\bar{a}}\Rt_{\ub{i}\bar{\jmath}\bar{k}\ub{l}}| + |\nablat_{\bar{a}}\Rt_{\ub{i}\ub{j}}|\right),\\
   \begin{split}
   \label{eq:drm00110}
   &|(D_{\tau} + \Delta)\tau^{-3\cnst}\nablat_{\ub{a}}\Rt_{\ub{i}\bar{\jmath}\bar{k}\ub{l}}|\\
   &\qquad\lesssim 
   \tau^{-(1+3\cnst)}\left( 
   |\nablat_{\bar{a}}\Rt_{\ub{i}\ub{j}\ub{k}\bar{l}}|+ |\nablat_{\bar{a}}\Rt_{\ub{i}\bar{\jmath}\bar{k}\bar{l}}|      
  + |\nablat_{\bar{a}}\Rt_{\ub{i}\bar{\jmath}}| 
  + |\nablat_{\ub{a}}\Rt_{\bar{\imath}\bar{\jmath}}|\right),
  \end{split}\\
   \begin{split}
   \label{eq:drm00001}
   &|(D_{\tau} + \Delta)\tau^{-2\cnst}\nablat_{\ub{a}}\Rt_{\ub{i}\ub{j}\ub{k}\bar{l}}|\\
   &\qquad \lesssim 
   \tau^{-(1+2\cnst)}\left(
   |\nablat_{\bar{a}}\Rt_{\ub{i}\ub{j}\ub{k}\ub{l}}|
    + |\nablat_{\bar{a}}\Rt_{\ub{i}\bar{\jmath}\bar{k}\ub{l}}| + |\nablat_{\ub{a}}\Rt_{\ub{i}\bar{\jmath}}| + |\nablat_{\bar{a}}\Rt_{\ub{i}\ub{j}}| \right),
  \end{split}\\
   \label{eq:drm00000}
&   |(D_{\tau} + \Delta)\tau^{-2\cnst}\nablat_{\ub{a}}\Rt_{\ub{i}\ub{j}\ub{k}\ub{l}}| \lesssim 
   \tau^{-(1+2\cnst)}\left(|\nablat_{\ub{a}}\Rt_{\ub{i}\bar{\jmath}\bar{k}\ub{l}}| + |\nablat_{\ub{a}}\Rt_{\ub{i}\ub{j}}|\right),
\end{align}
where, here, by $|U|\lesssim |V|$, we mean
\[
 |U| \lesssim |\oinf| (|h| + |\nabla h| + |\nablat \Rmt|) + C|V|
\]
for some constant $C = C(n) > 0$.
\end{proposition}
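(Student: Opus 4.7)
The argument closely parallels that of Proposition \ref{prop:drcev}, now applied to the full covariant derivative of the curvature rather than to $\delt\Rct$. Begin from the evolution equation of Lemma \ref{lem:drmtev}, convert $\Deltat$ to $\Delta$ via \eqref{eq:laplacediff}, substitute $\gt^{pq}\to g^{pq}$, and replace $\Rt\to R$ in every reaction term. Each of these substitutions introduces an error that lands in $\oinf\ast(h+\nabla h+\nablat\Rmt)$; the higher-derivative pieces $\gt^{-1}\ast h\ast \nabla\nabla\nablat\Rmt$ and $\gt^{-2}\ast\nabla h\ast\nabla\nablat\Rmt$ produced by \eqref{eq:laplacediff} are still $\oinf$, since the extra derivatives of $\Rmt$ are bounded while $h,\nabla h$ decay to infinite order. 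This yields
\begin{align*}
(D_\tau+\Delta)\nablat_a\Rt_{ijkl} \;=\; \oinf\ast(h+\nabla h+\nablat\Rmt) \;+\; E_{aijkl},
\end{align*}
where $E_{aijkl}$ is the linearization of the reaction terms at the cylinder.

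Substituting the explicit cylindrical curvatures $R_{ijkl}=\frac{1}{2(k-1)\tau}(\Ps_{il}\Ps_{jk}-\Ps_{ik}\Ps_{jl})$ and $R_{ij}=\frac{1}{2\tau}\Ps_{ij}$, every summand of $E$ becomes a $1/\tau$-weighted contraction of a component of $\nablat\Rmt$ against $\Ps$-projections. I then apply $(\Ps,\Pb)$ to each of the free indices $a,i,j,k,l$ to extract each of the ten components listed. Three tools recur throughout: (i) the swap
\begin{align*}
\nablat_a\Rt_{\cdots\ub{p}\cdots\ub{p}\cdots} \;=\; \nablat_a\Rt_{\cdots p\cdots p\cdots}-\nablat_a\Rt_{\cdots\bar{p}\cdots\bar{p}\cdots}+\oinf\ast\nablat\Rmt,
\end{align*}
which trades underlined contractions for barred ones while the unmodified contractions reduce to scalar or Ricci traces; (ii) the first and second Bianchi identities to cycle indices; and (iii) the weighting identity $D_\tau(\tau^\alpha V)=\alpha\tau^{\alpha-1}V+\tau^\alpha D_\tau V$, with the exponent $\alpha$ on each component chosen precisely so that the dominant $\frac{c}{\tau}\nablat_a\Rt_{\ldots}$ term on the right-hand side cancels against the time-weight derivative.

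For the fully barred component $\nablat_a\Rt_{\bar\imath\bar\jmath\bar k\bar l}$, every $\Ps$-contraction in $E_{aijkl}$ is annihilated, so no singular term survives and \eqref{eq:drmx1111} holds with right-hand side zero. For each of the nine remaining projected components the computation is structurally identical to that of Proposition~\ref{prop:drcev}: the $\Ps$-contractions in $E$ produce one distinguished $\frac{c}{\tau}\nablat\Rmt$-term on the component itself (fixing $\alpha$) together with residual terms which, after one or two applications of the Bianchi identity, reduce to exactly the components listed on the right of the corresponding inequality, carrying the weights dictated by the difference of their $\tau$-exponents. The triangular structure emerges automatically: each equation is controlled by components with at least as many barred indices as its own left-hand side, so the singularities in more spherical components can always be transferred ``downstream''.

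The main obstacle is not conceptual but purely combinatorial: each of the ten projected components demands its own $\Ps$-contraction tally and its own Bianchi rearrangement to verify that no spurious component appears on the right. Commutations of $\Ps,\Pb$ through $\Delta$ and $D_\tau$ contribute nothing since $\nabla\Ps=\nabla\Pb=0$ and $D_\tau\Ps=D_\tau\Pb=0$; thus the proof reduces to a careful, case-by-case tabulation of the projections of $E_{aijkl}$ and a consistent choice of the ten exponents $\alpha$.
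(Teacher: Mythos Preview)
Your approach is correct and follows the same route as the paper: start from Lemma~\ref{lem:drmtev}, convert $\Deltat$ to $\Delta$ via \eqref{eq:laplacediff}, linearize the reaction at the cylinder, then project and rescale component by component. The paper organizes the computation slightly differently by splitting your $E_{aijkl}$ into two pieces, $J_{aijkl}$ (coming from the $\delt_a\Bt$ terms) and $L_{aijkl}$ (coming from the four $\Rt_{\cdot q a p}\delt_r\Rt$ terms), and proving explicit component formulas for each in two auxiliary propositions (Propositions~\ref{prop:jcomp} and~\ref{prop:lcomp}). This $J/L$ split is purely organizational but genuinely helpful: $J$ carries only projections in the last four indices while $L$ carries the projection on the first index $a$, so the two tabulations are largely independent and shorter than a single nine-case pass through $E$. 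The paper also records the specific second-Bianchi reductions used to land on the stated right-hand sides (e.g.\ $|\delt_{\ub{a}}\Rt_{\bar\imath\bar\jmath\bar k\bar l}|\le C|\delt_{\bar a}\Rt_{\ub{i}\bar\jmath\bar k\bar l}|$ in \eqref{eq:drm00110}), which your sketch alludes to but does not spell out. A minor slip: there are nine components in the statement, not ten.
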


We will break the proof of Proposition \ref{prop:drmev} into a few smaller pieces. First, note that, from 
the proof of Lemma \ref{lem:drmtev}, we have
\begin{align*}
\begin{split}
 &\left(D_{\tau} + \Deltat\right)\delt_a\Rt_{ijkl} = -2\delt_a\left(\Bt_{ijkl} - \Bt_{ijlk} + \Bt_{ikjl} - \Bt_{iljk}\right)\\
 &\quad+ 2\gt^{pr}\gt^{qs}\big(\Rt_{iqap}\delt_r \Rt_{sjkl} + \Rt_{jqap}\delt_r \Rt_{iskl} + \Rt_{kqap}\delt_r\Rt_{ijsl}+ \Rt_{lqap}\delt_r\Rt_{ijks}\big)\\
&\quad+\big((\gt^{pq}\Rt_{ap} - R_a^q)\delt_q\Rt_{ijkl} + (\gt^{pq}\Rt_{ip} - R_i^q)\delt_a\Rt_{qjkl}(\gt^{pq}\Rt_{jp} - R_j^q)\delt_a\Rt_{iqkl}\\
&\quad+ +(\gt^{pq}\Rt_{kp} - R_k^q)\delt_a\Rt_{ijql} 
+  (\gt^{pq}\Rt_{lp} - R_l^q)\delt_a\Rt_{ijkq}\big).
 \end{split}
 \end{align*}
Thus, as a preliminary step, we may write
\begin{align}
 \begin{split}\label{eq:drmev1}
 (D_{\tau} + \Delta)\nablat_a\Rt_{ijkl} = \oinf \ast (h + \delt h + \nablat \Rmt) + J_{aijkl} + L_{aijkl},
 \end{split}
\end{align}
where
\begin{align*}
 J_{aijkl} &=-2\nablat_a\left(\Bt_{ijkl} - \Bt_{ijlk} + \Bt_{ikjl} - \Bt_{iljk}\right)
\end{align*}
and
\[
 L_{aijkl} = 2\big(R_{iqap}\nablat_p \Rt_{qjkl} + R_{jqap}\nablat_p \Rt_{iqkl} + R_{kqap}\nablat_p\Rt_{ijql} + R_{lqap}\nablat_p\Rt_{ijkq}\big).
\]

So, for the inequalities \eqref{eq:drmx1111} - \eqref{eq:drm00000}, we only need to analyze the structure of the tensors $J$ and $L$. We consider the tensor $J$ first. 

\begin{proposition}\label{prop:jcomp}
 The components of the tensor
 \begin{align*}
 J_{aijkl} &=-2\nablat_a\left(\Bt_{ijkl} - \Bt_{ijlk} + \Bt_{ikjl} - \Bt_{iljk}\right),
\end{align*}
satisfy the relations
 \begin{align}
  \begin{split}
    \label{eq:jxx111}
      J_{ai\bar{\jmath}\bar{k}\bar{l}} &\simeq 0, 
  \end{split}\\
   \begin{split}
    \label{eq:jx0011}
      J_{a\ub{i}\ub{j}\bar{k}\bar{l}} &\simeq - \frac{\cnst}{\tau}\nablat_a\Rt_{\ub{i}\ub{j}\bar{k}\bar{l}}, 
  \end{split}\\
    \begin{split}
    \label{eq:jx0110}
      J_{a\ub{i}\bar{\jmath}\bar{k}\ub{l}} &\simeq  \frac{\cnst}{\tau}\left(\nablat_a\Rt_{\ub{i}\bar{\jmath}\bar{k}\ub{l}} + \nablat_a \Rt_{\ub{i}\ub{l}\bar{\jmath}\bar{k}}
      + (\nablat_a \Rt_{\bar{\jmath}\bar{p}\bar{p}\bar{k}} - \nablat_a \Rt_{\bar{\jmath}\bar{k}})\Ps_{il}\right),      
  \end{split}\\
      \begin{split}
    \label{eq:jx0001}
      J_{a\ub{i}\ub{j}\ub{k}\bar{l}} &\simeq  
       \frac{\cnst}{\tau}\left((\nablat_a \Rt_{\ub{j}\bar{l}} - \nablat_a \Rt_{\ub{j}\bar{p}\bar{p}\bar{l}} )\Ps_{ik}
      - (\nablat_a \Rt_{\ub{i}\bar{l}} - \nablat_a \Rt_{\ub{i}\bar{p}\bar{p}\bar{l}})\Ps_{jk}\right), 
  \end{split}\\
  \begin{split}
    \label{eq:jx0000}
      J_{a\ub{i}\ub{j}\ub{k}\ub{l}} &\simeq  
       \frac{\cnst}{\tau}\left((\nablat_a \Rt_{\ub{j}\ub{l}} - \nablat_a \Rt_{\ub{j}\bar{p}\bar{p}\ub{l}} )\Ps_{ik}
      + (\nablat_a \Rt_{\ub{i}\ub{k}} - \nablat_a \Rt_{\ub{i}\bar{p}\bar{p}\ub{k}})\Ps_{jl}\right) \\
      &\phantom{\simeq} -\frac{\cnst}{\tau}\left((\nablat_a \Rt_{\ub{i}\ub{l}} - \nablat_a \Rt_{\ub{i}\bar{p}\bar{p}\ub{l}} )\Ps_{jk}
      + (\nabla_a \Rt_{\ub{j}\ub{k}} - \nablat_a \Rt_{\ub{j}\bar{p}\bar{p}\ub{k}})\Ps_{il}\right), 
  \end{split}
 \end{align}
where, here, $U \simeq V$ signifies that
\[
  U = \oinf \ast  \delt\Rmt + V.
\]
\end{proposition}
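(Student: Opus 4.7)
My plan is to establish the identities \eqref{eq:jxx111}--\eqref{eq:jx0000} by direct computation, exploiting the algebraic structure of the cylindrical curvature. Starting from $\Bt_{ijkl}=-\gt^{pr}\gt^{qs}\Rt_{pijq}\Rt_{rkls}$, the identity $\nablat\gt^{-1}\equiv 0$ forces the derivative $\nablat_a$ to fall only on the two curvature factors. Using $\gt^{-1}-g^{-1}=\oinf$ and, from Lemma \ref{lem:metdiff}, $\Rt-\Rm=\oinf$ (the term $\Rm\ast h$ in \eqref{eq:rmdiff} remains $\oinf$ despite the $1/\tau$-factor in $\Rm$), I reduce to
\[
\nablat_a \Bt_{ijkl} \simeq -g^{pr}g^{qs}\bigl(R_{pijq}\,\nablat_a\Rt_{rkls} + R_{rkls}\,\nablat_a\Rt_{pijq}\bigr)
\]
modulo terms of the form $\oinf\ast\delt\Rmt$.

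Substituting the explicit form $R_{ijkl}=\tfrac{1}{2(k-1)\tau}(\Ps_{il}\Ps_{jk}-\Ps_{ik}\Ps_{jl})$ and raising indices with $g$ yields
\[
\nablat_a \Bt_{ijkl} \simeq -\tfrac{1}{2(k-1)\tau}\bigl(\Ps_{ij}\nablat_a\Rt_{\ub{p}kl\ub{p}} - \nablat_a\Rt_{\ub{\jmath}kl\ub{\imath}} + \Ps_{kl}\nablat_a\Rt_{\ub{p}ij\ub{p}} - \nablat_a\Rt_{\ub{l}ij\ub{k}}\bigr).
\]
The key algebraic observation is that the cylindrical curvature is supported on the spherical factor: $R_{pijq}$ vanishes unless both $i$ and $j$ are underlined, and $R_{rkls}$ vanishes unless both $k,l$ are underlined. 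From this, \eqref{eq:jxx111} follows at once: for $J_{ai\bar{\jmath}\bar{k}\bar{l}}$, at least one of $\bar{\jmath},\bar{k},\bar{l}$ lies in one of the outer slots of each $R$-factor in every summand of $\Bt_{ijkl}-\Bt_{ijlk}+\Bt_{ikjl}-\Bt_{iljk}$, killing the expression modulo $\oinf\ast\delt\Rmt$.

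For the other cases, I project the free indices $i,j,k,l$ according to the prescribed under/barred pattern, sum the four summands of the $\Bt$-combination, and simplify using three tools: (i) the antisymmetry of $\nablat_a\Rt$ in its first or last pair together with the symmetry of $\Ps^{pq}$, which annihilates any trace of the form $\Ps^{pq}\nablat_a\Rt_{pquv}$; (ii) the trace identity $\nablat_a\Rt_{\ub{p}kl\ub{p}} + \nablat_a\Rt_{\bar{p}kl\bar{p}} \simeq \nablat_a\Rt_{kl}$, valid modulo $\oinf\ast\delt\Rmt$ because $(g^{pq}-\gt^{pq})\nablat_a\Rt_{pklq}=\oinf\ast\delt\Rmt$; and (iii) the second Bianchi identity, used to rewrite the surviving traces as components of $\nablat\Rct$ matching the right sides of \eqref{eq:jx0001} and \eqref{eq:jx0000}. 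The main obstacle is purely organizational rather than conceptual. Case \eqref{eq:jx0000}, where every free index is underlined and none of the four summands is killed outright by the projections, is the most intricate: one must match the antisymmetries inherited from the $\Bt$-combination against the symmetric combinations $\Ps_{jk}\Ps_{il}-\Ps_{jl}\Ps_{ik}$ appearing in the stated right-hand side. Once each configuration has been expanded and the identities (i)--(iii) applied systematically, the formulas \eqref{eq:jx0011}--\eqref{eq:jx0000} drop out directly.
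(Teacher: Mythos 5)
Your approach is essentially the same as the paper's: reduce $\nablat_a\Bt$ modulo $\oinf\ast\delt\Rmt$ to a contraction against the explicit cylindrical curvature $R_{ijkl}=\tfrac{1}{2(k-1)\tau}(\Ps_{il}\Ps_{jk}-\Ps_{ik}\Ps_{jl})$, sum the four permutations in $J_{aijkl}$, and simplify the projections case by case. Your preliminary reduction is correct, as is the argument for \eqref{eq:jxx111} (minor slip: the free indices $j,k,l$ occupy the \emph{inner} slots of the two curvature factors, not the outer ones, but the conclusion is unaffected since $R_{pijq}$ vanishes whenever \emph{any} of its four slots points along the Euclidean factor). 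Your trace identity (ii) is also correctly stated and is precisely the mechanism that produces the $\nablat\Rct$ components and the $\nablat_a\Rt_{\cdot\bar{p}\bar{p}\cdot}$ corrections appearing in the stated right-hand sides.

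The gap is your tool (iii). The \emph{second} Bianchi identity (the differential relation $\gt^{pq}\nablat_p\Rt_{qjkl}=\nablat_l\Rt_{kj}-\nablat_k\Rt_{lj}$ and its variants) plays no role in this proposition; it is used in Proposition \ref{prop:drcev}, not here. What the computation actually requires is the \emph{first} (algebraic) Bianchi identity $\Rt_{ijkl}+\Rt_{iklj}+\Rt_{iljk}=0$, which is needed to cancel and regroup the non-trace terms $\nablat_a\Rt_{\ub{k}il\ub{j}}$, $\nablat_a\Rt_{\ub{l}jk\ub{i}}$, etc., that survive after the $\operatorname{tr}_{\Ps}(\nablat_a\Rmt)\odot\Ps$ pieces are peeled off. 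For example, in case \eqref{eq:jx0000} (all indices underlined), the trace pieces already match the stated answer, and you are left needing the residual combination
\[
-2\nablat_a\Rt_{\ub{i}\ub{j}\ub{k}\ub{l}} + 2\nablat_a\Rt_{\ub{k}\ub{i}\ub{l}\ub{j}} - 2\nablat_a\Rt_{\ub{l}\ub{i}\ub{k}\ub{j}}
\]
to vanish; this is a consequence of the first Bianchi identity together with the pair-swap and antisymmetry symmetries of $\Rt$, and does not follow from the second. Likewise, the Ricci components in \eqref{eq:jx0001} and \eqref{eq:jx0000} arise from your tool (ii) and the decomposition $\Ps+\Pb=\operatorname{Id}$, not from either Bianchi identity. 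Replace the second Bianchi identity with the first in your toolkit, and recognize that (ii), not (iii), is what supplies the $\nablat\Rct$ terms; with those corrections the plan will close.
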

\begin{proof}
We first compute that
\begin{align*}
\nablat_a \Bt_{ijkl} &=  -\gt^{pr}\gt^{qs}\left(\nablat_a\Rt_{pijq}\Rt_{rkls} + \Rt_{pijq}\nablat_a\Rt_{rkls}\right)\\
&= \oinf \ast \nablat \Rmt - \nablat_a\Rt_{pijq}R_{pklq} - \nablat_a\Rt_{pklq}R_{pijq}\\
\begin{split}
&=\oinf \ast \nablat \Rmt - \frac{\cnst}{2\tau}\nablat_a\Rt_{pijq}(\Ps_{pq}\Ps_{kl} - \Ps_{pl}\Ps_{kq}) \\
&\phantom{=} -\frac{\cnst}{2\tau} \nablat_a\Rt_{pklq}(\Ps_{pq}\Ps_{ij}- \Ps_{pj}\Ps_{iq})\\
&=\oinf \ast \nablat \Rmt - \frac{\cnst}{2\tau}\left(\nablat_a\Rt_{i\ub{p}\ub{p}j}\Ps_{kl} + \nablat_a\Rt_{k\ub{p}\ub{p}l}\Ps_{ij}\right)\\
&\phantom{=}+ \frac{\cnst}{2\tau}\left( \nablat_a\Rt_{\ub{l}ij\ub{k}} + \nablat_a \Rt_{\ub{j}kl\ub{i}}\right)
\end{split}
\end{align*}
for any $a$, $i$, $j$, $k$, $l$. Permuting the indices in this identity and summing, we obtain
\begin{align*}
\begin{split}
J_{aijkl}  &= \oinf \ast \nablat \Rmt \\
&\phantom{=}+ \frac{\cnst}{\tau} \left( \nablat_a\Rt_{i\ub{p}\ub{p}j}\Ps_{kl} + \nablat_a\Rt_{k\ub{p}\ub{p}l}\Ps_{ij} -  \nablat_a\Rt_{\ub{l}ij\ub{k}}- \nablat_a \Rt_{\ub{j}kl\ub{i}}\right)\\ 
&\phantom{=}- \frac{\cnst}{\tau} \left( \nablat_a\Rt_{i\ub{p}\ub{p}j}\Ps_{lk} + \nablat_a\Rt_{l\ub{p}\ub{p}k}\Ps_{ij} -  \nablat_a\Rt_{\ub{k}ij\ub{l}}- \nablat_a \Rt_{\ub{j}lk\ub{i}}\right)\\
&\phantom{=}+ \frac{\cnst}{\tau} \left( \nablat_a\Rt_{i\ub{p}\ub{p}k}\Ps_{jl} + \nablat_a\Rt_{j\ub{p}\ub{p}l}\Ps_{ik} -  \nablat_a\Rt_{\ub{l}ik\ub{j}}- \nablat_a \Rt_{\ub{k}jl\ub{i}}\right)\\
&\phantom{=}- \frac{\cnst}{\tau} \left( \nablat_a\Rt_{i\ub{p}\ub{p}l}\Ps_{jk} + \nablat_a\Rt_{j\ub{p}\ub{p}k}\Ps_{il} -  \nablat_a\Rt_{\ub{k}il\ub{j}}- \nablat_a \Rt_{\ub{l}jk\ub{i}}\right), 
\end{split}
\end{align*}
that is,
\begin{align}
\begin{split}\nonumber
J_{aijkl}  &= \oinf \ast \nablat \Rmt  - \frac{\cnst}{\tau}(\operatorname{tr}_{\Ps}(\nablat_a\Rmt) \odot \Ps)_{ijkl}\\
&\phantom{=} + \frac{\cnst}{\tau} \left(\nablat_a\Rt_{\ub{k}ij\ub{l}} + \nablat_a \Rt_{\ub{j}lk\ub{i}} -  \nablat_a\Rt_{\ub{l}ij\ub{k}}- \nablat_a \Rt_{\ub{j}kl\ub{i}}\right)\\ 
&\phantom{=}+ \frac{\cnst}{\tau} \left( \nablat_a\Rt_{\ub{k}il\ub{j}}+ \nablat_a \Rt_{\ub{l}jk\ub{i}}-  \nablat_a\Rt_{\ub{l}ik\ub{j}}- \nablat_a \Rt_{\ub{k}jl\ub{i}}\right) 
\end{split}\\
\begin{split}\label{eq:j1}
&= \oinf \ast \nablat \Rmt\\
&\phantom{=}- \frac{\cnst}{\tau}\left((\operatorname{tr}_{\Ps}(\nablat_a\Rmt) \odot \Ps)_{ijkl}+ \nablat_a\Rt_{\ub{i}\ub{j}kl} + \nablat_a \Rt_{ij\ub{k}\ub{l}}\right)\\ 
&\phantom{=}+ \frac{\cnst}{\tau} \left( \nablat_a\Rt_{\ub{k}il\ub{j}}+ \nablat_a \Rt_{\ub{l}jk\ub{i}}-  \nablat_a\Rt_{\ub{l}ik\ub{j}}- \nablat_a \Rt_{\ub{k}jl\ub{i}}\right), 
\end{split}
\end{align}
where
\[
 \operatorname{tr}_{\Ps}(\nablat_a\Rmt)_{ij} = \nablat_a \Rt_{i\ub{p}\ub{p}j},
\]
and $U\odot V$ denotes the Kulkarni-Nomizu product
\[
 (U\odot V)_{ijkl} = U_{il}V_{jk} + U_{jk} V_{il} - U_{ik}V_{jl} - U_{jl}V_{ik}.
\]

A case by case examination of of \eqref{eq:j1}, using the first Bianchi identity and the observation that 
\[
\operatorname{tr}_{\Ps}(\nablat_a\Rmt)_{ij} = \nablat_a \Rt_{i\ub{p}\ub{p}j} = \oinf \ast \nablat \Rmt + \nablat_a \Rt_{ij} - \nablat_a \Rt_{i\bar{p}\bar{p}j},
\]
yields \eqref{eq:jxx111} - \eqref{eq:jx0000}.
\end{proof}

Now we perform a similar analysis for the tensor $L$.
\begin{proposition}\label{prop:lcomp}
 The components of the tensor
 \[
 L_{aijkl} = 2\big(R_{iqap}\nablat_p \Rt_{qjkl} + R_{jqap}\nablat_p \Rt_{iqkl} + R_{kqap}\nablat_p\Rt_{ijql} + R_{lqap}\nablat_p\Rt_{ijkq}\big)
\]
satisfy the relations
 \begin{align}
    \label{eq:l1xxxx}
      L_{\bar{a}ijkl} &\simeq 0,\\
   \label{eq:l01111}   
      L_{\ub{a}\bar{\imath}\bar{\jmath}\bar{k}\bar{l}} &\simeq 0,\\
    \begin{split}
     \label{eq:l00111}
     L_{\ub{a}\ub{i}\bar{\jmath}\bar{k}\bar{l}} &\simeq \frac{\cnst}{\tau}\left(\nablat_{\ub{a}}\Rt_{\ub{i}\bar{\jmath}\bar{k}\bar{l}} + \nablat_{\bar{\jmath}}\Rt_{\ub{a}\ub{i}\bar{k}\bar{l}}\right)
     + \frac{\cnst}{\tau}\Ps_{ia}\left(\nablat_{\bar{p}}\Rt_{\bar{p}\bar{\jmath}\bar{k}\bar{l}} - \nablat_{\bar{l}}\Rt_{\bar{k}\bar{\jmath}} +\nablat_{\bar{k}}\Rt_{\bar{\jmath}\bar{l}}\right),
    \end{split}\\
     \begin{split}
     \label{eq:l00011}
     L_{\ub{a}\ub{i}\ub{j}\bar{k}\bar{l}} &\simeq \frac{\cnst}{\tau}\nablat_{\ub{a}}\Rt_{\ub{i}\ub{j}\bar{k}\bar{l}}   + \frac{\cnst}{\tau}\Ps_{ia}\left(\nablat_{\bar{p}}\Rt_{\bar{p}\ub{j}\bar{k}\bar{l}} 
     - \nablat_{\bar{l}}\Rt_{\bar{k}\ub{j}} +\nablat_{\bar{k}}\Rt_{\ub{j}\bar{l}}\right)\\
     &\phantom{\simeq}
     - \frac{\cnst}{\tau}\Ps_{ja}\left(\nablat_{\bar{p}}\Rt_{\bar{p}\ub{i}\bar{k}\bar{l}} - \nablat_{\bar{l}}\Rt_{\bar{k}\ub{i}} +\nablat_{\bar{k}}\Rt_{\ub{i}\bar{l}}\right),
    \end{split}\\
      \begin{split}
    \label{eq:l00110}
     L_{\ub{a}\ub{i}\bar{\jmath}\bar{k}\ub{l}} &\simeq \frac{\cnst}{\tau}\left(2\nablat_{\ub{a}}\Rt_{\ub{i}\bar{\jmath}\bar{k}\ub{l}} 
     - \nablat_{\bar{\jmath}}\Rt_{\ub{a}\ub{i}\ub{l}\bar{k}} - \nablat_{\bar{k}}\Rt_{\ub{a}\ub{l}\ub{i}\bar{\jmath}}\right)
      \\
      &\phantom{\simeq}
	+ \frac{\cnst}{\tau}\Ps_{ia}\left(\nablat_{\bar{p}}\Rt_{\bar{p}\bar{\jmath}\bar{k}\ub{l}} - \nablat_{\ub{l}}\Rt_{\bar{k}\bar{\jmath}} 
	+\nablat_{\bar{k}}\Rt_{\ub{l}\bar{\jmath}}\right)\\
&\phantom{\simeq}	- \frac{\cnst}{\tau}\Ps_{la}\left(\nablat_{\bar{p}}\Rt_{\bar{p}\bar{k}\ub{i}\bar{\jmath}} - \nablat_{\bar{\jmath}}\Rt_{\ub{i}\bar{k}}
+\nablat_{\ub{i}}\Rt_{\bar{\jmath}\bar{k}}\right),
    \end{split}\\
     \begin{split}
    \label{eq:l00001}
     L_{\ub{a}\ub{i}\ub{j}\ub{k}\bar{l}} &\simeq \frac{\cnst}{\tau}\left(2\nablat_{\ub{a}}\Rt_{\ub{i}\ub{j}\ub{k}\bar{l}} 
	+\nablat_{\bar{l}}\Rt_{\ub{a}\ub{k}\ub{i}\ub{j}}\right)
	+ \frac{\cnst}{\tau}\Ps_{ia}\left(\nablat_{\bar{p}}\Rt_{\bar{p}\ub{j}\ub{k}\bar{l}} - \nablat_{\bar{l}}\Rt_{\ub{j}\ub{k}} 
	+\nablat_{\ub{k}}\Rt_{\bar{l}\ub{j}}\right)\\
    &\phantom{\simeq}
    - \frac{\cnst}{\tau}\Ps_{ja}\left(\nablat_{\bar{p}}\Rt_{\bar{p}\ub{i}\ub{k}\bar{l}} - \nablat_{\bar{l}}\Rt_{\ub{i}\ub{k}} 
    +\nablat_{\ub{k}}\Rt_{\bar{l}\ub{i}}\right)\\
&\phantom{\simeq}	+ \frac{\cnst}{\tau}\Ps_{ka}\left(\nablat_{\bar{p}}\Rt_{\bar{p}\bar{l}\ub{i}\ub{j}} 
- \nablat_{\ub{j}}\Rt_{\ub{i}\bar{l}} +\nablat_{\ub{i}}\Rt_{\ub{j}\bar{l}}\right),
    \end{split}\\  
   \begin{split}
    \label{eq:l00000}
     L_{\ub{a}\ub{i}\ub{j}\ub{k}\ub{l}} &\simeq \frac{2\cnst}{\tau}\nablat_{\ub{a}}\Rt_{\ub{i}\ub{j}\ub{k}\ub{l}}
     + \frac{\cnst}{\tau}\Ps_{ia}\left(\nablat_{\bar{p}}\Rt_{\bar{p}\ub{j}\ub{k}\ub{l}} - \nablat_{\ub{l}}\Rt_{\ub{j}\ub{k}} 
     +\nablat_{\ub{k}}\Rt_{\ub{l}\ub{j}}\right)\\
    &\phantom{\simeq}
    - \frac{\cnst}{\tau}\Ps_{ja}\left(\nablat_{\bar{p}}\Rt_{\bar{p}\ub{i}\ub{k}\ub{l}} - \nablat_{\ub{l}}\Rt_{\ub{i}\ub{k}} 
    +\nablat_{\ub{k}}\Rt_{\ub{l}\ub{i}}\right)\\
&\phantom{\simeq}	+ \frac{\cnst}{\tau}\Ps_{ka}\left(\nablat_{\bar{p}}\Rt_{\bar{p}\ub{l}\ub{i}\ub{j}} 
- \nablat_{\ub{j}}\Rt_{\ub{i}\ub{l}} +\nablat_{\ub{i}}\Rt_{\ub{j}\ub{l}}\right)\\
&\phantom{\simeq} - \frac{\cnst}{\tau}\Ps_{la}\left(\nablat_{\bar{p}}\Rt_{\bar{p}\ub{k}\ub{i}\ub{j}} - \nablat_{\ub{j}}\Rt_{\ub{i}\ub{k}} 
+\nablat_{\ub{i}}\Rt_{\ub{j}\ub{k}}\right),
    \end{split}
 \end{align}
 where here $U\simeq V$ signifies that
 \[
  U = \oinf\ast \nablat \Rmt + V.
 \]
\end{proposition}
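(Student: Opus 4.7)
The plan is to substitute the explicit form of the cylindrical curvature tensor into the definition of $L_{aijkl}$ and then reduce using the second Bianchi identity. Recall that, under our normalization,
\[
R_{iqap} = \frac{1}{2(k-1)\tau}\left(\Ps_{ip}\Ps_{qa}-\Ps_{ia}\Ps_{qp}\right),
\]
so this tensor is supported entirely on the spherical factor. Contracting each $R$-factor against the corresponding $\nablat\Rmt$-factor, the pattern
\[
R_{iqap}\nablat_p\Rt_{qjkl}=\frac{1}{2(k-1)\tau}\left(\nablat_{\ub{i}}\Rt_{\ub{a}jkl}-\Ps_{ia}\nablat_{\ub{p}}\Rt_{\ub{p}jkl}\right)
\]
holds, and similarly for the three other summands. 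Summing, $L_{aijkl}$ becomes a linear combination of ``pulled-back'' derivatives $\nablat_{\ub{i}}\Rt_{\ub{a}jkl}$ (and the cyclic variants) together with $\Ps_{ia}$-terms containing spherical traces $\nablat_{\ub{p}}\Rt_{\ub{p}jkl}$, etc., each weighted by $[(k-1)\tau]^{-1}$.

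Next, I would expand case by case according to the barred/underlined profile of the free indices $(a,i,j,k,l)$. The two outlying cases \eqref{eq:l1xxxx} and \eqref{eq:l01111} are immediate: when $a$ is barred, every $\Ps_{ia}$, $\Ps_{ja}$, $\Ps_{ka}$, $\Ps_{la}$ vanishes and the underlining $\ub{a}$ in each $\Rt$-factor projects onto an orthogonal complement, killing every term; when $a$ is spherical but all of $i,j,k,l$ are Euclidean, the projections $\ub{i},\ub{j},\ub{k},\ub{l}$ inside $\Rt$ and the $\Ps_{ia}$-type factors again annihilate every contribution. For the remaining cases, essentially only those summands survive for which the spherical slot in $R$ matches an underlined free index, and one is left with at most two types of surviving pieces: a ``principal'' piece with the derivative landing on the free underlined index, and one or more ``trace'' pieces involving $\nablat_{\ub{p}}\Rt_{\ub{p}\cdots}$.

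To finish, I would convert these surviving pieces into the right-hand sides in \eqref{eq:l00111}--\eqref{eq:l00000} using two identities that hold modulo $\oinf\ast\nablat\Rmt$. First, the second Bianchi identity (exact for $\nablat$) rewrites a term like $\nablat_{\ub{i}}\Rt_{\ub{a}\bar{\jmath}\bar{k}\bar{l}}$ as $\nablat_{\ub{a}}\Rt_{\ub{i}\bar{\jmath}\bar{k}\bar{l}}+\nablat_{\bar{\jmath}}\Rt_{\ub{a}\ub{i}\bar{k}\bar{l}}$, placing the derivative on the intended free slot. Second, the contracted Bianchi identity $\nablat^p\Rt_{pjkl}=\nablat_k\Rt_{jl}-\nablat_l\Rt_{jk}$, combined with $\gt^{pq}=g^{pq}+\oinf\ast h$, yields
\[
\nablat_{\ub{p}}\Rt_{\ub{p}jkl}=-\nablat_{\bar{p}}\Rt_{\bar{p}jkl}+\nablat_k\Rt_{jl}-\nablat_l\Rt_{jk}+\oinf\ast\nablat\Rmt,
\]
which trades each spherical trace for a purely Euclidean trace plus a first derivative of $\Rct$. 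After these substitutions, each surviving piece is already of the form appearing on the right-hand sides of \eqref{eq:l00111}--\eqref{eq:l00000}; a small amount of further bookkeeping using the symmetries of $\Rmt$ and of $\Rct$ brings everything into the stated normal form.

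The main obstacle I anticipate is purely combinatorial: cleanly tracking which of the four summands in $L_{aijkl}$ survive in each of the seven remaining index patterns, and managing the signs and the placement of the derivatives in the surviving pieces after applying the two Bianchi identities. The computation is routine but bookkeeping-heavy, and one must also be careful to absorb differences between $\nabla$-traces and $\nablat$-traces, and between $g^{-1}$-contractions and $\gt^{-1}$-contractions, into the $\oinf\ast\nablat\Rmt$ error allowed by the hypothesis \eqref{eq:hstdecay}.
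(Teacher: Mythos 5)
Your strategy matches the paper's proof exactly: insert the explicit cylindrical $\Rm$, contract to get
\[
R_{iqap}\nablat_p\Rt_{qjkl}=\frac{1}{2(k-1)\tau}\bigl(\nablat_{\ub{i}}\Rt_{\ub{a}jkl}-\Ps_{ia}\nablat_{\ub{p}}\Rt_{\ub{p}jkl}\bigr),
\]
rewrite the spherical trace terms via the contracted second Bianchi identity, and then inspect each barred/underlined index pattern. One small caveat: your stated contracted Bianchi identity $\nablat^p\Rt_{pjkl}=\nablat_k\Rt_{jl}-\nablat_l\Rt_{jk}$ has the wrong overall sign — with the paper's curvature conventions (where $R_{ijkl}=\frac{1}{2(k-1)\tau}(\Ps_{il}\Ps_{jk}-\Ps_{ik}\Ps_{jl})$ and $R_{ij}=g^{kl}R_{iklj}$) the correct identity is $\gt^{pq}\nablat_p\Rt_{qjkl}=\nablat_l\Rt_{jk}-\nablat_k\Rt_{jl}$, and this sign is what produces the $+\nablat_{\bar{k}}\Rt_{\bar{\jmath}\bar{l}}-\nablat_{\bar{l}}\Rt_{\bar{k}\bar{\jmath}}$ structure in \eqref{eq:l00111}--\eqref{eq:l00000}; you would catch this in the bookkeeping stage you describe, but it is worth flagging.
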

\begin{proof}
Note that
\begin{align*}
 R_{iqap}\nablat_p \Rt_{qkjl} &= \frac{\cnst}{2\tau}(\Ps_{ip}\Ps_{qa}- \Ps_{ia}\Ps_{qp})\nablat_p\Rt_{qjkl}
 = \frac{\cnst}{2\tau}(\nablat_{\ub{i}} \Rt_{\ub{a}jkl} - \Ps_{ia}\nablat_{\ub{p}}\Rt_{\ub{p}jkl}),
\end{align*}
and so
\begin{align*}
 L_{aijkl} &= 2\big(R_{iqap}\nablat_p \Rt_{qjkl} - R_{jqap}\nablat_p \Rt_{qikl} + R_{kqap}\nablat_p\Rt_{qlij} - R_{lqap}\nablat_p\Rt_{qkij}\big)\\
\begin{split} 
 &= \frac{\cnst}{\tau}\left(\nablat_{\ub{i}}\Rt_{\ub{a}jkl} - \nablat_{\ub{j}}\Rt_{\ub{a}ikl} - \Ps_{ia}\nablat_{\ub{p}}\Rt_{\ub{p}jkl} 
 + \Ps_{ja}\nablat_{\ub{p}}\Rt_{\ub{p}ikl}\right)\\
&\phantom{=} + \frac{\cnst}{\tau}\left(\nablat_{\ub{k}}\Rt_{\ub{a}lij} - \nablat_{\ub{l}}\Rt_{\ub{a}kij} - \Ps_{ka}\nablat_{\ub{p}}\Rt_{\ub{p}lij} 
+ \Ps_{la}\nablat_{\ub{p}}\Rt_{\ub{p}kij}\right).
\end{split}
\end{align*}
Using the identity
\[
 \gt^{pq}\nablat_{p}\Rt_{qjkl} = \nablat_l \Rt_{kj} - \nablat_k\Rt_{lj},
\]
we may rewrite the terms of the form $\nablat_{\ub{p}}\Rt_{\ub{p}jkl}$ in the above equation as
\[
 \nablat_{\ub{p}}\Rt_{\ub{p}jkl} = \oinf \ast \nablat \Rmt + \nablat_l \Rt_{kj} - \nablat_k\Rt_{lj} - \nablat_{\bar{p}}\Rt_{\bar{p}jkl}. 
\]
The relations \eqref{eq:l1xxxx}-\eqref{eq:l00000} then follow from a case-by-case inspection of the above identity for $L_{aijkl}$ using the Bianchi identities to combine terms.
\end{proof}

Now we combine the above computations to complete the proof of the main result of the section.
\begin{proof}[Proof of Proposition \ref{prop:drmev}]

As in the proof of Proposition \ref{prop:drcev}, the inequalities in \eqref{eq:drmx1111}-\eqref{eq:drm00000} follow from \eqref{eq:drmev1} and an inspection of the expressions
\eqref{eq:jxx111}-\eqref{eq:jx0000} and \eqref{eq:l1xxxx}-\eqref{eq:l00000} for the corresponding components of the tensors $J$ and $L$. 
We further use the Bianchi identities to estimate 
 $|\delt_{\ub{a}}\Rt_{\bar{\imath}\bar{\jmath}\bar{k}\bar{l}}|\leq 2|\delt_{\bar{a}}\Rt_{\ub{i}\bar{\jmath}\bar{k}\bar{l}}|$ and
$|\delt_{\ub{a}}\Rt_{\ub{i}\ub{j}\bar{k}\bar{l}}|\leq 2|\delt_{\bar{a}}\Rt_{\ub{i}\ub{j}\ub{k}\bar{l}}|$ in \eqref{eq:drm00110},
$|\delt_{\ub{a}}\Rt_{\ub{i}\bar{\jmath}\bar{k}\bar{l}}| \leq 2|\delt_{\bar{a}}\Rt_{\ub{i}\bar{\jmath}\bar{k}\ub{l}}|$
and $|\delt_{\bar{a}}\Rt_{\ub{i}\ub{j}\bar{k}\bar{l}}| \leq 2|\delt_{\bar{a}}\Rt_{\ub{i}\bar{\jmath}\bar{k}\ub{l}}|$
in \eqref{eq:drm00001},
and $|\delt_{\bar{a}}\Rt_{\ub{i}\ub{j}\ub{k}\bar{l}}|\leq 2|\delt_{\ub{a}}\Rt_{\ub{i}\bar{\jmath}\bar{k}\ub{l}}|$ in \eqref{eq:drm00000}.
\end{proof}

\subsection{Assembling the components of the system}
Next we use Propositions \ref{prop:dscalev}, \ref{prop:drcev}, and \ref{prop:drmev} to organize the rescaled components of $\nablat \Rt$, $\nablat\Rct$, and $\nablat\Rmt$ into groupings
which satisfy a closed system of inequalities whose singular part has a triangular structure. 

Define $\ve{W} = (W^{0}, W^{1}, \ldots, W^{5})$ by
\begin{align}
\label{eq:wdef}
\begin{split}
  W^{0} &= (\delt_{a} \Rt_{\bar{\imath}\bar{\jmath}\bar{k}\bar{l}}, \delt_{\bar{a}}\Rt_{\ub{i}\bar{\jmath}\bar{k}\bar{l}}, \tau^{\cnst}\delt_{\bar{a}}\Rt_{\ub{i}\ub{j}\bar{k}\bar{l}}),\\
  W^{1} &= (\tau \delt_{a}\Rt_{\bar{\imath}\bar{\jmath}}, \tau \delt_{\bar{a}}\Rt_{\ub{i}\bar{\jmath}}, \tau^{1+\cnst}\Gt_{\ub{a}\ub{i}\bar{\jmath}}),\\
  W^{2} &= (\tau^2\delt_a\Rt, \tau^{-\cnst}\delt_{\bar{a}}\Rt_{\ub{i}\bar{\jmath}\bar{k}\ub{l}}, \delt_{\bar{a}}\Rt_{\ub{i}\ub{j}\ub{k}\bar{l}} ),\\
  W^{3} &= (\tau^{1-\cnst}\delt_{\bar{a}}\Rt_{\ub{i}\ub{j}},\tau^{1-\cnst}\delt_{\ub{a}}\Rt_{\ub{i}\bar{\jmath}}, \tau \Gt_{\ub{a}\ub{i}\ub{j}}, \tau^{-3\cnst}\delt_{\ub{a}}\Rt_{\ub{i}\bar{\jmath}\bar{k}\ub{l}}  ),\\  
  W^{4} &=  (\tau^{1-3\cnst}\delt_{\ub{a}}\Rt_{\ub{i}\ub{j}}, \delt_{\bar{a}}\Rt_{\ub{i}\ub{j}\ub{k}\ub{l}}),\\
   W^{5} &= (\tau^{-2\cnst}\delt_{\ub{a}}\Rt_{\ub{i}\ub{j}\ub{k}\bar{l}}, \tau^{-2\cnst}\delt_{\ub{a}}\Rt_{\ub{i}\ub{j}\ub{k}\ub{l}}),
\end{split}
\end{align}
where, as before, $\cnst = 1/(k-1)$.

\begin{proposition}\label{prop:wtriangular}
The components $W^i$ of $\ve{W}$ satisfy the system
\begin{align*}
 |(D_{\tau} +\Delta)W^0| &\lesssim 0,\\
 |(D_{\tau} +\Delta)W^1| &\lesssim |W^0|,\\
 |(D_{\tau} +\Delta)W^2| &\lesssim \tau^{-(1+2\cnst)}|W^0| +\tau^{-(2+\cnst)}|W^1|,\\
 \begin{split}
 |(D_{\tau} +\Delta)W^3| &\lesssim \tau^{-(1+3\cnst)}|W^0| +\tau^{-(2+3\cnst)}|W^1|+ \tau^{-\max\{1+3\cnst, 2+\cnst\}}|W^2|,
\end{split}\\
\begin{split}
 |(D_{\tau} +\Delta)W^4| &\lesssim  \tau^{-(1+3\cnst)}|W^1| + \tau^{-(2+3\cnst)}|W^2|
+ \tau^{-\max\{1+3\cnst, 2-\cnst\}}|W^3|,
\end{split}\\
\begin{split}
  |(D_{\tau} +\Delta)W^5| &\lesssim \tau^{-(1+\cnst)}|W^2| + \tau^{-(2+\cnst)}|W^3|
+ \tau^{-\max\{1+2\cnst, 2-\cnst\}}|W^4|,
\end{split}
\end{align*}
on $\Cc_{r_0}\times (0, 1]$. Here, $|U|\lesssim |V|$ means that 
\[
  |U| \leq |\oinf| (|h| + |\nabla h| + |\nablat \Rmt|) + C|V|
\]
 for some constant $C = C(n) > 0$.  Moreover, we have
\begin{equation}\label{eq:drmxcomp}
|\delt\Rmt| + |\nabla \delt\Rmt| \leq C(|\ve{W}| + |\nabla \ve{W}|)
\end{equation}
on $\Cc_{r_0}\times (0, 1]$ for some $C = C(n)$.
\end{proposition}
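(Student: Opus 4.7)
The proof is a direct, if lengthy, bookkeeping exercise: I would translate the componentwise estimates of Propositions~\ref{prop:dscalev}, \ref{prop:drcev}, and \ref{prop:drmev} into $W^i$-language. The rescalings in \eqref{eq:wdef} have been chosen precisely so that, upon doing so, the singular $\tau$-powers appearing on the right decorate only entries of $W^j$ with $j<i$ strictly smaller than the $W^i$ under consideration. Note that the estimates in those propositions already incorporate the product rule applied to the $\tau^\alpha$-factor, so no additional time-derivative calculation is required.

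The base case $i=0$ is immediate: the three entries of $W^0$ are $\delt_a\Rt_{\bar{\imath}\bar{\jmath}\bar{k}\bar{l}}$, $\delt_{\bar{a}}\Rt_{\ub{i}\bar{\jmath}\bar{k}\bar{l}}$, and $\tau^c \delt_{\bar{a}}\Rt_{\ub{i}\ub{j}\bar{k}\bar{l}}$, and the estimates \eqref{eq:drmx1111}, \eqref{eq:drm10111}, \eqref{eq:drm10011} give $|(D_{\tau} + \Delta)W^0| \lesssim 0$ directly.

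For each $i\geq 1$, I would then process the entries of $W^i$ one by one. For a generic entry $\tau^{\alpha}U$, the relevant estimate bounds $|(D_\tau+\Delta)(\tau^\alpha U)|$ by $\tau^{-\gamma}$ times a sum of norms of various components of $\delt \Rt$, $\delt \Rct$, $\Gt$, or $\delt\Rmt$, each of which coincides (up to Bianchi symmetries and an $\oinf$-error) with some $\tau^{-\beta_j}$-multiple of an entry of $W^j$ with $j<i$; the combined $\tau$-exponent appearing in front of $|W^j|$ is then $-\gamma-\beta_j$, matching the proposition. As a representative calculation, the fourth entry of $W^3$, namely $\tau^{-3c}\delt_{\ub{a}}\Rt_{\ub{i}\bar{\jmath}\bar{k}\ub{l}}$, is controlled by \eqref{eq:drm00110}: its right-hand side is $\tau^{-(1+3c)}$ times $|\delt_{\bar{a}}\Rt_{\ub{i}\ub{j}\ub{k}\bar{l}}|+|\delt_{\bar{a}}\Rt_{\ub{i}\bar{\jmath}\bar{k}\bar{l}}|+|\delt_{\bar{a}}\Rt_{\ub{i}\bar{\jmath}}|+|\delt_{\ub{a}}\Rt_{\bar{\imath}\bar{\jmath}}|$, and identifying these tensors with the third entry of $W^2$, the second entry of $W^0$, and $\tau^{-1}$ times the second and first entries of $W^1$ produces the contribution $\tau^{-(1+3c)}|W^0|+\tau^{-(2+3c)}|W^1|+\tau^{-(1+3c)}|W^2|$ claimed by the proposition. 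The remaining entries of $W^i$ for $i=1,\ldots,5$ are handled identically, and the $\tau$-exponents displayed in the statement are the worst-case maxima of the individual exponents obtained from each entry.

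The inequality \eqref{eq:drmxcomp} follows from the observation that, after accounting for the symmetries of $\Rt$ (antisymmetry in the pairs, pair-swap, and the first Bianchi identity), the list \eqref{eq:wdef} exhausts every index pattern of $\delt\Rmt$ under the splitting $T\Cc = \Hc \oplus \Kc$, so each component of $\delt\Rmt$ is a fixed linear combination of entries of $\ve{W}$; the analogous decomposition for $\nabla\delt\Rmt$ follows from $\nabla\Ps = \nabla\Pb = 0$ and the fact that $\nabla$ commutes with multiplication by scalar functions of $\tau$. The principal obstacle is the sheer bulk of the bookkeeping: one has to check, case by case, that the $\tau$-exponents in \eqref{eq:wdef} are tuned so that no $W^j$ with $j\geq i$ appears on the right of the inequality for $W^i$, and that all singular factors arising in Propositions~\ref{prop:dscalev}--\ref{prop:drmev} are correctly absorbed into the stated exponents.
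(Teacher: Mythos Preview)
Your proposal is correct and matches the paper's proof essentially line for line: the paper carries out exactly the componentwise bookkeeping you describe, running through each entry $W^{i,j}$, invoking the corresponding estimate from Propositions~\ref{prop:dscalev}--\ref{prop:drmev}, and identifying the right-hand side with entries of $W^{j'}$ for $j'<i$ (your sample computation for $W^{3,3}$ is verbatim the paper's).

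One small caveat on \eqref{eq:drmxcomp}: your phrase ``fixed linear combination of entries of $\ve{W}$'' is slightly loose, since the entries of $\ve{W}$ carry $\tau$-powers that must be inverted, and a few of these (e.g.\ the $\tau^{c}$ in $W^{0,2}=\tau^{c}\delt_{\bar a}\Rt_{\ub{i}\ub{j}\bar{k}\bar{l}}$) would produce unbounded coefficients $\tau^{-c}$ if used directly. The paper handles this by using the Bianchi identities to express $|\delt\Rmt|$ in terms of a short list of components that appear in $\ve{W}$ with \emph{nonpositive} $\tau$-exponents only, so that recovering them costs bounded factors $\tau^{\alpha}$, $\alpha\geq 0$, on $(0,1]$. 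This is implicit in your appeal to Bianchi, but the choice of representatives is what makes the inequality go through.
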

\begin{proof}
Let us observe that  \eqref{eq:drmxcomp} is satisfied first. Using the symmetries of $\delt\Rmt$ and the Bianchi identities, we have
\begin{align*}
\begin{split}
 |\delt_a\Rt_{ijkl}| &\leq C\big(|\delt_{\bar{a}}\Rt_{\bar{\imath}\bar{\jmath}\bar{k}\bar{l}}| + |\delt_{\bar{a}}\Rt_{\ub{i}\bar{\jmath}\bar{k}\bar{l}}|
  + |\delt_{a}\Rt_{\ub{i}\bar{\jmath}\bar{k}\ub{l}}| + |\delt_{\ub{a}}\Rt_{\ub{i}\ub{j}\ub{k}\bar{l}}|+ |\delt_{\ub{a}}\Rt_{\ub{i}\ub{j}\ub{k}\ub{l}}|\big) 
\end{split}\\
 &\leq C\big(|W^0| + \tau^\cnst|W^2|
  + \tau^{3\cnst}|W^3| + \tau^{2\cnst}|W^5|\big)
\end{align*}
for some $C = C(n) > 0$. Similarly, $|\nabla\delt\Rmt|$ can be controlled by the sum of $|\nabla W^0|$, $|\nabla W^2|$, $|\nabla W^3|$, and $|\nabla W^5|$.

Now we verify the system of inequalities satisfied by the components of $\ve{W}$. Denoting the components of $W^i$ by $W^{i, j}$, we first see from \eqref{eq:drmx1111}-\eqref{eq:drm10011} that
\[
 |(D_{\tau} + \Delta)W^{0, j}| \lesssim 0
\]
for $j = 0, 1, 2$. The inequality for $W^0$ follows. Next, from \eqref{eq:drc11x} and \eqref{eq:drc011}, we have
\begin{align*}
 |(D_{\tau} + \Delta)W^{1, 0}| &\lesssim |\delt_a \Rt_{\bar{\imath}\bar{\jmath}\bar{k}\bar{l}}| = |W^{0, 0}|,
\end{align*}
and
\begin{align*}
 |(D_{\tau} + \Delta)W^{1, 1}| &\lesssim |\delt_{\bar{a}}\Rt_{\ub{i}\bar{\jmath}\bar{k}\bar{l}}| = |W^{0, 1}|,
\end{align*}
while, from \eqref{eq:g001}, that
\begin{align*}
 |(D_{\tau} + \Delta)W^{1, 2}| &\lesssim \tau^{\cnst}|\nablat_{\bar{a}}\Rt_{\ub{i}\ub{j}\bar{k} \bar{l}}|
			       = |W^{0, 2}|.
\end{align*}
Taken together, these inequalities yield the relation for $W^1$.

For $W^2$, we start with \eqref{eq:dscalev}, which implies
\[
 |(D_{\tau} + \Delta)W^{2, 0}| \lesssim \tau|\delt_{a}\Rt_{\bar{\imath}\bar{\jmath}}| = |W^{1, 0}|.
\]
Then \eqref{eq:drm10110} and \eqref{eq:drm10001} yield, respectively, that
\begin{align*}
 |(D_{\tau} + \Delta)W^{2, 1}|&\lesssim \tau^{-(1+\cnst)}(|\delt_{\bar{a}}\Rt_{\bar{\imath}\bar{\jmath}\bar{k}\bar{l}}| + 
 |\delt_{\bar{a}}\Rt_{\ub{i}\ub{j}\bar{k}\bar{l}}|  + |\delt_{\bar{a}}\Rt_{\bar{\imath}\bar{\jmath}}|)\\
 &\lesssim\tau^{-(1+\cnst)}|W^{0, 0}| + \tau^{-(1+2\cnst)}|W^{0, 2}| + \tau^{-(2+\cnst)}|W^{1, 0}|,
\end{align*}
and
\[
 |(D_{\tau} + \Delta)W^{2, 2}|\lesssim \tau^{-1}(|\delt_{\bar{a}}\Rt_{\ub{i}\bar{\jmath}\bar{k}\bar{l}}| + |\delt_{\bar{a}}\Rt_{\ub{i}\bar{\jmath}}|)
 \lesssim \tau^{-1}|W^{0, 1}| + \tau^{-2}|W^{1, 1}|,
\]
and the inequality for $W^2$ follows.

Similarly, using \eqref{eq:drc110} and \eqref{eq:drc001}, we see that
\begin{align*}
  \begin{split}
   |(D_{\tau} + \Delta)W^{3, 0}| &\lesssim \tau^{-\cnst}(|\nablat_{\bar{a}}\Rt| 
   + |\nablat_{\bar{a}}\Rt_{\bar{\jmath}\bar{k}}| + |\nablat_{\bar{a}}\Rt_{\ub{i}\bar{\jmath}\bar{k} \ub{l}}|)\\
   &\lesssim    \tau^{-(1+\cnst)}|W^{1, 0}| + \tau^{-(2+\cnst)}|W^{2, 0}| + |W^{2, 1}|,
  \end{split}
\end{align*}
and
\begin{align*}
  \begin{split}
&   |(D_{\tau} + \Delta)W^{3, 1}| \lesssim \tau^{-\cnst}(|\nablat_{\bar{a}}\Rt| + |\Gt_{\ub{a}\ub{j}\bar{k}}| +
   |\nablat_{\bar{a}}\Rt_{\bar{\jmath}\bar{k}}| + |\nablat_{\bar{a}}\Rt_{\ub{i} \bar{\jmath}\bar{k} \ub{l}}|)\\
    &\qquad\lesssim  \tau^{-(1+\cnst)}|W^{1, 0}| + \tau^{-(1+2\cnst)}|W^{1, 2}|+  \tau^{-(2+\cnst)}|W^{2, 0}|   + |W^{2, 1}|,
  \end{split}
\end{align*}
while, using \eqref{eq:g000} and \eqref{eq:drm00110}, we see that
\begin{align*}
 |(D_{\tau} + \Delta)W^{3, 2}| &\lesssim 
 |\nablat_{\ub{a}}\Rt| +
   |\nablat_{\ub{a}}\Rt_{\bar{\jmath}\bar{k}}| + |\nablat_{\bar{a}}\Rt_{\ub{j}\bar{k}}| + |\nablat_{\bar{a}}\Rt_{\ub{i} \ub{j}\ub{k} \bar{l}}|\\
   &\lesssim  \tau^{-1}|W^{1, 0}| + \tau^{-1}|W^{1, 1}| +  \tau^{-2}|W^{2, 0}| +  |W^{2, 2}|,
\end{align*}
and
\begin{align*}
   &|(D_{\tau} + \Delta)W^{3, 3}| \lesssim 
   \tau^{-(1+3\cnst)}\bigg(
   |\nablat_{\bar{a}}\Rt_{\ub{i}\ub{j}\ub{k}\bar{l}}|+ |\nablat_{\bar{a}}\Rt_{\ub{i}\bar{\jmath}\bar{k}\bar{l}}| + |\nablat_{\bar{a}}\Rt_{\ub{i}\bar{\jmath}}|
   + |\nablat_{\ub{a}}\Rt_{\bar{\imath}\bar{\jmath}}|\bigg)\\
   &\qquad\lesssim \tau^{-(1+3\cnst)}(|W^{0, 1}| + |W^{2, 2}|) + \tau^{-(2+3\cnst)}(|W^{1, 0}| + |W^{1, 1}|).
\end{align*}
Combining these relations yields the inequality for $W^3$.

Next, from \eqref{eq:drc000} and \eqref{eq:drm10000}, we have
\begin{align*}
  \begin{split}
   &|(D_{\tau} + \Delta)W^{4, 0}| \\
   &\qquad\lesssim \tau^{-3\cnst}\big(|\nablat_{\ub{a}}\Rt| + |\Gt_{\ub{a}\ub{j}\ub{k}}| +
   |\nablat_{\ub{a}}\Rt_{\bar{\jmath}\bar{k}}| + |\nablat_{\bar{a}}\Rt_{\ub{j}\bar{k}}| + |\nablat_{\ub{a}}\Rt_{\ub{i} \bar{\jmath}\bar{k} \ub{l}}|\big)\\
   &\qquad\lesssim \tau^{-(2+3\cnst)}|W^{2, 0}| +\tau^{-(1+3\cnst)}(|W^{1, 0}| + |W^{1, 1}| + |W^{3, 2}|) + |W^{3, 3}|, 
  \end{split}
\end{align*}
and
\begin{align*}
  |(D_{\tau} + \Delta)W^{4, 1}|&\lesssim 
  \tau^{-1}\left(|\nablat_{\bar{a}}\Rt_{\ub{i}\bar{\jmath}\bar{k}\ub{l}}| + |\nablat_{\bar{a}}\Rt_{\ub{i}\ub{j}}|\right)\lesssim tau^{-(1-\cnst)}|W^{2, 1}| + \tau^{-(2-\cnst)}|W^{3, 0}|,
\end{align*}
which together yield the inequality for $W^4$.

Finally, to obtain the inequality for $W^5$, we use 
\begin{align*}
\begin{split}
&   |(D_{\tau} + \Delta)W^{5, 0}| \lesssim 
   \tau^{-(1+2\cnst)}\bigg(
   |\nablat_{\bar{a}}\Rt_{\ub{i}\ub{j}\ub{k}\ub{l}}|
    + |\nablat_{\bar{a}}\Rt_{\ub{i}\bar{\jmath}\bar{k}\ub{l}}|  
   + |\nablat_{\ub{a}}\Rt_{\ub{i}\bar{\jmath}}| + |\nablat_{\bar{a}}\Rt_{\ub{i}\ub{j}}| \bigg)
\end{split}\\
&\qquad \lesssim   \tau^{-(1+\cnst)}|W^{2, 1}| + \tau^{-(2+\cnst)}|W^{3, 0}| +\tau^{-(2+\cnst)}|W^{3, 1}|  + \tau^{-(1+2\cnst)}|W^{4, 1}|,
\end{align*}
from \eqref{eq:drm00001}, and
\begin{align*}
|(D_{\tau} + \Delta)W^{5, 1}| &\lesssim 
   \tau^{-(1+2\cnst)}\left(|\nablat_{\ub{a}}\Rt_{\ub{i}\bar{\jmath}\bar{k}\ub{l}}| + |\nablat_{\ub{a}}\Rt_{\ub{i}\ub{j}}|\right)\\
   &\lesssim \tau^{-(1-\cnst)}|W^{3, 3}| + \tau^{-(2-\cnst)}|W^{4, 0}|,
\end{align*}
from \eqref{eq:drm00000}.

\end{proof}

Note that the largest exponent of $\tau$ which appears in the denominator of the coefficients of $|W^i|$ on the right side of the above relations is $\gamma = 2+ 3\cnst$.
Returning to Proposition \ref{prop:wtriangular} and unwinding the notation $\lesssim$, we summarize the findings of this section as follows.
\begin{proposition}\label{prop:wsys}
 For all $\beta > 0$, there is a constant $B_0 = B_0(\beta)$ depending on finitely many of the constants $M_{l, m}$
in \eqref{eq:xydecay} such that $\ve{W} = (W^0, W^1, \ldots, W^5)$ and $\ve{Y} = (h, \nabla h, \nabla \nabla h)$ together satisfy
\begin{equation}\label{eq:wsimple}
  |(D_{\tau} + \Delta)W^i| \leq B_0\tau^{\beta}(|\ve{W}| + |\ve{Y}|) + B_0\sum_{j=0}^{i-1}\tau^{-\gamma}|W^j|
\end{equation}
for $i = 0, 1, \dots 5$, and 
\begin{equation}\label{eq:yweq}
|D_{\tau}\ve{Y}| \leq B_0(|\ve{W}| + |\nabla\ve{W}|) + B_0\tau^{-1}|\ve{Y}|
\end{equation}
on $\Cc_{r_0}\times (0, 1]$. Here, $\gamma = 2 + 3/(k-1)$. Moreover, 
\[
 |\ve{X}| + |\nabla\ve{X}| \leq C(|\ve{W}| + |\nabla\ve{W}|)
\]
for some constant $C = C(n)$.
\end{proposition}

\section{Exponential Decay: The induction argument}\label{sec:expdecay}
The advantage of the system \eqref{eq:wsimple}-\eqref{eq:yweq} over the system \eqref{eq:xypdeode} is that the terms with singular coefficients in \eqref{eq:wsimple} appear in a strictly triangular form.
In this section, we will prove decay estimates for general systems with this triangular structure, and use these estimates to deduce Theorem \ref{thm:expdecay1}. 
These estimates will use the weights
\begin{equation}\label{eq:sigmagdef}
 \sigma(\tau) = \tau e^{\frac{T-\tau}{3}}, \quad G_{z_0}(z, \tau) = e^{-\frac{|z-z_0|^2}{4\tau}},
\end{equation}
for fixed $z_0 \in \RR^{n-k}$.
Note that $\sigma$ is comparable to $\tau$ in the sense that
\begin{equation}\label{eq:sigmataucomp}
  \tau \leq \sigma(\tau) \leq e^{\frac{T}{3}}\tau
\end{equation}
for $0 \leq \tau \leq T$, and that $\sigma^{\prime}(\tau) > 0$ and  $\sigma(\tau) \leq 1$ on $[0, T]$ as long as $T \leq 1$.

\begin{proposition}\label{prop:xypolydecay}
Let the bundles $\Wc = \oplus_{i=0}^q T^{(k_i, l_i)}(\Cc)$ and $\Yc = \oplus_{i=0}^{q^{\prime}}T^{(k_i, l_i)}(\Cc)$ be equipped with the family of metrics and connections induced by $g = g(\tau)$.
Suppose that $\ve{W} = (W^0, \ldots, W^{q})$ and $\ve{Y} = (Y^0, \ldots, Y^{q^{\prime}})$ are families of sections of $\Wc$ and $\Yc$ over $\Cc_{r_0}\times (0, 1]$ satisfying the following two conditions:
\begin{enumerate}
 \item[(a)] There are nonnegative constants $\beta$, $\gamma$, $\mu$, and $B$ such that
\begin{align}
\label{eq:wysys}
\begin{split}
  |(D_{\tau} + \Delta) W^i| &\leq B\tau^{\beta}(|\ve{W}| + |\ve{Y}|) + B\sum_{j=0}^{i-1}\tau^{-\gamma}|W^j|,\\
  |D_{\tau}\ve{Y}| & \leq B\tau^{-\mu}(|\ve{W}| + |\nabla\ve{W}|) + B\tau^{-1}|\ve{Y}|,
\end{split}
\end{align}
for each $i= 0, \ldots, q$ on $\Cc_{r_0}\times (0, 1]$.
\item[(b)] For each $l\geq 0$,
\begin{equation}\label{eq:spacetimedecay3}
   \sup_{\Cc_{r_0}\times (0, 1]} \frac{|z|^{2l}}{\tau^l} \left(|\ve{W}| + |\nabla\ve{W}| + |\ve{Y}|\right) \leq M_{l}
\end{equation}
for some constant $M_l\geq 0$.
\end{enumerate}

Then, there are positive constants $\beta_0 = \beta_0(k, n, q, \gamma, \mu)$ and $\lambda_0 = \lambda_0(k, n, \mu)$, and $L_0$,  $K_0$, and $T_0\leq 1$ depending on $k$, $n,$ $\gamma$, $\mu$, $B$, and finitely many of the constants $M_l$, such that, 
if $\beta \geq \beta_0$, the inequality
\begin{align}\label{eq:xypolydecay}
\begin{split}
&\int_0^T\int_{\Dc_{r}(z_0)}\left(\tau|\ve{W}|^2 + \tau^2|\nabla \ve{W}|^2+ \tau^{\lambda_0}|\ve{Y}|^2\right)\sigma^{-m}G_{z_0}\,\dX \\
&\qquad\qquad\leq K_0L_0^mr^{-2m}m!
\end{split}
\end{align}
holds 
for all $m\geq 0$ and all $r$, $T$, and $z_0$ with $0 < r^2 \leq T \leq T_0$ and $B_{4r}(z_0) \subset \RR^{n-k}\setminus\overline{B_{r_0}(0)}$.
\end{proposition}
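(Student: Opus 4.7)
The plan is to prove \eqref{eq:xypolydecay} by induction on $m \geq 0$, applying the parabolic Carleman inequality \eqref{eq:pdecarleman2} to each component $W^i$ in the order $i = 0, 1, \ldots, q$ dictated by the triangular structure, and the ODE Carleman inequalities \eqref{eq:odecarleman2g}, \eqref{eq:odecarleman2ng} to $\ve{Y}$. The base case $m = 0$ follows from \eqref{eq:spacetimedecay3} alone: the hypothesis $B_{4r}(z_0) \subset \RR^{n-k} \setminus \overline{B_{r_0}(0)}$ forces $|z| \gtrsim r_0$ on $\Dc_r(z_0)$, so the infinite-order spatial decay gives $|\ve{W}| + |\nabla \ve{W}| + |\ve{Y}| \leq C_l \tau^{l/2}$ there for every $l$, which, with $\lambda_0$ fixed, renders the left-hand side of \eqref{eq:xypolydecay} bounded by a constant $K_0$ depending on finitely many of the $M_l$.

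For the inductive step $m - 1 \Rightarrow m$, the parabolic Carleman estimate is expected to yield a bound of the schematic form
\begin{equation*}
m\int_0^T\int_{\Dc_r(z_0)}\bigl(\tau|\phi|^2 + \tau^2|\nabla \phi|^2\bigr)\sigma^{-m}G_{z_0}\,\dX \leq C\int_0^T\int_{\Dc_{2r}(z_0)}\tau^2|(D_\tau + \Delta)\phi|^2 \sigma^{-m+1}G_{z_0}\,\dX + \mathrm{bdy},
\end{equation*}
after introducing a spatial cutoff supported in $\Dc_{2r}(z_0)$; the boundary contribution is controlled by exploiting the Gaussian decay of $G_{z_0}$ across the collar $\Dc_{2r}(z_0) \setminus \Dc_r(z_0)$, which converts the annular loss into a factor of $r^{-2}$ per level. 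Applied to $\phi = W^i$ with the inequality from \eqref{eq:wysys}, the right-hand side splits into a diagonal piece controlled by $\tau^{2 + 2\beta}(|\ve{W}|^2 + |\ve{Y}|^2)$ and triangular coupling terms controlled by $\tau^{2 - 2\gamma}|W^j|^2$ for $j < i$. Choosing $\beta \geq \beta_0(k, n, q, \gamma, \mu)$ large enough, the diagonal piece absorbs into the inductive hypothesis through $\tau \lesssim \sigma$, while the triangular pieces are handled by the fact that estimates for $W^j$ with $j < i$ have already been established at an $m$-index shifted by $O(\gamma)$. For $\ve{Y}$, the ODE Carleman inequalities require $\lambda_0 = \lambda_0(k, n, \mu)$ large enough to absorb the $\tau^{-1}|\ve{Y}|$ coefficient into the weight, and then the $\tau^{-\mu}(|\ve{W}| + |\nabla \ve{W}|)$ term is controlled by the PDE estimates already in hand. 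The prefactor $m!$ in \eqref{eq:xypolydecay} emerges from accumulating the $1/m$ factors in front of the Carleman inequalities across $m$ steps, $L_0^m$ absorbs the per-step multiplicative constants, and $r^{-2m}$ comes from the Gaussian loss across $\Dc_{2r}(z_0) \setminus \Dc_r(z_0)$.

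The main obstacle is the joint $m$-index bookkeeping across the triangular cascade. Each application of a Carleman inequality advances the $\sigma$-exponent by one, but each absorption of a singular factor $\tau^{-\gamma}$ or $\tau^{-\mu}$ consumes part of that gain; since one inductive step requires up to $q + 1$ such absorptions, the gains would be erased unless $\tau^{2\beta}$ supplies an adequate reserve, which is precisely what the assumption $\beta \geq \beta_0(k, n, q, \gamma, \mu)$ guarantees. A secondary technical difficulty is the non-compact support of $\sigma^{-m}G_{z_0}$, which forces the introduction of spatial cutoffs and the treatment of the resulting boundary terms via the off-center Gaussian decay of $G_{z_0}$ and the sacrificial $\Dc_{2r}(z_0) \to \Dc_r(z_0)$ restriction. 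With these ingredients in place, the induction closes and yields \eqref{eq:xypolydecay} with constants $K_0$, $L_0$, $T_0$ of the claimed dependencies.
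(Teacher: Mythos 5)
Your skeleton (induction on $m$, the parabolic Carleman estimate \eqref{eq:pdecarleman2} for $\ve{W}$, the ODE estimates for $\ve{Y}$, spatial/temporal cutoffs, Stirling-type control of the annular error terms) matches the paper, but the two mechanisms you rely on to close the inductive step are not the ones that work, and as described they fail. First, the triangular terms: you propose to control the coupling terms $\tau^{-\gamma}|W^j|$, $j<i$, by ``estimates for $W^j$ already established at an $m$-index shifted by $O(\gamma)$.'' The induction ascends in $m$, so the only estimates in hand are at levels $\leq m-1$; but since $\tau \leq \sigma \leq e^{T/3}\tau$, the weight $\tau^{2-2\gamma}\sigma^{-m}$ is comparable to $\tau^{2}\sigma^{-(m+2\gamma)}$, so controlling $\iint\tau^{2-2\gamma}\sigma^{-m}|W^j|^2G_{z_0}\,\dX$ would require the estimate at a level strictly above $m-1$ (in the application $\gamma = 2+3/(k-1)>2$), which is unavailable; and the $\tau^{\beta}$ reserve cannot help, because the coupling terms in \eqref{eq:wysys} carry no factor of $\tau^{\beta}$. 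The paper's device is different and entirely within a single level $m$: it applies \eqref{eq:pdecarleman2} to all components simultaneously with \emph{staggered} parameters $\alpha_i = m/2 + (q-i)(\gamma+b)$, so that for $j<i$ the singular weight $\tau^{2-2\gamma}\sigma^{-2\alpha_i}$ is dominated by $T_0^{2b+1}$ times the left-hand-side weight $\tau\sigma^{-2\alpha_j}$ attached to $W^j$; summing over $i$ and shrinking $T_0$ absorbs the whole cascade. That is also where $\beta_0=(q+1)b+q\gamma$ comes from (it guarantees $\sigma^{2(\beta-\alpha_i)}\leq\sigma^{2(b-\alpha_j)}$ for $i\leq j$), not from replenishing losses accumulated across successive values of $m$.

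Second, the $\ve{Y}$ component is not closed by saying the $\tau^{-\mu}(|\ve{W}|+|\nabla\ve{W}|)$ term is ``controlled by the PDE estimates already in hand.'' The Gaussian ODE inequality \eqref{eq:odecarleman2g} is lossy: its right-hand side contains $\iint\tau^{\lambda-1}\sigma^{-2\alpha}|z-z_0|^2|\ve{Z}|^2G_{z_0}\,\dX$, and $|z-z_0|^2/\tau$ is unbounded, so this term is dominated by nothing you have produced. The paper's proof spends most of its effort precisely here: it splits spacetime into $\{|z-z_0|^2<m\tau/8\}$, where the term is reabsorbed into the left side, and its complement, where $|z-z_0|^2G_{z_0}/\tau\leq (m/8)e^{-m/32}$ and one needs an \emph{unweighted} (no Gaussian) bound on $\iint\tau^{\lambda_0}\sigma^{-m}|\ve{Y}|^2$ over a smaller ball $\Dc_{4\delta r}(z_0)$. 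That bound is manufactured from the level-$(m-1)$ inductive hypothesis by the covering/delocalization Lemma \ref{lem:fgtofng}, followed by Lemma \ref{lem:xmngtoympng}, which uses the non-Gaussian estimate \eqref{eq:odecarleman2ng} to promote a level-$(m-1)$ bound on $(\ve{W},\nabla\ve{W})$ to a level-$m$ bound on $\ve{Y}$; the fixed choice $\delta<1-e^{-1/64}$ makes $e^{-m/32}$ beat the loss $(1-\delta)^{-2(m-1)}$. Without this mechanism — which, as the paper notes, is the reason an induction on $m$ is needed at all — your inductive step does not close. (Your secondary bookkeeping, e.g. obtaining $r^{-2m}m!$ from $\tau^{2-m}e^{-r^2/(4\tau)}$ on the annulus via Stirling, is essentially correct; also, $\lambda_0$ is dictated by $2\mu$ and $(n-k)/2$ through these two lemmas, not by any need to absorb $\tau^{-1}|\ve{Y}|$, which is handled by the $1/m^2$ prefactor for $m$ large.)
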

The point is that the constants $\beta_0$, $\lambda_0$, $L_0$, $K_0$, and $T_0$ do not depend on $m$. 

\subsection{Proof of Theorem \ref{thm:expdecay1}}

We will prove Proposition \ref{prop:xypolydecay} by an induction argument in the next subsection. First we show that it
indeed implies Theorem \ref{thm:expdecay1}.

\begin{proof}[Proof of Theorem \ref{thm:expdecay1}, assuming Proposition \ref{prop:xypolydecay}]
By choosing the constant $B_0 = B_0(\beta)$ appropriately large in \eqref{eq:wsimple} and \eqref{eq:yweq}, we may assume that \eqref{eq:wysys} is satisfied with $\beta \geq \beta_0$, $\gamma = 2 + 3/(k-1)$, and $\mu = 0$.
Let $z_0\in \RR^{n-k}\setminus \overline{B_{8r_0}(0)}$ and $0 < T\leq T_0$.
Since $r_0 > 1$, we are assured that $B_{4r}(w) \subset  \RR^{n-k}\setminus\overline{B_{r_0}(0)}$
whenever $w\in B_{2}(z_0)$ and $0 \leq r \leq \sqrt{T} \leq 1$. At any such $w$, we may then combine \eqref{eq:drmxcomp} with \eqref{eq:xypolydecay} to obtain that,
for all $r\leq \sqrt{T}$ and $m\geq 0$, the inequality
\[
\int_0^{T}\int_{\Dc_{r}(w)}\tau^{p}(|\ve{X}|^2 + |\nabla \ve{X}|^2 + |\ve{Y}|^2)\sigma^{-m}G_{w}\,\dX \leq NL_0^mr^{-2m}m!
\]
holds for some $N = N(K_0)$ and fixed integer $p =\max\{\lambda_0, 2\}$.

Using that $\sigma(\tau) \leq \sqrt{e}\tau$, we then have 
\[
\frac{1}{(m-p)!}\int_0^{T}\int_{\Dc_{r}(w)}(|\ve{X}|^2 + |\nabla \ve{X}|^2 + |\ve{Y}|^2)\left(\frac{r^2}{4L\tau}\right)^{m-p}G_w\,\dX \leq \frac{N^{\prime} m^p}{4^mr^{2p}}
\]
for $L = \max\{\sqrt{e}L_0, 1\}$ and some $N^{\prime} = N^{\prime}(p, N, L_0)$.
Summing both sides of this inequality over all $m\geq p$ yields
\[
\int_0^{T}\int_{\Dc_{r}(w)}(|\ve{X}|^2 + |\nabla \ve{X}|^2 + |\ve{Y}|^2)e^{\frac{r^2-L|z-w|^2}{4L\tau}}\,\dX \leq N^{\prime\prime}r^{-2p},
\]
for some $N^{\prime\prime} = N^{\prime\prime}(p, N^{\prime})$,
and, consequently, that
\begin{equation}\label{eq:expdsmall}
 \int_0^{T}\int_{\Dc_{\frac{r}{2\sqrt{L}}(w)}}(|\ve{X}|^2 + |\nabla \ve{X}|^2 + |\ve{Y}|^2)e^{\frac{r^2}{8L\tau}}\,\dX \leq N^{\prime\prime}r^{-2p}.
\end{equation}

Returning to the statement of Theorem \ref{thm:expdecay1}, consider first the interval $[0, T]$ where $T = \min\{1, T_0\}$. We may cover $\Dc_{1}(z_0)$ with finitely many sets of the form
$\Dc_r(w_i)$, $i=1, \ldots, \nu$, where $r= \sqrt{T}/(2\sqrt{L})$ and $w_i\in \overline{B_1(z_0)} \subset B_2(z_0)$. This can be done so that the number of sets in the cover satisfies $\nu \leq C(L/T)^{(n-k)/2}$ for some dimensional constant $C$. 
Since $B_{4r}(w_i) \subset  \RR^{n-k}\setminus\overline{B_{r_0}(0)}$ for each $i$,
we may apply the estimate in \eqref{eq:expdsmall} on each $B_{\sqrt{T}/(2\sqrt{L})}(w_i)$ and sum to obtain that 
\begin{align*}
\int_0^{T}\int_{\Dc_{1}(z_0)}(|\ve{X}|^2 + |\nabla \ve{X}|^2 + |\ve{Y}|^2)e^{\frac{1}{8L\tau}}\,\dX &\leq CN^{\prime\prime}L^{\frac{n-k}{2}}T^{-p-\frac{n-k}{2}}.
\end{align*}
If $T_0 = 1$, we are done. Otherwise, if $T_0 < 1$, we may obtain an estimate of the same form on $[T_0, 1]$ since
 \[
   \int_{T_0}^{1}\int_{\Dc_{1}(z_0)}(|\ve{X}|^2 + |\nabla \ve{X}|^2 + |\ve{Y}|^2)e^{\frac{1}{8L\tau}}\,\dX \leq N^{\prime\prime\prime}(1-T_0)e^{\frac{1}{8LT_0}}
 \]
for some $N^{\prime\prime\prime}$ depending on $M_{0, m}$ for $m \leq 4$. Combining this estimate with the one on the interval $[0, T_0]$ then proves \eqref{eq:expdecay1}.
\end{proof}

\subsection{Three Carleman-type estimates}
We will prove Proposition \ref{prop:xypolydecay} by induction on the degree $m$ of polynomial decay. The induction step is based on the application of the following Carleman-type estimates
to $\ve{W}$ and $\ve{Y}$.  The estimates apply to arbitrary compactly supported families of sections of bundles $\mathcal{Z}$ of the form $\mathcal{Z} = \bigoplus T^{(k_i, l_i)}\Cc$
on $\Cc\times(0, 1]$ with metrics and connections induced by $g= g(\tau)$. 

The first Carleman estimate will be applied to a suitably cut-off version of the ``PDE'' component $\ve{W}$ of our system. A similar estimate was proven by the second author in \cite{WangCylindrical},
following \cite{EscauriazaSereginSverakHalfSpace}.
\begin{theorem}\label{thm:pdecarleman2}
Assume $0 < T \leq 2$. Then, for any $\alpha \geq 1$ and $z_0\in \RR^{n-k}$, the estimate
\begin{align}\label{eq:pdecarleman2}
\begin{split}
&\iint \sigma^{-2\alpha}\tau(\alpha|\ve{Z}|^2 + \tau|\nabla \ve{Z}|^2)G_{z_0}\,\dX \leq 10\iint \sigma^{-2\alpha}\tau^2|(D_{\tau} + \Delta)\ve{Z}|^2G_{z_0}\,\dX
\end{split}
\end{align}
holds for any smooth family of sections $\ve{Z}$ of $\mathcal{Z}$ with compact support in $\Cc\times(0, T)$.
\end{theorem}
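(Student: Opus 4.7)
The plan is to prove \eqref{eq:pdecarleman2} by the symmetric/antisymmetric-decomposition technique for Carleman estimates on parabolic operators, adapted to the tensor operator $D_\tau + \Delta$ on the evolving cylinder and following the approach of \cite{EscauriazaSereginSverakHalfSpace, WangCylindrical}. First I would substitute $u = \sigma^{-\alpha}\ve{Z}$. Using $\sigma'/\sigma = \tau^{-1} - 1/3$ and the fact that $D_\tau$ is $g$-compatible with $\nabla = \nabla_{g(\tau)}$ time-independent, this reduces \eqref{eq:pdecarleman2} to
\[
\iint \bigl(\alpha\tau|u|^2 + \tau^2|\nabla u|^2\bigr) G_{z_0}\,\dX \leq 10 \iint \tau^2|Lu|^2 G_{z_0}\,\dX,
\]
where $Lu := (D_\tau + \Delta)u + \alpha\eta\, u$ and $\eta := \tau^{-1} - 1/3$.

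Next I would expand $|Lu|^2 = |(D_\tau+\Delta)u|^2 + 2\alpha\eta\langle(D_\tau+\Delta)u, u\rangle + \alpha^2\eta^2|u|^2$, integrate against $\tau^2 G_{z_0}\,\dX$, and process the cross term by integration by parts. Since $D_\tau$ is $g$-compatible, $\langle D_\tau u, u\rangle = \tfrac{1}{2}\partial_\tau|u|^2$, and integrating in $\tau$---accounting for the evolution of the volume form through $\partial_\tau\log d\mu_{g(\tau)} = k/(2\tau)$---yields a positive $\alpha\tau|u|^2 G_{z_0}$ contribution on the left. Similarly, $\langle\Delta u, u\rangle = \tfrac{1}{2}\Delta|u|^2 - |\nabla u|^2$, and integrating by parts spatially, using $\nabla G_{z_0} = -\tfrac{z-z_0}{2\tau}G_{z_0}$, yields a positive $2\alpha\eta\tau^2|\nabla u|^2 G_{z_0}$ contribution. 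Since $\alpha \geq 1$, $0 < T \leq 2$, and $\eta$ stays bounded below by a positive constant on this range, these two contributions together dominate the desired left-hand side. The explicit drift $-1/3$ in $\sigma'/\sigma$ is tuned precisely so that the $k/(2\tau)$ drift from the time-dependent volume form is absorbed. The remaining lower-order terms---coming from $\Delta G_{z_0}$, from $\partial_\tau G_{z_0}$, and from the $-1/3$ piece of $\eta$---are controlled by the nonnegative quantities $\tau^2|(D_\tau+\Delta)u|^2$ and $\alpha^2\tau^2\eta^2|u|^2$ on the right via suitable Cauchy--Schwarz inequalities; careful tuning of these yields the explicit constant $10$.

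The principal obstacle is the bookkeeping of the many correction terms generated by the interaction of the time-dependence of $g(\tau)$ (which contributes the $k/(2\tau)$ drift in $d\mu_{g(\tau)}$ and the Ricci endomorphism absorbed into $D_\tau$) with the $\tau$-dependent Gaussian drift from $\nabla G_{z_0}$. However, the $g$-compatibility of $D_\tau$ and $\nabla$ reduces every inner-product differentiation to the scalar identities $D_\tau|u|^2 = 2\langle D_\tau u, u\rangle$ and $\Delta|u|^2 = 2|\nabla u|^2 + 2\langle\Delta u, u\rangle$, so the tensorial nature of $\ve{Z}$ does not complicate the argument beyond a scalar Carleman estimate, provided the weight $\sigma(\tau) = \tau e^{(T-\tau)/3}$ has been chosen precisely to cancel the volume-form drift. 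The hypothesis $T \leq 2$ enters only to keep $\sigma$ and $\tau$ comparable, as noted in \eqref{eq:sigmataucomp}.
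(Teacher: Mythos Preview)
Your outline has a genuine gap: expanding $|Lu|^2$ and integrating the cross term by parts once does \emph{not} produce the key positive contribution $\tfrac{\alpha}{3}\tau|u|^2$. If you carry out the computation you describe, the cross term $2\alpha\iint\eta\tau^2\langle(D_\tau+\Delta)u,u\rangle G_{z_0}\,\dX$ after integration by parts becomes
\[
-\alpha\iint\Bigl[(1-\tfrac{2\tau}{3}) + (1-\tfrac{\tau}{3})\tfrac{n}{2}\Bigr]|u|^2 G_{z_0}\,\dX \;-\; 2\alpha\iint\eta\tau^2|\nabla u|^2 G_{z_0}\,\dX,
\]
where the $|z-z_0|^2$ terms from $\partial_\tau G_{z_0}$ and $\Delta G_{z_0}$ cancel. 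Both surviving terms have the wrong sign to move to the left, and the $\alpha^2\eta^2\tau^2|u|^2 \sim \alpha^2|u|^2$ term from the square is too large to be absorbed by any $\alpha\tau|u|^2$ term for $\alpha$ large. The $\tfrac{\alpha\tau}{3}$ coefficient that actually drives the estimate is a \emph{second-order} (commutator) effect: it equals $-\alpha\tau\bigl(\tau(\log\sigma)'' + (\log\sigma)'\bigr)$, and the $-1/3$ in $\sigma$ is tuned so that this expression equals $1/3$, not so that it cancels the volume drift $k/(2\tau)$ as you suggest (a constant cannot cancel a $\tau^{-1}$ term).

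The paper's argument instead conjugates by the \emph{full} weight $\varphi = -\tfrac{1}{2}f_{z_0} - \alpha\log\sigma$ (i.e., absorbs $G_{z_0}^{1/2}$ as well), applies the symmetric/antisymmetric decomposition $\mathcal L=\mathcal S+\mathcal A$, and computes the commutator $\langle[\mathcal S,\mathcal A]V,V\rangle$. Because $(\mathcal C,g(\tau),f_{z_0})$ is a shrinking soliton, the quantities $\mathcal Q^{(1)}_\varphi$ and $\mathcal Q^{(3)}_\varphi$ vanish identically and $\mathcal Q^{(2)}_\varphi$ reduces to exactly $\alpha\tau/3$; this yields the $|Z|^2$ estimate in one stroke. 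The gradient term is then obtained from a separate elementary energy identity, which is where the hypothesis $T\le 2$ actually enters. Your partial conjugation by $\sigma^{-\alpha}$ alone leaves the Gaussian as an external weight and destroys this clean cancellation.
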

We will use the next two estimates to control the component $\ve{Y}$.
\begin{theorem}\label{thm:odecarleman2}
Assume $0 < T\leq 2$ and let $D$, $U\subset \Cc$ be open sets such that $D$ is precompact and $\overline{D}\subset U$. For any $\lambda > 0$, there is $\alpha_0 = \alpha_0(\lambda, k) \geq 1$
such that, for all $\alpha \geq \alpha_0$ and
$z_0 \in \RR^{n-k}$
the estimates
 \begin{align}
\begin{split}\label{eq:odecarleman2g}
&2\alpha\int_0^T\int_D \tau^{\lambda}\sigma^{-2\alpha}|\ve{Z}|^2G_{z_0}\,\dX \\
&\qquad\leq  \int_0^T\int_D\tau^{\lambda-1}\sigma^{-2\alpha}|z- z_0|^2|\ve{Z}|^2G_{z_0}\,\dX\\
&\qquad\phantom{\leq}
+ 50\alpha^{-1}\int_0^T\int_D\tau^{\lambda+2}\sigma^{-2\alpha}|D_{\tau}\ve{Z}|^2G_{z_0}\,\dX,
\end{split}
\end{align}
and
\begin{align}
\begin{split}
 \label{eq:odecarleman2ng}
\alpha^2\int_0^T\int_D \tau^{\lambda}\sigma^{-2\alpha}|\ve{Z}|^2\,\dX \leq 16\int_0^T\int_D\tau^{\lambda+2} \sigma^{-2\alpha}|D_{\tau}\ve{Z}|^2\,\dX,
\end{split}
\end{align}
hold for all smooth families of sections $\ve{Z}$ of $\Zc$ over $U\times(0, T)$ with $\operatorname{supp} \ve{Z} \subset U\times [a, b]$ for some $0 < a < b < T$.
\end{theorem}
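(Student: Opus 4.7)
The plan is to prove both estimates by integration by parts in time, using the compact time support of $\ve{Z}$ in some $[a,b]\subset(0,T)$. The master identity underlying both is that, for any smooth positive weight $w=w(\tau,z)$,
\begin{equation*}
0 = \int_0^T\partial_\tau\int_D w^2|\ve{Z}|^2\,d\mu_{g(\tau)}\,d\tau = \int_0^T\int_D\bigl[(\partial_\tau\log w^2 + R)\,w^2|\ve{Z}|^2 + 2w^2\langle D_\tau\ve{Z},\ve{Z}\rangle\bigr]\,d\mu_{g(\tau)}\,d\tau,
\end{equation*}
since $\partial_\tau d\mu_{g(\tau)} = R\,d\mu_{g(\tau)}$ under the backward Ricci flow and the cylindrical metric $g(\tau)=(2(k-1)\tau\gc)\oplus\bar{g}$ has scalar curvature $R=k/(2\tau)$. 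The weight $\sigma(\tau)=\tau e^{(T-\tau)/3}$ satisfies the identity $\sigma'/\sigma=1/\tau-1/3$. Both estimates then amount to choosing $w$ appropriately and using Cauchy--Schwarz to absorb the cross term $\langle D_\tau\ve{Z},\ve{Z}\rangle$.

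For \eqref{eq:odecarleman2ng}, I would substitute $F=\sigma^{-\alpha}\tau^{\lambda/2}\ve{Z}$, so that a direct computation yields $\sigma^{-2\alpha}\tau^\lambda|D_\tau\ve{Z}|^2 = |D_\tau F + c_1 F|^2$ with $c_1(\tau)=(\alpha-\lambda/2)/\tau-\alpha/3$. Applying the master identity to $2c_1\langle D_\tau F,F\rangle=c_1\partial_\tau|F|^2$ then gives
\begin{equation*}
\int_0^T\int_D \sigma^{-2\alpha}\tau^\lambda|D_\tau\ve{Z}|^2\,d\mu\,d\tau = \int_0^T\int_D |D_\tau F|^2\,d\mu\,d\tau + \int_0^T\int_D (c_1^2-\partial_\tau c_1-Rc_1)|F|^2\,d\mu\,d\tau.
\end{equation*}
A direct expansion shows the potential $c_1^2-\partial_\tau c_1-Rc_1$ has leading $1/\tau^2$-coefficient $(\alpha-\lambda/2)[(\alpha-\lambda/2)+1-k/2]$, which for $\alpha\geq\alpha_0(\lambda,k)$ sufficiently large dominates the subleading $O(\alpha^2/\tau)$ and $O(\alpha^2)$ terms uniformly on $\tau\in(0,T]$, giving $c_1^2-\partial_\tau c_1-Rc_1\geq c_0\alpha^2/\tau^2$ for some $c_0>0$. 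Dropping the nonnegative $\int\int|D_\tau F|^2$, substituting $|F|^2=\sigma^{-2\alpha}\tau^\lambda|\ve{Z}|^2$, and relabeling $\lambda\mapsto\lambda+2$ yields \eqref{eq:odecarleman2ng} with the constant $16$ absorbing $c_0^{-1}$.

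For \eqref{eq:odecarleman2g}, I would instead apply the master identity directly with $w^2=\sigma^{-2\alpha}\tau^{\lambda+1}G_{z_0}$. The Gaussian factor contributes an extra $|z-z_0|^2/(4\tau^2)$ to $\partial_\tau\log w^2$; moving the corresponding integral to the right-hand side produces precisely the $\int\int|z-z_0|^2\tau^{\lambda-1}\sigma^{-2\alpha}G_{z_0}|\ve{Z}|^2$ term in \eqref{eq:odecarleman2g}. On the left, the leading coefficient of $\int\int\tau^\lambda\sigma^{-2\alpha}G_{z_0}|\ve{Z}|^2$ is $2\alpha-(\lambda+1+k/2)$, modified by an additional $O(\alpha)$ correction from the $(2\alpha/3)\sigma^{-2\alpha}\tau^{\lambda+1}|\ve{Z}|^2$ piece created by $\sigma'/\sigma\neq 1/\tau$ (controlled via $\tau\leq T\leq 2$). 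Cauchy--Schwarz $2|\langle D_\tau\ve{Z},\ve{Z}\rangle|\leq (\mu\alpha/\tau)|\ve{Z}|^2+(\tau/(\mu\alpha))|D_\tau\ve{Z}|^2$ applied to the cross term then contributes the desired $(1/\alpha)\tau^{\lambda+2}|D_\tau\ve{Z}|^2$ piece on the right, with its partnered $|\ve{Z}|^2$ piece absorbed back into the left-hand coefficient once $\mu$ is chosen small and $\alpha\geq\alpha_0(\lambda,k)$ is large. The main obstacle throughout is the careful bookkeeping required to verify that, after all these absorptions, the residual left-hand coefficient still dominates as $2\alpha$ (respectively as $\alpha^2/16$ in \eqref{eq:odecarleman2ng}); this bookkeeping is what determines the threshold $\alpha_0(\lambda,k)$.
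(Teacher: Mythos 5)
Your derivation of \eqref{eq:odecarleman2g} coincides with the paper's: both integrate the exact derivative $\partial_\tau(\tau^{\lambda+1}|\ve{Z}|^2 e^{2\varphi}\,d\mu)$ over $D\times(0,T)$ with $\varphi=-|z-z_0|^2/(8\tau)-\alpha\log\sigma$, so that $e^{2\varphi}=\sigma^{-2\alpha}G_{z_0}$, move the $|z-z_0|^2/(4\tau)$ contribution to the right, and Cauchy--Schwarz the cross term. For \eqref{eq:odecarleman2ng} the paper applies the same identity directly with $\phi=-\alpha\log\sigma$ and again uses Cauchy--Schwarz on $2\tau^{\lambda+1}\langle D_\tau\ve{Z},\ve{Z}\rangle$, whereas you complete the square via $F=\sigma^{-\alpha}\tau^{\lambda/2}\ve{Z}$ and bound the resulting potential from below; these are equivalent reorganizations of the same computation, and both are valid. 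One piece of bookkeeping in your version deserves correction, though: the claim that the $1/\tau^2$-coefficient $(\alpha-\lambda/2)\bigl[(\alpha-\lambda/2)+1-k/2\bigr]$ ``dominates'' the $O(\alpha^2/\tau)$ and $O(\alpha^2)$ terms is not right as stated, since the $1/\tau$-coefficient is $\approx-\tfrac{2}{3}\alpha^2$, which is large and negative and comparable to the $1/\tau^2$ term when $\tau$ is of order one. What actually makes the potential positive is that to leading order in $\alpha$ it collapses to the perfect square $\alpha^2(1/\tau-1/3)^2=\alpha^2(\sigma'/\sigma)^2$, and since $\tau\leq T\leq 2<3$ one has $1/\tau-1/3\geq 1/(3\tau)$, giving the lower bound $\geq\alpha^2/(9\tau^2)$ at leading order; the genuinely subleading $O(\alpha/\tau^2)$ and $O(1/\tau^2)$ corrections are then absorbed by taking $\alpha_0(\lambda,k)$ large. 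With that adjustment your argument goes through and recovers a constant compatible with the stated $16$.
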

 Here the support of $Z$ need not be contained inside $D\times [a, b]$. We will prove Theorems \ref{thm:pdecarleman2} and \ref{thm:odecarleman2} in Section \ref{ssec:carlemandecay} below.

\subsection{A delocalization procedure}
Ideally, we would next apply the Gaussian-localized estimates \eqref{eq:pdecarleman2} and \eqref{eq:odecarleman2g} 
directly to (suitably cut-off versions of) $\ve{W}$ and $\ve{Y}$ and sum the resulting inequalities
to obtain the decay estimate needed for the induction step. However, the estimate \eqref{eq:odecarleman2g} turns out to be too lossy to allow us to do this in a single application. We will need to supplement it with estimates of $\ve{W}$ and $\ve{Y}$ relative to the purely time-dependent weight $\sigma$ on regions of spacetime where  $|z-z_0|^2/\tau > cm$ for some $c$. 

The lack of a sufficiently strong counterpart to \eqref{eq:pdecarleman2} for the ODE component is in fact the reason we need to employ an induction argument at all. By contrast, in \cite{KotschwarWangConical}, where the background metric converges smoothly to a conical metric as $\tau \to 0$, and in \cite{WangCylindrical}, 
  where the analysis reduces to that of a strictly parabolic inequality for a scalar equation,
 the exponential decay can be deduced in a single step.

 In our proof of Proposition \ref{prop:xypolydecay} in the next subsection, we will use the following two technical lemmas 
 to blend the localized estimates with the unlocalized ones.  The purpose of the first of these is to convert Gaussian-weighted $L^2$-bounds on 
 $\ve{W}$, $\nabla\ve{W}$ and $\ve{Y}$ on sets $\Dc_r(z)$ of a fixed radius $r$ into slightly weaker bounds minus the Gaussian weights on sets $\Dc_{s}(z)$
with $s \ll r$.  The proof is by an elementary covering argument.

\begin{lemma}\label{lem:fgtofng}
Suppose $0 < T^{\prime}\leq 1$ and $F$ is a positive smooth function on $\Cc_{r_0}\times (0, T^{\prime})$ with $|F| \leq M$ for some $M> 0$.
For all $\epsilon \in (0, 1/4)$ and $a> (n-k)/2$, there exists a constant $C_{a} = C_{a}(n, k)$
with the following property: 

Whenever, for some integer $m\geq 0$,
the inequality
\begin{align}\label{eq:fm}
\int_0^T\int_{\Dc_{r}(z_0)}F\sigma^{-m}G_{z_0}\,\dX
&\leq NL^mr^{-2m}m!
\end{align}
holds for some $N \geq M$ and $L\geq 1/(4\epsilon)^{2}$ and all $r$, $T$,  $z_0$ satisfying   $0 < r^2 \leq  T \leq T^{\prime}$ and
$B_{4r}(z_0) \subset \RR^{n-k}\setminus\overline{B_{r_0}(0)}$, 
the inequality
\begin{align}\label{eq:fml2}
\begin{split}
&\int_0^T\int_{\Dc_{4\epsilon r}(z_0)}\tau^{a}F\sigma^{-m}\,\dX \leq C_{a}NL^m((1-\epsilon)r)^{-2m}m!
\end{split}
\end{align}
holds for the same such $r$, $T$, and $z_0$.
\end{lemma}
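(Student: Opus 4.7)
The plan is to deduce the unweighted estimate \eqref{eq:fml2} from the Gaussian-weighted hypothesis \eqref{eq:fm} by a covering argument paired with a dyadic decomposition in time. The underlying observation is that the Gaussian $G_{z_0}$ provides a useful lower bound on $\Dc_{4\epsilon r}(z_0)$ only for $\tau$ comparable to (or larger than) the squared spatial scale; for smaller $\tau$ we must recover a Gaussian lower bound by recentering at a point near $z$.

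First I would split the $\tau$-integral at the threshold $\tau^{*} = \epsilon^2 r^2$. For $\tau\in[\tau^{*},T]$, a direct computation gives $G_{z_0}(z,\tau)\geq e^{-4}$ whenever $z\in B_{4\epsilon r}(z_0)$, so that the corresponding piece of $\int\tau^a F\sigma^{-m}\,\dX$ is dominated by $e^{4}T^{a}$ times the Gaussian-weighted integral on $\Dc_{r}(z_0)$, which is controlled by the hypothesis applied at $(z_0,r,T)$. Using $T^{a}\leq 1$ and $r^{-2m}\leq((1-\epsilon)r)^{-2m}$ gives a contribution of the desired form.

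The main work is the small-time piece $\tau\in[0,\tau^{*}]$. I would dyadically decompose by setting $\tau_{\ell}=\tau^{*}4^{-\ell}$ for $\ell\geq 0$ and bound $\tau^{a}\leq\tau_{\ell}^{a}$ on $[\tau_{\ell+1},\tau_{\ell}]$. At scale $\ell$ cover $\overline{B_{4\epsilon r}(z_0)}$ by a family of balls $B_{\sqrt{\tau_{\ell}}}(w_{j}^{(\ell)})$ with centers $w_{j}^{(\ell)}\in\overline{B_{4\epsilon r}(z_0)}$ and bounded overlap; a volume argument gives $N_{\ell}\leq C(n,k)\,2^{\ell(n-k)}$ such balls. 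On each ball, and for $\tau\in[\tau_{\ell+1},\tau_{\ell}]$, one has $|z-w_{j}^{(\ell)}|^{2}/(4\tau)\leq 1$, hence $G_{w_{j}^{(\ell)}}(z,\tau)\geq e^{-1}$, which allows the introduction of the recentered Gaussian at the cost of an absolute factor. Because $w_{j}^{(\ell)}$ lies within $4\epsilon r$ of $z_0$, the triangle inequality yields $B_{4(1-\epsilon)r}(w_{j}^{(\ell)})\subset B_{4r}(z_0)\subset\RR^{n-k}\setminus\overline{B_{r_{0}}(0)}$; moreover $\sqrt{\tau_{\ell}}\leq \epsilon r\leq(1-\epsilon)r$ since $\epsilon\leq 1/4$, so the hypothesis applies at $(w_{j}^{(\ell)},(1-\epsilon)r,T)$ and contributes $NL^{m}((1-\epsilon)r)^{-2m}m!$ per ball.

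Summing over $j$ and $\ell$ produces the prefactor $\sum_{\ell\geq 0}\tau_{\ell}^{a}N_{\ell}\leq C(n,k)\,(\epsilon r)^{2a}\sum_{\ell\geq 0}2^{-\ell(2a-(n-k))}$. The key geometric point is that this series converges \emph{precisely} when $a>(n-k)/2$, producing a constant $C(a,n,k)$ independent of $m$, $L$, $N$, $r$, $T$, $z_{0}$. Since $\epsilon r\leq r\leq\sqrt{T}\leq 1$, the residual factor $(\epsilon r)^{2a}$ is bounded by $1$, and combining with the large-time piece gives \eqref{eq:fml2} with $C_{a}$ depending only on $a$, $n$, $k$. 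The main technical point to watch is that no extra $m$-dependence enters through the dyadic summation: the factorial is provided cleanly by each application of the hypothesis on a covering ball, while the dyadic sum converges by the borderline exponent condition $a>(n-k)/2$ and produces only the $m$-independent constant $C(a,n,k)$.
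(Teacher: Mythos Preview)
Your proposal is correct and follows essentially the same approach as the paper: split the time integral at a threshold comparable to $(\epsilon r)^2$, handle the small-time piece by a dyadic decomposition in $\tau$ paired with a covering of $B_{4\epsilon r}(z_0)$ by balls of radius $\sqrt{\tau_\ell}$, reinsert the recentered Gaussian on each covering ball, apply the hypothesis at the shifted center with radius $(1-\epsilon)r$, and sum using the convergence criterion $a>(n-k)/2$. The paper's scales $s_j = 4\epsilon r/2^j$ are just a relabeling of your $\sqrt{\tau_\ell} = \epsilon r/2^\ell$.

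The only (minor) difference is your treatment of the large-time piece $\tau\in[\tau^*,T]$: you reinsert the Gaussian via $G_{z_0}\geq e^{-4}$ and invoke the hypothesis directly at $(z_0,r,T)$, whereas the paper uses the crude bound $|F|\leq M$ together with $\sigma^{-m}\leq (4\epsilon r)^{-2m}$, which is then absorbed into the right-hand side using the assumptions $N\geq M$ and $L\geq(4\epsilon)^{-2}$. Your variant is slightly cleaner in that it does not appeal to these two extra hypotheses; the paper's variant explains why those hypotheses appear in the statement.
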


\begin{proof} Fix $\epsilon \in (0, 1/4)$ and $a > (n-k)/2$ and suppose the inequality \eqref{eq:fm} holds for some $m\geq 0$ and $L\geq 1/(4\epsilon)^2$ and $N\geq M$,
for all $0 < r^2 \leq T \leq T^{\prime}$ and all $z_0\in \RR^{n-k}$ with $B_{4r}(z_0)\subset \RR^{n-k}\setminus\overline{B_{r_0}(0)}$.

Then fix a specific such $r$, $T$, and $z_0$ and let us verify that \eqref{eq:fml2} continues to hold. To begin, let $0 < \delta < 16\epsilon^2r^2$, and
split up the time interval to obtain the preliminary estimate
\begin{align}
\begin{split}\label{eq:fbd1}
  \int_{\delta}^T\int_{\Dc_{4\epsilon r}(z_0)}\tau^{a}F\sigma^{-m}\,\dX &=\left(\int_{\delta}^{16\epsilon^2r^2} + \int_{16\epsilon^2r^2}^T\right)\int_{\Dc_{4\epsilon r}(z_0)}\tau^{a}F\sigma^{-m}\,\dX\\
&\leq\int_{\delta}^{16\epsilon^2r^2}\int_{\Dc_{4\epsilon r}(z_0)}\tau^{a}F\sigma^{-m}\,\dX + CM(4\epsilon r)^{-2m}
\end{split}
\end{align}
for some $C = C(n, k)$.  

To estimate the first term on the right in \eqref{eq:fbd1}, observe that, for any $0 < s \leq 4\epsilon r$, we can cover $B_{4\epsilon r}(z_0)$ by
a collection of balls $\{B_{s}(w_i)\}_{i=1}^{\nu}$ with $w_i\in \overline{B_{4\epsilon r}(z_0)}$. The $w_i$ can be chosen so that their total number $\nu = \nu(s)$ will satisfy the bound
\[
 \nu(s) \leq c\left(\frac{4\epsilon r}{s}\right)^{n-k} 
\]
for some $c = c(n, k)$. We now define $s_j = 4\epsilon r/2^{j}$ and $\nu_j = \nu(s_j)$ for $j = 0, 1, 2, \ldots$, and apply this observation to choose collections 
$\{w_{i, j}\}_{i=1}^{\nu_j} \subset \overline{B_{4\epsilon r}(z_0)}$ of such points.

Since $w_{i, j}\in \overline{B_{4\epsilon r}(z_0)}$, 
\[
B_{4(1-\epsilon)r}(w_{i, j}) \subset B_{4r}(z_0) \subset \RR^{n-k}\setminus\overline{B_{r_0}(0)},
\]
and so the estimate \eqref{eq:fm} for $F$ is valid over $B_{(1-\epsilon)r}(w_{i, j})$. In particular, for each $w_{i, j}$, $j\geq 1$,
we have
\begin{align}
\nonumber
 \int_{s_{j}^2}^{s_{j-1}^2}\int_{\Dc_{s_{j}}(w_{i, j})}\tau^{a}F\sigma^{-m}\,\dX &\leq e^{\frac{1}{4}}s_{j-1}^{2a}\int_{s_{j}^2}^{s_{j-1}^2}\int_{\Dc_{s_j}(w_{i, j})}F\sigma^{-m}G_{w_{i, j}}\,\dX\\
\nonumber
 &\leq e^{\frac{1}{4}}\left(\frac{8\epsilon r}{2^{j}}\right)^{2a}\int_{0}^{T}\int_{\Dc_{(1-\epsilon)r}(w_{i, j})}F\sigma^{-m}G_{w_{i, j}}\,\dX\\
\label{eq:smallstball}
 &\leq e^{\frac{1}{4}}\left(\frac{1}{4^a}\right)^{j} \frac{(8\epsilon r)^{2a}NL^mm!}{((1-\epsilon)r)^{2m}}.
\end{align}
(In the second inequality, we have used that $s_{j} \leq 2\epsilon r < (1-\epsilon)r$ since $\epsilon < 1/4$.) We then may apply \eqref{eq:smallstball} to obtain that
\begin{align*}
\begin{split}
\int_{s_j^2}^{s_{j-1}^2}\int_{\Dc_{4\epsilon r}(z_0)}\tau^{a}F \sigma^{-m}\,\dX
&\leq \sum_{i=1}^{\nu_j}\int_{s_j^2}^{s_{j-1}^2}\int_{\Dc_{s_j}(w_{i, j})}\tau^{a}F\sigma^{-m}\,\dX\\
&\leq \left(\frac{1}{2^{2a - n + k}}\right)^{j}\frac{ce^{\frac{1}{4}}(8\epsilon r)^{2a}NL^mm!}{((1-\epsilon)r)^{2m}}
\end{split} 
\end{align*}
for each $j\geq 1$. 

Summing over $j$, we see that
\begin{align}
\nonumber
\int_{\delta}^{16\epsilon^2r^2}\int_{\Dc_{4\epsilon r}(z_0)}\tau^{a}F\sigma^{-m}\,\dX
&\leq \sum_{j=1}^{\infty} \int_{s_j^2}^{s_{j-1}^2}\int_{\Dc_{4\epsilon r}(z_0)}\tau^{a}F\sigma^{-m}\,\dX\\
\label{eq:fterm1}
&\leq C^{\prime}_a\frac{NL^mm!}{((1-\epsilon)r)^{2m}},
\end{align}
for some $C^{\prime}_a = C^{\prime}_a(n, k)$. Combining this with \eqref{eq:fbd1}, and sending $\delta\to 0$, we obtain
\begin{align*}
\begin{split}
 \int_0^T\int_{\Dc_{4\epsilon}(z_0)}\tau^{a}F\sigma^{-m}\,\dX &\leq\frac{C^{\prime}_aNL^mm!}{((1-\epsilon)r)^{2m}} + \frac{CM}{(4\epsilon r)^{2m}}\\
 &\leq (C + C^{\prime}_a)\frac{NL^mm!}{((1-\epsilon)r)^{2m}},
\end{split}
\end{align*}
since we have assumed that $L \geq 1/(4\epsilon)^2$ and $N \geq M$. So \eqref{eq:fml2} holds with the choice $C_{a} = C^{\prime}_a + C$.
\end{proof}

\subsection{Advancing the unlocalized bounds}
For the next lemma, we return to the setting of the statement of  Proposition \ref{prop:xypolydecay} and let $\ve{W}$ and $\ve{Y}$ be families of sections of $\Wc$ and $\Yc$ over $\Cc_{r_0}\times (0, 1]$
satisfying \eqref{eq:wysys} and \eqref{eq:spacetimedecay3} for some constants $\beta$, $\mu$, $B$, and $M_l$. 
We will use this lemma to convert $L^2$-bounds with time-dependent weights of degree $m$ on $\ve{W}$ and $\nabla \ve{W}$ into corresponding bounds of degree $m+1$ on $\ve{Y}$. The proof is a simple application of the estimate \eqref{eq:odecarleman2ng}, using the pointwise control of $D_{\tau} \ve{Y}$ by $\ve{W}$ and $\nabla\ve{W}$ implied by \eqref{eq:wysys}.

\begin{lemma}\label{lem:xmngtoympng} Fix $a\geq 0$ and $\lambda \geq 2\mu + a$. There is an integer $m_0\geq 0$ depending on $\lambda$, $k$, $B$, and $M_0$, such that whenever, for some $m\geq m_0$, $L\geq 2$,  and $N\geq 1$, 
the inequality
\begin{align}\label{eq:xyl2b}
\begin{split}
&\int_0^T\int_{\Dc_{r}(z_0)}\!\!\tau^a\left(|\ve{W}|^2 + \tau|\nabla \ve{W}|^2\right)\sigma^{-m}\,\dX \leq NL^mr^{-2m}m!
\end{split}
\end{align}
holds for some $r$, $T$, and $z_0$ satisfying   $0 < r^2 \leq  T \leq 1$ 
and $B_{2r}(z_0) \subset \RR^{n-k}\setminus\overline{B_{r_0}(0)}$,
the inequality
\begin{align}\label{eq:yl2mp}
\begin{split}
&\int_0^T\int_{\Dc_{r}(z_0)}\!\!\tau^{\lambda} |\ve{Y}|^2\sigma^{-(m+1)}\,\dX \leq NL^{m}r^{-2m}(m-1)!
\end{split}
\end{align}
also holds for the same $r$, $T$, and $z_0$.
\end{lemma}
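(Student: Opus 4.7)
My strategy is to apply the purely time-weighted Carleman inequality \eqref{eq:odecarleman2ng} to a time-cutoff of $\ve{Y}$, with the weight parameter chosen as $2\alpha = m+1$ so that $\sigma^{-2\alpha}$ matches the target $\sigma^{-(m+1)}$. As long as $m \geq 2\alpha_0(\lambda, k) - 1$, the hypothesis $\alpha \geq \alpha_0$ in Theorem \ref{thm:odecarleman2} is satisfied. Let $\eta_\epsilon = \eta_\epsilon(\tau)$ be a smooth time cutoff with $\eta_\epsilon \equiv 1$ on $[2\epsilon, T-2\epsilon]$, support in $(0,T)$, and $|\eta_\epsilon'| \leq C/\epsilon$. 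Applying \eqref{eq:odecarleman2ng} with $D = \Dc_r(z_0)$ and $\ve{Z} = \eta_\epsilon \ve{Y}$ yields
\[
\frac{(m+1)^2}{4}\int_0^T\!\!\int_D\tau^\lambda \sigma^{-(m+1)}\eta_\epsilon^2 |\ve{Y}|^2\,\dX \leq 16 \int_0^T\!\!\int_D\tau^{\lambda+2}\sigma^{-(m+1)}|D_\tau(\eta_\epsilon \ve{Y})|^2\,\dX.
\]

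Next I would expand $|D_\tau(\eta_\epsilon \ve{Y})|^2 \leq 2(\eta_\epsilon')^2|\ve{Y}|^2 + 2\eta_\epsilon^2 |D_\tau \ve{Y}|^2$ and use the ODE component of \eqref{eq:wysys} to estimate $|D_\tau \ve{Y}|^2 \leq CB^2\tau^{-2\mu}(|\ve{W}|^2 + |\nabla \ve{W}|^2) + CB^2 \tau^{-2}|\ve{Y}|^2$, so that the right-hand side splits into three pieces. The $\tau^{-2}|\ve{Y}|^2$ contribution collapses to $CB^2\int \tau^\lambda \sigma^{-(m+1)}\eta_\epsilon^2|\ve{Y}|^2$ and is absorbed into the left-hand side once $(m+1)^2$ exceeds a suitable multiple of $B^2$. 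The main piece involves $\tau^{\lambda+2-2\mu}\sigma^{-(m+1)}(|\ve{W}|^2 + |\nabla\ve{W}|^2)$; using $\sigma \geq \tau$ to replace $\sigma^{-(m+1)}$ by $\tau^{-1}\sigma^{-m}$, and invoking $\lambda \geq 2\mu + a$ together with $\tau \leq 1$ to obtain $\tau^{\lambda+1-2\mu} \leq \tau^{a+1}$, this piece is controlled by $CB^2\int(\tau^a|\ve{W}|^2 + \tau^{a+1}|\nabla\ve{W}|^2)\sigma^{-m}\,\dX$, which by \eqref{eq:xyl2b} is at most $CB^2 NL^m r^{-2m}m!$.

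The cutoff error terms $\int \tau^{\lambda+2}\sigma^{-(m+1)}(\eta_\epsilon')^2|\ve{Y}|^2\,\dX$ must vanish in a suitable limit. Near $\tau = 0$ this is immediate from the infinite-order decay of $\ve{Y}$ in \eqref{eq:spacetimedecay3} (take $l$ arbitrarily large there). The endpoint $\tau = T$ is the main technical subtlety, since $\ve{Y}$ need not vanish there; I would handle it by running the Carleman estimate on a slightly enlarged time interval $[0, T^\ast]$ with $T < T^\ast \leq 1$ (possible since $\ve{Y}$ is defined on $(0, 1]$) and choosing $T^\ast - T$ of order $1/m$, so that the associated weight $\sigma^\ast$ differs from $\sigma$ by a factor bounded independently of $m$ on $[0, T]$ and the additional error, controlled by $M_0$, can be absorbed at the cost of enlarging $m_0$. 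Combining the three pieces and rearranging yields
\[
\int_0^T\!\!\int_D \tau^\lambda \sigma^{-(m+1)}|\ve{Y}|^2\,\dX \leq \frac{CB^2 m}{(m+1)^2}NL^m r^{-2m}(m-1)!,
\]
and choosing $m_0 = m_0(\lambda, k, B, M_0)$ large enough that the coefficient is at most $1$ delivers \eqref{eq:yl2mp}. The most delicate step is the treatment of the upper temporal boundary $\tau = T$; everything else is a direct combination of the Carleman estimate with the ODE component of \eqref{eq:wysys}, exploiting that $\alpha^2 \sim m^2$ on the left exchanges the $m!$ of the hypothesis for the $(m-1)!$ of the conclusion.
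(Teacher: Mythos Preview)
Your overall strategy is exactly the paper's: apply \eqref{eq:odecarleman2ng} with $2\alpha = m+1$ to a temporal cutoff of $\ve{Y}$, absorb the $\tau^{-2}|\ve{Y}|^2$ term using $(m+1)^2 \gg B^2$, and control the $\ve{W}$ contribution via $\lambda \geq 2\mu + a$ and the hypothesis \eqref{eq:xyl2b}. The one place you diverge is the treatment of the upper temporal endpoint, and there your proposed fix is both more complicated than necessary and, as stated, slightly broken.

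The issue is that your enlargement $T^\ast > T$ with $T^\ast \leq 1$ requires $T < 1$, but the lemma is stated for all $T \leq 1$; when $T = 1$ there is no room to enlarge, since $\ve{Y}$ is only defined on $(0,1]$. The paper avoids this entirely by a much simpler device: take the cutoff $\xi_\epsilon$ to be supported in $(\epsilon, 3T/4)$ and identically one on $[2\epsilon, T/2]$, so that near the upper endpoint $|\xi_\epsilon'| \leq C/T$ rather than $C/\epsilon$. The resulting upper error term lives on $[T/2, 3T/4]$, where $\sigma \geq T/2 \geq r^2/2$, and is therefore bounded directly by $CM_0^2\, 2^m r^{-2m}$ using only \eqref{eq:unifbd}. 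The integral on the missing piece $[T/2, T]$ is then estimated in the same trivial way. Both of these terms are $\leq N L^m r^{-2m}(m-1)!$ once $m_0$ is large enough that $(m-1)! \geq CM_0^2$, which is exactly why $m_0$ depends on $M_0$. No interval enlargement or $m$-dependent cutoff scale is needed.
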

\begin{proof} For now, we will take $m_0$ to be some large fixed integer; we will set lower bounds for it over the course of the proof. Suppose that \eqref{eq:xyl2b} holds
for some $m\geq m_0$ and $L\geq 2$,  and $N \geq 1$ at some $r$, $T$,  $z_0$ satisfying   $0 < r^2 \leq  T \leq 1$ 
and $B_{2r}(z_0) \subset \RR^{n-k}\setminus\overline{B_{r_0}(0)}$. 

For any $0 < \epsilon  < T/4$, let $\xi_{\epsilon}\in C^{\infty}(\RR)$ be a bump function with support in $(\epsilon, 3T/4)$
which is identically one on $[2\epsilon, T/2]$ and satisfies $|\xi_{\epsilon}^{\prime}| \leq C\epsilon^{-1}$ on $[\epsilon, 2\epsilon]$ and $|\xi_{\epsilon}^{\prime}|\leq CT^{-1}$ on $[T/2, 3T/4]$.
Here and below, $C$ will denote various positive constants depending at most on $n$ and $k$.

Define $\ve{W}_{\epsilon} = \xi_{\epsilon}\ve{W}$ and $\ve{Y}_{\epsilon} = \xi_{\epsilon}\ve{Y}$. Then, by \eqref{eq:wysys},
\[
 |D_{\tau}\ve{Y}_{\epsilon}|^2 \leq CB^2 \tau^{-2}|\ve{Y}_{\epsilon}|^2 + CB^2\tau^{-2\mu}(|\ve{W}_{\epsilon}|^2 + \tau|\nabla \ve{W}_{\epsilon}|^2) + 
 C|\xi_{\epsilon}^{\prime}|^2\ |\ve{Y}|^2.
\]
The first constraint we impose on $m_0$ is that $m_0 \geq 2\alpha_0(k, \lambda)$, where $\alpha_0$ is as in Theorem \ref{thm:odecarleman2}.  This allows us to
apply \eqref{eq:odecarleman2ng} with $D = \Dc_r(z_0)$, $\ve{Z} = \ve{Y}_{\epsilon}$, and $\alpha = (m+1)/2$ to obtain
\begin{align*}
\begin{split}
&\int_0^T\int_{\Dc_r(z_0)} \tau^{\lambda}\sigma^{-(m+1)}|\ve{Y}_{\epsilon}|^2\,\dX \\
&\quad\leq \frac{CB^2}{(m+1)^2}\int_0^T\int_{\Dc_{r}(z_0)} \tau^{\lambda}\sigma^{-(m+1)}|\ve{Y}_{\epsilon}|^2\,\dX\\
&\quad\phantom{\leq}+ \frac{CB^2}{(m+1)^2}\int_0^T\int_{\Dc_{r}(z_0)} \sigma^{-(m+1)}\tau^{\lambda-2\mu}(\tau|\ve{W}_{\epsilon}|^2 + \tau^2|\nabla \ve{W}_{\epsilon}|^2)\,\dX\\
&\quad\phantom{\leq}+ \frac{CB^2}{\epsilon^2(m+1)^2}\int_{\epsilon}^{2\epsilon}\int_{\Dc_{r}(z_0)}\sigma^{-(m+1)}\tau^{\lambda+2}|\ve{Y}|^2\,\dX\\
&\quad\phantom{\leq}+ \frac{CB^2}{T^2(m+1)^2}\int_{\frac{T}{2}}^{\frac{3T}{4}}\int_{\Dc_{r}(z_0)}\sigma^{-(m+1)}\tau^{\lambda+2}|\ve{Y}|^2\,\dX.
\end{split}
\end{align*}

Provided $m_0$ is taken greater still (say, $m_0 > \sqrt{2C}{B}$), we may hide the first term on the right in the term on the left. Having done this, we see that all of the integrands on the right are integrable on $(0, T]$ by our decay assumption \eqref{eq:spacetimedecay3},
and the third term will tend to $0$ when we send $\epsilon \searrow 0$. Taking this limit thus yields
\begin{align*}
\begin{split}
&\int_0^{\frac{T}{2}}\int_{\Dc_r(z_0)} \tau^{\lambda}\sigma^{-(m+1)}|\ve{Y}|^2\,\dX\\
&\qquad\leq \frac{CB^2}{(m+1)^2}\int_0^{\frac{3T}{4}}\int_{\Dc_r(z_0)} \sigma^{-(m+1)}\tau^{\lambda-2\mu}(\tau|\ve{W}|^2 + \tau^2|\nabla \ve{W}|^2)\,\dX\\
&\qquad\phantom{\leq}+ \frac{CB^2}{T^2(m+1)^2}\int_{\frac{T}{2}}^{\frac{3T}{4}}\int_{\Dc_{r}(z_0)}\sigma^{-(m+1)}\tau^{\lambda+2}|\ve{Y}|^2\,\dX.
\end{split}
\end{align*}
 
Since we assume $\lambda \geq 2\mu + a $, we may use \eqref{eq:xyl2b} (and that $\tau \leq \sigma$) to estimate 
\begin{align*}
&\int_0^{\frac{3T}{4}}\int_{\Dc_r(z_0)} \sigma^{-(m+1)}\tau^{\lambda-2\mu}(\tau|\ve{W}|^2 + \tau^{2}|\nabla\ve{W}|^2)\,\dX\\
&\qquad\leq \int_0^{T}\int_{\Dc_r(z_0)} \sigma^{-m}\tau^a(|\ve{W}|^2 + \tau|\nabla \ve{W}|^2)\,\dX \leq NL^mr^{-2m}m!.
\end{align*}
We may also estimate directly that
\begin{align*}
\int_{\frac{T}{2}}^{\frac{3T}{4}}\int_{\Dc_{r}(z_0)}\sigma^{-(m+1)}\tau^{\lambda+2}|\ve{Y}|^2\,\dX
&\leq CM_0^2r^{n-k}2^mT^{\lambda+ \frac{k}{2}+2 - m} \leq CM_0^2T^22^mr^{-2m}.
\end{align*}
Putting these two pieces together, we obtain 
\begin{align*}
\int_0^{\frac{T}{2}}\int_{\Dc_r(z_0)} \tau^{\lambda}\sigma^{-(m+1)}|\ve{Y}|^2\,\dX
&\leq CB^2\left(\frac{1 + M_0^2}{m+1}\right) NL^mr^{-2m}(m-1)!.
\end{align*}

On the other hand,
\[
 \int_{\frac{T}{2}}^{T}\int_{\Dc_r(z_0)} \tau^{\lambda}\sigma^{-(m+1)}|\ve{Y}|^2\,\dX
 \leq CM_0^2T^{\lambda}2^mr^{-2m} \leq CM_0^2NL^mr^{-2m},
\]
which, when added to the previous inequality, yields \eqref{eq:yl2mp},
provided $m_0$ is chosen larger still to ensure
\[
  m_0  \geq 1 + C(B^2 + (1+B^2)M_0^2).
\]
This completes the proof.
\end{proof}

\subsection{The induction argument}
In this section we prove Proposition \ref{prop:xypolydecay} using Lemmas \ref{lem:fgtofng} and \ref{lem:xmngtoympng}. We will use the notation
\[
  \Ac_{r, s}(z_0) = \Dc_{s}(z_0)\setminus \overline{\Dc_{r}(z_0)} =  \Ss^k\times (B_{s}(z_0) \setminus \overline{B_{r}(z_0)})
\]
for $0 < r < s$ and $z_0\in\RR^{n-k}$.  Note that $\Ac_{r, s}(0) = \Ac_{r, s}$.

\begin{proof}[Proof of Proposition \ref{prop:xypolydecay}] 
Define $\lambda_0 = 2\mu + (n-k)/2 + 2$ and fix any $b > \lambda_0/2$.
Then choose $\beta_0 = (q+1)b + q\gamma$, and let $m_0 = m_0(\lambda_0, k, B, M_0)$ be the constant guaranteed by Lemma \ref{lem:xmngtoympng}. Here $M_0$ is the constant from \eqref{eq:spacetimedecay3}. (This choice ensures
as well that $m_0 \geq 2\alpha_0$, where $\alpha_0 = \alpha_0(\lambda_0, k)$ is as in Theorem \ref{thm:odecarleman2}.)

Our proof is by induction on $m$. In view of the assumption \eqref{eq:spacetimedecay3} of infinite-order decay, we may start our induction at as large an integer $m_1$ as we like. It will be convenient to specify the value $m_1$ over the course of the proof, and to do the same for the constants $K_0$, $L_0$, and $T_0$ in the statement.
The choices of these constants will depend only on the external parameters $k$, $n$, $q$, $\beta$, $\mu$, $B$, and $M_0$, however, we will specify $K_0$ in terms of $m_1$, so logically it should be understood that $m_1$
is defined first. 

To help keep track of these dependencies, we'll use $C$ to denote a sequence of positive constants depending only on $k$, $n$, and $q$, and use $N$ to denote a sequence of positive constants potentially depending also on $B$ and $M_0$.
To begin, we'll
assume at least that $m_1 \geq m_0$, $K_0, L_0 > 0$, and $0 < T_0 \leq 1$.

Using the assumption \eqref{eq:spacetimedecay3}, we may assume there is $K\geq 1$ depending
on $m_1$, $n$, $k$ and finitely many of the constants $M_{l}$ such that
\begin{align*}
&\sup_{\Cc_{r_0}\times(0, 1]} \tau^{-m_1}(|\ve{W}|^2 + |\nabla \ve{W}|^2 + |\ve{Y}|^2)  \\
&\qquad + \int_0^1\int_{\Cc_{r_0}}\sigma^{-m_1}\left(|\ve{W}|^2 + |\nabla \ve{W}|^2 + |\ve{Y}|^2\right)\,\dX \leq K.
\end{align*}
In particular, for any $w$, $r$, and $T$ satisfying  $0 < r^2 \leq  T \leq T_0$ and
$B_{4r}(w) \subset \RR^{n-k}\setminus\overline{B_{r_0}(0)}$, we will have
\begin{align}\label{eq:ih}
\int_0^T\int_{\Dc_{r}(w)}\sigma^{-m_1}\left(\tau|\ve{W}|^2 + \tau^2|\nabla \ve{W}|^2 +\tau^{\lambda_0}|\ve{Y}|^2\right)G_{w}\, \dX \leq K. 
\end{align}
Provided $K_0 \geq M$ and $L_0 \geq 1$, at least, the inequality \eqref{eq:xypolydecay} will hold for all such admissible choices of $r$, $w$, and $T$ and all $m\leq m_1$.

Proceeding by induction, assume that $m > m_1$ and that \eqref{eq:xypolydecay} holds for all integers up to $m-1$. Fix 
$r$, $z_0$, and $T$ satisfying $0 < r^2 \leq T \leq T_0$ and $B_{4r}(z_0) \subset \RR^{n-k}\setminus\overline{B_{r_0}(0)}$. We will show that \eqref{eq:xypolydecay} also holds with exponent $m$
for $r$, $z_0$, and $T$.

We start by applying the Carleman inequality \eqref{eq:pdecarleman2} to a fixed component $W^i$ of $\ve{W}$ that has been cut-off in space and time. Let $\phi \in C^{\infty}(\RR^{n-k})$ be a smooth bump function
with support in $B_{2r}(z_0)$ which is identically one on $\overline{B_{r}(z_0)}$. Regarding $\phi$ as a function on $\Cc$ that is independent of $\theta \in \Ss^k$, we have $\phi \equiv 1$
on $\overline{\Dc_{r}(z_0)}$ and $\operatorname{supp}(\phi) \subset \Dc_{2r}(z_0)$.
For each $\epsilon < T/4$, let $\xi_{\epsilon}\in C^{\infty}(\RR)$  be a bump function with support in $(\epsilon, 3T/4)$
which is identically one on $[2\epsilon, T/2]$. These functions may chosen to satisfy the inequalities
\[
 r|\nabla \phi| + r^2|\Delta \phi| \leq C, \quad \epsilon|\xi_{\epsilon}^{\prime}|\chi_{[\epsilon, 2\epsilon]} + T|\xi_{\epsilon}^{\prime}|\chi_{[T/2, 3T/4]} \leq C
\]
for some $C$. (Note that $|\nabla \phi|_{\gt}(\theta, z, \tau) = |\delb \phi|_{\gb}(z)$ and $(\Delta \phi)(\theta, z, \tau) = (\bar{\Delta}\phi)(z)$.)

Define $\ve{W}_{\epsilon} = \phi\xi_{\epsilon}\ve{W}$ and $\ve{Y}_{\epsilon}= \phi\xi_{\epsilon} \ve{Y}$. Using \eqref{eq:wysys}, we compute that
\begin{align*}
 |(D_{\tau} + \Delta)W_{\epsilon}^i|^2 &\leq CB^2\tau^{2\beta}(|\ve{W}_{\epsilon}|^2 + |\ve{Y}_{\epsilon}|^2) 
 +CB^2\sum_{j=0}^{i-1}\tau^{-2\gamma}|W^j_{\epsilon}|^2\\
 &\phantom{\leq} +
 C\xi_{\epsilon}^2(|\nabla \phi|^2|\nabla W^i|^2 + |\Delta\phi|^2|W^i|^2) + C\phi^2(\xi_{\epsilon}^{\prime})^2|W^i|^2,
 \end{align*}
for each $i =0, \ldots, q$.  For each $i$, define $\nu_i =  (q-i)(\gamma + b)$ 
and apply the Carleman estimate \eqref{eq:pdecarleman2} to $W_{\epsilon}^i$ with $\alpha_i = m/2 + \nu_i$ to obtain
\begin{align}\label{eq:wiest1}
\begin{split}
&\iint \sigma^{-2\alpha_i}\tau(\alpha_i|W_{\epsilon}^i|^2 + \tau|\nabla W^i_{\epsilon}|^2)G_{z_0}\,\dX\\
&\qquad\leq N\sum_{j< i}\iint\tau^{2-2\gamma}\sigma^{-2\alpha_i}|W^j_{\epsilon}|^2G_{z_0}\,\dX\\
&\qquad\phantom{\leq}+
 N\sum_{j\geq i}\iint \tau^{2\beta+2}\sigma^{-2\alpha_i}|W_{\epsilon}^j|^2 G_{z_0}\,\dX\\
 &\qquad\phantom{\leq}+N\iint \tau^{2\beta+2}\sigma^{-2\alpha_i}|\ve{Y}_{\epsilon}|^2G_{z_0}\,\dX\\
 &\qquad\phantom{\leq}+ \frac{C}{r^2}\int_{\epsilon}^{\frac{3T}{4}}\int_{\Ac_{r, 2r}(z_0)}\tau^2\sigma^{-2\alpha_i}(|W^i|^2 + |\nabla W^i|^2)G_{z_0}\,\dX\\
 &\qquad\phantom{\leq}+\frac{C}{\epsilon^2}\int_{\epsilon}^{2\epsilon}\int_{\Dc_{2r}(z_0)}\tau^2\sigma^{-2\alpha_i}|W^i|^2\,G_{z_0}\,\dX\\
 &\qquad\phantom{\leq}+\frac{C}{T^2}\int_{\frac{T}{2}}^{\frac{3T}{4}}\int_{\Dc_{2r}(z_0)}\tau^2\sigma^{-2\alpha_i}|W^i|^2\,G_{z_0}\,\dX.
\end{split}
\end{align}

For the integrals in the first term on the right, we have immediately that
\[
\iint\tau^{2-2\gamma}\sigma^{-2\alpha_i}|W^j_{\epsilon}|^2G_{z_0}\,\dX 
\leq N\iint\tau^{2}\sigma^{-2(\alpha_j-b)}|W^j_{\epsilon}|^2G_{z_0}\, \dX,
\]
using \eqref{eq:sigmataucomp} and that $\alpha_j \geq \alpha_i + \gamma + b$ for $j < i$. For the integrals in the second term, our choice of $\beta_0$ ensures that

\begin{equation}\label{eq:ba}
 (\beta - \alpha_i) - (b - \alpha_j) \geq  (q + i - j)(b+\gamma)\geq 0, 
\end{equation}
and hence $\sigma^{2(\beta - \alpha_i)} \leq \sigma^{2(b - \alpha_j)}$, 
for all $0 \leq i \leq j \leq q$ and $\beta \geq \beta_0$. Thus	
\begin{align*}	
 \iint \tau^{2\beta+2}\sigma^{-2\alpha_i}|W_{\epsilon}^j|^2 G_{z_0}\,\dX 
 &\leq \iint \tau^{2}\sigma^{-2(\alpha_j - b)}|W_{\epsilon}^j|^2 G_{z_0}\,\dX
\end{align*}
for $i \leq j$, again using \eqref{eq:sigmataucomp}. Therefore, we may combine the first two terms to obtain
\begin{align*}
&\sum_{j< i}\iint\tau^{2-2\gamma}\sigma^{-2\alpha_i}|W^j_{\epsilon}|^2G_{z_0}\,\dX+
\sum_{j\geq i}\iint \tau^{2\beta+2}\sigma^{-2\alpha_i}|W_{\epsilon}^j|^2 G_{z_0}\,\dX\\
&\qquad\leq NT_0^{2b+1}\sum_{j= 0}^q\iint\tau\sigma^{-2\alpha_j}|W^j_{\epsilon}|^2G_{z_0}\,\dX.
\end{align*}
Equation \eqref{eq:ba} also shows that $\beta - \alpha_i \geq b -\alpha_q = b - m/2$ for all $i$, so that we can
estimate the third term in \eqref{eq:wiest1} by
\[
 \iint \tau^{2\beta+2}\sigma^{-2\alpha_i}|\ve{Y}_{\epsilon}|^2G_{z_0}\,\dX \leq \iint \tau^{2}\sigma^{2b-m}|\ve{Y}_{\epsilon}|^2G_{z_0}\,\dX.
\]

Returning to \eqref{eq:wiest1}, using that $\sigma^{-2\alpha_i} \leq \tau^{-2\nu_0}\sigma^{-m}$ in the last three terms, and summing over $i$, we obtain that
\begin{align}\label{eq:wiest2}
\begin{split}
&\sum_{i=0}^q\iint \sigma^{-2\alpha_i}\tau(\alpha_i|W_{\epsilon}^i|^2 + \tau|\nabla W^i_{\epsilon}|^2)G_{z_0}\,\dX\\
&\qquad\leq NT_0^{2b+1}\sum_{j=0}^{q}\iint\tau\sigma^{-2\alpha_j}|W^j_{\epsilon}|^2G_{z_0}\,\dX\\
&\qquad\phantom{\leq}+N\iint \tau^{2}\sigma^{2b-m}|\ve{Y}_{\epsilon}|^2G_{z_0}\,\dX\\
&\qquad\phantom{\leq} + \frac{C}{r^2}\int_{\epsilon}^{\frac{3T}{4}}\int_{\Ac_{r, 2r}(z_0)}\tau^{2-2\nu_0}\sigma^{-m}(|\ve{W}|^2 + |\nabla \ve{W}|^2)G_{z_0}\,\dX\\
&\qquad\phantom{\leq}+\frac{C}{\epsilon^2}\int_{\epsilon}^{2\epsilon}\int_{\Dc_{2r}(z_0)}\tau^{2-2\nu_0}\sigma^{-m}|\ve{W}|^2\,G_{z_0}\,\dX\\
&\qquad\phantom{\leq}+\frac{C}{T^2}\int_{\frac{T}{2}}^{\frac{3T}{4}}\int_{\Dc_{2r}(z_0)}\tau^{2-2\nu_0}\sigma^{-m}|\ve{W}|^2\,G_{z_0}\,\dX.
\end{split}
\end{align}

If $T_0$ is sufficiently small (depending on $N$ and $b$), we may bring the first term on the right side over to the left. Then, we may split the domain
of integration in the second term
to obtain that
\begin{align}
\begin{split}\label{eq:xest1}
&\sum_{i=0}^q\iint \sigma^{-2\alpha_i}(\tau|\ve{W}_{\epsilon}^i|^2 + \tau^2|\nabla \ve{W}^i_{\epsilon}|^2)G_{z_0}\,\dX\\
&\leq N\int_{2\epsilon}^{\frac{T}{2}}\int_{\Dc_{r}(z_0)} \tau^{2}\sigma^{2b-m}|\ve{Y}_{\epsilon}|^2G_{z_0}\,\dX\\
&\phantom{\leq}
+ \frac{N}{r^2}\int_{\epsilon}^{\frac{3T}{4}}\int_{\Ac_{r, 2r}(z_0)}\!\!\!\!\!\tau^{2-2\nu_0}\sigma^{-m}(|\ve{W}|^2 + |\nabla \ve{W}|^2 + |\ve{Y}|^2)G_{z_0}\,\dX\\
&\phantom{\leq}+\frac{N}{\epsilon^2}\int_{\epsilon}^{2\epsilon}\int_{\Dc_{2r}(z_0)}\!\!\!\!\tau^{2-2\nu_0}\sigma^{-m}(|\ve{W}|^2 + |\ve{Y}|^2)\,G_{z_0}\,\dX\\
&\phantom{\leq}+\frac{N}{T^2}\int_{\frac{T}{2}}^{\frac{3T}{4}}\int_{\Dc_{2r}(z_0)}\!\!\!\!\tau^{2-2\nu_0}\sigma^{-m}(|\ve{W}|^2 + |\ve{Y}|^2)\,G_{z_0}\,\dX.
\end{split}
\end{align}
  On account of our decay assumption \eqref{eq:spacetimedecay3}, we may send $\epsilon \searrow 0$ in \eqref{eq:xest1} and the third term on the right will vanish. Then, using that
  $\sigma^{-\alpha_q} \leq \sigma^{-\alpha_i}$, we have
\begin{align*}
& \iint\sigma^{-m}(\tau|\ve{W}_{\epsilon}|^2 + \tau^2|\nabla \ve{W}_{\epsilon}|^2)G_{z_0}\,\dX =\iint \sigma^{-2\alpha_q}(\tau|\ve{W}_{\epsilon}|^2 + \tau^2|\nabla \ve{W}_{\epsilon}|^2)G_{z_0}\,\dX\\
&\qquad\leq\sum_{i=0}^q\iint \sigma^{-2\alpha_i}(\tau|\ve{W}_{\epsilon}^i|^2 + \tau^2|\nabla \ve{W}^i_{\epsilon}|^2)G_{z_0}\,\dX,
\end{align*}
and so
\begin{align}
\begin{split}\label{eq:xest2}
&\int_0^{\frac{T}{2}}\int_{\Dc_{r}(z_0)}\sigma^{-m}(\tau|\ve{W}|^2 + \tau^2|\nabla \ve{W}|^2)G_{z_0}\,\dX\\
&\leq NT_0^2\int_{0}^{\frac{T}{2}}\int_{\Dc_{r}(z_0)} \tau^{\lambda_0}\sigma^{-m}|\ve{Y}|^2G_{z_0}\,\dX\\
&
+ \frac{N}{r^2}\int_{0}^{\frac{3T}{4}}\int_{\Ac_{r, 2r}(z_0)}\tau^{2-2\nu_0}\sigma^{-m}(|\ve{W}|^2 + |\nabla \ve{W}|^2 + |\ve{Y}|^2)G_{z_0}\,\dX\\
& 
+ \frac{N}{T^2}\int_{\frac{T}{2}}^{\frac{3T}{4}}\int_{\Dc_{2r}(z_0)}\tau^{2-2\nu_0}\sigma^{-m}(|\ve{W}|^2 + |\nabla \ve{W}|^2 + |\ve{Y}|^2)G_{z_0}\,\dX.
\end{split}
\end{align}
(Above, in the first term on the right, we have used our assumption that $b > \lambda_0/2$.)

Now we switch gears to estimate $\ve{Y}$. With $\xi_{\epsilon}$ defined as before, it follows from \eqref{eq:wysys} that
\[
 |D_{\tau}(\xi_{\epsilon} \ve{Y})|^2 \leq CB^2\xi_{\epsilon}^2\left(\tau^{-2\mu}(|\ve{W}|^2 + |\nabla \ve{W}|^2) + \tau^{-2}|\ve{Y}|^2\right) + C|\xi_{\epsilon}^{\prime}|^2|\ve{Y}|^2. 
\]
Since $m > m_1 \geq m_0$, we may apply the Carleman estimate \eqref{eq:odecarleman2g} to $\ve{Z} = \xi_{\epsilon}\ve{Y}$ on $\Dc_{r}(z_0)$ with $\alpha = m/2$ to obtain
that
\begin{align}
\begin{split}\label{eq:yest1}
&m\int_{0}^{\frac{3T}{4}}\int_{\Dc_r(z_0)} \tau^{\lambda_0}\sigma^{-m}\xi_{\epsilon}^2|\ve{Y}|^2G_{z_0}\,\dX \\
&\leq  \int_0^{\frac{3T}{4}}\int_{\Dc_{r}(z_0)}\tau^{\lambda_0-1}\sigma^{-m}|z- z_0|^2\xi_{\epsilon}^2|\ve{Y}|^2G_{z_0}\,\dX\\
&\phantom{\leq}
+ \frac{N}{m}\int_0^{\frac{3T}{4}}\int_{\Dc_{r}(z_0)}\tau^{\lambda_0 - 2\mu+2}\sigma^{-m}\xi_{\epsilon}^2(|\ve{W}|^2 + |\nabla \ve{W}|^2)G_{z_0}\,\dX\\
&\phantom{\leq}
+\frac{N}{m}\int_0^{\frac{3T}{4}}\int_{\Dc_{r}(z_0)}\tau^{\lambda_0}\sigma^{-m}\xi_{\epsilon}^2|\ve{Y}|^2G_{z_0}\,\dX\\
&\phantom{\leq}+\frac{C}{\epsilon^2m}\int_{\epsilon}^{2\epsilon}\int_{\Dc_{r}(z_0)}\!\!\!\!\tau^{\lambda_0+2}\sigma^{-m}|\ve{Y}|^2\,G_{z_0}\,\dX\\
&\phantom{\leq}+\frac{C}{T^2m}\int_{\frac{T}{2}}^{\frac{3T}{4}}\int_{\Dc_{r}(z_0)}\!\!\!\!\tau^{\lambda_0+2}\sigma^{-m}|\ve{Y}|^2\,G_{z_0}\,\dX.
\end{split}
\end{align}
Provided $m_1$ has been chosen large enough to satisfy that $N/m_1^2 < 1/2$ we may hide the third term on the right in the left-hand side. Then 
sending $\epsilon \searrow 0$, and using that $\lambda_0 > 2\mu$, we arrive at the inequality
\begin{align}
\begin{split}\label{eq:yest2}
&\int_{0}^{\frac{T}{2}}\int_{\Dc_{r}(z_0)} \tau^{\lambda_0}\sigma^{-m}|\ve{Y}|^2G_{z_0}\,\dX \\
&\leq  \frac{2}{m}\int_0^{\frac{3T}{4}}\int_{\Dc_{r}(z_0)}\tau^{\lambda_0-1}\sigma^{-m}|z- z_0|^2|\ve{Y}|^2G_{z_0}\,\dX\\
&\phantom{\leq}
+ \frac{N}{m^2}\int_0^{\frac{T}{2}}\int_{\Dc_{r}(z_0)}\tau^{2}\sigma^{-m}(|\ve{W}|^2 + |\nabla \ve{W}|^2)G_{z_0}\,\dX\\
&\phantom{\leq}+\frac{N}{T^2m^2}\int_{\frac{T}{2}}^{\frac{3T}{4}}\int_{\Dc_{r}(z_0)}\!\!\!\!\tau^{2}\sigma^{-m}(|\ve{W}|^2 + |\nabla \ve{W}|^2 + |\ve{Y}|^2)\,G_{z_0}\,\dX.
\end{split}
\end{align}
Here we have also absorbed part of the second term on the right of \eqref{eq:yest1} into the last term of \eqref{eq:yest2}.

Adding \eqref{eq:xest2} to \eqref{eq:yest2}, we see that if $m_1$ is taken large enough and $T_0$ small enough (depending on $N$)
we may bring some terms from the right to the left and arrive at the inequality
\begin{align}
\begin{split}\label{eq:xyest1}
&\int_0^{\frac{T}{2}}\int_{\Dc_{r}(z_0)}\sigma^{-m}(\tau|\ve{W}|^2 + \tau^2|\nabla \ve{W}|^2 + \tau^{\lambda_0}|\ve{Y}|^2)G_{z_0}\,\dX\\
&\leq \frac{N}{r^2}\int_{0}^{\frac{3T}{4}}\int_{\Ac_{r, 2r}(z_0)}\!\!\!\!\!\tau^{2-2\nu_0}\sigma^{-m}(|\ve{W}|^2 + |\nabla \ve{W}|^2 + |\ve{Y}|^2)G_{z_0}\,\dX\\
&\phantom{\leq} + \frac{N}{T^2}\int_{\frac{T}{2}}^{\frac{3T}{4}}\int_{\Dc_{2r}(z_0)}\!\!\!\!\!\tau^{2-2\nu_0}\sigma^{-m}(|\ve{W}|^2 + |\nabla \ve{W}|^2 + |\ve{Y}|^2)G_{z_0}\,\dX\\
&\phantom{\leq} +  \frac{4}{m}\int_0^{\frac{3T}{4}}\int_{\Dc_{r}(z_0)}\tau^{\lambda_0-1}\sigma^{-m}|z- z_0|^2|\ve{Y}|^2G_{z_0}\,\dX.
\end{split}
\end{align}

We now estimate each term on the right side of \eqref{eq:xyest1} in turn. For the first, note that we have $G_{z_0}(z, \tau) \leq e^{-\frac{r^2}{4\tau}}$ on
$\Ac_{r, 2r}(z_0)\times(0, 3T/4)$ and, hence
\begin{align*}
\tau^2\sigma^{-m}G_{z_0} &\leq \tau^{-m+2}e^{-\frac{r^2}{4\tau}} \leq \left(\frac{4(m-2)}{r^2}\right)^{m-2}e^{-(m-2)}\leq \left(\frac{4}{r^2}\right)^{m-2} (m-2)!
\end{align*}
by Stirling's formula.
Also, by \eqref{eq:spacetimedecay3}, we have
\[
(|\ve{W}|^2 + |\nabla \ve{W}|^2 + |\ve{Y}|^2)\tau^{-2\nu_0} \leq K
\]
on $\Cc_{r_0}\times (0, T)$, provided $m_1 \geq 2\nu_0 = 2q(\gamma + b)$. So the first term on the right side of \eqref{eq:xyest1} may be estimated from above by
\begin{align}
\begin{split}\label{eq:xyest1t1}
& \frac{1}{r^2}\int_{0}^{\frac{3T}{4}}\int_{\Ac_{r, 2r}(z_0)}\tau^{2- 2\nu_0}\sigma^{-m}(|\ve{W}|^2 + |\nabla \ve{W}|^2 + |\ve{Y}|^2)G_{z_0}\,\dX\\
 &\qquad\leq NKr^{-2m}4^{m}(m-2)!.
\end{split}
 \end{align}
 
For the second term, we simply note that
\begin{align}
\begin{split}\label{eq:xyest1t2}
&\frac{1}{T^2}\int_{\frac{T}{2}}^{\frac{3T}{4}}\int_{\Dc_{2r}(z_0)}\!\!\!\!\tau^{2-2\nu_0}\sigma^{-m}(|\ve{W}|^2 + |\nabla \ve{W}|^2 + |\ve{Y}|^2)\,G_{z_0}\,\dX \leq N2^mr^{-2m}.
\end{split}
\end{align}

The third term in \eqref{eq:xyest1} will require more work. First, we fix some $0< \delta < 1/4$ and split the domain of integration into three
spacetime regions:
\begin{align}
 \begin{split}\label{eq:ex1}
 &\frac{4}{m}\int_0^{\frac{3T}{4}}\int_{\Dc_{r}(z_0)}\tau^{\lambda_0-1}\sigma^{-m}|z- z_0|^2|\ve{Y}|^2G_{z_0}\,\dX\\
 &=\frac{4}{m}\int_0^{\frac{3T}{4}}\int_{\Ac_{4\delta r, r}(z_0)}\tau^{\lambda_0-1}\sigma^{-m}|z- z_0|^2|\ve{Y}|^2G_{z_0}\,\dX\\
 &\phantom{=}+
 \frac{4}{m}\int_{\frac{T}{2}}^{\frac{3T}{4}}\int_{\Dc_{4\delta r}(z_0)}\tau^{\lambda_0-1}\sigma^{-m}|z- z_0|^2|\ve{Y}|^2G_{z_0}\,\dX\\
 &\phantom{=}+\frac{4}{m}\int_0^{\frac{T}{2}}\int_{\Dc_{4\delta r}(z_0)}\tau^{\lambda_0-1}\sigma^{-m}|z- z_0|^2|\ve{Y}|^2G_{z_0}\,\dX.
\end{split}
 \end{align}
The first and second terms in \eqref{eq:ex1} can be estimated exactly as their counterparts in \eqref{eq:xyest1} above, to yield
\begin{align}\label{eq:ex1t2}
\begin{split}
& \frac{4}{m}\int_0^{\frac{3T}{4}}\int_{\Ac_{4\delta r, r}(z_0)}\tau^{\lambda_0-1}\sigma^{-m}|z- z_0|^2|\ve{Y}|^2G_{z_0}\,\dX
 \leq N(2\delta r)^{-2m} (m-1)!
 \end{split}
\end{align}
and
 \begin{equation}\label{eq:ex1t1}
 \frac{4}{m}\int_{\frac{T}{2}}^{\frac{3T}{4}}\int_{\Dc_{4\delta r}(z_0)}\tau^{\lambda_0-1}\sigma^{-m}|z- z_0|^2|\ve{Y}|^2G_{z_0}\,\dX
 \leq N2^{m}r^{-2m}.
\end{equation}
To estimate the third term on the right of \eqref{eq:ex1}, we will split the domain of integration further into the spacetime regions 
\begin{align*}
\Omega&=  (\Dc_{4\delta r}(z_0)\times(0, T/2)) \cap\left\{|z-z_0|^2 < \frac{m\tau}{8}\,\right\}, 
\end{align*}
and 
\begin{align*}
\Omega^{\prime} &= (\Dc_{4\delta r}(z_0)\times(0, T/2))\cap \Omega^c.
\end{align*}
Then $|z- z_0|^2G_{z_0}/\tau \leq (m/8)e^{-m/32}$ on $\Omega^{\prime}$,  provided at least that $m_1 \geq 32$,
and so 
\begin{align*}
 \begin{split}
 &\frac{4}{m}\int_0^{\frac{T}{2}}\int_{\Dc_{4\delta r}(z_0)}\tau^{\lambda_0-1}\sigma^{-m}|z- z_0|^2|\ve{Y}|^2G_{z_0}\,\dX\\
 &\qquad\leq \frac{1}{2}\iint_{\Omega}\tau^{\lambda_0}\sigma^{-m}|\ve{Y}|^2G_{z_0}\,\dX + \frac{e^{-\frac{m}{32}}}{2}\iint_{\Omega^{\prime}}\tau^{\lambda_0}\sigma^{-m}|\ve{Y}|^2\,\dX\\
 &\qquad\leq \frac{1}{2}\int_0^{\frac{T}{2}}\int_{\Dc_{r}(z_0)}\tau^{\lambda_0}\sigma^{-m}|\ve{Y}|^2G_{z_0}\,\dX + \frac{e^{-\frac{m}{32}}}{2}\int_{0}^{\frac{T}{2}}\int_{\Dc_{4r\delta}(z_0)}\tau^{\lambda_0}\sigma^{-m}|\ve{Y}|^2\,\dX.
\end{split}
\end{align*}

Putting things together, we see that the third term on the right side of \eqref{eq:xyest1} admits the bound
\begin{align}\label{eq:xyest1t3}
 \begin{split}
&\frac{4}{m}\int_0^{\frac{3T}{4}}\int_{\Dc_{r}(z_0)}\tau^{\lambda_0-1}\sigma^{-m}|z- z_0|^2|\ve{Y}|^2G_{z_0}\,\dX\\
&\qquad\leq\frac{1}{2}\int_0^{\frac{T}{2}}\int_{\Dc_{r}(z_0)}\tau^{\lambda_0}\sigma^{-m}|\ve{Y}|^2G_{z_0}\,\dX
+ \frac{e^{-\frac{m}{32}}}{2}\int_{0}^{\frac{T}{2}}\int_{\Dc_{4r\delta}(z_0)}\tau^{\lambda_0}\sigma^{-m}|\ve{Y}|^2\,\dX\\
 &\qquad\phantom{\leq} + N2^m\delta^{-2m}r^{-2m}(m-1)!
 \end{split}
\end{align}
for any $\delta \in (0, 1/4)$.
Incorporating \eqref{eq:xyest1t1}, \eqref{eq:xyest1t2}, and \eqref{eq:xyest1t3} into \eqref{eq:xyest1} then yields
\begin{align}
 \begin{split}\label{eq:xyest2}
&\int_0^{\frac{T}{2}}\int_{\Dc_{r}(z_0)}\sigma^{-m}(\tau|\ve{W}|^2 + \tau^2|\nabla\ve{W}|^2 + \tau^{\lambda_0}|\ve{Y}|^2)G_{z_0}\,\dX\\
&\qquad\leq e^{-\frac{m}{32}}\int_{0}^{\frac{T}{2}}\int_{\Dc_{4r\delta}(z_0)}\tau^{\lambda_0}\sigma^{-m}|\ve{Y}|^2\,\dX + K_04^m\delta^{-2m}r^{-2m}(m-1)!,
\end{split}
\end{align}
provided $K_0$ is sufficiently large (depending on $K$ and $N$).

We now estimate the first term on the right of \eqref{eq:xyest2}. We start by applying Lemma \ref{lem:fgtofng} with $F = \tau|\ve{W}|^2 + \tau^2|\nabla \ve{W}|^2$
and $a = \lambda_0 - 2\mu - 1 = 1 + (n-k)/2$. Choose $\delta$ so small that
\[
 0 < \delta < 1  - e^{-\frac{1}{64}}.
\]
Then, since we already have assumed that $K_0 \geq M$, if, in addition, $L_0 \geq 1/(4\delta)^2$, Lemma \ref{lem:fgtofng} and the induction hypothesis \eqref{eq:ih}
together imply that
\begin{align*}
 &\int_0^{T}\int_{D_{4\delta r}(z_0)}\sigma^{-(m-1)}\tau^{a+1}\!\!\left(|\ve{W}|^2 + \tau|\nabla \ve{W}|^2\right)\dX \leq CK_0\left(\frac{L_0}{(1-\delta)^2r^2}\right)^{m-1}(m-1)!
\end{align*}
for some $C = C_{a}$ (which, with our choice of $a$, only depends on $n$ and $k$). Then, since $m \geq m_1 \geq m_0$, provided $L_0 \geq 2$, we may apply Lemma 
\ref{lem:xmngtoympng} with $\lambda = \lambda_0$ and $a+1$ in place of $a$ to obtain that 
\begin{align*}
 \begin{split}
 e^{-\frac{m}{32}}\int_0^{T}\int_{\Dc_{4\delta r}(z_0)}\!\!\!\!\tau^{\lambda_0}\sigma^{-m}|\ve{Y}|^2\,\dX &\leq CK_0e^{-\frac{m}{32}}\left(\frac{L_0}{(1-\delta)^2r^2}\right)^{m-1}(m-2)!\\
 &\leq CK_0L_0^{m-1}r^{-2m}(m-2)!.
 \end{split}
\end{align*}

Returning to \eqref{eq:xyest2}, we see that
\begin{align*}
 \begin{split}
&\int_0^{\frac{T}{2}}\int_{\Dc_{r}(z_0)}\sigma^{-m}(\tau|\ve{W}|^2 + \tau^2|\nabla \ve{W}|^2 + \tau^{\lambda_0}|\ve{Y}|^2)G_{z_0}\,\dX\\
&\qquad\leq\frac{K_0L_0^m(m-1)!}{r^{2m}}\left(\frac{C}{L_0} + \left(\frac{4}{\delta L_0}\right)^m\right)\\
&\qquad\leq \frac{K_0L_0^m(m-1)!}{2r^{2m}}
\end{split}
\end{align*}
provided $L_0$ is taken large enough depending on $C$ and the universal constant $\delta$.
On the other hand,
\begin{align*}
 &\int_{\frac{T}{2}}^T\int_{\Dc_{r}(z_0)}\sigma^{-m}(\tau|\ve{W}|^2 + \tau^2|\nabla \ve{W}|^2 + \tau^{\lambda_0}|\ve{Y}|^2)G_{z_0}\,\dX
 \leq CM2^mr^{-2m}.
\end{align*}
Summing these two inequalities completes the proof of Proposition \ref{prop:xypolydecay} provided $K_0$ and $L_0$ are taken larger still.

\end{proof}

\section{Backward uniqueness}\label{sec:backwarduniqueness}

In this section, we will prove Theorem \ref{thm:mainp} via an analysis of the system composed of $\ve{X} = \nablat \Rmt$ and $\ve{Y}= (Y^0, Y^1, Y^2)$
from Section \ref{sec:pdeode}. Our analysis will only make use of the following properties of $\ve{X}$ and $\ve{Y}$:
\begin{enumerate}
\item There exists a constant $B$ such that 
\begin{align}\label{eq:xysys1}
\begin{split}
   |(D_{\tau} + \Delta)\ve{X}| &\leq B\tau^{-1}|\ve{X}| + B|\ve{Y}|,\\
   |D_{\tau} \ve{Y}| &\leq  B(|\ve{X}| + |\nabla\ve{X}|) + B\tau^{-1}|\ve{Y}|,
\end{split}
\end{align}
on $\Cc_{r_0}\times (0, 1]$.
\item\label{it:ss}
The sections $\ve{X}$ and $\ve{Y}$ are self-similar in the sense that, if $\overline{\ve{X}} = \ve{X}|_{\Cc_{r_0}\times\{1\}}$
and $\overline{\ve{Y}} = \ve{Y}|_{\Cc_{r_0}\times\{1\}}$,  and $\Psi_{\tau}(\theta, z) = (\theta, z/\sqrt{\tau})$, then
\[
 \ve{X}= \tau\Psi_{\tau}^*\overline{\ve{X}},\quad \ve{Y} = \tau\Psi_{\tau}^*\overline{\ve{Y}},
\] 
and
\begin{equation}\label{eq:buss}
 |\ve{X}|^2 = \tau^{-3}|\overline{\ve{X}}|_{g(1)}^2\circ\Psi_{\tau}, \quad |\ve{Y}|^2 = \sum_{i=0}^{2}\tau^{-i}|\overline{Y}^i|^2_{g(1)}\circ\Psi_{\tau}.
\end{equation}
\item There is a constant $M_0$ such that
\begin{equation}\label{eq:unifbd}
    \sup_{\Cc_{r_0}\times(0, 1]}\left(|\ve{X}|^2 + |\nabla\ve{X}|^2 + |\ve{Y}|^2\right) \leq M_0.
\end{equation}

\item There are constants $N_2$, $N_3 > 0$ and $r_1 \geq r_0$ such that 
\begin{equation}\label{eq:decay}
\int_0^1\int_{\Ac_{r, 2r}}\left(|\ve{X}|^2 + |\nabla \ve{X}|^2 + |\ve{Y}|^2\right)e^{\frac{N_2r^2}{\tau}}\,\dX \leq N_3
\end{equation}
for all $r \geq r_1$.
\end{enumerate}

The exact values of the exponents of $\tau$ in the scale factors in \eqref{it:ss} are not important for the analysis; all that we need is for $\ve{X}$ and $\ve{Y}$ to be self-similar and satisfy some relationship
akin to \eqref{eq:buss}.
We will show that these four conditions imply that $\ve{X}$ and $\ve{Y}$ must vanish identically on $\Cc_{r_2}\times (0, T_1]$ for some $r_2 \geq r_1$ and $0 < T_1 \leq 1$.
\begin{theorem}\label{thm:bu}
Suppose that $\ve{X}$ and $\ve{Y}$ are smooth sections of $\Xc$ and $\Yc$ defined on $\Cc_{r_0}\times (0, 1]$ satisfying conditions (1) - (4) above.  Then there exists
$r_2 > 0$ and $0 < T_1\leq 1$ such that $\ve{X}\equiv 0$ and  $\ve{Y}\equiv 0$ on $\Cc_{r_2}\times (0, T_1]$. 
\end{theorem}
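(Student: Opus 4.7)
The plan is to establish Theorem \ref{thm:bu} by applying paired Carleman inequalities---one parabolic, applied to $\ve{X}$, and one ordinary, applied to $\ve{Y}$---with a family of exponentially growing weights of the form $e^{\alpha\Phi(z,\tau)}$ where $\Phi$ grows like $|z-z_0|^{2\delta}/\tau^{\delta}$ for some fixed $\delta\in(0,1)$, and then sending the Carleman parameter $\alpha$ to infinity to force vanishing on a definite set. The relevant Carleman inequalities \eqref{eq:pdecarlemanbu} and \eqref{eq:odecarlemanbu} will have schematic form
\begin{align*}
\alpha\iint(\tau|\ve{Z}|^2+\tau^2|\nabla\ve{Z}|^2)e^{2\alpha\Phi}\,\dX &\le C\iint\tau^2|(D_\tau+\Delta)\ve{Z}|^2e^{2\alpha\Phi}\,\dX,\\
\alpha\iint|\ve{Z}|^2e^{2\alpha\Phi}\,\dX &\le C\iint\tau^2|D_\tau\ve{Z}|^2e^{2\alpha\Phi}\,\dX + \mathrm{(boundary)},
\end{align*}
valid for compactly supported families $\ve{Z}$ in $\Cc\times(0,T_1)$.

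First I would apply these inequalities to $\phi_R\xi_\epsilon\ve{X}$ and $\phi_R\xi_\epsilon\ve{Y}$, where $\phi_R$ is a smooth spatial cutoff equal to one on $\Cc_{r_0}\cap\{|z|\le R\}$ and supported in $\{|z|\le 2R\}$, and $\xi_\epsilon$ is a temporal cutoff supported in $(\epsilon,T_1)$. Using \eqref{eq:xysys1} to rewrite $|(D_\tau+\Delta)\ve{X}|^2$ and $|D_\tau\ve{Y}|^2$ on the right-hand sides in terms of $\ve{X}$, $\nabla\ve{X}$, and $\ve{Y}$ themselves, and then summing the two resulting estimates, the singular $B\tau^{-1}|\ve{X}|$ and $B\tau^{-1}|\ve{Y}|$ contributions are to be absorbed on the left because $\partial_\tau\Phi\sim|z|^{2\delta}\tau^{-\delta-1}$ dominates $\tau^{-1}$ on the region of interest; the cross terms $B|\ve{Y}|$ and $B(|\ve{X}|+|\nabla\ve{X}|)$ are absorbed by using the complementary Carleman estimate once $\alpha$ is large.

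The exponential decay hypothesis \eqref{eq:decay} enters exactly in controlling the spatial-cutoff error, which is supported in $\Ac_{R,2R}\times(0,T_1)$ and whose integrand is bounded by a multiple of $(|\ve{X}|^2+|\nabla\ve{X}|^2+|\ve{Y}|^2)e^{2\alpha\Phi}$. Because $\delta<1$, one has $2\alpha\Phi(z,\tau)\le N_2R^2/\tau$ on $\Ac_{R,2R}$ for $R\ge R(\alpha)$, so \eqref{eq:decay} bounds this error by $CN_3$ independently of $R$. Letting $R\to\infty$ (the cutoff error vanishes in the limit) and $\epsilon\to 0$ (the time-slice errors at $\tau=\epsilon$ vanish by \eqref{eq:unifbd}, and those at $\tau=T_1$ are $\alpha$-independent), one obtains
\[
\alpha\iint_{\Cc_{r_0}\times(0,T_1)}(|\ve{X}|^2+|\ve{Y}|^2)e^{2\alpha\Phi}\,\dX\le C
\]
for all large $\alpha$. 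Restricting integration to $\{\Phi\ge\Phi_0\}$ for some $\Phi_0>0$ and sending $\alpha\to\infty$ then forces $\ve{X}\equiv 0$ and $\ve{Y}\equiv 0$ on that set, and by varying the reference point $z_0$ and the threshold $\Phi_0$ one deduces vanishing on $\Cc_{r_2}\times(0,T_1]$ for some $r_2\ge r_1$ and $0<T_1\le 1$.

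The main obstacle will be balancing the weight $\Phi$ against the two competing constraints: it must grow fast enough as $\tau\to 0$ (and as $|z|\to\infty$) to absorb the singular $\tau^{-1}$ coefficients in \eqref{eq:xysys1} after squaring, yet not so fast that the cutoff error on $\Ac_{R,2R}$ outstrips the Gaussian-rate decay supplied by \eqref{eq:decay}---which is what forces the precise exponential-quadratic form with exponent $\delta<1$ (rather than $\delta=1$). The self-similarity relation \eqref{eq:buss}, meanwhile, serves as the structural bridge in the argument: it is what allows the spatial decay of Theorem \ref{thm:expdecay1} to be reinterpreted as vanishing as $\tau\to 0$, which is the direction in which backward uniqueness actually forces propagation of zero.
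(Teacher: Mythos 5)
Your outline matches the paper's strategy in broad strokes (paired Carleman estimates \eqref{eq:pdecarlemanbu}, \eqref{eq:odecarlemanbu} applied to cut-off versions of $\ve{X}$ and $\ve{Y}$, absorption of the singular coefficients from \eqref{eq:xysys1}, and a limit $\alpha\to\infty$), but there is a genuine gap in how you treat the cutoff errors, and it is precisely where the real difficulty of this theorem lives. Your spatial cutoff $\phi_R$ is equal to one on $\{|z|\le R\}$, i.e.\ you only cut off at large $|z|$. But $\ve{X}$ and $\ve{Y}$ are defined only on the end $\Cc_{r_0}$, and the Carleman inequalities require compact support in $\Cc_{r_3}\times(0,T)$; so you must also cut off along an \emph{inner} boundary, as the paper does with $\psi_r$ supported in $B_{2r}(0)\setminus\overline{B_R(0)}$. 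The resulting error term, supported on the fixed annulus $\Ac_{R,R+1}$, does not vanish in any limit ($R$ is fixed, and the weight there blows up as $\tau\to0$ like $e^{c\alpha R^{2\delta}/\tau^{\delta}}$), so your claimed conclusion $\alpha\iint(|\ve{X}|^2+|\ve{Y}|^2)e^{2\alpha\Phi}\,\dX\le C$ with $C$ independent of $\alpha$ is not attainable. The paper instead tolerates right-hand sides growing in $\alpha$: it splits the inner-annulus error in time, uses the \emph{self-similarity} of $\ve{X},\ve{Y}$ (condition (2)) to fold the small-$\tau$ part of that error back into the main term (this is the ``essential use of self-similarity'' in this step), bounds the remaining part by $K_\alpha\sim e^{2\alpha\,4^{\delta}(R+1)^{2\delta}/\tau_0^{\delta}}$, and then wins because the weight on the target region $\Cc_{R+1}\times(0,\tau_0/8]$ carries the strictly larger exponent $8^{\delta}(R+1)^{2\delta}/\tau_0^{\delta}$, so dividing and sending $\alpha\to\infty$ still forces vanishing. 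In your proposal self-similarity is only described as the bridge from Theorem \ref{thm:expdecay1} to condition (4) (which is Proposition \ref{prop:expdecay2}, outside this theorem); without its use inside the argument, the inner-boundary term cannot be controlled and the proof does not close.

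Two smaller inaccuracies point the same way. The time-slice error at $\tau=\epsilon$ does not vanish by the uniform bound \eqref{eq:unifbd} alone: on $\{|z|\le 2r\}$ the weight is of size $e^{2\alpha(4r^2/\epsilon)^{\delta}}$ with a prefactor $\epsilon^{-2}$, so boundedness gives a divergent estimate; one needs the Gaussian-rate decay \eqref{eq:decay} together with $\delta<1$ so that $2\alpha(4r^2/\epsilon)^{\delta}-N_2R^2/(2\epsilon)\to-\infty$. Similarly, the error near $\tau=T$ is not automatically $\alpha$-independent for a generic weight $|z-z_0|^{2\delta}/\tau^{\delta}$; the paper engineers the factor $\eta(\tau)$ (vanishing linearly at $T$) and uses a temporal cutoff at $T-T/\alpha$ so that this term grows only polynomially in $\alpha$. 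These are the ``balancing'' issues you flag as the main obstacle, but the decisive missing ingredient is the inner boundary and the self-similarity argument that neutralizes it.
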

We have already seen in Proposition \ref{prop:xypdeode} that $\ve{X}$ and $\ve{Y}$ defined by \eqref{eq:xydef} satisfy (1) - (3). The following proposition, which is essentially a corollary of Theorem \ref{thm:expdecay1},
shows that they also satisfy the exponential decay estimate in the precise form given in (4). Theorem \ref{thm:mainp} is thus a consequence of Theorem \ref{thm:bu}.

\subsection{Space-time exponential decay revisited}
Combined with the self-similarity of $\ve{X}$ and $\ve{Y}$ and the reference metric $g$, Theorem \ref{thm:expdecay1} implies that $\ve{X}$ and $\ve{Y}$ also decay in space at an exponential-quadratic rate.
\begin{proposition}\label{prop:expdecay2}
 There exist $N_2$ and $N_3$ (depending on $N_0$, $N_1$, and $r_0$) such that
 \begin{equation}\label{eq:expdecay2}
    \int^1_0\int_{\Ac_{r, 2r}}\left(|\ve{X}|^2 + |\nabla\ve{X}|^2 + |\ve{Y}|^2\right)e^{\frac{N_2r^2}{\tau}}\,\dX \leq N_3 
 \end{equation}
 or any $r \geq 16r_0$.
\end{proposition}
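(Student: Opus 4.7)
The plan is to derive \eqref{eq:expdecay2} from Theorem~\ref{thm:expdecay1} using the self-similarity of $\ve{X}$ and $\ve{Y}$ via a parabolic rescaling. The key observation is that under $(z,\tau)=(\lambda z',\lambda^2\tau')$ with $\lambda:=r/r^*$ and $r^*:=16 r_0$, the temporal weight $e^{N_0/\tau'}$ appearing in Theorem~\ref{thm:expdecay1} transforms into $e^{N_0\lambda^2/\tau}=e^{N_2^* r^2/\tau}$ with $N_2^*:=N_0/(16 r_0)^2$, which is exactly the type of spatial weight sought in \eqref{eq:expdecay2}.

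First I would fix a finite collection $\{z_i'\}_{i=1}^M\subset\Ac_{r^*,2r^*}$ with each $|z_i'|>8 r_0$ so that $\{\Dc_1(z_i')\}_{i=1}^M$ covers $\Ac_{r^*,2r^*}$; here $M$ depends only on $n$, $k$, and $r_0$. Theorem~\ref{thm:expdecay1} applied at each $z_i'$ yields
\[
\int_0^1\!\int_{\Dc_1(z_i')}\!(|\ve{X}|^2+|\nabla\ve{X}|^2+|\ve{Y}|^2)\,e^{N_0/\tau'}\dX\leq N_1.
\]
For $r\geq r^*$, set $\lambda=r/r^*\geq 1$ and change variables by $z=\lambda z'$, $\tau=\lambda^2\tau'$, which maps $\Dc_1(z_i')\times(0,1]$ onto $\Dc_\lambda(\lambda z_i')\times(0,\lambda^2]$ with $\lambda z_i'\in\Ac_{r,2r}$. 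Using the cylindrical form of $g(\tau)$ one computes $d\mu_{g(\tau')}d\tau'=\lambda^{-n-2}d\mu_{g(\tau)}d\tau$, and the self-similarity identities \eqref{eq:buss} yield one-sided pointwise rescalings by fixed powers of $\lambda$; for instance $|\ve{X}|^2(z,\tau)=\lambda^{6}|\ve{X}|^2(z',\tau')$, and analogously $|\ve{Y}|^2(z,\tau)\leq\lambda^{4}|\ve{Y}|^2(z',\tau')$ (since the summands $\tau^i|\overline{Y}^i|^2$ rescale with different powers but each is bounded by $\lambda^{4}$ times its counterpart in $(z',\tau')$ for $\lambda\geq 1$), with a parallel bound for $|\nabla\ve{X}|^2$. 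Since $\{\Dc_\lambda(\lambda z_i')\}$ covers $\Ac_{r,2r}$, summing the rescaled estimates over $i$ and restricting to $\tau\leq 1\leq\lambda^2$ gives
\[
\int_0^1\!\int_{\Ac_{r,2r}}\!(|\ve{X}|^2+|\nabla\ve{X}|^2+|\ve{Y}|^2)\,e^{N_2^* r^2/\tau}\dX\leq C(r_0,n,k)\,r^P N_1
\]
for some fixed exponent $P$ independent of $r$.

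Finally, I would set $N_2:=N_2^*/2$ and use that on $\tau\in(0,1]$,
\[
e^{N_2 r^2/\tau}=e^{N_2^* r^2/\tau}\cdot e^{-N_2 r^2/\tau}\leq e^{N_2^* r^2/\tau}\cdot e^{-N_2 r^2},
\]
which together with the previous display gives
\[
\int_0^1\!\int_{\Ac_{r,2r}}(|\ve{X}|^2+|\nabla\ve{X}|^2+|\ve{Y}|^2)\,e^{N_2 r^2/\tau}\dX\leq C\,r^P e^{-N_2 r^2}\leq N_3
\]
uniformly over $r\geq 16 r_0$, since the Gaussian in $r$ dominates any polynomial on $[16r_0,\infty)$. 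The principal conceptual step is the parabolic rescaling itself; the remaining bookkeeping—tracking the distinct scaling exponents for $\ve{X}$, $\nabla\ve{X}$, and the individual summands of $\ve{Y}$—is a direct consequence of \eqref{eq:buss}, and no analytic input beyond Theorem~\ref{thm:expdecay1} is required.
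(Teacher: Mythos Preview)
Your approach is correct and essentially identical to the paper's: cover $\Ac_{r^*,2r^*}$ by finitely many unit balls to get a bound with weight $e^{N_0/\tau}$, apply the parabolic rescaling $(z,\tau)\mapsto(\lambda z,\lambda^2\tau)$ with $\lambda=r/r^*$ to transfer this to $\Ac_{r,2r}$ with weight $e^{N_2^* r^2/\tau}$, and then halve the exponent so the spare Gaussian factor $e^{-N_2 r^2}$ absorbs the polynomial in $r$. One small correction: the self-similarity gives $|\ve{X}|^2(z,\tau)=\lambda^{-6}|\ve{X}|^2(z',\tau')$ (not $\lambda^{6}$), since $\ve{X}$ is a $(5,0)$-tensor and $|\ve{X}|^2_{g(\tau)}=\tau^{-3}|\overline{\ve{X}}|^2_{g(1)}\circ\Psi_\tau$; this does not affect your argument, as you only need a bound of the form $C r^P$ which is swallowed by the Gaussian regardless of the sign of $P$.
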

\begin{proof}
For simplicity, let $r_1 = 16r_0$.  The set $\Ac_{r_1, 2r_1}$ can be covered by a finite collection of sets of the form $\Dc_1(z_i)$ where $z_i \in \RR^{n-k}\setminus\overline{B_{r_1/2}(0)}$
and so we obtain from Theorem \ref{thm:expdecay1} the inequality
\begin{equation}\label{eq:expdecay2a}
  \int^1_0\int_{\Ac_{r_1, 2r_1}}\left(|\ve{X}|^2 + |\nabla\ve{X}|^2 + |\ve{Y}|^2\right)e^{\frac{N_0}{\tau}}\,\dX \leq CN_1r_0^{n-k}
\end{equation}
for some $C = C(n, k)$.

Now fix $r \geq r_1$.
Then
\[
 |\ve{X}|^2(\theta, z, \tau) = \tau^{-3}|\delt \Rmt|_{g(1)}^2(\Psi_{\tau}(\theta, z), 1),\quad  d\mu_{g(\tau)} = \tau^{k/2}d\mu_{g(1)},
\]
and so, for any $0 < a < 1$, by the change of variables
\[
 \theta^{\prime} = \theta, \quad z^{\prime} = \frac{r_1}{r}z, \quad \tau^{\prime} = \frac{r_1^2}{r^2}\tau,
\]
we have
\begin{align*}
  \int^1_a\int_{\Ac_{r, 2r}}|\ve{X}|^2e^{\frac{N_0r^2}{r_1^2\tau}}\,\dX 
  &= \left(\frac{r}{r_1}\right)^{n-4}\int^{\frac{r_1^2}{r^2}}_{\frac{ar_1^2}{r^2}}\int_{\Ac_{r_1, 2r_1}}|\ve{X}|^2e^{\frac{N_0}{\tau}}\,\dX.
\end{align*}
  Taking $N_2 = N_0/(2r_1^2) = N_0/(512r_0^2)$, then, and sending $a\to 0$, we obtain
 \begin{align*}
  \int^1_0\int_{\Ac_{r, 2r}}|\ve{X}|^2e^{\frac{N_2r^2}{\tau}}\,\dX &\leq e^{\frac{-N_0r^2}{2r_1^2}} \left(\frac{r}{r_1}\right)^{n-4}\int^{\frac{r_1^2}{r^2}}_{0}\int_{\Ac_{r_1, 2r_1}}|\ve{X}|^2e^{\frac{N_0}{\tau}}\,\dX\\
  &\leq N\int^{1}_{0}\int_{\Ac_{r_1, 2r_1}}|\ve{X}|^2e^{\frac{N_0}{\tau}}\,\dX
\end{align*}
for some $N= N(N_0)$. The estimate \eqref{eq:expdecay2} for $\ve{X}$ then follows from \eqref{eq:expdecay2a}. 
Analogous scaling arguments prove \eqref{eq:expdecay2} for the other terms in the integrand.
\end{proof}

\subsection{Carleman estimates}
To prove Theorem \ref{thm:bu}, we will use two Carleman-type inequalities with weights that grow at an approximately exponential-quadratic rate at infinity. Following \cite{WangCylindrical},
for $\alpha > 0$, $0 < T\leq 1$, and $\delta\in (7/8, 1)$, we define $\phi_{\alpha}:\Cc\times (0, \infty)\to \RR$ by
\begin{equation}\label{eq:phialphadef}
  \phi_{\alpha}(\theta, z, \tau) = 
  \alpha\eta(\tau)\left(\frac{|z|^2}{\tau}\right)^{\delta},
\end{equation}
and $\eta:[0, T]\to[0, 1]$ by
\begin{equation}\label{eq:etadef}
\eta(\tau)=  \left\{\begin{array}{ll}
	    1 & \mbox{if}\ \tau \in [0, \tau_0],\\
	    1-\frac{1}{32}\delta(4\delta -3)\left(\frac{\tau}{\tau_0} - 1\right)^2 &\mbox{if}\ \tau\in [\tau_0, 2\tau_0],\\
	    1 + \frac{1}{32}\delta(4\delta-3)\left(3-\frac{2\tau}{\tau_0}\right) &\mbox{if}\ \tau\in [2\tau_0, T],
         \end{array}\right.
\end{equation}
where
\[
  \tau_0 \dfn \frac{2\delta(4\delta - 3)T}{3\delta(4\delta -3) + 32}.
\]
The function $\eta$ has been engineered to be monotone decreasing on $[0, T]$, identically one near $\tau = 0$,  and proportional to $T-\tau$
near $\tau = T$ with $\eta(T) = 0$. 

Below, $\Zc$ will denote an arbitrary bundle of the form $\bigoplus T^{(k_i, l_i)}\Cc$ 
equipped with the family of metrics and connections induced by $g(\tau)$. 
\begin{theorem}\label{thm:pdecarlemanbu} For any $\delta \in (7/8, 1)$ and $T\leq 1$, there exists $r_3\geq 1$ depending on $n$, $k$, and $\delta$ 
such that, for all smooth families of sections $\ve{Z}$ of the bundle $\Zc$ 
with support compactly contained in $\Cc_{r_3}\times (0, T)$, we have the inequality
\begin{align}\begin{split}
\label{eq:pdecarlemanbu}
& \int_0^T\int_{\Cc_{r}}\left(\frac{\alpha}{\tau^{\delta}}|\ve{Z}|^2 + \tau|\nabla\ve{Z}|^2\right)e^{2\phi_{\alpha}}\,\dX \leq 10\int_0^T\int_{\Cc_{r}}\tau^2|(D_{\tau} + \Delta)\ve{Z}|^2e^{2\phi_{\alpha}}\,\dX
\end{split}
  \end{align}
for all $\alpha > 0$ and $r\geq r_3$.
\end{theorem}
We will apply this estimate to the PDE component $\ve{X}$ of our system. To control the ODE component $\ve{Y}$, we will use the following matching estimate.
\begin{theorem}\label{thm:odecarlemanbu}
For any $\delta \in (7/8, 1)$, and $T\leq 1$ there exists $r_4 > 0$, depending on $n$, $k$, and $\delta$,
such that, for all smooth families of sections $\ve{Z}$ of  $\Zc$ with support compactly contained in  
$\Cc_{r}\times (0, T)$, we have the inequality
\begin{equation}
\label{eq:odecarlemanbu}
 \int_0^T\int_{\Cc_{r}}\frac{\alpha}{\tau^{\delta}}|\ve{Z}|^2e^{2\phi_{\alpha}}\,\dX \leq \int_0^T\int_{\Cc_{r}}\tau^2|D_{\tau}\ve{Z}|^2e^{2\phi_{\alpha}}\,\dX 
\end{equation}
for all $\alpha \geq 1$ and $r \geq r_4$.
\end{theorem}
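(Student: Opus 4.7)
The plan is to derive \eqref{eq:odecarlemanbu} from the elementary pointwise inequality $|D_\tau \ve{Z}|^2 \geq 2a\,\langle D_\tau \ve{Z}, \ve{Z}\rangle - a^2|\ve{Z}|^2$, valid for any smooth scalar function $a = a(\tau,z)$, specialized to the choice $a = -\partial_\tau \phi_\alpha$, via a single integration by parts in $\tau$. The strategy parallels the proof of the Gaussian ODE estimate \eqref{eq:odecarleman2ng}, but the positive lower bound on the resulting coefficient $Q$ will arise from $(\partial_\tau \phi_\alpha)^2$ rather than from powers of $\sigma^{-1}$.

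After multiplying this pointwise inequality by $\tau^2 e^{2\phi_\alpha}$ and integrating over $\Cc_r \times (0,T)$ against $d\mu_{g(\tau)}\,d\tau$, I would rewrite the cross term using $\partial_\tau|\ve{Z}|^2 = 2\langle D_\tau \ve{Z}, \ve{Z}\rangle$ and integrate by parts in $\tau$; the compact support of $\ve{Z}$ kills boundary terms, while the volume form contributes $\partial_\tau d\mu_{g(\tau)} = R\,d\mu_{g(\tau)}$ under \eqref{eq:brf}. Substituting $a = -\partial_\tau \phi_\alpha$ and cancelling yields
\[
\int_0^T\!\!\int_{\Cc_r} \tau^2 |D_\tau \ve{Z}|^2 e^{2\phi_\alpha}\,\dX \;\geq\; \int_0^T\!\!\int_{\Cc_r} Q(\tau,z)\,|\ve{Z}|^2 e^{2\phi_\alpha}\,\dX,
\]
where
\[
Q \dfn \partial_\tau(\tau^2 \partial_\tau \phi_\alpha) + \tau^2(\partial_\tau \phi_\alpha)^2 + R\,\tau^2 \partial_\tau \phi_\alpha.
\]
The theorem then reduces to the pointwise lower bound $Q \geq \alpha/\tau^\delta$ throughout $\Cc_{r_4} \times (0, T]$, with $r_4 = r_4(n, k, \delta)$.

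To verify this bound, I set $p \dfn (|z|^2/\tau)^\delta$ and compute directly
\[
\tau^2(\partial_\tau \phi_\alpha)^2 = \alpha^2 p^2\bigl(\delta\eta - \tau\eta'\bigr)^2, \qquad \partial_\tau(\tau^2 \partial_\tau\phi_\alpha) = \alpha p\bigl[\tau^2\eta'' + 2(1-\delta)\tau\eta' - \delta(1-\delta)\eta\bigr],
\]
together with $R = k/(2\tau)$ on the cylinder. Combining gives $Q = \alpha^2 p^2 (\delta\eta - \tau\eta')^2 + \alpha p\,P(\tau)$ with $|P(\tau)| \leq C_1(\delta, k)$, since $\eta, \eta', \eta''$ are uniformly bounded on $[0,T]$. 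The piecewise definition \eqref{eq:etadef} of $\eta$ has been calibrated so that $\delta\eta - \tau\eta' \geq c_\delta > 0$ uniformly on $(0,T]$: it equals $\delta$ on $[0,\tau_0]$, is bounded below by $\delta\eta \geq 31\delta/32$ on $[\tau_0, 2\tau_0]$, and on $[2\tau_0, T]$ it dominates $\tau|\eta'| \geq \delta(4\delta-3)/8$ (even as $\eta(\tau) \to 0$ at $\tau = T$). Consequently $Q \geq c_\delta^2 \alpha^2 p^2/2$ once $\alpha p \geq 2C_1/c_\delta^2$, and since $p \geq r_4^{2\delta}$ on $\Cc_{r_4} \times (0, T]$ for $T \leq 1$, taking $r_4$ large enough (depending only on $\delta$ and $k$) ensures both this condition and $c_\delta^2 r_4^{4\delta}/2 \geq 1$, which together yield $Q \geq \alpha/\tau^\delta$ for all $\alpha \geq 1$.

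The main obstacle will be preserving this quadratic-in-$\alpha$ lower bound on $Q$ uniformly across all three regimes of $\eta$, particularly near $\tau = T$, where $\eta(\tau) \to 0$ threatens to wipe out the leading term of $\tau^2(\partial_\tau \phi_\alpha)^2$. The careful engineering of $\tau_0$ and the piecewise coefficients in \eqref{eq:etadef} is designed precisely to force $\tau|\eta'|$ to remain uniformly positive on $[2\tau_0, T]$, so that $(\delta\eta - \tau\eta')^2$ never degenerates and the leading coefficient of $\alpha^2 p^2$ in $Q$ stays away from zero.
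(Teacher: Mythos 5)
Your proposal is correct. The pointwise inequality $|D_\tau \ve{Z}|^2 \geq 2a\langle D_\tau\ve{Z},\ve{Z}\rangle - a^2|\ve{Z}|^2$ with $a = -\partial_\tau\phi_\alpha$, followed by integration by parts in $\tau$ (accounting for $\partial_\tau\,d\mu = R\,d\mu$ under \eqref{eq:brf}), does yield the coefficient $Q = \partial_\tau(\tau^2\partial_\tau\phi_\alpha) + \tau^2(\partial_\tau\phi_\alpha)^2 + R\tau^2\partial_\tau\phi_\alpha$ as you claim, and your estimates on $Q$ close the argument. That said, the route differs noticeably from the paper's, and is somewhat heavier. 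The paper applies the identity \eqref{eq:odeident} with $j=1$ and $\phi = \phi_\alpha$, integrates to kill the boundary terms, and uses a single Cauchy--Schwarz $2\tau\langle D_\tau Z, Z\rangle \geq -\tau^2|D_\tau Z|^2 - |Z|^2$; the dominant positive coefficient is then the \emph{linear-in-$\alpha$} term $-2\tau\partial_\tau\phi_\alpha \geq 2\alpha\delta|z|^{2\delta}/\tau^\delta$, directly from the bound $\delta\eta - \tau\eta' \geq \delta$ in Lemma \ref{lem:etaprop}, and the only loss is the bounded constant $(k+4)/2$, absorbed by taking $r_4$ large. Your completion-of-the-square makes the cross term vanish exactly, so the positivity of $Q$ is carried instead by the \emph{quadratic-in-$\alpha$} term $\tau^2(\partial_\tau\phi_\alpha)^2 = \alpha^2 p^2(\delta\eta-\tau\eta')^2$, and you must additionally bound the linear part $\partial_\tau(\tau^2\partial_\tau\phi_\alpha) + R\tau^2\partial_\tau\phi_\alpha$, which forces you to compute and control $\tau^2\eta''$ and $\tau\eta'$ (using, in effect, all three inequalities of Lemma \ref{lem:etaprop} rather than just the second), and then to check that the quadratic dominates. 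Both arguments work and both ultimately rest on the lower bound for $\delta\eta - \tau\eta'$, but the paper's choice of splitting avoids $\eta''$ entirely and is a few steps shorter. One small efficiency you could have borrowed: rather than re-deriving case-by-case lower bounds for $\delta\eta-\tau\eta'$ on each piece of $[0,T]$, you could simply cite the uniform bound $\delta\eta-\tau\eta' \geq \delta$ already recorded in Lemma \ref{lem:etaprop}.
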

We will prove Theorems \ref{thm:pdecarlemanbu} and \ref{thm:odecarlemanbu} in Section \ref{sec:carleman}. For now, we will take them for granted
and use them to prove Theorem \ref{thm:bu}.

\begin{proof}[Proof of Theorem \ref{thm:bu}]  Our argument is a modification of that of Theorem 3.3 in \cite{WangCylindrical}. 
Let $r_2 \geq \max\{r_1, r_3, r_4\}$ and fix some $R\geq r_2$ and $0 < T\leq 1$. 

We will need two cutoff functions. For all $\alpha > 8$ and $0 < \epsilon < T/8$, let $\chi_{\alpha, \epsilon}$
be a smooth bump function on $[0, 1]$ with support in $(\epsilon, T-T/\alpha)$ satisfying $\chi_{\alpha, \epsilon} \equiv 1$ on $[2\epsilon, T-2T/\alpha]$,
$|\chi_{\alpha, \epsilon}^{\prime}| \leq 2/\epsilon$ on $(\epsilon, 2\epsilon)$, and $|\chi_{\alpha, \epsilon}^{\prime}| \leq 2\alpha/T$ on $(T-2T/\alpha, T-T/\alpha)$.
For the spatial cutoff, choose, for each $r > R+1$, a bump function $\psi_r$ on $\RR^{n-k}$ with support in $B_{2r}(0)\setminus\overline{B_R(0)}$ which satisfies $\psi_r \equiv 1$ on $\overline{B_{r}(0)}\setminus B_{R+1}(0)$
and the bounds 
$|\bar{\nabla} \psi_r|_{\bar{g}} +|\bar{\Delta} \psi_r|_{\bar{g}}\leq C$. We regard $\psi_r = \psi_r(\theta, z)$ as a function on $\Cc$ which is independent of $\theta$, in which case, $|\nabla \psi_r| = |\delb\psi_r|_{\gb}$ and $\Delta \psi_r = \bar{\Delta} \psi_r$.

Now define 
\[\ve{X}_{\alpha, \epsilon, r} = \chi_{\alpha, \epsilon}\psi_r\ve{X}, \quad \ve{Y}_{\alpha, \epsilon, r} = \chi_{\alpha, \epsilon} \psi_r \ve{Y}.
\]
 From \eqref{eq:xysys1}, we have
\begin{align*}
\begin{split}
 |(D_{\tau} + \Delta)\ve{X}_{\alpha, \epsilon, r}| &\leq B\tau^{-1}|\ve{X}_{\alpha, \epsilon, r}| + B|\ve{Y}_{\alpha, \epsilon, r}| + \psi_r|\chi_{\alpha, \epsilon}^{\prime}||\ve{X}| \\
 &\phantom{\leq} + 2\chi_{\alpha, \epsilon}(|\nabla \psi_r| + |\Delta \psi_r|))(|\ve{X}| + |\nabla \ve{X}|),
 \end{split}
\end{align*}
and
\begin{align*}
\begin{split}
|D_{\tau} \ve{Y}_{\alpha, \epsilon, r}| &\leq  B(|\ve{X}_{\alpha, \epsilon, r}| + |\nabla\ve{X}_{\alpha, \epsilon, r}|) + B\tau^{-1}|\ve{Y}_{\alpha, \epsilon, r}|
+ \psi_r|\chi_{\alpha, \epsilon}^{\prime}||\ve{Y}|\\
&\phantom{\leq} +B\chi_{\alpha, \epsilon}|\nabla \psi_r||\ve{X}|,
\end{split}
\end{align*}
on $\Cc_{R}\times (0, T]$.

Applying the inequalities \eqref{eq:pdecarlemanbu} and \eqref{eq:odecarlemanbu} to $\ve{X}_{\alpha, \epsilon, r}$ and $\ve{Y}_{\alpha, \epsilon, r}$ and summing the result, we arrive at the inequality
\begin{align}\label{eq:buineq1}
 \begin{split}
 &\int_0^T\int_{\Cc_{R}}\left(\alpha\tau^{-\delta}(|\ve{X}_{\alpha, \epsilon, r}|^2 + |\ve{Y}_{\alpha, \epsilon, r}|^2) +  \tau|\nabla \ve{X}_{\alpha, \epsilon, r}|^2 \right)e^{2\phi_{\alpha}}\,\dX\\
 &\qquad\leq K\int_0^T\int_{\Cc_{R}}\left(|\ve{X}_{\alpha, \epsilon, r}|^2 + |\ve{Y}_{\alpha, \epsilon, r}|^2 +  \tau^2|\nabla \ve{X}_{\alpha, \epsilon, r}|^2 \right)e^{2\phi_{\alpha}}\,\dX\\
 &\qquad\phantom{\leq}+ \frac{C}{\epsilon^2}\int_{\epsilon}^{2\epsilon}\int_{\Ac_{R, 2r}}\tau^2\left(|\ve{X}|^2 + |\ve{Y}|^2 + |\nabla \ve{X}|^2 \right)e^{2\phi_{\alpha}}\,\dX\\
 &\qquad\phantom{\leq}+ \frac{C\alpha^2}{T^2}\int_{T-\frac{2T}{\alpha}}^{T - \frac{T}{\alpha}}\int_{\Ac_{R, 2r}}\tau^2\left(|\ve{X}|^2 + |\ve{Y}|^2 + |\nabla \ve{X}|^2 \right)e^{2\phi_{\alpha}}\,\dX\\
 &\qquad\phantom{\leq}+ K\int_{\epsilon}^{T-\frac{T}{\alpha}}\int_{\Ac_{R, R+1}}\tau^2\left(|\ve{X}|^2  + |\nabla \ve{X}|^2\right)e^{2\phi_{\alpha}}\,\dX\\
 &\qquad\phantom{\leq}+ K\int_{\epsilon}^{T-\frac{T}{\alpha}}\int_{\Ac_{r, 2r}}\tau^2\left(|\ve{X}|^2  + |\nabla \ve{X}|^2\right)e^{2\phi_{\alpha}}\,\dX.
 \end{split}
\end{align}
Here and below, we use $C$ to denote a constant depending at most on $n$ and $k$, and $K$ a constant depending possibly in addition on $\delta$, $B$, $M_0$, $N_2$, and $N_3$.

Now, provided $T$ is chosen small enough (depending on $n$, $k$, $B$ and $\delta$), we can hide the first term on the right in the term on the left at the expense of enlarging the constants on the right, say,
by a factor of two. Also, using the decay estimate \eqref{eq:decay},
we can estimate the second term on the right via
\begin{align*}
&\frac{1}{\epsilon^2}\int_{\epsilon}^{2\epsilon}\int_{\Ac_{R, 2r}}\tau^2\left(|\ve{X}|^2 + |\ve{Y}|^2 + |\nabla \ve{X}|^2 \right)e^{2\phi_{\alpha}}\,\dX\\
&\qquad\leq 4e^{\left(2\alpha\left(\frac{4r^2}{\epsilon}\right)^{\delta} - \frac{N_2R^2}{2\epsilon}\right)}
\int_{\epsilon}^{2\epsilon}\int_{\Ac_{R, 2r}}\left(|\ve{X}|^2 + |\ve{Y}|^2 + |\nabla \ve{X}|^2 \right)e^{\frac{N_2R^2}{\tau}}\,\dX\\
&\qquad \leq K_{\alpha, r}e^{-\frac{N_2R^2}{4\epsilon}}
\end{align*}
for some $K_{\alpha, r}$ depending on $\alpha$, $\delta$, $r$, $R$, $N_2$, and $N_3$.
In particular, this term tends to $0$ as $\epsilon \searrow 0$ for any fixed $\alpha$ and $r$.  

Similarly, on $\Ac_{R, R+1}\times (0, T)$ and $\Ac_{r, 2r}\times (0, T)$, we have $e^{2\phi_{\alpha}} \leq K_{\alpha}e^{\frac{N_2R^2}{\tau}}$ and $e^{2\phi_{\alpha}} \leq K_{\alpha}e^{\frac{N_2r^2}{\tau}}$,
respectively, for some $K_{\alpha}$ depending on $\alpha$ and $\delta$. So, using \eqref{eq:decay}, we see that the fourth and fifth terms on the right in \eqref{eq:buineq1} converge to finite values as $\epsilon\searrow 0$. 
After taking this limit, then, we obtain from \eqref{eq:buineq1} that
\begin{align}\label{eq:buineq2}
 \begin{split}
 &\int_0^{\frac{T}{2}}\int_{\Ac_{R+1, r}}\left(\alpha\tau^{-\delta}(|\ve{X}|^2 + |\ve{Y}|^2) +  \tau|\nabla \ve{X}|^2 \right)e^{2\phi_{\alpha}}\,\dX\\
 &\qquad\leq \frac{C\alpha^2}{T^2}\int_{T-\frac{2T}{\alpha}}^{T - \frac{T}{\alpha}}\int_{\Ac_{R, 2r}}\tau^2\left(|\ve{X}|^2 + |\ve{Y}|^2 + |\nabla \ve{X}|^2 \right)e^{2\phi_{\alpha}}\,\dX\\
 &\qquad\phantom{\leq}+ K\int_{0}^{T}\int_{\Ac_{R, R+1}}\tau^2\left(|\ve{X}|^2  + |\nabla \ve{X}|^2\right)e^{2\phi_{\alpha}}\,\dX\\
  &\qquad\phantom{\leq} + K\int_{0}^{T}\int_{\Ac_{r, 2r}}\tau^2\left(|\ve{X}|^2  + |\nabla \ve{X}|^2\right)e^{2\phi_{\alpha}}\,\dX.
 \end{split}
\end{align}

Estimating as above, we see also that
\begin{align*}
 \int_{0}^{T}\int_{\Ac_{r, 2r}}\tau^2\left(|\ve{X}|^2  + |\nabla \ve{X}|^2\right)e^{2\phi_{\alpha}}\,\dX &\leq K_{\alpha} e^{-\frac{N_2r^2}{2T}},
\end{align*}
so the last term on the right of \eqref{eq:buineq2} tends to zero as $r\to\infty$.
The first term on the right of \eqref{eq:buineq2} can also be seen to be bounded above independently of $r$; we will verify this now and further show that it is bounded independently of $\alpha$.  

Let us assume from now on that $\alpha \geq \alpha_1$ where $\alpha_1 = \alpha_1(\delta)$ is large enough that $T- 2T/\alpha_1 \geq 2\tau_0$. (The constant $\tau_0$ here is from the definition
of $\eta$ in \eqref{eq:etadef}.)
Then $\eta(\tau) = c_0(T-\tau)/T$ on the interval $[T-2T/\alpha, T-T/\alpha]$ for some constant $c_0=c_0(\delta)$ and, consequently, $\phi_{\alpha} \leq 2c_0|z|^{2\delta}/\tau^{\delta}$ for $\tau$ in the same range. 
Choosing $m$ so large that $2^mR \geq r$, we may estimate that
\begin{align*}
&\int_{T-\frac{2T}{\alpha}}^{T - \frac{T}{\alpha}}\int_{\Ac_{R, 2r}}\tau^2\left(|\ve{X}|^2 + |\ve{Y}|^2 + |\nabla \ve{X}|^2 \right)e^{2\phi_{\alpha}}\,\dX \\
&\leq \int_{T-\frac{2T}{\alpha}}^{T - \frac{T}{\alpha}}\int_{\Ac_{R, 2r}}\left(|\ve{X}|^2 + |\ve{Y}|^2 + |\nabla\ve{X}|^2 \right)e^{\frac{4c_0|z|^{2\delta}}{\tau^{\delta}}}\,\dX\\
&\leq K\int_{2\tau_0}^{T}\int_{\Ac_{R, 2^{m+1}R}}\left(|\ve{X}|^2 + |\ve{Y}|^2 + |\nabla \ve{X}|^2 \right)e^{\frac{N_2|z|^2}{8\tau}}\,\dX\\
&\leq K\sum_{l=0}^{\infty} \left\{  e^{-\frac{N_2(2^lR)^2}{8T}}\int_{0}^{T}\int_{\Ac_{2^lR, 2^{l+1}R}}\!\!\!\!\left(|\ve{X}|^2 + |\ve{Y}|^2 + |\nabla \ve{X}|^2 \right)e^{\frac{N_2(2^lR)^2}{\tau}}\,\dX\right\}\\
&\leq K,
\end{align*}
for any $\alpha \geq \alpha_1$ and $r\geq r_2$.
Thus we may take the limit as $r\to \infty$ on both sides of \eqref{eq:buineq2} to obtain that
\begin{align}\label{eq:buineq3}
 \begin{split}
 &\int_0^{\frac{T}{2}}\int_{\Cc_{R+1}}\left(\alpha\tau^{-\delta}(|\ve{X}|^2 + |\ve{Y}|^2) +  \tau|\nabla \ve{X}|^2 \right)e^{2\phi_{\alpha}}\,\dX\\
 &\qquad\leq \frac{K\alpha^2}{T^2} + K\int_{0}^{T}\int_{\Ac_{R, R+1}}\tau^2\left(|\ve{X}|^2  + |\nabla \ve{X}|^2\right)e^{2\phi_{\alpha}}\,\dX.
 \end{split}
\end{align}

To estimate the second term on the right side of \eqref{eq:buineq3}, note that, by construction, $\eta(\tau) \equiv 1$ for $\tau \in (0, \tau_0]$. Using the self-similarity of $\ve{X}$ and $\nabla\ve{X}$ from (4) above,
we
have
\begin{align*}
 \int_0^{\frac{\tau_0}{4}}\int_{\Ac_{R, R+1}}\tau^2\left(|\ve{X}|^2  + |\nabla \ve{X}|^2\right)e^{2\phi_{\alpha}}\,\dX
 &\leq \int_0^{\frac{\tau_0}{4}}\int_{\Cc_R}\tau^2\left(|\ve{X}|^2  + |\nabla \ve{X}|^2\right)e^{2\phi_{\alpha}}\,\dX\\
 &\leq C\int_0^{\tau_0}\int_{\Cc_{2R}} \tau^2\left(|\ve{X}|^2  + |\nabla \ve{X}|^2\right)e^{2\phi_{\alpha}}\,\dX\\
 &\leq C\int_0^{\frac{T}{2}}\int_{\Cc_{R+1}} \tau^2\left(|\ve{X}|^2  + |\nabla \ve{X}|^2\right)e^{2\phi_{\alpha}}\,\dX.
\end{align*}
Thus, for $T$ small enough, depending on $n$, $k$, $B$, and $L$, we can convert \eqref{eq:buineq3} into
\begin{align*}
 \int_0^{\frac{T}{2}}\int_{\Cc_{R+1}}\left(|\ve{X}|^2 + |\ve{Y}|^2\right)e^{2\phi_{\alpha}}\,\dX 
 &\leq \frac{K\alpha^2}{T^2} + 
 K\int_{\frac{\tau_0}{4}}^{T}\int_{\Ac_{R, R+1}}\!\!\!\!\!\!\tau^2\left(|\ve{X}|^2  + |\nabla \ve{X}|^2\right)e^{2\phi_{\alpha}}\,\dX\\
 &\leq K\left(\frac{\alpha^2}{T^2}   + e^{2\alpha\left(\frac{4^{\delta}(R+1)^{2\delta}}{\tau_0^{\delta}}\right)}R^{\frac{n-k}{2}}\right).
\end{align*}

On the other hand,
\begin{align*}
 \begin{split}
 &e^{2\alpha\frac{8^{\delta}(R+1)^{2\delta}}{\tau_0^{\delta}}}\int_0^{\frac{\tau_0}{8}}\int_{\Cc_{R+1}}\left(|\ve{X}|^2 + |\ve{Y}|^2\right)\,\dX \leq \int_0^{\frac{T}{2}}\int_{\Cc_{R+1}}\left(|\ve{X}|^2 + |\ve{Y}|^2\right)e^{2\phi_{\alpha}}\,\dX,
\end{split}
\end{align*}
so we find that
\begin{align*}
\int_0^{\frac{\tau_0}{8}}\int_{\Cc_{R+1}}\left(|\ve{X}|^2 + |\ve{Y}|^2\right)\,\dX \leq \frac{K\alpha^2}{T^2}e^{-\frac{\alpha \epsilon(R+1)^{2\delta}}{\tau_0^{\delta}}},
\end{align*}
for all $\alpha \geq \alpha_1$, where $\epsilon = 2\cdot4^{\delta}(2^{\delta} - 1)$. Sending $\alpha\to \infty$, we conclude at last that
$\ve{X} \equiv 0$ and $\ve{Y}\equiv 0$ on $\Cc_{R+1}\times [0, \tau_0/8]$.
\end{proof}

\section{The Carleman estimates}\label{sec:carleman}
In this section, we will prove the Carleman estimates in Theorems \ref{thm:pdecarleman2}, \ref{thm:odecarleman2}, \ref{thm:pdecarlemanbu}, and \ref{thm:odecarlemanbu}.  We start by establishing
some general integral identities for families of tensors on a manifold evolving by the backward Ricci flow.
\subsection{Integral identities}
In this subsection, we will use $g = g(\tau)$ to denote an arbitrary solution to \eqref{eq:brf} on a smooth manifold $M = M^n$ for $\tau \in (0, T)$, and $\Zc$ to denote a tensor bundle over $M$. 
We will use $\nabla= \nabla_{g(\tau)}$ and $d\mu = d\mu_{g(\tau)}$ to represent the Levi-Civita connection and Riemannian density associated to $g$,
and define the operator $D_{\tau}$ in terms of $g$ as in Section \ref{sec:pdeode}. We will also continue to use the shorthand $\dX = d\mu_{g(\tau)}\,d\tau$.

Let $\phi:M\times(0, T)\to \RR$ be a smooth positive function and consider the operator
\[
    \mathcal{L} = \tau e^{\phi}(D_{\tau} + \Delta)e^{-\phi}
\]
acting on smooth families of sections of $\Zc$. Explicitly, then, we have
\[
 \mathcal{L}V = \tau\left(|\nabla \phi|^2 - \pd{\phi}{\tau} - \Delta \phi\right)V + \tau(D_\tau + \Delta)V - 2\tau\nabla_{\nabla\phi}V, 
\]
and the formal $L^2(\dX)$-adjoint of $\mathcal{L}$ is given by
\[
 \mathcal{L}^*V = \tau\left(|\nabla\phi|^2 +\Delta \phi - \pd{\phi}{\tau} - \frac{1}{\tau} - R\right)V - \tau(D_\tau - \Delta)V + 2\tau\nabla_{\nabla\phi}V.
\]
Writing $\Lc$ in terms of its symmetric and antisymmetric parts
\begin{align*}
  \Sc V &= \frac{\Lc V + \Lc^*V}{2} = \tau\left(|\nabla \phi|^2 - \pd{\phi}{\tau} - \frac{R}{2} - \frac{1}{2\tau}\right)V + \tau \Delta V,\\
  \Ac V &= \frac{\Lc V - \Lc^*V}{2} = \tau\left(\frac{R}{2} - \Delta \phi +\frac{1}{2\tau}\right)V + \tau D_{\tau} V - 2\tau\nabla_{\nabla\phi}V, 
\end{align*}
yields the identity
\begin{align}
 \begin{split}\label{eq:basicdecomposition}
    &\iint \tau^2|D_\tau Z + \Delta Z|^2e^{2\phi}\, \dX = \iint|\Lc V|^2\,\dX\\
    &\qquad\qquad= \iint\left(|\Sc V|^2 + |\Ac V|^2 + \langle [\Sc, \Ac] V, V\rangle \right)\,\dX, 
 \end{split}
\end{align}
for any smooth family $Z = e^{-\phi}V$ of sections of $\Zc$ with compact support in $\Cc\times (0, T)$.

Provided (with a judicious choice of $\phi$) we can effectively estimate the commutator $[\Sc, \Ac]$, the above identity will yield an estimate of the $L^2$-norm of $(D_{\tau} + \Delta)Z$
from below by that of $Z$. The basis of this estimate is the following explicit expression for the commutator.
\begin{proposition}\label{prop:commutator} If $V$ is any smooth family of sections of $\Zc$ with compact support in $M\times(0, T)$, we have
\begin{align}\label{eq:commutator}
\begin{split}
&\iint \langle [\Sc, \Ac] V, V\rangle\,\dX =\iint \left(\Qc^{(1)}_{\phi}(\nabla V, \nabla V) + \Qc^{(2)}_{\phi}|V|^2 + \Qc_{\phi}^{(3)}(\nabla V, V)\right)\,\dX, 
\end{split}
\end{align}
where
 \begin{align*}
  &\Qc^{(1)}_\phi(\nabla V, \nabla V) = 2\tau^2\left(2\nabla_i\nabla_j\phi - R_{ij} + \frac{g_{ij}}{2\tau}\right)\langle \nabla_i V, \nabla_j V\rangle,\\
\begin{split}
   &\Qc^{(2)}_{\phi} = \tau^2\left(\frac{\partial^2\phi}{\partial\tau^2} - \Delta^2\phi -2\pdtau|\nabla \phi|^2 + \frac{1}{2}\left(\pd{R}{\tau} + \Delta R\right) 
   - \langle \nabla R, \nabla \phi\rangle
   \right)\\
   &\phantom{\Qc^{(2)}_{\phi} =}+ 2\tau^2\left(2\nabla\nabla\phi(\nabla\phi, \nabla\phi) -\Rc(\nabla\phi, \nabla\phi) + \frac{|\nabla\phi|^2}{2\tau}\right)\\
   &\phantom{\Qc^{(2)}_{\phi} =}
   + \tau\left(\pd{\phi}{\tau} -2|\nabla \phi|^2 + \frac{R}{2}\right),
   \end{split}
\end{align*}
and
\begin{align*}
  \begin{split}
     &\Qc^{(3)}_{\phi}(\nabla V, V) = -2\tau^2\left(\nabla_i R_{ja} - \nabla_{j}R_{ia} + 2 R_{ija}^l\nabla_l \phi\right)\langle\Lambda^i_jV,  \nabla_a V\rangle.
    \end{split}
 \end{align*}
\end{proposition}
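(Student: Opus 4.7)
The plan is to expand $[\Sc,\Ac]$ as an explicit differential operator acting on $V$, pair with $V$, and reorganize the resulting spacetime integral into the three asserted pieces via integration by parts. Writing
\[
 \Sc = s + \tau\Delta, \qquad \Ac = a + \tau D_\tau - 2\tau\nabla_{\nabla\phi},
\]
with the scalar multiplication operators
\[
 s = \tau\left(|\nabla\phi|^2 - \pd{\phi}{\tau} - \frac{R}{2} - \frac{1}{2\tau}\right), \qquad a = \tau\left(\frac{R}{2} - \Delta\phi + \frac{1}{2\tau}\right),
\]
bilinearity of the commutator reduces $[\Sc,\Ac]$ to five nontrivial commutators out of the six pairs of summands (since $[s,a] = 0$). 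The three involving at least one scalar multiplication are handled by Leibniz: $[\tau\Delta, a]V = \tau(\Delta a) V + 2\tau\langle\nabla a, \nabla V\rangle$, $[s, \tau D_\tau]V = -\tau(\pd{s}{\tau})V$, and $[s, -2\tau\nabla_{\nabla\phi}]V = 2\tau\langle\nabla s, \nabla\phi\rangle V$.

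The two remaining commutators carry the geometric content. For $[\tau\Delta, \tau D_\tau]$ I would use that, under $\pdtau g = 2\Rc$, the Levi-Civita connection evolves by $\pdtau \Gamma^k_{ij} = g^{kl}(\nabla_i R_{jl} + \nabla_j R_{il} - \nabla_l R_{ij})$, so that on a tensor bundle $[D_\tau,\nabla_i]V$ is realized as a $\nabla R$-weighted action of the $\Lambda$-type derivations on $V$; commuting once more with $\Delta$ produces a principal second-order piece proportional to $R_{ij}\nabla_i\nabla_j V$, first-order $\nabla R$-type pieces, and scalar $\pdtau R$, $\Delta R$ contributions. For $[\tau\Delta, -2\tau\nabla_{\nabla\phi}]$ the Ricci identity $(\nabla_l\nabla_a - \nabla_a\nabla_l)V = R^i_{laj}\Lambda^j_i V$ on tensor bundles, together with $[\Delta, \nabla_a\phi]$-type manipulations, produces the distinctive contraction $R^l_{ija}\nabla_l\phi \cdot \Lambda^i_j V \cdot \nabla_a V$ appearing in $\Qc^{(3)}$, along with the Hessian-of-$\phi$ bilinears in $\nabla V$ that feed into $\Qc^{(1)}$.

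Once $[\Sc,\Ac]V$ is assembled as an explicit operator, I would pair with $V$ and integrate, using compact support of $V$ to drop all boundary terms. The standard identities
\[
 \int \langle \Delta V, V\rangle\,\dX = -\int |\nabla V|^2\,\dX, \qquad \int \langle D_\tau V, V\rangle\,\dX = -\frac{1}{2}\int R|V|^2\,\dX,
\]
(the second using $\pdtau d\mu_{g(\tau)} = R\,d\mu_{g(\tau)}$ under backward Ricci flow), together with the analogous identity $\int \langle \nabla_{\nabla\phi}V, V\rangle\,\dX = -\tfrac{1}{2}\int (\Delta\phi)|V|^2\,\dX$, convert each second-order derivative of $V$ either into a $|\nabla V|^2$-quadratic contributing to $\Qc^{(1)}$ or into a scalar multiple of $|V|^2$ contributing to $\Qc^{(2)}$, while the curvature-crossed first-order terms assemble directly into $\Qc^{(3)}$.

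The principal obstacle will be bookkeeping: verifying that every one of the many terms produced by the five commutators, together with the derivatives of $s$ and $a$ generated by $\pdtau$ and $\nabla$ and the subsequent integrations by parts, collects cleanly into the three $\Qc$ expressions with exactly the stated coefficients. In particular, the shrinker-soliton quantity $2\nabla_i\nabla_j\phi - R_{ij} + g_{ij}/(2\tau)$ entering $\Qc^{(1)}$ emerges from the delicate balance of Hessian-of-$\phi$ contributions from $[\tau\Delta, -2\tau\nabla_{\nabla\phi}]$, Ricci contributions from $[\tau\Delta, \tau D_\tau]$, and the explicit $1/(2\tau)$ corrections built into $s$ and $a$; this is precisely the combination that the Carleman weight $\phi$ will be chosen to make positive-definite against the cylindrical background.
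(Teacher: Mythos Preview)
Your proposal is correct and follows essentially the same approach as the paper. The paper writes $\Sc = \tau(\Delta + F\operatorname{Id})$ and $\Ac = \tau(D_\tau - 2\nabla_{\nabla\phi} + G\operatorname{Id})$ (your $s = \tau F$, $a = \tau G$), expands $\Sc(\Ac V) - \Ac(\Sc V)$ directly rather than via bilinearity, and then computes the two key commutators $[\Delta, D_\tau]$ and $[\nabla_{\nabla\phi}, \Delta]$ in integrated form exactly as you outline; the remaining scalar terms involving $\nabla F$, $\pd{F}{\tau}$, and $\Delta G$ are then expanded and collected into $\Qc^{(2)}$.
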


\begin{proof}
 For the time-being, write $\Sc$ and $\Ac$ as
 \[
   \Sc =  \tau(\Delta +  F\operatorname{Id}), \quad \Ac = \tau (D_{\tau}  - 2\nabla_{\nabla_\phi} + G\operatorname{Id}).
\]
Then 
\begin{align*}
 \Sc(\Ac V) &= \tau^2\big(\Delta D_{\tau}V - 2\Delta(\nabla_{\nabla_{\phi}}V) +\Delta(G V) + F D_{\tau}V - 2F(\nabla_{\nabla_\phi} V) + FG V\big),
\end{align*}
and
\begin{align*}
\begin{split}
 \Ac(\Sc V) &= \tau^2\big(D_{\tau}\Delta V + D_{\tau}(F V) -2\nabla_{\nabla\phi}(\Delta V) - 2\nabla_{\nabla_{\phi}}(F V) 
 + G \Delta V + FG V\big)\\
 &\phantom{=\tau^2\big(} + \tau(\Delta V + F V),
 \end{split}
\end{align*}
so
\begin{align*}
 \begin{split}
   [\Sc, \Ac] V &=  \tau^2\bigg([\Delta, D_{\tau}] V + 2[\nabla_{\nabla\phi}, \Delta] V  + \left(2\left\langle \nabla F, \nabla \phi\right\rangle - \pd{F}{\tau}\right)V
      \\
     &\phantom{=}+ \Delta G V + 2\nabla_{\nabla G} V- \frac{1}{\tau}(\Delta V + F V)\bigg).
 \end{split}
\end{align*}

Since $V$ has compact support, we may integrate $\langle [\Sc, \Ac]V, V\rangle$ over $\Cc\times (0, T)$
and integrate by parts in the integrals corresponding to the fourth and sixth terms of the above identity to
obtain that
\begin{align}
\begin{split}\label{eq:comm1}
  &\iint\langle [\Sc, \Ac] V, V\rangle\,\dX\\ 
  &\qquad=  \iint \tau^2\langle [\Delta, D_{\tau}] V + 2[\nabla_{\nabla\phi}, \Delta] V, V\rangle \,\dX 
   +\iint\tau|\nabla V|^2\,\dX\\
   &\qquad\phantom{=}+ \iint\left(\tau^2\left(2\langle \nabla F, \nabla \phi\rangle - \pd{F}{\tau}\right) -\tau F\right)|V|^2\,\dX.
\end{split}
\end{align}

We now simplify the commutator terms on the right side of \eqref{eq:comm1}.  First,
\begin{align*}
\iint \tau^2\langle [\Delta, D_{\tau}] V, V\rangle\,\dX &= \iint \tau^2\bigg(\left\langle [\nabla_a, D_{\tau}] \nabla_a V, V\right\rangle + \left\langle [D_{\tau}, \nabla_a] V, \nabla_aV\right\rangle \bigg)\dX \\
 &= \iint \tau^2\left(\frac{1}{2}[\nabla_a, D_{\tau}] \nabla_a |V|^2 + 2\left\langle [D_{\tau}, \nabla_a] V, \nabla_aV\right\rangle \right)\dX, 
\end{align*}
and since
\begin{align*}
 [\nabla_a, D_{\tau}] \nabla_a|V|^2 &= R_{ab}\nabla_b\nabla_a |V|^2 +(\nabla_aR_{ac} - \nabla_cR_{aa})\nabla_c|V|^2\\
				    &= \nabla_b(R_{ab}\nabla_a |V|^2) -\langle\nabla R, \nabla|V|^2\rangle,
\end{align*}
and
\[
 [D_{\tau}, \nabla_a] V = -R_{ab}\nabla_b V -(\nabla_bR_{ac} - \nabla_cR_{ab})\Lambda^b_c(V),
\]
we have
\begin{align}
\begin{split}\label{eq:deltadtcomm}
&\iint \tau^2\langle [\Delta, D_{\tau}] V, V\rangle\,\dX\\
&\qquad= 
\iint \tau^2\bigg(\frac{1}{2}\Delta R |V|^2 -2R_{ab}\langle \nabla_a V, \nabla_b V\rangle\\
&\qquad\phantom{=}\quad\quad
-2\left\langle(\nabla_bR_{ac} - \nabla_cR_{ab})\Lambda^b_c(V),\nabla_a V\right\rangle\bigg)\,\dX. 
\end{split}
\end{align}

Likewise, for the second commutator term in \eqref{eq:comm1}, we compute that
\begin{align*}
 \begin{split}
\iint \tau^2\langle \nabla_{\nabla\phi}(\Delta V), V\rangle  \,\dX &= -\iint\tau^2\bigg\{
\Delta\phi\langle \Delta V , V\rangle + \langle \nabla_{\nabla\phi}V, \Delta V\rangle\bigg\}\,\dX,
\end{split}
\end{align*}
and
\begin{align*}
 \begin{split}
\iint \tau^2\langle \Delta(\nabla_{\nabla\phi}V), V\rangle\,\dX
&= \iint\tau^2\bigg\{\Delta\phi|\nabla V|^2 
- 2\langle [\nabla_a, \nabla_b]V, \nabla_a V\rangle \nabla_b\phi\\
&\qquad\phantom{=}\quad-2\nabla_a\nabla_b\phi\langle \nabla_a V, 
\nabla_b V\rangle - \langle \nabla_{\nabla \phi} V, \Delta V\rangle \bigg\}\,\dX.
\end{split}
\end{align*}
Using that
\[
 \nabla_d\phi[\nabla_a, \nabla_d]V = -R_{bcad}\nabla_d\phi\Lambda^b_c(V),
\]
we then have
\begin{align}
 \begin{split}\label{eq:ddphideltacomm}
&2\iint \tau^2\left\langle[\nabla_{\nabla\phi}, \Delta]V, V\right\rangle \,\dX \\
&\quad=\iint \tau^2\bigg\{4\nabla_a\nabla_b\phi\langle\nabla_a V, \nabla_b V\rangle - 4 R_{bcad}\nabla_d\phi\langle\Lambda^b_c(V), \nabla_a V\rangle\\
&\quad\phantom{=\iint \tau^2\bigg\{} -\Delta^2\phi|V|^2\bigg\}\,\dX.
\end{split}
\end{align}

Now we expand the third term on the right of \eqref{eq:comm1}. Since
\[
F = |\nabla\phi|^2 - \pd{\phi}{\tau} - \frac{R}{2} - \frac{1}{2\tau},
\]
we compute that
\begin{align*}
 2\langle \nabla F, \nabla \phi\rangle &= 4\nabla\nabla\phi(\nabla\phi, \nabla\phi) - 2\left\langle\nabla\pd{\phi}{\tau}, \nabla\phi\right\rangle
 -\langle\nabla R, \nabla \phi\rangle\\
 &=  4\nabla\nabla\phi(\nabla\phi, \nabla\phi) - 2\Rc(\nabla\phi, \nabla\phi) - \pdtau|\nabla\phi|^2
 -\langle\nabla R, \nabla \phi\rangle,
\end{align*}
and
\[
 \pd{F}{\tau} = \pdtau |\nabla \phi|^2 - \frac{\partial^2\phi}{\partial\tau^2} - \frac{1}{2}\pd{R}{\tau} + \frac{1}{2\tau^2},
\]
so
\begin{align*}
 &\iint\left(2\tau^2\langle \nabla F, \nabla \phi\rangle - \tau^2\pd{F}{\tau}-\tau F\right)|V|^2\,\dX \\
&\qquad= \iint2\tau^2\left(2\nabla\nabla\phi(\nabla\phi, \nabla \phi) - \Rc(\nabla\phi, \nabla \phi) + \frac{|\nabla\phi|^2}{2\tau}\right)|V|^2\,\dX\\
&\qquad\phantom{=}+\iint\bigg\{\tau^2\left(\frac{\partial^2\phi}{\partial\tau^2}  - 2\pdtau |\nabla\phi|^2 + \frac{1}{2}\pd{R}{\tau}- \langle \nabla R, \nabla \phi\rangle\right)\\
&\qquad\phantom{=+\iint\bigg\{}
+ \tau\left(\pd{\phi}{\tau} + \frac{R}{2} -2|\nabla\phi|^2\right)\bigg\}|V|^2\,\dX.
\end{align*}
Combining this with \eqref{eq:comm1}, \eqref{eq:deltadtcomm}, and \eqref{eq:ddphideltacomm} yields \eqref{eq:commutator}.
\end{proof}

 \begin{remark}
 \label{rem:solitoncommutator}
  When $g(\tau)$ is a shrinking self-similar solution to \eqref{eq:brf} in the sense that $(M, g(1), f(1))$ satisfies \eqref{eq:grs} and $g(\tau) = \tau\Psi^*_{\tau}g(1)$, $f(\tau) = f\circ\Psi_{\tau}^*f(1)$  
  where $\pd{\Psi}{\tau} = -\tau^{-1}(\nabla_{g(1)} f(1))\circ\Psi$ and $\Psi_{1} = \operatorname{Id}$, the quantities $\Qc^{(i)}_{\phi}$, $i =1, 2, 3$, on the right side of \eqref{eq:commutator} vanish identically with the choice
  $\phi = -\frac{f}{2}$. This can be seen immediately for $\Qc^{(1)}_{\phi}$ and $\Qc^{(3)}_{\phi}$ given the identities
 \[
  R_{ij} + \nabla_i\nabla_j f = \frac{g_{ij}}{2\tau}, \quad \nabla_i R_{jk} - \nabla_jR_{ik} = R_{ijk}^l\nabla_l f,
 \]
 satisfied by $g$ and $f$ on $M\times (0, T)$. The vanishing of $\Qc^{(2)}_{\phi}$ follows from the additional identities
 \[
 \Delta f + R = \frac{n}{2\tau}, \quad  \pd{f}{\tau} = - |\nabla f|^2, \quad   \pd{R}{\tau} =- \langle \nabla R, \nabla f\rangle - \frac{R}{\tau},
 \]
 since
 \begin{align*}
\begin{split}
   \Qc^{(2)}_{\frac{-f}{2}} &= \frac{\tau^2}{2}\left(\left(\pd{R}{\tau} + \langle \nabla R, \nabla f\rangle + \frac{R}{\tau}\right) + \Delta(\Delta f + R)\right)\\
		    &\phantom{=}-\frac{\tau^2}{2}\left(\pdtau + \frac{1}{\tau}\right)\left(\pd{f}{\tau} +|\nabla f|^2\right)\\
		    &\phantom{=}-\frac{\tau^2}{2}\left(\Rc(g) + \nabla\nabla f - \frac{g}{2\tau}\right)(\nabla f, \nabla f)\\
		    &=0.
\end{split}
\end{align*}
 \end{remark}

We will use the simple energy estimate in the next proposition to control $|\nabla Z|$ by $|(D_{\tau} + \Delta) Z|$ in combination with our estimate for $|Z|$.
\begin{proposition}\label{prop:intineq2}
If $Z$ is any smooth family of sections of $\Zc$ with compact support in $M\times(0, T)$,
then, for any $j$, $l\geq 0$, and $\cb > 0$,
\begin{align}
\begin{split}
\label{eq:intident2}
&\iint \tau^j|\nabla Z|^2e^{2\phi}\,\dX\\
&\qquad\leq \iint\tau^{j}\left(\Delta \phi + 2|\nabla\phi|^2 - \pd{\phi}{\tau} -\frac{R}{2} 
+ \frac{c\tau^{1-l}}{2\tau}\right)|Z|^2e^{2\phi}\,\dX\\
&\qquad\phantom{\leq}+ \iint\frac{\tau^{j+l}}{2\cb}|(D_{\tau}+\Delta)Z|^2e^{2\phi}\,\dX.
\end{split}
\end{align}
\begin{proof}
 Write $V = e^{\phi}Z$ as before and consider the identities
 \[
  \tau^j|\nabla V|^2 = \frac{1}{2}\left(\pdtau + \Delta\right)(\tau^j|V|^2) - \frac{j\tau^{j-1}}{2}|V|^2 -\tau^j\langle(D_{\tau} + \Delta)V, V\rangle
 \]
and
\begin{align*}
\begin{split}
 \tau^j\langle(D_{\tau}+\Delta)V, V\rangle &= \tau^{j-1}\langle\Lc V, V\rangle + \tau^j\left(\Delta\phi + \pd{\phi}{\tau} - |\nabla\phi|^2\right)|V|^2 + \tau^j\langle \nabla\phi, \nabla|V|^2\rangle.
\end{split}
\end{align*}
Combining these identities, integrating over $M\times (0, T)$, and integrating by parts, we obtain
\begin{align}
 \begin{split}\nonumber
   \iint \tau^j|\nabla V|^2\dX &= \iint\tau^{j}\left(|\nabla\phi|^2 -\pd{\phi}{\tau} -\frac{R}{2} - \frac{j}{2\tau}\right)|V|^2 \,\dX\\
   &\phantom{=} - \iint \tau^{j-1}\langle \Lc V, V\rangle\,\dX
 \end{split}\\
\begin{split}\label{eq:intineq2e1}
  &\leq\iint\tau^{j}\left(|\nabla\phi|^2 -\pd{\phi}{\tau} -\frac{R}{2} +\frac{\cb\tau^{1-l}-j}{2\tau}\right)|V|^2\,\dX\\
  &\phantom{\leq} + \iint\frac{\tau^{j+l}}{2\cb}|(D_{\tau} + \Delta) Z)|^2e^{2\phi}\,\dX
\end{split}
\end{align}
for any $\cb > 0$ and $l\geq 0$.
On the other hand, 
\[
  |\nabla V|^2 = e^{2\phi}(|\nabla Z|^2 + \langle \nabla \phi, \nabla|Z|^2\rangle + |\nabla \phi|^2|Z|^2),
\]
so
\begin{align}
\begin{split}\label{eq:intineq2e2}
  \iint \tau^j|\nabla Z|^2\,\dX &= \iint\tau^j|\nabla V|^2e^{2\phi}\,\dX + \iint\tau^j\left(\Delta\phi + |\nabla\phi|^2\right)|Z|^2e^{2\phi}\,\dX.
\end{split}
\end{align}
Combining \eqref{eq:intineq2e1} and \eqref{eq:intineq2e2}, we obtain \eqref{eq:intident2}.
\end{proof}

\end{proposition}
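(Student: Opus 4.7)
The plan is a direct energy calculation based on the standard substitution $V = e^{\phi} Z$ that already runs through the Carleman framework of Section~\ref{sec:carleman}. The pointwise starting identity is
\[
\tau^{j}|\nabla V|^{2} = \tfrac{1}{2}(\partial_{\tau} + \Delta)(\tau^{j}|V|^{2}) - \tfrac{j}{2}\tau^{j-1}|V|^{2} - \tau^{j}\langle (D_{\tau} + \Delta)V, V\rangle,
\]
which follows from $\partial_{\tau}|V|^{2} = 2\langle D_{\tau}V, V\rangle$ (metric compatibility of $D_{\tau}$) together with the Bochner identity $\Delta|V|^{2} = 2\langle\Delta V, V\rangle + 2|\nabla V|^{2}$. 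I would integrate this identity over $M\times(0,T)$; since $Z$ (and hence $V$) has compact support in spacetime, and $\partial_{\tau}d\mu = R\,d\mu$ under backward Ricci flow, the divergence term contributes only $-\iint R\tau^{j}|V|^{2}\,\dX$, which is the origin of the $-R/2$ in the coefficient on the right of \eqref{eq:intident2}.

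Next, I would unwind the substitution. From $\nabla V = e^{\phi}(\nabla Z + Z\otimes\nabla\phi)$ one has
\[
|\nabla V|^{2} = e^{2\phi}\bigl(|\nabla Z|^{2} + \langle\nabla\phi,\nabla|Z|^{2}\rangle + |\nabla\phi|^{2}|Z|^{2}\bigr),
\]
and a spatial integration by parts turns $\iint\tau^{j}\langle\nabla\phi,\nabla|Z|^{2}\rangle e^{2\phi}\,\dX$ into $-\iint\tau^{j}(\Delta\phi + 2|\nabla\phi|^{2})|Z|^{2}e^{2\phi}\,\dX$. The analogous expansion $(D_{\tau}+\Delta)V = e^{\phi}\bigl((D_{\tau}+\Delta)Z + (\partial_{\tau}\phi + \Delta\phi + |\nabla\phi|^{2})Z + 2\nabla_{\nabla\phi}Z\bigr)$ produces a cross-term $\langle\nabla\phi,\nabla|Z|^{2}\rangle$ that is handled by the same integration by parts. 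Collecting the contributions yields the clean identity
\[
\iint\tau^{j}|\nabla Z|^{2}e^{2\phi}\,\dX + \iint\tau^{j}\langle (D_{\tau}+\Delta)Z, Z\rangle e^{2\phi}\,\dX = \iint\tau^{j}\Bigl(\Delta\phi + 2|\nabla\phi|^{2} - \partial_{\tau}\phi - \tfrac{R}{2} - \tfrac{j}{2\tau}\Bigr)|Z|^{2}e^{2\phi}\,\dX.
\]

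The last step is to split off the cross-term via Young's inequality with parameter $c\tau^{-l}$,
\[
-\langle (D_{\tau}+\Delta)Z, Z\rangle \leq \tfrac{\tau^{l}}{2c}|(D_{\tau}+\Delta)Z|^{2} + \tfrac{c}{2\tau^{l}}|Z|^{2},
\]
which produces exactly the $(2c)^{-1}\tau^{j+l}|(D_{\tau}+\Delta)Z|^{2}$ summand and the $\bigl(c\tau^{1-l}/(2\tau)\bigr)\tau^{j}|Z|^{2}$ contribution appearing in the coefficient of \eqref{eq:intident2}; dropping the sign-favorable $-j/(2\tau)$ then gives the stated inequality. There is no real obstacle here — the only bookkeeping to watch is the correct accounting of the $R\,d\mu$ factor generated by the backward-flow evolution of $d\mu$, which is precisely what places the $-R/2$ in the right-hand-side coefficient.
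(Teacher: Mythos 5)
Your argument is correct and is essentially the paper's own proof: both start from the identity $\tau^j|\nabla V|^2 = \tfrac12(\partial_\tau+\Delta)(\tau^j|V|^2) - \tfrac{j}{2}\tau^{j-1}|V|^2 - \tau^j\langle (D_\tau+\Delta)V,V\rangle$ with $V=e^{\phi}Z$, use $\partial_\tau d\mu = R\,d\mu$ and the spatial integration by parts on $\langle\nabla\phi,\nabla|Z|^2\rangle$, and finish with Young's inequality with weight $c\tau^{-l}$ (the paper merely phrases the cross term as $\tau^{j-1}\langle\Lc V,V\rangle$, which equals your $\tau^j e^{2\phi}\langle(D_\tau+\Delta)Z,Z\rangle$). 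The only blemish is the dropped factor $\tfrac12$ in your phrase ``contributes only $-\iint R\tau^j|V|^2\,\dX$,'' which your subsequent clean identity shows is just a slip of wording, not of computation.
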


\subsection{Carleman estimates to imply exponential decay}
\label{ssec:carlemandecay}
For the rest of the section, we will specialize to the cylinder $M = \Cc$ with
\[
\Psi_{\tau}(\theta, z) = (\theta, z/\sqrt{\tau}), \quad g(\tau) = \tau\Psi_{\tau}^*g(1) = (2(k-1)\tau \gc)\oplus\bar{g}, 
\]
and
\[
 \quad f_{z_0}(\theta, z, \tau) = f_{z_0}(\Psi_{\tau}(\theta, z), 1) = \frac{|z-z_0|^2}{4\tau} + \frac{k}{2},
\]
for $\tau > 0$ and some $z_0\in \RR^{n-k}$ as before. 

\subsubsection{An estimate for the PDE component}
We start with the proof of Theorem \ref{thm:pdecarleman2}. Following \cite{EscauriazaSereginSverakHalfSpace}, \cite{WangCylindrical}, 
we define for $\alpha > 0$ and $z_0\in \RR^{n-k}$ the weight function
$\varphi = \varphi_{\alpha, z_0}: \Cc\times (0, \infty)\to \RR$ by
\begin{align}
\begin{split}
\label{eq:phi1def}
 \varphi(z, \theta, \tau)&= - \frac{|z-z_0|^2}{8\tau} - \alpha\log\sigma(\tau)\\
 &= -\frac{1}{2}f_{z_0}(z, \theta, \tau) - \alpha\log\sigma(\tau) + \frac{k}{4},	
 \end{split}
\end{align}
where $\sigma(\tau) = \tau e^{(T-\tau)/3}$. 

\begin{proof}[Proof of Theorem \ref{thm:pdecarleman2}]
Fix $0 < T \leq 2$, $\alpha \geq 1$, and $z_0\in \RR^{n-k}$. It suffices to prove the estimate for the case that $\Zc$ has a single summand (i.e., is a tensor bundle over $\Cc$). Let $Z$ be a smooth family
of sections of $\Zc$ with compact support in $\Cc\times (0, T)$ and write $V = e^{\varphi}Z$. Consider \eqref{eq:commutator} with the choice $\phi = \varphi$.
Since $\varphi$ differs from $-f_{z_0}/2$ 
by a function depending only on $\tau$, it follows from Remark \ref{rem:solitoncommutator} that the quantities $\Qc^{(i)}_{\varphi}$ in \eqref{eq:commutator} 
satisfy
\begin{align*}
  \begin{split}
\Qc^{(1)}_{\varphi} = 0, \quad   \Qc^{(2)}_{\varphi} = -\alpha\tau\left( \tau(\log \sigma)^{\prime\prime} + (\log \sigma)^{\prime}\right) = \frac{\alpha\tau}{3}, \quad \Qc^{(3)}_{\varphi} = 0.
  \end{split}
\end{align*}
According to \eqref{eq:basicdecomposition} and Proposition \ref{prop:commutator}, we then have
\begin{equation}\label{eq:cyldecay1}
 \frac{\alpha}{3}\iint\tau|Z|^2e^{2\varphi}\,\dX \leq \iint \tau^2|(D_{\tau} + \Delta)Z|^2e^{2\varphi}\,\dX.
\end{equation}

To incorporate the derivative of $Z$, we use Proposition \ref{prop:intineq2} with $\phi = \varphi$, $j=2$, $l=1$, and $\cb = 2\alpha$. Using the soliton identities (see Remark 8.2),
we can simplify the integrand of the first integral on the right of \eqref{eq:intident2} to find
\begin{align*}
 &\tau^2\left(\Delta \varphi + |\nabla\varphi|^2 - \pd{\varphi}{\tau} - \frac{R}{2} + \frac{\alpha}{\tau}\right)\\
 &\qquad=\tau^2\left(-\frac{\Delta f_{z_0}}{2} + \frac{|\nabla f_{z_0}|^2}{4} + \frac{1}{2}\pd{f_{z_0}}{\tau} + \alpha (\log\sigma)^{\prime} -\frac{R}{2} + \frac{\alpha}{\tau}\right)\\
 &\qquad= 2\alpha\tau -\tau\left(\frac{\alpha\tau}{3}+\frac{n}{4}\right),
\end{align*}
and hence that
\begin{equation*}
  \iint \tau^2|\nabla Z|^2e^{2\varphi}\dX \leq 2\alpha\iint \tau|Z|^2e^{2\varphi}\dX + \frac{T}{4\alpha}\iint\tau^2|(D_{\tau} + \Delta)Z|^2e^{2\varphi}\dX.
\end{equation*}
Combining this with \eqref{eq:cyldecay1} and using that $T \leq 2$, we arrive at
\[
 \iint (\alpha \tau|Z|^2 + \tau^2|\nabla Z|^2)e^{2\varphi}\,\dX \leq 10 \iint\tau^2|(D_{\tau}+\Delta)Z|^2e^{2\varphi}\,\dX,
\]
which implies \eqref{eq:pdecarleman2}.
\end{proof}

\subsubsection{Estimates for the ODE component}
Both of the Carleman-type estimates \eqref{eq:odecarleman2g} and \eqref{eq:odecarleman2ng} are consequences of the simple identity 
\begin{align}
 \begin{split}
&\label{eq:odeident}
 \pdtau\left(\tau^{j} |Z|^2\,e^{2\phi}\,d\mu\right) = \tau^{j}\left(\left(\frac{j}{\tau} + 2\pd{\phi}{\tau} + R\right)|Z|^2 + 2\langle D_{\tau} Z, Z\rangle\right)e^{2\phi}\,d\mu,
\end{split}
 \end{align}
where $Z$ is a smooth family of tensor fields over $\Cc$, $j\geq 0$ is a fixed number, and $\phi:\Cc\times (0, T)\to \RR$ is an arbitrary smooth function.
\begin{proof}[Proof of Theorem \ref{thm:odecarleman2}]
Again it suffices to consider the case that $\Zc$ is a tensor bundle over $\Cc$. Let $Z$ be a smooth family of sections of $\Zc$ with compact support in $U\times (0, T)$
for some open $U\subset \Cc$. Let $D\subset \Cc$ be any open set with $\overline{D}\subset U$ and fix $\alpha \geq 1$, $\lambda > 0$, and $z_0\in \RR^{n-k}$. (The support
of $Z(\cdot, \tau)$ need not be contained in $\overline{D}$.)

For the first inequality \eqref{eq:odecarleman2g}, we apply \eqref{eq:odeident} with $\phi = \varphi$ and $j = \lambda + 1$ at some fixed $p= (\theta, z)$, obtaining
\begin{align}
\nonumber
\begin{split}
 \pdtau\left(\tau^{\lambda+1}|Z|^2e^{2\varphi}d\mu\right)  &=\bigg(\tau^{\lambda}\left(\lambda+1 + \frac{|z-z_0|^2}{4\tau} + \frac{k}{2} -\frac{2\alpha}{3}(3-\tau)\right)|Z|^2\\
 &\phantom{=}+ 2\tau^{\lambda+1}\langle D_{\tau} Z, Z\rangle\bigg)e^{2\varphi}d\mu. 
\end{split}
 \end{align}
Since $Z$ vanishes identically near $\tau = 0$ and $\tau = T$, we may integrate the above identity over $D\times [0, T]$ to obtain 
\begin{align*}
 \begin{split}
  &\int_0^T\int_D\tau^{\lambda}\left(\frac{8\alpha }{3} - 4\lambda -2k- 4\right)|Z|^2e^{2\varphi}\,\dX \\
  &\qquad\leq \int_0^T\int_D \tau^{\lambda-1}\left(|z-z_0|^2|Z|^2 + 8\tau^2\langle D_{\tau}Z, Z\rangle\right)e^{2\varphi}\,\dX.
  \end{split}
\end{align*}
Estimating
\[
 8\tau^2\langle D_{\tau} Z, Z \rangle \leq \frac{\alpha\tau}{3}|Z|^2 + \frac{48\tau^3}{\alpha}|D_{\tau}Z|^2,
\]
we see that
\begin{align*}
 \begin{split}
  &2\alpha\int_0^T\int_D\tau^{\lambda}|Z|^2e^{2\varphi}\,\dX \leq \int_0^T\int_D \tau^{\lambda-1}\left(|z-z_0|^2|Z|^2 + \frac{48}{\alpha}\tau^2|D_{\tau}Z|^2 \right)e^{2\varphi}\,\dX
  \end{split}
\end{align*}
for $\alpha \geq \alpha^{\prime}(k, \lambda)$ sufficiently large. This implies \eqref{eq:odecarleman2g} for such $\alpha$ and $D$.

For \eqref{eq:odecarleman2ng}, we apply \eqref{eq:odeident} again with $\phi = -\alpha\log \sigma$ and $j = \lambda + 1$ at some fixed $p= (\theta, z)$,
obtaining
\begin{align}
\nonumber
\begin{split}
 &\pdtau\left(\tau^{\lambda+1}|Z|^2\sigma^{-2\alpha}d\mu\right)\\
 &\qquad=\bigg(\tau^{\lambda}\left(\lambda+1 + \frac{k}{2} -\frac{2\alpha}{3}(3-\tau)\right)|Z|^2 + 2\tau^{\lambda+1}\langle D_{\tau} Z, Z\rangle\bigg)\sigma^{-2\alpha}d\mu. 
\end{split}
 \end{align}
Integrating over $D\times [0, T]$, we obtain 
\begin{align*}
 \begin{split}
  &\int_0^T\int_D\tau^{\lambda}\left(\frac{2\alpha }{3} - \lambda - \frac{k}{2} - 1\right)|Z|^2\sigma^{-2\alpha}\,\dX \leq 2\int_0^T\int_D \tau^{\lambda+1}\langle D_{\tau}Z, Z\rangle\sigma^{-2\alpha}\,\dX.
  \end{split}
\end{align*}
Since
\[
 2\tau^{\lambda+1}\langle D_{\tau}Z, Z\rangle \leq \frac{\alpha\tau^{\lambda}}{8}|Z|^2 + \frac{8\tau^{\lambda+2}}{\alpha}|D_{\tau}Z|^2,
\]
we have
 \begin{align*}
 \begin{split}
  &\frac{\alpha}{2}\int_0^T\int_D\tau^{\lambda}|Z|^2\sigma^{-2\alpha}\,\dX 
  \leq \frac{8}{\alpha}\int_0^T\int_D \tau^{\lambda+2}|D_{\tau}Z|^2\sigma^{-2\alpha}\,\dX,
  \end{split}
\end{align*}
provided $\alpha \geq \alpha^{\prime\prime}(k, \lambda)$ is sufficiently large. This implies \eqref{eq:odecarleman2ng} for such $\alpha$. Putting $\alpha_0 = \max\{\alpha^{\prime}, \alpha^{\prime\prime}\}$ finishes the proof.
\end{proof}

\subsection{Carleman estimates to imply backward uniqueness}
Now we prove the second set of Carleman estimates from Section \ref{sec:backwarduniqueness}. Here, as in \cite{WangCylindrical}, we fix some  $0 < T \leq 1$
and construct our weight from the function $\phi_{\alpha} = \phi_{\alpha, \delta}:\Cc\times(0, T)\to \RR$ given by
\[
  \phi_{\alpha}(z, \theta, \tau) = \alpha\eta(\tau)\left(4\left(f_0(z, \theta, \tau) - \frac{k}{2}\right)\right)^{\delta} = 
  \alpha\eta(\tau)\left(\frac{|z|^2}{\tau}\right)^{\delta},
\]
as in \eqref{eq:phialphadef} with $\eta:[0, T]\to [0, 1]$ defined as in \eqref{eq:etadef}.
The function $\eta$ is piecewise-differentiable, twice weakly-differentiable, and satisfies the following inequalities.
\begin{lemma}[\cite{WangCylindrical}]\label{lem:etaprop}
The function $\eta$ is nonincreasing and satisfies
\begin{equation}\label{eq:etaprop}
 0 \leq \eta \leq 1, \quad \delta\eta - \tau\eta^{\prime} \geq \delta, \quad \tau^2\eta^{\prime\prime}\geq -\frac{1}{4}\delta(4\delta -3),
\end{equation}
for $\tau \in [0, T]$.
\end{lemma}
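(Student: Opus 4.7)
The proof is a direct piecewise verification, since $\eta$ is defined explicitly on three intervals $[0,\tau_0]$, $[\tau_0,2\tau_0]$, $[2\tau_0, T]$. My plan is first to compute $\eta'$ and $\eta''$ on each piece: on $[0,\tau_0]$ we have $\eta \equiv 1$, $\eta' = \eta'' = 0$; on $[\tau_0, 2\tau_0]$ we get $\eta'(\tau) = -\frac{\delta(4\delta-3)}{16\tau_0^2}(\tau-\tau_0)$ and $\eta''(\tau) = -\frac{\delta(4\delta-3)}{16\tau_0^2}$; and on $[2\tau_0, T]$ we get $\eta'(\tau) = -\frac{\delta(4\delta-3)}{16\tau_0}$, $\eta'' \equiv 0$. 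A quick check at $\tau_0$ and $2\tau_0$ confirms that $\eta$ is $C^1$ and piecewise $C^2$, so that the inequalities involving $\eta''$ may be interpreted in the weak sense without ambiguity. Using the explicit value of $\tau_0$, one verifies that $\eta(T) = 0$, which together with the sign of $\eta'$ (nonpositive on each piece since $\delta(4\delta-3) > 0$ when $\delta \in (7/8,1)$) gives that $\eta$ is nonincreasing and takes values in $[0,1]$.

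For the inequality $\delta\eta - \tau\eta' \geq \delta$, I would verify each piece separately. On $[0,\tau_0]$ it is immediate since both sides equal $\delta$. On $[\tau_0, 2\tau_0]$, a direct substitution gives
\[
\delta\eta - \tau\eta' = \delta + \frac{\delta(4\delta-3)(\tau-\tau_0)}{32\tau_0^2}\bigl[(2-\delta)\tau + \delta\tau_0\bigr],
\]
which is $\geq \delta$ because $\tau \geq \tau_0$ and $\delta \in (7/8,1) \subset (0,2)$. On $[2\tau_0, T]$, I would similarly compute
\[
\delta\eta - \tau\eta' = \delta + \frac{\delta(4\delta-3)}{32}\left[3\delta + \tfrac{2\tau}{\tau_0}(1-\delta)\right],
\]
which again exceeds $\delta$ since $\delta < 1$.

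For the bound $\tau^2\eta'' \geq -\frac{1}{4}\delta(4\delta-3)$, the only nontrivial piece is $[\tau_0, 2\tau_0]$, on which $\tau^2\eta'' = -\frac{\delta(4\delta-3)\tau^2}{16\tau_0^2}$; the inequality reduces to $\tau \leq 2\tau_0$, which holds on this piece. There is essentially no obstacle beyond bookkeeping; the choice of the specific coefficient $\frac{1}{32}\delta(4\delta-3)$ and of $\tau_0$ in the definition of $\eta$ is engineered precisely to make these four conditions hold simultaneously, and the verification amounts to three short computations on the three pieces.
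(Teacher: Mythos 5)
Your verification is correct: the derivative formulas on each of the three pieces, the $C^1$ matching at $\tau_0$ and $2\tau_0$, the identity $\eta(T)=0$ forced by the choice of $\tau_0$, and the three inequalities all check out, with the only nontrivial cases being $\delta\eta-\tau\eta'\ge\delta$ on the middle and outer pieces and $\tau^2\eta''\ge-\tfrac14\delta(4\delta-3)$ on the middle piece, exactly as you computed. The paper itself does not carry out this computation: it simply cites Lemma 2.5 of \cite{WangCylindrical}, where the same inequalities are verified for the rescaled function $\tilde{\eta}(\tau)=\eta(\tau/T)$, and observes that they are invariant under rescaling of $\tau$. So in substance your argument is the content of the cited lemma, written out directly and self-contained, which is a perfectly good (arguably preferable) alternative to the citation. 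One small bookkeeping item worth adding for completeness: you should note that $2\tau_0\le T$, so that the three intervals are actually ordered inside $[0,T]$; this follows at once from the formula for $\tau_0$, since $2\tau_0\le T$ is equivalent to $\delta(4\delta-3)\le 32$, which certainly holds for $\delta\in(7/8,1)$.
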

These inequalities are verified in Lemma 2.5 of \cite{WangCylindrical}
for the function $\tilde{\eta}(\tau) = \eta(\tau/T)$. They are invariant under rescaling of $\tau$ and are hence also valid in our situation.
\subsubsection{An estimate for the PDE component}

To apply the integral identities in the preceding section, we first need to collect formulas for the various derivative expressions that appear in the quantities $\Qc^{(i)}_{\phi_{\alpha}}$,
$i=1, 2, 3$, in \eqref{eq:commutator}. The necessary expressions have already been computed  in \cite{WangCylindrical}. (The computations there, made relative to the 
Euclidean metric are valid for the evolving cylindrical metric here
since $\phi_{\alpha}$ is independent of the spherical variables.)

\begin{lemma}[Lemma 2.4, \cite{WangCylindrical}]\label{lem:phialphaderiv}
  For any $\alpha > 0$, the derivatives of the function $\phi_{\alpha}$ satisfy the expressions
 \begin{align*}
   \nabla \phi_{\alpha} &= \frac{2\alpha\delta\eta}{\tau^{\delta}}|z|^{2\delta-2}z,\\
  |\nabla \phi_{\alpha}|^2 &= \frac{4\alpha^2\delta^2\eta^2}{\tau^{2\delta}}|z|^{4\delta -2},\\
    \nabla\nabla\phi_{\alpha} &= \frac{2\alpha\delta\eta}{\tau^{\delta}}|z|^{2\delta -4}\left(|z|^2 \Pb + 2(\delta- 1)z\otimes z\right),\\
    \Delta \phi_{\alpha} &= \frac{2\alpha\delta\left(2(\delta -1) + n-k\right)\eta}{\tau^{\delta}}|z|^{2\delta -2},\\
      \pd{\phi_{\alpha}}{\tau}&=\frac{\alpha(\tau\eta^{\prime} - \delta\eta)}{\tau^{\delta+1}}|z|^{2\delta},\\
      \frac{\partial^2\phi_{\alpha}}{\partial\tau^2} &= \frac{\alpha(\tau^2\eta^{\prime\prime} - 2\delta\tau\eta^{\prime}
+ \delta(\delta +1)\eta)}{\tau^{\delta+2}}|z|^{2\delta},\\
    \pdtau |\nabla\phi_{\alpha}|^2 &= \frac{8\alpha^2\delta^2\eta(\tau\eta^{\prime} -\delta\eta)}{\tau^{2\delta+1}}|z|^{4\delta -2},\\
  \Delta^2\phi_{\alpha} &= \frac{4\alpha\delta(\delta-1)(2(\delta -1) + n-k)(2(\delta-2) + n-k)\eta}{\tau^{\delta}}|z|^{2\delta-4},
\end{align*}
on $\Cc_r\times (0, T)$ for any $r > 0$.
\end{lemma}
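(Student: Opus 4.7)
The plan is to observe that every expression in the lemma is a straightforward consequence of two structural facts about $\phi_\alpha$ and the ambient geometry, followed by routine differentiation. First, $\phi_\alpha(\theta,z,\tau)$ depends only on $|z|$ and $\tau$; in particular it is invariant under the $S^k$-factor of $\mathcal{C}$. Second, the metric $g(\tau)=(2(k-1)\tau\gc)\oplus\gb$ splits orthogonally as $\Hc\oplus\Kc$, and its Levi-Civita connection respects this splitting, with $\nabla|_{\Kc}$ coinciding with the flat Euclidean connection on $\RR^{n-k}$. Consequently, for any function $\psi(z,\tau)$ the covariant derivatives in the spherical directions vanish, and all covariant derivatives in the Euclidean directions reduce to the ordinary Euclidean partials in $z$. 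In particular $\nabla\phi_\alpha$ has only a Euclidean-factor component, $\nabla\nabla\phi_\alpha$ has only $\Pb\otimes\Pb$ components, and $\Delta\phi_\alpha$ is simply the Euclidean Laplacian of $\phi_\alpha$ viewed as a function on $\RR^{n-k}$.

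Given these reductions, I would write $\phi_\alpha=\alpha\eta(\tau)\tau^{-\delta}|z|^{2\delta}$ and compute $\nabla\phi_\alpha$ and $|\nabla\phi_\alpha|^2$ in one line by differentiating $|z|^{2\delta}$ with respect to $z$. For the Hessian, I would use the general formula $\partial_j\partial_i F(|z|^2)=2F'(|z|^2)\delta_{ij}+4F''(|z|^2)z_iz_j$ with $F(u)=u^\delta$, which immediately produces the stated decomposition into the $\Pb$-part and the $z\otimes z$-part. Tracing this expression against $g^{-1}$ (equivalently $\gb^{-1}$) using $\operatorname{tr}\Pb=n-k$ gives $\Delta\phi_\alpha$, and applying the same radial-Laplacian identity a second time to $|z|^{2(\delta-1)}$ produces $\Delta^2\phi_\alpha$.

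For the time derivatives I would simply differentiate the product $\eta(\tau)\tau^{-\delta}$ (times the $\tau$-independent factor $|z|^{2\delta}$ or $|z|^{4\delta-2}$) using the ordinary product and quotient rules, collecting terms so that the numerators read $\tau\eta'-\delta\eta$ for $\partial_\tau\phi_\alpha$ and $\tau^2\eta''-2\delta\tau\eta'+\delta(\delta+1)\eta$ for $\partial_\tau^2\phi_\alpha$. The formula for $\partial_\tau|\nabla\phi_\alpha|^2$ is the same computation applied to $\eta^2\tau^{-2\delta}$.

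There is essentially no obstacle: the only things to be careful about are (i) confirming that the cross-terms $\nabla\nabla\phi_\alpha(\cdot,\cdot)$ with one spherical and one Euclidean argument indeed vanish, which follows from $\nabla_{\mathrm{sph}}\phi_\alpha\equiv 0$ together with the fact that $\nabla_{\mathrm{Euc}}(\mathrm{spherical\ frame})$ lies again in the spherical subbundle (a consequence of the product structure); and (ii) that the piecewise definition of $\eta$ does not cause trouble, since $\eta$ is $C^1$ and twice weakly differentiable so that all the formulas hold pointwise a.e., which is enough for the integral identities in which the lemma is subsequently used.
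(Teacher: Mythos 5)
Your proposal is correct and follows essentially the same route as the paper: the paper does not reprove the lemma but cites Lemma 2.4 of \cite{WangCylindrical}, remarking precisely that the Euclidean computations remain valid for the evolving cylinder because $\phi_{\alpha}$ is independent of the spherical variables, which is the reduction you make explicit before carrying out the routine radial-function and product-rule calculus. Your verifications of the vanishing mixed Hessian components and of the a.e.\ validity of the $\eta''$ formula are exactly the points implicit in that remark.
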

Above, in the first and third equations, we identify $z$ with the differential of the function $(\theta, z)\mapsto |z|^2/2$ and, in the expression 
for $\nabla\nabla\phi_{\alpha}$, we identify the endomorphism $\Pb$ with the two-tensor $\Pb_{ij} = \Pb_i^kg_{kj}$. Now we prove Theorem \ref{thm:pdecarlemanbu}. 
\begin{proof}[Proof of Theorem \ref{thm:pdecarlemanbu}] 
Fix $\delta \in (7/8, 1)$ and $T\in (0, 1]$, and let $r\geq r_3$ where $r_3\geq 1$
is to be specified over the course of the proof. We will assume, as before, that $\Zc$ is a fixed tensor bundle over $\Cc$.  Let $Z$ be a smooth family of sections of $\Zc$ on $\Cc_{r}$ 
defined for $\tau\in (0, T)$ and let $V = e^{\phi_{\alpha}}Z$.

With an eye toward \eqref{eq:commutator}, let us define
\begin{align*}
 S_{\phi_{\alpha}} &=  \frac{g}{2\tau} - \Rc(g) + 2\nabla\nabla\phi_{\alpha}. 
\end{align*}
Then, using Lemma \ref{lem:phialphaderiv}, we have
\begin{align*}
S_{\phi_{\alpha}} &= \frac{\Pb}{2\tau} + 2\nabla\nabla\phi_{\alpha}
= \frac{\Pb}{2\tau} + \frac{4\alpha\delta\eta}{\tau^{\delta}}|z|^{2\delta -4}\left(|z|^2 \Pb + 2(\delta- 1)z\otimes z\right).
\end{align*}
Since $\delta > 1/2$, the second term, and hence the sum, is nonnegative-definite when considered as a two-tensor on $T\Cc$ over $\Cc_r$.  In particular, the quantity $\Qc^{(1)}_{\phi_{\alpha}}(\nabla V, \nabla V)$ from \eqref{eq:commutator} is nonnegative.  

For the quantity $\Qc^{(2)}_{\phi_{\alpha}}$, we have similarly that
\begin{align*}
\begin{split}
   &\Qc^{(2)}_{\phi_{\alpha}} \geq \tau^2\left(\frac{\partial^2\phi_{\alpha}}{\partial\tau^2} - \Delta^2\phi_{\alpha} -2\pdtau|\nabla \phi_{\alpha}|^2\right) 
  + \tau\left(\pd{\phi_{\alpha}}{\tau} -2|\nabla \phi_{\alpha}|^2\right),
   \end{split}
\end{align*}
where we have used that $\nabla R = 0$, $\Delta R = 0$, and $\pd{R}{\tau} + R/{\tau} = 0$. 
Now, two of the terms on the right are proportional to $\alpha^2$. Using Lemmas \ref{lem:etaprop} and \ref{lem:phialphaderiv},
we see that we may estimate them below by
\begin{align*}
-2\tau\left(\tau\pdtau|\nabla \phi_{\alpha}|^2 +  |\nabla \phi_{\alpha}|^2\right)
&= -\frac{8\alpha^2\delta^2\eta|z|^{4\delta-2}}{\tau^{2\delta-1}}\left(2(\tau\eta^{\prime} -\delta\eta) +\eta\right)\\
&\geq\frac{6\alpha^2\delta^2\eta|z|^{4\delta-2}}{\tau^{2\delta-1}}.
\end{align*}
The remaining terms are proportional to $\alpha$, and we may estimate them similarly:
\begin{align*}
\begin{split}
&\tau^2\left(\frac{\partial^2\phi_{\alpha}}{\partial\tau^2} - \Delta^2\phi_{\alpha}\right) + \tau\pd{\phi_{\alpha}}{\tau}\\
&\quad=\frac{\alpha|z|^{2\delta}}{\tau^{\delta}}\bigg(\tau^2\eta^{\prime\prime} - 2\delta\tau\eta^{\prime}
+ \delta(\delta +1)\eta + (\tau\eta^{\prime} - \delta\eta)
- \frac{C(\delta, k, n)\tau^2\eta}{|z|^{4}}\bigg)
\end{split}\\
&\quad\geq \frac{\alpha|z|^{2\delta}}{\tau^{\delta}}\left(\frac{3\delta\eta + (1-4\delta)\tau\eta^{\prime}}{4}  - \frac{C(\delta, k, n)\tau^2\eta}{|z|^4}\right).
\end{align*}
So, if $r_3 = r_3(\delta, n, k)$ is taken sufficiently large, we will have
\[
\Qc^{(2)}_{\phi_{\alpha}}\geq \frac{\alpha|z|^{2\delta}}{2\tau^{\delta}}(\delta\eta - \tau\eta^{\prime}) + \frac{6\alpha^2\delta^2\eta|z|^{4\delta-2}}{\tau^{2\delta-1}},
\]
on $\Cc_{r}\times (0, T)$.

Finally, $\Qc^{(3)}_{\phi_{\alpha}} = 0$ on the cylinder since $\nabla \Rc = 0$ and $\Rm(\cdot, \cdot, \cdot, \nabla \phi_{\alpha}) = 0$. Putting things together
and using \eqref{eq:basicdecomposition}
and \eqref{eq:commutator}, we thus see that
\begin{align}
\label{eq:bucarleman1}
\begin{split}
 &\int_0^T\int_{\Cc_r}\left(\frac{\alpha|z|^{2\delta}}{2\tau^{\delta}}(\delta\eta-\tau\eta^{\prime}) +\frac{6\alpha^2\delta^2\eta|z|^{4\delta-2}}{\tau^{2\delta-1}}\right)|Z|^2e^{2\phi_{\alpha}}\,\dX\\
 &\qquad\leq \int_0^T\int_{\Cc_r}\tau^2|D_\tau Z + \Delta Z|^2e^{2\phi_{\alpha}}\, \dX, 
\end{split}
 \end{align}
for all $\alpha > 0$ and $r\geq r_3$. 

Now we use Proposition \ref{prop:intineq2} to add in the derivative term. Taking $\phi = \phi_{\alpha}$ and $\cb = j= l =1$ in \eqref{eq:intident2} yields
\begin{align}
\label{eq:bucarleman2}
\begin{split}
&\int_0^T\int_{\Cc_r} \tau|\nabla Z|^2e^{2\phi_{\alpha}}\,\dX\\
&\qquad\leq \int_0^T\int_{\Cc_r}\tau\left(\Delta \phi_{\alpha} + 2|\nabla\phi_{\alpha}|^2 - \pd{\phi_{\alpha}}{\tau} -\frac{R}{2}\right)|Z|^2e^{2\phi_{\alpha}}\,\dX\\
&\phantom{\qquad\leq}+ \int_0^T\int_{\Cc_r}\frac{\tau^{2}}{2}|(D_{\tau}+\Delta)Z|^2e^{2\phi_{\alpha}}\,\dX.
\end{split}
\end{align}
Then, by Lemmas \ref{lem:etaprop} and \ref{lem:phialphaderiv},
\begin{align*}
&\tau\left(\Delta \phi_{\alpha} + 2|\nabla\phi_{\alpha}|^2 - \pd{\phi_{\alpha}}{\tau} -\frac{R}{2}\right) =\frac{2\alpha\delta\left(2(\delta -1) + n-k\right)\eta}{\tau^{\delta}}|z|^{2\delta -2}\\
&\qquad\phantom{=}
 + \frac{8\alpha^2\delta^2\eta^2}{\tau^{2\delta}}|z|^{4\delta -2}
 -\frac{\alpha(\tau\eta^{\prime} - \delta\eta)}{\tau^{\delta+1}}|z|^{2\delta} - \frac{k}{4\tau}\\
 &\qquad\leq \frac{\alpha|z|^{2\delta}}{\tau^{\delta}}\left( (\delta\eta - \tau\eta^{\prime}) + \frac{\tau(n-k)}{|z|^2} \right) + \frac{8\alpha^2\delta^2\eta^2|z|^{4\delta-2}}{\tau^{2\delta-1}}\\
  &\qquad\leq \frac{\alpha|z|^{2\delta}}{\tau^{\delta}}\left( (\delta\eta - \tau\eta^{\prime}) + \frac{\delta}{2}\right) + \frac{8\alpha^2\delta^2\eta^2|z|^{4\delta-2}}{\tau^{2\delta-1}},
\end{align*}
for $r_3$ sufficiently large.  Returning to \eqref{eq:bucarleman2} with this, multiplying both sides by $1/4$, and combining the result with \eqref{eq:bucarleman1}, we obtain
\begin{align*}
\begin{split}
 &\int_0^T\int_{\Cc_r}\left(\frac{\alpha\delta|z|^{2\delta}}{8\tau^{\delta}}|Z|^2 + \frac{\tau}{4}|\nabla Z|^2\right)e^{2\phi_{\alpha}}\,\dX \leq \int_0^T\int_{\Cc_r}\tau^2|D_\tau Z + \Delta Z|^2e^{2\phi_{\alpha}}\, \dX,
 \end{split}
\end{align*}
for $r\geq r_3$ and all $\alpha > 0$. The estimate \eqref{eq:pdecarlemanbu} follows.
\end{proof}

\subsubsection{An estimate for the ODE component}

For the proof of the matching estimate for the ODE component, we again use the identity \eqref{eq:odeident}. 
\begin{proof}[Proof of Theorem \ref{thm:odecarlemanbu}]  Fix $\alpha\geq 1$, $0 < T\leq 1$, and let $r\geq r_4$ for some $r_4$ to be specified later. Let $Z$ be a smooth family of sections
 the tensor bundle $\Zc$ with compact support in $\Cc_{r}\times (0, T)$.  Starting from \eqref{eq:odeident} with $j=1$ and $\phi = \phi_{\alpha}$, we have 
\begin{equation*}
 \pdtau\left(\tau|Z|^2e^{2\phi_{\alpha}}d\mu\right) = \tau\left(\left(\frac{1}{\tau} + 2\pd{\phi_{\alpha}}{\tau} + \frac{k}{2\tau}\right)|Z|^2 + 2\langle D_{\tau} Z, Z\rangle\right)e^{2\phi_{\alpha}}d\mu.
\end{equation*}
By Lemmas \ref{lem:etaprop} and \ref{lem:phialphaderiv}, 
\[
  \pd{\phi_{\alpha}}{\tau} = \alpha(\tau\eta^{\prime} - \delta\eta)\tau^{-\delta-1}|z|^{2\delta} \leq -\alpha\delta\tau^{-\delta-1}|z|^{2\delta},
\]
so, integrating over $\Cc_r\times (0, T)$ and using Cauchy-Schwarz, we see that
\begin{align*}
 \int_0^T\int_{\Cc_r}\tau^2|D_{\tau} Z|^2e^{2\phi_{\alpha}}\,\dX &\geq 
 -\int_0^T\int_{\Cc_r}\left(2\tau \pd{\phi_{\alpha}}{\tau} + \frac{k+4}{2}\right)|Z|^2e^{2\phi_{\alpha}}\,\dX\\
 &\geq \int_0^T\int_{\Cc_r}\left(\frac{2\alpha\delta|z|^2}{\tau^{\delta}} - \frac{k+4}{2}\right)|Z|^2e^{2\phi_{\alpha}}\,\dX.
\end{align*}
Thus, provided $r_4 = r_4(n, k, \delta)$ is sufficiently large, we will have
\begin{align*}
 \int_0^T\int_{\Cc_r}\tau^2|D_{\tau} Z|^2e^{2\phi_{\alpha}}\,\dX 
 &\geq \int_0^T\int_{\Cc_r}\frac{\alpha\delta|z|^2}{\tau^{\delta}}|Z|^2e^{2\phi_{\alpha}}\,\dX
\end{align*}
as claimed.
\end{proof}
\appendix

\section{Normalizing the soliton vector field.}
\label{app:normalization}

In this section, we prove Theorem \ref{thm:vfnormalization}, which provides the diffeomorphism $\Phi$ we use to identify the soliton vector field with that of the standard cylindrical soliton structure. 

\subsection{Preliminaries}
Let us first review the  prerequisites we need from ODE theory, following Chapter 9 of \cite{LeeSmoothManifolds}. Recall that a \emph{flow-domain} on a manifold $M$
is an open set $\mathcal{D}\subset M\times \RR$ satisfying that, for each $p\in M$, the set of $t$ for which $(p, t)$ belongs to $\mathcal{D}$ is an open interval containing $0$.
(Here, the order of the time and space variables is opposite to that in \cite{LeeSmoothManifolds}.)
A \emph{smooth flow} 
is a smooth map $\Theta:\mathcal{D}\to M$ from a flow domain $\mathcal{D}$ which satisfies the group laws
\[
  \Theta(p, 0) = p, \quad \Theta(\Theta(p, s), t) = \Theta(p, s+t),
\]
for all $p\in M$ and $s$, $t\in \RR$ for which $(p, s)$, $(\Theta(p, s), t)$, and $(p, s+t)$ belong to $\mathcal{D}$. 

The infinitesimal generator $\pd{\Theta}{t}(p, 0)$ of a smooth flow $\Theta:\mathcal{D}\to M$ is a smooth flow  is a smooth vector field on $M$, and it is a consequence of the local theory of ODE that, to each smooth vector
field $V$, there is a maximally defined smooth flow $\Theta$ whose infinitesimal generator is $V$. (See Theorem 9.12 in \cite{LeeSmoothManifolds}.)

The main tool we need is the \emph{Flowout Theorem} (Theorem 9.20 in \cite{LeeSmoothManifolds}), which asserts that if $S\subset M$ is a compact hypersurface and
$V$ is nowhere tangent to $S$, then the restriction of the flow $\Theta$ of $V$ to $\mathcal{O} = (S\times \RR)\cap \mathcal{D}$ is a smooth immersion
which pushes forward the coordinate vector field $\pd{}{t}$ along $\RR$ to $V$. When $S$ is a compact hypersurface, there is $\delta > 0$
such that $\Phi|_{S\times (-\delta, \delta)}$ is  a diffeomorphism onto its image.

\subsection{A sequence of maps identifying the vector fields}

Now we specialize to the setting Theorem \ref{thm:vfnormalization}.
We will assume below that $(\Cc_{r_0}, \gt, \delt\ft)$ is strongly asymptotic to $(\Cc, g, \nabla f)$ as a soliton structure
and write, as before, 
\[
h = \gt - g, \quad \Xt = \delt \ft, \quad X = \nabla f = \frac{r}{2}\pd{}{r}, \quad E = \Xt - X.
\]
By assumption, there
are constants $M_{l, m}$ such that
\begin{equation}\label{eq:decay4}
 \sup_{\Cc_{r_0}} |z|^l\left\{|\nabla^{(m)}h| + |\nabla^{(m)}E|\right\} \leq M_{l, m}
\end{equation}
for all $l$, $m\geq 0$. 

Using the notation and terminology of the previous section, let $\Theta: \Dc\subset \Cc_{a_0}\times \RR\to \Cc$ be the maximal smooth flow of $\Xt$. 
There are a variety of ways to use $\Theta$ to construct an injective local diffeomorphism $\Ss^k\times \Ss^{n-k}\times (0, \infty) \to \Cc$ by identifying
$\Ss^{k}\times \Ss^{n-k-1}$ with an appropriate hypersurface in $\Cc_{r_0}$ to which $\Xt$ is nowhere tangent. Each of these local diffeomorphisms
can be adjusted to pull $\Xt$ back to $X$. The trick is to choose an identification for which it is convenient to see that the pull-back of $\gt$ by the map this identification produces
is still strongly asymptotic to the cylindrical metric. We will construct a sequence of maps $\Phi^{(b)}$ from the identifications of $\Ss^k\times \Ss^{n-k-1}$ with $\Sc_b = \partial\mathcal{C}_b^k$ for values of $b$ tending to infinity. 
From this sequence, we will extract a limit map which, in a sense, agrees with the identity to infinite order at spatial infinity.

To begin, let us use the infinite order agreement of $\tilde{X}$ and $X$ to choose $a_0$ so large that $a_0 > 2r_0$ and
\begin{equation}\label{eq:xangle}
  \left.\left\langle \tilde{X}, \pd{}{r} \right\rangle \right|_{(\theta, \sigma, r)}  \geq \frac{r}{4}, \quad 
  \left.\left\langle \tilde{X}, \pd{}{r} \right\rangle_{\gt} \right|_{(\theta, \sigma, r)}  \geq \frac{r}{4},
\end{equation}
on $\Cc_{a_0}$.

\begin{proposition}
\label{prop:phiexist}
There exists a constant $a_1 \geq a_0$ with the property that, for each $b \geq a_1$, there is an injective local diffeomorphism $\Phi^{(b)}: \Cc_{a_1} \longrightarrow \Cc_{a_1/2}$ satisfying
\begin{equation}\label{eq:phibvf}
d\Phi^{(b)}_{(\theta, \sigma, s)}\left(\frac{s}{2}\pd{}{s}\right) = \tilde{X}(\Phi^{(b)}(\theta, \sigma, s)), \quad \left.\Phi^{(b)}\right|_{\Sc_{b}} = \operatorname{Id}_{\Sc_b},
\end{equation}
\begin{equation}\label{eq:phibimage}
 \Cc_{2a_1} \subset \Phi^{(b)}(\Cc_{a_1}),
\end{equation}
and
\begin{equation}\label{eq:rscomp}
 \frac{s}{2} \leq r\circ \Phi^{(b)}(\theta, \sigma, s) \leq 2s.
\end{equation}
Additionally, for each $l\geq 0$, there is a constant $C_l$ such that
\begin{align}\label{eq:rsc0}
      |r\circ \Phi^{(b)}(\theta, \sigma, s) - s| &\leq C_l\left|\frac{1}{s^{l}} - \frac{s}{b^{l+1}}\right|,\\
      \label{eq:transversec0}
      d_{\Sc_s}((\theta, \sigma, s), \pi\circ\Phi^{(b)}(\theta, \sigma, s)) &\leq C_l\left|\frac{1}{s^l} -\frac{1}{b^l}\right|,
\end{align}
for all $b$, $s\geq a_1$,
where $d_{\Sc_{s}}$ is the induced distance on $\Sc_{s}$ and $\pi = \pi_{s}: \Cc_{a_0}\to \Sc_s$ is the projection $\pi_{s}(\theta, \sigma, r) = (\theta, \sigma, s)$.
\end{proposition}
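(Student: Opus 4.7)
The plan is to straighten $\tilde X$ by its own flow, exploiting the fact that the flow of $X = (r/2)\partial_r$ is explicit: $(\theta,\sigma,r)\mapsto(\theta,\sigma,re^{t/2})$. Letting $\Theta:\Dc\to\Cc$ denote the maximal flow of $\tilde X$, I would define
\[
 \Phi^{(b)}(\theta,\sigma, s)\;\dfn\;\Theta\bigl(\theta, \sigma, b;\,2\log(s/b)\bigr),
\]
so that the chain rule immediately gives $d\Phi^{(b)}(\partial_s)=(2/s)\tilde X\circ\Phi^{(b)}$, yielding the vector-field identity in \eqref{eq:phibvf}; the normalization $\Phi^{(b)}|_{\Sc_b}=\mathrm{Id}$ is automatic at $t=0$. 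For long-time existence of the flow and the crude comparison \eqref{eq:rscomp}, write $\rho(t)\dfn r\circ\gamma(t)$ along any integral curve $\gamma$ of $\tilde X$, so that $\dot\rho=\rho/2+E^r(\gamma)$ with $E=\tilde X-X$. Combining the lower bound $\dot\rho\geq\rho/4$ from \eqref{eq:xangle} with the crude estimate $|E|\leq M_{0,0}$ from \eqref{eq:decay4}, and then bootstrapping using the stronger decay $|E|\leq M_{l,0}\rho^{-l}$, yields $\rho(t)$ comparable to $s$ up to fixed multiplicative constants, proving \eqref{eq:rscomp} for $a_1$ sufficiently large; the transversality of $\tilde X$ to each $\Sc_r$ and the monotonicity of $\rho$ along flow lines then give \eqref{eq:phibimage}.

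To upgrade to the quantitative estimate \eqref{eq:rsc0}, I would apply variation of parameters to $\dot\rho=\rho/2+E^r\circ\gamma$, obtaining
\[
 \rho(t)-be^{t/2}\;=\;e^{t/2}\!\int_0^t e^{-\tau/2}\,E^r(\gamma(\tau))\,d\tau.
\]
Setting $t=2\log(s/b)$ so that $be^{t/2}=s$, and substituting the bootstrap bound $\rho(\tau)\geq\tfrac{1}{2}be^{\tau/2}$ into the pointwise estimate $|E^r(\gamma(\tau))|\leq M_{l,0}\,\rho(\tau)^{-l}$, a direct computation of the integral gives
\[
 \bigl|\rho(2\log(s/b))-s\bigr|\;\leq\;\frac{2^{l+1}M_{l,0}}{l+1}\left|\frac{s}{b^{l+1}}-\frac{1}{s^l}\right|
\]
in both regimes $s\gtrless b$ (the signs of the two boundary contributions conspire to produce the same absolute-value bound), which is precisely \eqref{eq:rsc0}. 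The spherical estimate \eqref{eq:transversec0} is obtained analogously: only the $T\Sc_r$-component $E^{\mathrm{tan}}$ of $E$ drives $\pi_{\rho(t)}\circ\gamma(t)$ relative to the ambient sphere, and $|E^{\mathrm{tan}}|\leq M_{l,0}\rho^{-l}$, so integrating the associated angular speed against the same radial profile and accounting for the $r/s$-rescaling between intrinsic distances on $\Sc_{\rho(t)}$ and $\Sc_s$ produces a constant multiple of $|s^{-l}-b^{-l}|$.

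Injectivity of $\Phi^{(b)}$ is forced by the strict radial monotonicity of $\rho$ along $\tilde X$, which implies that each integral curve of $\tilde X$ meets each $\Sc_r$ in at most one point; hence two preimages with the same value of $s$ must lie on the same flow line at the same time, and so coincide. The local-diffeomorphism property holds because $d\Phi^{(b)}$ sends the nowhere-vanishing $(s/2)\partial_s$ to $\tilde X$, transverse to every $\Sc_r$, while it maps directions tangent to $\Sc_s$ by $d\Theta_{2\log(s/b)}$ isomorphically onto directions tangent to $\Sc_{\rho(t)}$. \textbf{The main obstacle} will be the precise bookkeeping of the variation-of-constants estimates and the bootstrap that underwrites them: establishing and closing the lower bound $\rho(\tau)\geq\tfrac{1}{2}be^{\tau/2}$ uniformly across both the forward ($s>b$) and backward ($s<b$) regimes, including flow times $|t|=2|\log(s/b)|$ that may be large when $b/s$ is large, requires careful sign-tracking in the integration, and the asymmetric appearance of $s^{-l}$ and $s\cdot b^{-(l+1)}$ in \eqref{eq:rsc0} reflects the fact that the minimum of $\rho$ along the flow switches from the $s$-end to the $b$-end as one crosses between the two regimes.
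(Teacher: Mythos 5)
Your construction coincides with the paper's: you define $\Phi^{(b)}$ by reparametrizing the flow of $\tilde{X}$ out of $\Sc_b$ via $t=2\log(s/b)$, obtain \eqref{eq:rscomp} and \eqref{eq:rsc0} by integrating the radial equation $\dot\rho=\rho/2+E^r$ with the infinite-order decay of $E=\tilde{X}-X$ (your variation-of-parameters identity is exactly the paper's integration of $\frac{d}{ds}(r/s)=\frac{2}{s^2}\langle E,\partial_r\rangle$, bootstrap included), get \eqref{eq:transversec0} by integrating the projected tangential part of $E$, and prove injectivity and \eqref{eq:phibimage} from the strict monotonicity of $r$ along integral curves. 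This is essentially the same proof as in the paper, and the computations you sketch (including the constant $\frac{2^{l+1}M_{l,0}}{l+1}$ and the sign bookkeeping across $s\gtrless b$) check out.
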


\begin{proof}
By \eqref{eq:xangle}, $\tilde{X}$ is nowhere tangent to $\Sc_a$ for $a \geq a_0$. We use this to construct a preliminary map $\tilde{\Phi}^{(b)}$ following Theorem 9.20 of \cite{LeeSmoothManifolds}.
Let 
$\Theta:\mathcal{D}\subset \Cc_{a_0}\times \RR\to \Cc$
be the maximal smooth flow of $\tilde{X}$, and let $\tilde{\Phi}^{(b)} = \Theta|_{\mathcal{O}_b}$ where $\mathcal{O}_b = \mathcal{D}\cap (\Sc_b\times \RR)$. By \eqref{eq:xangle}, $r$ is 
increasing along the integral curves of $\tilde{X}$, so the  
flow of $\tilde{X}$ preserves $\Cc_{a_0}$.  By \eqref{eq:decay4}, $|\tilde{X}|\leq M(r+1)$ for some $M$, so the integral curves of $\tilde{X}$ starting at any point in $\Cc_{a_0}$ exist for all positive $t$. 

Fix some $a > a_0$. By the compactness of $\Sc_a$, we will have
$\Sc_{a}\times (-\delta, \infty) \subset \mathcal{O}_{a}$ for some $\delta > 0$, and this implies that, for all $b\geq a$,
we will have $\Sc_{b}\times (-(\delta + \alpha(b)), \infty) \subset \mathcal{O}_{b}$ where
\[
  \alpha(b) \dfn \inf \left\{\,t\,|\ \Theta(\Sc_{a}\times\{t\})\cap \Sc_{b} \neq \emptyset\,\right\}
\]
is the minimum time needed to reach $\Sc_{b}$ via an integral curve of $\tilde{X}$ starting in $\Sc_{a}$.

Now, just as in \cite{LeeSmoothManifolds}, each 
$\tilde{\Phi}^{(b)}$ is a local diffeomorphism, and 
\[
 d\tilde{\Phi}^{(b)}_{(\theta, \sigma, t)}\left(\pdt\right) = \tilde{X}(\tilde{\Phi}^{(b)}(\theta, \sigma, t)), \quad \tilde{\Phi}^{(b)}(\theta, \sigma, 0) = (\theta, \sigma, b).
\]
Provided $\delta$ is small enough, the restriction of $\tilde{\Phi}^{(b)}$ to $\Sc_{b}\times (-\delta, \delta)$ will be injective and hence a diffeomorphism onto its image. But it is not hard
to see that $\tilde{\Phi}^{(b)}$ is actually injective on all of $\Cc_{b - (\alpha(b) + \delta)}$. Indeed, $\frac{d}{ds}r(\gamma(s)) \geq a_0/4 > 0$ along any integral curve $\gamma$ of $\tilde{X}$,
so each point in the image of $\tilde{\Phi}^{(b)}$ lies on an integral curve which intersects $\Sc_b$ in exactly one point. Following each point $(\theta_0, \sigma_0, s_0)$ in the image
along an integral curve of $\tilde{X}$ to $\Sc_b$ thus associates the point with a unique radial translation $t$ and a unique $(\theta, \sigma)$ such that $(\theta, \sigma, b) \in \Sc_b$
and $\tilde{\Phi}^{(b)}(\theta, \sigma, t) = (\theta_0, \sigma_0, s_0)$.

Now define
\[
 \Phi^{(b)}(\theta, \sigma, s) = \tilde{\Phi}^{(b)}(\theta, \sigma, 2\ln(s/b))
\]
for all $(\theta, \sigma, s)$ such that $(\theta, \sigma, 2\ln(s/b)) \in \mathcal{O}_b$. Then
\[
 d\Phi^{(b)}_{(\theta, \sigma, s)}\left(\frac{s}{2}\pd{}{s}\right) = \tilde{X}(\Phi^{(b)}(\theta, \sigma, s)), \quad \Phi^{(b)}|_{\Sc_b} = \operatorname{Id}|_{\Sc_b},
\]
and $\Phi^{(b)}$ is a diffeomorphism onto its image.

Now we consider the distortion of distance under $\Phi^{(b)}$. Fix $(\theta, \sigma)\in \Ss^k\times \Ss^{n-k-1}$. For all $s$ such that $\gamma^{(b)}(s) = \Phi^{(b)}(\theta, \sigma, s)$ is well-defined, we have from Proposition \ref{prop:xnorm}
that $r^{(b)}(s) = r(\gamma^{(b)}(s))$ satisfies
\begin{align}\label{eq:rderiv}
\begin{split}
 \frac{d}{ds}\left(\frac{r^{(b)}(s)}{s}\right) &= - \frac{r^{(b)}(s)}{s^2} + \frac{2}{s^2}\left.\left\langle \tilde{X}, \pd{}{r}\right\rangle\right|_{\gamma^{(b)}(s)}\\
 &= \frac{2}{s^2}\left.\left\langle E, \pd{}{r}\right\rangle\right|_{\gamma^{(b)}(s)}.
 \end{split}
\end{align}
Integrating from $s$ to $b$, we find that
\begin{equation*}
      \left|\frac{r^{(b)}(s)}{s} - 1\right| \leq c\left|\int_s^b\frac{1}{t^2}\,dt\right|,
\end{equation*}
for some  $c$ independent of $\theta$, $\sigma$, and, in particular, that
\begin{equation}\label{eq:rest0}
-c \leq r^{(b)}(s) - s \leq c,
\end{equation}
for all $s \leq b$ such that $\gamma^{(b)}(s)$ is defined.  But $\gamma^{(b)}(s)$ will be defined at least as long as $r^{(b)}(s) > a_0$, and, so, at least for all $s > a_0 + c$.  
Choose $a_1 = 2(a_0 + c)$. Then $\Phi^{(b)}$ will be defined on $\Cc_{a_1}$ and \eqref{eq:rest0} says that, for $b\geq a_1$,
\[
r^{(b)}(a_1) \geq a_1 - c = 2a_0 + c > \frac{a_1}{2}.
\]
Consequently, $\Phi^{(b)}(\Cc_{a_1}) \subset \Cc_{a_1/2}$. Similarly,
\[
 r^{(b)}(a_1) \leq a_1 + c \leq 2a_0 + 3c  < 2a_1,
\]
so $\Cc_{2a_1} \subset \Phi^{(b)}(\Cc_{a_1})$.
For $b$, $s\geq a_1$, we will also have 
\[
\frac{s}{2} \leq a_0 + \frac{s}{2} \leq s - c    \leq r^{(b)}(s) \leq  s + c \leq 2s, 
\]
which is \eqref{eq:rscomp}. We may then estimate $|E\circ\Phi^{(b)}| \leq C_lr^{-l} \leq C_l2^ls^{-l}$. Returning to \eqref{eq:rderiv} with this
bound and integrating again along arbitrary paths with fixed $\theta$, $\sigma$ we obtain \eqref{eq:rsc0}.

The estimate \eqref{eq:transversec0} is proven in the same way. Fix $(\theta, \sigma) \in \Ss^k\times \Ss^{n-k-1}$ and $s_0\geq a_1$ and let $p(s) = \pi_{s_0}\circ\Phi^{(b)}(\theta, \sigma, s)$.
For any $s$, we have
 \[
  p^{\prime}(s) = d\pi_{s_0}\circ d\Phi^{(b)}\left(\pd{}{s} \right) = \frac{2}{s}d\pi_{s_0}(\tilde{X}(p(s))) = \frac{2}{s}d\pi_{s_0}(E(p(s))),
 \]
while, by estimate \eqref{eq:rsc0} above, we have $|E(p(s))| \leq C_ls^{-l}$
for all $l\geq 0$ for some $C_l$ independent of $\theta$ and $\sigma$.
But this is enough, since
\[
 |d\pi_{s_0}(E(p(s)))|_{g_{\Sc_{s_0}}} \leq \frac{s_0}{s}|E(p(s))|,
\]
and so
\begin{align*}
&d_{\Sc_{s_0}}\left((\theta, \sigma, s_0), \pi_{s_0}\circ\Phi^{(b)}(\theta, \sigma, s_0)\right) = d_{\Sc_{s_0}}(p(b), p(s_0))\\
&\qquad\qquad\leq \left|\int_{s_0}^b |p^{\prime}(t)|_{g_{\Sc_{s_0}}}\,dt\right| \leq C\int_{s_0}^b \frac{1}{t^{l+1}}\,dt,
\end{align*}
and \eqref{eq:transversec0} follows.
\end{proof}

\subsection{Analysis of an associated system of ODE} Next we seek uniform derivative estimates on the family of maps $\Phi^{(b)}$ in order to extract a limit as $b\to \infty$.
The distance distortion estimates \eqref{eq:rsc0}-\eqref{eq:transversec0} guarantee that the image of a point under $\Phi^{(b)}$ will not wander too far from the point itself, and therefore that
we can obtain the derivative estimates we need from an analysis of the local coordinate representations
of $\Phi^{(b)}$ relative to a fixed finite atlas on $\Cc_{a_0}$. Each of these coordinate representations will satisfy a system of equations with a common structure
reflecting the infinite order agreement of $\Xt$ and $X$ at spatial infinity. We analyze a general version of this system now.

Consider solutions
\[
     \psi: U\times (s_0, \infty) \to W\subset \RR^{n-1}, \quad r: U\times (s_0, \infty) \to (s_1, \infty),
\]
to the system
\begin{align}
\begin{split}\label{eq:psirssys}
   \pd{\psi}{s} &= \frac{2}{s}E_{\psi}(\psi, r), \quad \psi(x, b) = x,\\
    \pd{r}{s} & = \frac{r}{s} + \frac{2}{s}E_{r}(\psi, r), \quad r(x, b) = b,
\end{split}
\end{align}
where $U\subset \RR^{n-1}$, $W\subset \RR^{n-1}$ are open sets and
$E = (E_{\psi}, E_r): W\times (r_0, \infty)\to \RR^n$ satisfies 
\begin{equation}\label{eq:edecay}
 \left|\frac{\partial^{|\mu| + p}E}{\partial y^{\mu} \partial r^p}\right|(y, r) \leq \frac{C(\mu, l,  p)}{r^l} 
\end{equation}
for all $l$, $p\geq 0$ and all multiindices $\mu = (\mu_1, \ldots, \mu_{n-1})$.

Here in this subsection (and only for this subsection) we will write 
\[
\Phi(x, s) = (\psi(x, s), r(x, s)),
\]
and use 
$\langle \cdot, \cdot \rangle$ and $|\cdot|$ to denote the standard Euclidean inner product and norm on $\RR^{n}$. 
The collision of notation is intentional: in our eventual application to the proof of Theorem \ref{thm:vfnormalization}, the neighborhoods $U$ and $W$ will correspond to the images of charts on coordinate neighborhoods of $\Ss^k\times \Ss^{n-k-1}$.
The maps $\Phi$ and $E$ will correspond to the coordinate 
representations of $\Phi^{(b)}$ (for fixed $b$) and $E$ relative to the associated charts on $\Cc$. 

Our goal is to derive
estimates on $\Phi$ from this system on compact subsets of $U\times (s_0, \infty)$ which are independent of $b$.
\begin{proposition}\label{prop:odeckestimate} Let $V$ be a precompact open set with $\overline{V}\subset U$. Then, 
for all $k$, $l \geq 0$, there is a constant $C = C(k, l)$ depending on $V$, but independent of $b$, such that
 \begin{equation}\label{eq:odeckestimate}
      \sup_{V\times (s_0, b]}s^l\left| \frac{\partial^{|\mu|+p}}{\partial x^{\mu}\partial s^p}(\Phi - \operatorname{Id})\right|(x, s) \leq C(k, l)
 \end{equation}
for all $\mu$ and $p \geq 0$ such that $|\mu| + p = k$.
\end{proposition}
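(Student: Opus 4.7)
The plan is to prove \eqref{eq:odeckestimate} by induction on $k = |\mu| + p$, exploiting the infinite-order decay of $E$ and all its derivatives together with the fact that $r \asymp s$ once we know the $C^0$ estimate. For the base case, I would begin by introducing the substitution $v = r/s$, which converts the $r$-equation into $\partial_s v = (2/s^2) E_r(\psi, sv)$ with initial data $v(x, b) = 1$, eliminating the linear growth term. Integrating both the $\psi$-equation and this $v$-equation from $b$ down to $s$ and using $|E(y, r)| \leq C_l r^{-l}$ together with the bounds $r \asymp s$ from Proposition \ref{prop:phiexist} yields $|\psi - x| \leq C_l s^{-l}$ and $|v - 1| \leq C_l s^{-l-1}$. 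Since $r - s = s(v-1)$, this establishes \eqref{eq:odeckestimate} for $k = 0$ (with $p = 0$); the case $p \geq 1$ at $k = 0$ follows by reading off $\partial_s \Phi$ from the equation itself.

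For the inductive step, assume \eqref{eq:odeckestimate} holds at all orders $\leq k-1$ and fix a multi-index $\alpha$ with $|\alpha| = k$. Let $U_\alpha = (\partial_x^\alpha(\psi - x), \partial_x^\alpha(r - s))$; note that $U_\alpha(x, b) = 0$ identically since $\Phi(\cdot, b) = \operatorname{Id}$ on $V$. Differentiating \eqref{eq:psirssys} via the multivariable Faà di Bruno formula gives a linear system
\[
    \partial_s U_\alpha = A(x, s) U_\alpha + F_\alpha(x, s),
\]
in which the matrix $A$ has only one non-decaying entry (the $s^{-1}$ coming from the $r/s$ term in the $r$-equation) and otherwise consists of terms of the form $s^{-1}(\partial E)(\psi, r)$, while the forcing $F_\alpha$ is a polynomial in derivatives of $E$ evaluated at $(\psi, r)$ and in $x$-derivatives of $\Phi$ of order strictly less than $k$. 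By the inductive hypothesis and the infinite-order decay of $E$, both $A - s^{-1} I_r$ and $F_\alpha$ decay to infinite order in $s$, uniformly in $b$. Absorbing the $s^{-1}$ term with the integrating factor $s^{-1}$ (equivalently, substituting $\partial_x^\alpha(r-s) = s W$) and applying Gronwall's inequality on $[s, b]$ with zero initial data yields $|U_\alpha(x, s)| \leq C_l s^{-l}$ for every $l$, with $C_l$ independent of $b$ because $\int_{s_0}^{\infty} |A(t) - t^{-1} I_r| \, dt$ and $\int_{s_0}^{\infty} t^l |F_\alpha(t)|\, dt$ are finite independently of $b$.

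To handle mixed and pure $s$-derivatives of order $k$, I would apply $\partial_s^{p-1}$ to \eqref{eq:psirssys} iteratively, expressing each $\partial_x^\mu \partial_s^p \Phi$ with $|\mu| + p = k$ and $p \geq 1$ as a polynomial in derivatives of $E$ composed with $\Phi$ and in $x$-derivatives of $\Phi$ of total order at most $k$. Each term in this expression either carries at least one factor of $\partial E$ (decaying to infinite order in $r \asymp s$) or a pure $x$-derivative of $\Phi - \operatorname{Id}$ that has already been controlled, so the desired bound follows. The main technical obstacle is the combinatorial bookkeeping of the Faà di Bruno expansion and verifying that the Gronwall constant at each induction stage does not depend on $b$; this hinges on the uniform integrability of the coefficients and forcing in $s \in (s_0, \infty)$, which is in turn a direct consequence of the infinite-order decay hypothesis \eqref{eq:edecay} on $E$.
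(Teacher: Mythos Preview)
Your proposal is correct and takes essentially the same approach as the paper: the key move in both is the substitution $v = r/s$ (the paper packages this as the map $F = \rho_{1/s}\circ\Phi = (\psi, r/s)$) to kill the non-decaying $s^{-1}$ term in the $r$-equation, followed by a Gronwall-type integration from $s=b$ using the infinite-order decay of $E$ and its derivatives. The only cosmetic differences are that the paper works out the cases $k=0,1,2$ explicitly rather than framing the argument as a formal induction with Fa\`a di Bruno, and it re-derives $r\asymp s$ internally rather than citing Proposition~\ref{prop:phiexist}.
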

\begin{proof} Let $V$ be a precompact open set with with $\overline{V}\subset U$. Fix $x\in V$. Then
 \begin{equation}\label{eq:c01}
   \pd{}{s}\left(\frac{r(x, s)}{s}\right) = \frac{2}{s^2} E_r(\psi(x, s), r(x, s)),
 \end{equation}
so, using the bound $|E(\psi, r)|\leq C$ we have
\[
 \left|1 - \frac{r(x, s)}{s}\right| \leq C\int_s^b \frac{1}{t^2}\,dt \leq  \frac{C}{s},
\]
and hence that $|r(x, s)- s| \leq C$ for any $x$ and any $s_0 < s \leq b$.  

For all $s$ sufficiently large, we will also have that $s/2 \leq r(x, s) \leq 2s$. 
Hence, for each $l$, there is $C_l$ such that $|E(\psi(x, s), r(x, s))| \leq C_ls^{-l}$.
Returning to \eqref{eq:c01}, then, we can estimate
\[
 \left|1 - \frac{r(x, s)}{s}\right| \leq \int_s^b \frac{2}{t^2}|E(\psi(x, t), r(x, t))|\,dt \leq  C_l\int_s^b \frac{1}{t^{l+2}}\,dt \leq  \frac{C_l}{s^{l+1}},
\]
and hence that $|r(x, s) - 1| \leq C_ls^{-l}$. Using now that $r$ and $s$ are comparable, we obtain similarly that
\[
 |\psi(x, s) - x| \leq \int_s^b\frac{2}{t}|E_{\psi}(\psi(x, t), r(x, t))|\,dt \leq \frac{C_l}{s^{l}}.
\]

Now we estimate the first derivatives of $\Phi$. Fix some $l \geq 0$. From what we have done above, we already know that
 \[
  \left|\pd{\psi}{s}\right| = \frac{2}{s}\left|E_{\psi}(\psi, r)\right| \leq \frac{C_l}{s^l}, \quad
 \left|\pd{r}{s} -1\right| \leq \left|\frac{r}{s} - 1\right| + \frac{2}{s}|E_{r}(\psi, r)| \leq \frac{C_l}{s^l}.
\]
For the $x$-derivatives, it will be convenient to introduce the map 
\[
F = \rho_{\frac{1}{s}}\circ \Phi:U\times (s_0, \infty) \to W\times (0, \infty),
\]
where $\rho_{\lambda}(x, r) = (x, \lambda r)$, i.e., $F(x, s) = (\psi(x, s), r(x, s)/s)$.  Fix $1 \leq i \leq n-1$. Then
\begin{align*}
 \pd{}{s} \pd{F}{x^i} &= \pd{}{x^i}\left(\frac{2}{s}E_{\psi}\circ \Phi, \frac{2}{s^2}E_{r}\circ \Phi\right)
		      = \frac{2}{s} (d\rho_{\frac{1}{s}}\circ dE)\pd{\Phi}{x^i}\\
		      &= \frac{2}{s} (d\rho_{\frac{1}{s}}\circ dE\circ d\rho_s) \pd{F}{x^i}.
\end{align*}
Now, the matrix-valued function
\[
 A =  \frac{2}{s} (d\rho_{\frac{1}{s}}\circ dE\circ d\rho_s) \circ \Phi = 2\left(\begin{array}{cc}
                                                                      \frac{1}{s}\pd{E_{\psi}^{\alpha}}{y^\beta} & \pd{E_{\psi}^{\alpha}}{r}\\
                                                                      \frac{1}{s^2}\pd{E_{r}}{y^\beta} & \frac{1}{s}\pd{E_{r}}{r}\\
                                                                     \end{array}\right)
\]
satisfies $|A| \leq C_ls^{-(l+1)}$ for all $l$, so the function $\phi = \left|\pd{F}{x^i} - e_i\right|^2$ satisfies
\begin{align*}
 \pd{\phi}{s} &= 2\left\langle A\left(\pd{F}{x^i} - e_i\right),\pd{F}{x^i} - e_i\right\rangle 
	    + 2\left\langle A e_i,\pd{F}{x^i} - e_i\right\rangle\\
	    &\geq -3|A|\phi - |A|. 
\end{align*}
Fix $s_0 < s_1 \leq b$. Then, there is $C$ depending only on $l$ such that 
\[
\pd{\phi}{s}  \geq -C_ls^{-2}(\phi + s_1^{-2l})
\]
for any $x$ and all $s \geq s_1$. Integrating from $s_1$ to $b$ yields
\[
 \ln\left(\frac{\phi(x, b) + s_1^{-2l}}{\phi(x, s_1) + s_1^{-2l}}\right) \geq  \frac{C_l}{b} - \frac{C_l}{s_1}
\]
which, since $\phi(x, b) = 0$, means that
\[
 \phi(x, s_1) \leq e^{\frac{C}{s_1}-\frac{C}{b}}{s_1^{-2l}} \leq C_ls_1^{-2l},
\]
where $C$ is independent of $s_1$. Since $s_1$ was arbitrary, it follows that
\[
 \left|\pd{\psi^{\alpha}}{x^{i}} - \delta^{\alpha}_{i}\right|
+  \frac{1}{s}\left|\pd{r}{x^{i}}\right| \leq \frac{C_l}{s^l} 
\]
for all $s$, and the desired estimate follows.

The higher derivatives may be estimated similarly. We will give here the details only for the case $k =2$. Fix again $l\geq 0$.
From above, we have already seen that 
\[
 \frac{\partial^2r}{\partial s^2} = \pd{}{s}\left(\frac{r}{s} + \frac{2}{s}E_r\right) = \frac{2}{s}dE_r\pd{\Phi}{s},
\]
and
\[
 \frac{\partial^2\psi}{\partial s^2} =\pd{}{s} \left(\frac{2}{s}E_{\psi}\right) = -\frac{2}{s^2}E_{\psi} +\frac{2}{s} dE_{\psi}\pd{\Phi}{s},
\]
so
\[
 \left|\frac{\partial^2\psi}{\partial s^2}\right| + \left|\frac{\partial^2r}{\partial s^2}\right| \leq \frac{C_l}{s^l}
\]
for some $C_l$.
Similarly, 
\[
 \left|\frac{\partial^2 r}{\partial x^i \partial s}\right| \leq \frac{1}{s}\left|\pd{r}{x^i}\right| + \frac{2}{s}\left|dE_r\pd{\Phi}{x^i}\right| \leq C_ls^{-l},
\] 
and
\[
 \left|\frac{\partial^2 \psi}{\partial x^i \partial s}\right| \leq \frac{2}{s}\left|dE_{\psi}\pd{\Phi}{x^i}\right| \leq C_ls^{-l}
\]
for any $i$.

For the pure $x$-derivatives, we again use the map $F$ and compute that
\begin{align*}
  \pd{}{s} \frac{\partial^{2} F}{\partial x^i \partial x^j} &= \frac{2}{s}(d\rho_{\frac{1}{s}}\circ dE\circ d\rho_{s})\frac{\partial^2 F}{\partial x^i \partial x^j}\\
  &\phantom{=}
  + \frac{2}{s}\left(d\rho_{\frac{1}{s}} \circ d^2E\right)\left(d\rho_s \pd{F}{x^i}, d\rho_s \pd{F}{x^j}\right)
\end{align*}
for any $i$ and $j$.
Fixing any $x$ and integrating from $s$ to $b$, we may estimate as in the previous lemma that
\[
 \left|\frac{\partial^2 F}{\partial x^i \partial x^j}(x, s)\right| \leq \frac{C}{s^{l}}
\]
using that
\[
 \frac{\partial^2 F}{\partial x^i \partial x^j}(x, b) = 0.
\]
The desired estimate on $\frac{\partial^2\Phi}{\partial x^i\partial x^j}$ follows immediately.
\end{proof}

\subsection{Convergence to a limit diffeomorphism}
Now we are ready to extract a limit as $b\to \infty$ from the family $\Phi^{(b)}$ of local diffeomorphisms constructed in Proposition \ref{prop:phiexist}. We first fix a finite coordinate atlas
in order to import the estimates from the previous section to the cylinder.

It follows from the distance estimates \eqref{eq:transversec0} that we can cover $\Ss^{k}\times \Ss^{n-k-1}$ by a finite collection
$\{U^i_{\delta}\}_{i=1}^N$ of products 
\[
U^i_{\delta} = \mathring{B}_{\delta}^k(p_i)\times\mathring{B}^{n-k-1}_{\delta}(q_i)
\]
of coordinate balls of radius $\delta$ less than one fourth the injectivity radii of $\Ss^k$ and $\Ss^{n-k-1}$ with the property that
\[
   \Phi^{(b)}(\overline{U}_{2\delta}^i\times (a_2, \infty)) \subset U_{4\delta}^i \times (a_2/2, \infty)
\]
for all $a_2 \geq a_1$ sufficiently large (depending on $\delta$) and all $b \geq a_2$. 
Write $\tilde{U}^i = U_{2\delta}^i\times (a_2, \infty)$ and $\tilde{W}^i = U_{4\delta}^i\times (a_2/2, \infty)$
and consider the corresponding atlases $\{(\tilde{U}^i, \tilde{\varphi}^i)\}_{i=1}^N$ and $\{(\tilde{W}^i, \tilde{\varphi}^i)\}_{i=1}^N$ of $\Cc_{a_2}$ and $\Cc_{a_2/2}$, respectively.
Here we use $\tilde{\varphi}^i$ to represent both the map $\exp_{p_i}^{-1}\times \exp_{q_i}^{-1}\times \operatorname{Id}$ on $\tilde{W}^i$ and its restriction to $\tilde{U}^i$. 

Passing to the coordinate representations $\tilde{\varphi}^i\circ\Phi^{(b)}\circ(\tilde{\varphi}^i)^{-1}$ and $d\tilde{\varphi}^{i}(E) \circ (\tilde{\varphi}^i)^{-1}$ of $\Phi^{(b)}$ and $E$ (which we will continue to denote
by the same symbols) we obtain a system of the 
form \eqref{eq:psirssys} on $\tilde{\varphi}^i(\tilde{U}^i)$ with the bounds \eqref{eq:edecay} for some $C$ depending on $U^i$;
these bounds follow from \eqref{eq:decay4} since the coordinate representation of $g$ on $\RR^{n}$ satisfies
\[
  C^{-1}\delta_{jk} \leq g_{jk}(y, s) \leq Cs^2\delta_{jk}
\]
on $\tilde{U}^i$ for some $C > 0$ depending only on $i$, and we have bounds of the form $\left|\partial^{(m)}\Gamma_{jk}^l\right| \leq C(i, m)$ on $\tilde{U}^i$ for all $m\geq 0$. Here $y = (\theta, \sigma)$.

From Proposition \ref{prop:odeckestimate}, we obtain that, for fixed $i$ and  $a_2 < s_1 < s_2$, the $C^k$-norms of the coordinate representation of  $\Phi^{(b)} - \operatorname{Id}$ 
are uniformly bounded on the compact set $K =  \overline{U^i_{\delta}}\times [s_1, s_2]\subset \tilde{U}^i$ for each $k\geq 0$. From the Ascoli-Arzela theorem, then, there is a sequence $b_j \to \infty$ such that $\Phi^{(b_j)}$ 
converges in every $C^k$-norm to a smooth map $\Phi_K^{(\infty)}$ on $K$. Covering the annular regions
$A_j = \Ac_{a_2+ 1/j, ja_2}$ by finitely many of the charts from this atlas, we can obtain a smooth limit $\Phi_j^{(\infty)}$ on $A_j$ for each $j$; taking a further subsequence, we obtain a smooth limit
$\Phi = \Phi^{(\infty)}$ defined on all of $\Cc_{a_2}$. We record this statement and some additional observations in the following proposition.

\begin{proposition}
\label{prop:limitphi} Let $a_2$ be as in the discussion above.
There exists $a_3 \geq a_2$ and a sequence $b_j \to \infty$ such that $\Phi^{(b_j)}$ converges locally smoothly as $j\to \infty$ to a smooth map $\Phi: \Cc_{a_3}\to \Cc_{a_3/2}$ satisfying
\begin{enumerate}
 \item[(a)] $d\Phi_{(\theta, \sigma, s)}\left(X(\theta, \sigma, s)\right) = \tilde{X}\circ\Phi(\theta, \sigma, s)$,
\item[(b)] $\Phi$ is a diffeomorphism onto its image and $\Cc_{2a_3}\subset \Phi(\Cc_{a_3})$,

\item[(c)] On each coordinate neighborhood $U = U^i_{\delta}$ defined above, and for each $k$, $l\geq 0$, there is $C = C(i, k, l)$
such that, for all $s > a_3$,
\begin{equation}\label{eq:coordck}
      s^l\left\{\|\Phi - \operatorname{Id}\|_{C^k(U\times [s, 2s])} + \|\Phi^*g - g\|_{C^k(U\times [s, 2s])} \right\} \leq C, 
\end{equation}
relative to the Euclidean norm and connection.
\end{enumerate} 
\end{proposition}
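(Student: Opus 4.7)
The discussion preceding the statement has already reduced matters to extracting a convergent subsequence from uniform $C^k$-bounds and verifying that the properties of the individual $\Phi^{(b)}$ pass to the limit. I cover $\Cc_{a_2}$ by the finitely many product charts $\{(\tilde U^i, \tilde\varphi^i)\}$ and, on each chart, pass to the coordinate expressions of $\Phi^{(b)}$ and $E$; these satisfy a system of the form \eqref{eq:psirssys}--\eqref{eq:edecay}, so Proposition \ref{prop:odeckestimate} furnishes, on each compact subset $\overline{V}\times[s_1,s_2]\subset\tilde\varphi^i(\tilde U^i)$, a $C^k$-bound on $\Phi^{(b)}-\operatorname{Id}$ of order $s^{-l}$ for every $k, l \geq 0$, uniformly in $b\geq s_2$. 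Exhausting $\Cc_{a_2}$ by such compact sets and diagonalizing, I extract a subsequence $b_j \to \infty$ along which $\Phi^{(b_j)}\to \Phi$ in $C^k_{\mathrm{loc}}(\Cc_{a_3})$ for every $k$, where $a_3\geq a_2$ is chosen large enough for what follows. Property (a) is then obtained by passing to the $C^1$-limit in the identity $d\Phi^{(b_j)}(X)=\tilde X \circ \Phi^{(b_j)}$ from \eqref{eq:phibvf}, and the first half of (c) is immediate from the uniform estimate \eqref{eq:odeckestimate}.

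For the second half of (c), the components of $g$ in each chart are smooth in $(y,s)$ with all partial derivatives dominated by polynomials in $s$, a consequence of the product form $g=(2(k-1)\gc)\oplus\bar g$ together with the use of polar coordinates on the Euclidean factor. Writing $(\Phi^*g)_{ij}(p) = g_{kl}(\Phi(p))\,\partial_i\Phi^k\partial_j\Phi^l$, Taylor-expanding $g_{kl}\circ\Phi$ along the segment from $p$ to $\Phi(p)$ and invoking $\partial_i\Phi^k=\delta_i^k+\partial_i(\Phi-\operatorname{Id})^k$, the components of $\Phi^*g-g$ in coordinates are a finite sum of products of derivatives of $g$ (at most polynomial in $s$) with derivatives of $\Phi-\operatorname{Id}$ (of order $s^{-l}$ for arbitrary $l$ by the first half of (c)). Choosing $l$ large absorbs any polynomial loss and yields the claimed $C^k$-bound.

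Property (b) then combines the inverse function theorem with a transversality argument. Since $d\Phi$ is a small $C^k$-perturbation of the identity in coordinates for $a_3$ large, $\Phi$ is a local diffeomorphism. From (a), $\Phi$ maps each integral curve of $X$, that is, each radial ray $\{(\theta,\sigma)\}\times(a_3,\infty)$, to an integral curve of $\tilde X$; by \eqref{eq:xangle}, $\tilde X$ is strictly transverse to every $\Sc_s$ and $r$ is strictly monotone and unbounded along its trajectories, so any two $X$-rays whose $\Phi$-images share a point must already agree on $\Sc_{a_3}$. Injectivity of $\Phi$ thus reduces to injectivity of the cross-sectional map $\Sc_{a_3}\to\Cc_{a_3/2}$, which is a small $C^1$-perturbation of the inclusion and hence an embedding. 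For $\Cc_{2a_3}\subset\Phi(\Cc_{a_3})$, the hypersurface $\Phi(\Sc_{a_3})$ lies in the radial slab $\{|r-a_3|\leq C\}$ by (c); as every backward $\tilde X$-trajectory from a point with $r\geq 2a_3$ has $r$ strictly decreasing into this slab (and stays in $\Cc_{a_0}$ throughout), it must cross $\Phi(\Sc_{a_3})$ transversally, placing its starting point in $\Phi(\Cc_{a_3})$.

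The only mildly delicate step is the coordinate calculation of $\Phi^*g-g$: the components of $g$ acquire polynomial-in-$s$ growth once polar coordinates are introduced on the Euclidean factor, so one must check that differentiation up to order $k$ introduces no worse growth. Because the decay rate $l$ in the first half of (c) is arbitrary, any fixed polynomial loss is absorbed, and no obstacle arises.
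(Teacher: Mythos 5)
Your extraction of the limit (finite atlas, the coordinate system \eqref{eq:psirssys} with the decay \eqref{eq:edecay}, Proposition \ref{prop:odeckestimate}, Arzel\`a--Ascoli plus a diagonal argument over an exhaustion), your proof of (a) by passing \eqref{eq:phibvf} to the $C^1$-limit, and both halves of (c) follow the paper's argument essentially verbatim; in particular the paper also handles $\Phi^*g-g$ exactly as you do, by combining the polynomial bounds on the coordinate components of $g$ and its Christoffel symbols with the faster-than-any-polynomial decay of the coordinate derivatives of $\Phi-\operatorname{Id}$.

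The genuine gap is in part (b). For injectivity, from $\Phi(p_1)=\Phi(p_2)$ and property (a) you may only conclude that the $\Phi$-images of the two radial rays lie on one and the same integral curve of $\tilde{X}$; it does not follow that the rays ``agree on $\Sc_{a_3}$,'' because the two base points $(\theta_i,\sigma_i,a_3)$ could be carried to two \emph{different} points of that common trajectory -- an identification off by a flow-time shift, i.e. $s_1\neq s_2$. Injectivity of the cross-sectional map $\Phi|_{\Sc_{a_3}}$ does not exclude this, so your reduction is incomplete as stated. To close it you must first force $s_1=s_2$: for instance, flow both points forward by the same time $t$, so that equivariance gives $\Phi(\theta_1,\sigma_1,s_1e^{t/2})=\Phi(\theta_2,\sigma_2,s_2e^{t/2})$, and use the $C^0$-comparison of $r\circ\Phi$ with $s$ from \eqref{eq:coordck}: the discrepancy $|s_1-s_2|\,e^{t/2}$ grows without bound while $|r\circ\Phi-s|$ stays bounded, so $s_1=s_2$; one then concludes with local injectivity of $\Phi$ on a thin slab $\Sc_t\times(t-\epsilon,t+\epsilon)$, which (combined with following radial lines) is exactly how the paper argues. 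A second, smaller gap is in your proof that $\Cc_{2a_3}\subset\Phi(\Cc_{a_3})$: confinement of $\Phi(\Sc_{a_3})$ to a thin radial slab together with transversality does not by itself guarantee that a backward $\tilde{X}$-trajectory actually \emph{crosses} the hypersurface $\Phi(\Sc_{a_3})$; you need a separation or degree argument there. The paper sidesteps this by passing the uniform inclusion $\Cc_{2a_1}\subset\Phi^{(b)}(\Cc_{a_1})$ of \eqref{eq:phibimage} to the limit: writing $q=\Phi^{(b_j)}(p_j)$, the $C^0$-comparison of $r\circ\Phi^{(b_j)}$ with $s$ confines the $p_j$ to a compact set, and any subsequential limit $p$ satisfies $\Phi(p)=q$ with $p\in\Cc_{a_3}$ after enlarging $a_3$.
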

\begin{proof} For now, we will assume just that $a_3 \geq a_2$ and further restrict $a_3$ as we work through the argument.
 The identity in (a) follows from \eqref{eq:phibvf} and the $C^1$-convergence of $\Phi^{(b_j)}$. The second claim in (b) follows from \eqref{eq:phibimage}, and the estimate on the first term in \eqref{eq:coordck} follows from Proposition \ref{prop:odeckestimate} and the discussion preceding the statement of this proposition.  In particular, we can choose $a_3$ sufficiently large 
 so that $(1/2)\operatorname{Id} \leq d\Phi \leq 2\operatorname{Id}$ on $U^i\times [a_3, \infty)$
 for each $i$. Among other things, this ensures that $\Phi$ will be a local diffeomorphism on $\Cc_{a_3}$. 
 
 The argument that $\Phi$ is injective goes then
 just as the corresponding argument for $\Phi^{(b)}$ in Proposition \ref{prop:phiexist}.  Here, as there, $r(s)$ is strictly increasing along the radial lines $s\mapsto (\theta, \sigma, s)$,
 and $\Phi$ is a diffeomorphism
 when restricted to $\Sc_t\times (t-\epsilon, t+ \epsilon)$ for some sufficiently large $t$ and sufficiently small $\epsilon$. Following the radial lines forward and backward as 
 in the proof of Proposition \ref{prop:phiexist}, we see that $\Phi$ must be injective on $\Cc_{a_3}$, and hence a diffeomorphism onto its image. Using the $C^0$-comparison of $r\circ \Phi$ with $s$,
 we can also enlarge $a_3$ if necessary to ensure that $\Phi(\Cc_{a_3}) \subset \Cc_{a_3/2}$ and $\Cc_{2a_3}\subset \Phi(\Cc_{a_3})$.
 
 Finally, the $C^k$ estimates on $\Phi^*g - g$ in \eqref{eq:coordck} follow from the uniform estimates we have on the derivatives of the coordinate representations of $\Phi-\operatorname{Id}$ and the metric $g$ on the neighborhoods 
 $\tilde{U}^i$.
 \end{proof}
 
\subsection{Proof of Theorem \ref{thm:vfnormalization}}  

Now we assemble the proof of Theorem \ref{thm:vfnormalization}. Taking $r_1 = a_3$ (and recalling that $a_3 \geq a_0 \geq 2r_0$), Proposition \ref{prop:limitphi} gives us the existence of a map
$\Phi: \Cc_{r_1}\to \Cc_{r_1/2} \subset \Cc_{r_0}$ satisfying that $\Phi_*X = X\circ\Phi$ and $\Cc_{2r_1}\subset \Phi(\Cc_{r_1})$.  Moreover (patching together estimates using the local bounds on
the Christoffel symbols), part (c) of that proposition ensures that
\[
  \sup_{\Cc_s}s^l|\nabla^{(m)}(\Phi^*g - g)| < \infty
\]
for all $l\geq 0$.  Writing $\hat{g} = \Phi^*g$ and $\hat{\nabla}$ for the connection of $\hat{g}$, we thus have
\begin{equation}
 \label{eq:conndiff}
 \sup_{\Cc_s}s^l|\nabla^{(m)}(\hat{\Gamma} - \Gamma)| < \infty,
\end{equation}
and, consequently,
\begin{equation}
\label{eq:hatest}
 \sup_{\Cc_s}s^l|\hat{\nabla}^{(m)}(\hat{g} - g)| < \infty,
\end{equation}
for all $l$ and $m$. 

But then, for all $l$, we have
\begin{align*}
   |\Phi^*\gt - g| &\leq |\Phi^*\gt - \hat{g}| + |\hat{g} - g| \leq C|\Phi^*\gt - \hat{g}|_{\hat{g}} + |\hat{g} - g|\\
   &=C|\gt - g|\circ \Phi + |\hat{g} - g| \leq C_l s^{-l} 
\end{align*}
for some $C_l$, using that both $\hat{g}$ and $\gt$ are strongly asymptotic to $g$ and that $r$ and $s$ are comparable.
We can then proceed inductively, using \eqref{eq:conndiff} and \eqref{eq:hatest} to estimate the covariant derivatives of $\gt- g$. For example,
since
\begin{align*}
  |\nabla(\Phi^*\gt - g)| &\leq C|\hat{\Gamma} - \Gamma||\Phi^*\gt - g| + |\hat{\nabla}(\Phi^*\gt - g)|\\
    &\leq  C|\hat{\Gamma} - \Gamma||\Phi^*\gt - g| + C|\hat{\nabla}(\Phi^*\gt - \hat{g})|_{\hat{g}} + |\hat{\nabla}(\hat{g} - g)|\\
    &= C|\hat{\Gamma} - \Gamma||\Phi^*\gt - g| + C|\nabla(\gt - g)|\circ \Phi + |\hat{\nabla}(\hat{g} - g)|,
\end{align*}
we see that we have a bound of the form $|\nabla(\Phi^*\gt - g)| \leq C_ls^{-l}$ for all $l$. We can argue similarly for the higher derivatives. This completes the proof.

\end{document}